\renewcommand{\leq}{\le}
\renewcommand{\geq}{\ge}
\newcommand{\e}{\text{e}}
\newcommand{\I}{\mathds 1}
\def\d{{\rm d}}
\def\<{\langle}
\def\>{\rangle}
\newtheorem{theorem}{Theorem}[section]
\newtheorem{lemma}[theorem]{Lemma}
\newtheorem{proposition}[theorem]{Proposition}
\newtheorem{corollary}[theorem]{Corollary}
\numberwithin{equation}{section}
\theoremstyle{definition}
\newtheorem{definition}[theorem]{Definition}
\newtheorem{remark}[theorem]{Remark}
\begin{document}
\allowdisplaybreaks
\title[Resistances in one-dimensional critical long-range percolation]
{\bfseries  The polynomial growth of effective resistances in one-dimensional critical long-range percolation}

\author{Jian Ding \qquad  Zherui Fan  \qquad  Lu-Jing Huang}

\thanks{\emph{J. Ding:}
School of Mathematical Sciences, Peking University, Beijing, China.
  \texttt{dingjian@math.pku.edu.cn}}
\thanks{\emph{Z. Fan:}
School of Mathematical Sciences, Peking University, Beijing, China.
  \texttt{1900010670@pku.edu.cn}}

\thanks{\emph{L.-J. Huang:}
School of Mathematics and Statistics \& Key Laboratory of Analytical Mathematics and Applications (Ministry of Education), Fujian Normal University, Fuzhou, China.
  \texttt{huanglj@fjnu.edu.cn}}


\date{}
\maketitle

\begin{abstract}
We study the critical long-range percolation on $\mathds{Z}$, where an edge connects $i$ and $j$ independently with probability $1-\exp\{-\beta\int_i^{i+1}\int_j^{j+1}|u-v|^{-2}\d u\d v\}$ for $|i-j|>1$ for some fixed $\beta>0$ and with probability 1 for $|i-j|=1$. Viewing this as a random electric network where each edge has a unit conductance, we show that the effective resistances from 0 to $[-n,n]^c$ and from the interval $[-n,n]$ to $[-2n,2n]^c$ (conditioned on no edge joining $[-n,n]$ and $[-2n,2n]^c$)  both grow like $n^{\delta(\beta)}$ for some $\delta(\beta)\in (0,1)$.

\noindent \textbf{Keywords:} Long-range percolation, effective resistance, polynomial growth.

\medskip

\noindent \textbf{MSC 2020:} 60K35, 82B27, 82B43

\end{abstract}
\allowdisplaybreaks

\tableofcontents

\section{Introduction}\label{intro}

Long-range percolation (LRP), initially introduced by \cite{S83,ZPL83}, was proposed to study the effects of long-range interactions on phase transitions. It is a percolation model on $\mathds{Z}^d$, in which each pair of vertices may be connected by a bond with probability (roughly speaking) proportional to $\beta |i - j|^{-s}$ for some parameters $\beta, s > 0$.
Over the years, LRP has been a subject of extensive study. One of the key early findings is the existence of multiple phase transitions regarding the infinite cluster as the parameters $s$ and $\beta$ vary.
For instance, in one dimension, \cite{S83} showed that percolation does not occur when $s>2$ unless the nearest-neighbor connecting probability $p_1 = 1$, while \cite{NS86} established that an infinite cluster exists if $s<2$.
Furthermore, it was found that percolation occurs at the critical case of $s=2$ when $\beta$ is sufficiently large and $p_1$ is close to 1.
This was further refined by \cite{AN86} (see also \cite{DGT24}), which showed the critical behavior of $\beta$. A comprehensive overview of these developments can be found in \cite{DFH23}.

Another intriguing direction involves the behavior of random walks on the infinite cluster of LRP. While important progress has been made on the recurrence properties and scaling limits of random walks in the non-critical case (see \cite{CS13,BT24}), much remains to be understood about the random walk behavior in the critical case, particularly in low-dimensional settings (see Section \ref{relatew} below for further details).

Motivated by this, we aim to study the growth properties of effective resistances in the critical LRP model, which is closely related to various aspects of random walks (see Section \ref{relatew}). We will focus on the one-dimensional critical LRP model where the connecting probabilities exhibit strong scaling invariance, making it a particularly convenient choice. Specifically, we set $p_1=1$, ensuring the connectivity trivially.
For $i,j\in \mathds{Z}$ with $|i-j|>1$, let the edge $\langle i,j\rangle$ occur independently with probability
\begin{equation}\label{LRP}
p_{i,j}:=1-\exp\left\{-\beta\int_i^{i+1}\int_j^{j+1}\frac{1}{|x-y|^2}\d x\d y\right\},
\end{equation}
where $\beta>0$ serves as a parameter of this LRP model.
This model will also be referred to as a $\beta$-LRP model, with $\mathcal{E}$ denoting for the edge set. Additionally, for ease of notation, we will also use $\sim$ to denote edges, that is, $i\sim j$ implies $\langle i,j \rangle \in \mathcal{E}$.

We now assign unit conductance to each edge in the $\beta$-LRP model, which allows us to view the $\beta$-LRP model as a random electric network on $\mathds{Z}$.
To introduce effective resistance, we define the notion of a flow as follows.
Let $f$ be a function defined on $\mathds{Z}^2$. We call $f$ a flow on the $\beta$-LRP model if it satisfies
\begin{equation*}
f_{ij}=
\begin{cases}
-f_{ji},\quad & \langle i,j\rangle\in \mathcal{E}, \\
0,\quad &\text{otherwise}
\end{cases}
\end{equation*}
for all $i,j\in \mathds{Z}$.
Thus, the net flow out of a point $i$ is defined as $f_{(i)}:=\sum_{j\neq i} f_{ij}$.
Moreover, given two finite disjoint subsets $I_1,I_2\subset \mathds{Z}$, we say that $f$ is a unit flow from $I_1$ to $I_2$ if it is a flow satisfying
$$
\sum_{i\in I_1}f_{(i)}=1,\quad\sum_{i\in I_2}f_{(i)}=-1\quad  \text{and}\quad f_{(j)}=0\quad \text{for all }j\notin I_1\cup I_2.
$$
We then define the effective resistance between two disjoint subsets $I_1, I_2\subset \mathds{Z}$ as
\begin{equation}\label{eff-resis}
R(I_1,I_2)=\inf_{J_1, J_2}\inf\left\{\frac{1}{2}\sum_{i\sim j}f^2_{ij}:\ f\ \text{is a unit flow from $J_1$ to } J_2\right\},
\end{equation}
where the first infimum is taken over all finite subsets $J_1 \subset I_1$ and $J_2 \subset I_2$.
We refer to the quantity in the bracket of \eqref{eff-resis} as the energy (of the flow $f$).
In particular, when $I_1=\{i\}$ is a singleton, we denote $R(i,I_2)$ as $R(I_1,I_2)$.

Our main result presents that the effective resistances from 0 to $[-n,n]^c$ and from the interval $[-n,n]$ to $[-2n,2n]^c$ (conditioned on no edge joining $[-n,n]$ and $[-2n,2n]^c$)  both grow like $n^\delta$ for some $\delta\in(0,1)$, as detailed in the following theorem.

\begin{theorem}\label{mainr1}
For all $\beta>0$, there exists an exponent $\delta=\delta(\beta)\in (0,1)$ {\rm(}depending only on $\beta${\rm)} such that for all $n\in \mathds{N}$,
$$
\mathds{E}\left[R(0,[-n,n]^c)\right]\asymp_P R(0,[-n,n]^c)\asymp_P n^\delta
$$
and conditioned on $[-n,n]\nsim[-2n,2n]^c$,
\begin{equation*}
 \mathds{E}\left[R([-n,n],[-2n,2n]^c )\ | [-n,n]\nsim[-2n,2n]^c\right]\asymp_P R([-n,n],[-2n,2n]^c )\asymp_P n^\delta,
\end{equation*}
where $A(n)\asymp_P B(n)$ means that for all $\varepsilon >0$, there exist constants $0<c<C<\infty$ such that $\mathds{P}[cB(n)\leq A(n)\leq CB(n)]>1-\varepsilon$ for all $n\in \mathds{N}$.
\end{theorem}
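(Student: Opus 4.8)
\medskip
\noindent\textbf{Proof strategy.}
The plan is to reduce both assertions to a single renormalised quantity and to run a multi-scale argument on it. Fix dyadic scales $n_k=2^k$ and let $X_k$ be the effective resistance across the annulus $[-2n_k,2n_k]\setminus[-n_k,n_k]$ with $[-n_k,n_k]$ and $[-2n_k,2n_k]^c$ each contracted to a vertex, conditioned on $[-n_k,n_k]\nsim[-2n_k,2n_k]^c$; since this annulus consists of two intervals of length $\asymp n_k$ placed in parallel, $X_k$ is comparable to the resistance of crossing a single such interval end-to-end, and (up to a constant factor) $X_k$ with $2^k\asymp n$ is the conditional resistance appearing in the second assertion of Theorem~\ref{mainr1}. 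The basic input is a self-similarity estimate: after pairing consecutive integers, the connection probabilities \eqref{LRP} at scale $2n$ are comparable, uniformly in $n$ and up to multiplicative constants, to those at scale $n$, because $|x-y|^{-2}$ is exactly the critical exponent in dimension one. Together with Rayleigh monotonicity and the series/parallel laws this yields $\mathds E[X_k]\asymp\mathds E[R(0,[-n_k,n_k]^c)]$ and an approximate renormalisation recursion: splitting $[0,2n]$ at its midpoint exhibits $X_{k+1}$, up to constants, as the series composition of two (essentially independent) copies of $X_k$, short-circuited in parallel by the long edges that jump across the whole interval. The expected number of such spanning edges is a constant depending only on $\beta$ --- this is the critical balance $\sum_{|i|\le n}\sum_{|j|>2n}|i-j|^{-2}\asymp1$ --- which is also why $\{[-n,n]\nsim[-2n,2n]^c\}$ has probability bounded away from $0$ and from $1$, and why the two quantities in the theorem carry the same exponent. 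The goal then becomes $\mathds E[X_k]\asymp 2^{k\delta}$ for a well-defined $\delta=\delta(\beta)\in(0,1)$.

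For the upper bound --- $\delta<1$, and $\mathds E[R(0,[-n,n]^c)]\lesssim n^\delta$ --- I would build an explicit hierarchical unit flow. With probability at least $1-e^{-c\beta\varepsilon^2}$ some long edge $\langle i,j\rangle$ joins the left window $[0,\varepsilon n]$ of $[0,n]$ to its right window $[(1-\varepsilon)n,n]$; routing the entire unit current through $\langle i,j\rangle$ leaves two sub-problems --- unit flows across $[0,i]$ and across $[j,n]$, each at scale $\le\varepsilon n$ --- which are solved recursively. Each level spends one extra edge of energy $1$ and spawns two sub-problems at scale $\varepsilon n$, so after $\log_{1/\varepsilon}n$ levels one obtains a genuine unit flow of energy $O\!\big(2^{\log_{1/\varepsilon}n}\big)=O\!\big(n^{\log 2/\log(1/\varepsilon)}\big)$ on the favourable event, the unfavourable event being absorbed by a geometric family of candidate windows (or, crudely, by the nearest-neighbour flow of energy $n$). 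Optimising $\varepsilon=\varepsilon(\beta)$ --- of order $1$ for moderate $\beta$, of order $\beta^{-1/2}$ for large $\beta$ --- gives $\delta(\beta)<1$ for every $\beta>0$, with $\delta(\beta)\to1$ as $\beta\downarrow0$ and $\delta(\beta)\to0$ as $\beta\uparrow\infty$.

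The lower bound $\delta>0$ is the heart of the matter and the step I expect to be the main obstacle. The naive tools are too weak: a single Nash--Williams bound over the cutsets ``edges straddling the point $2^k$'' gives only $R\gtrsim\sum_k 1/|\Pi_k|\asymp\log\log n$ (such a cutset has $\Theta(k)$ edges), and dually the Dirichlet principle with a test function $v(x)=\psi(\log_2|x|)$ gives only $R\gtrsim(\log n)/\beta$ --- both see only the coarse annulus-by-annulus structure and miss the recursive gain. Instead I would prove a recursive lower bound dual to the flow above: construct, by recursion on the scale and adapted to the percolation configuration, a test function $v$ that rises across a ``successful'' sub-interval and is essentially constant elsewhere, so that its energy $\tfrac12\sum_{i\sim j}(v(i)-v(j))^2$ obeys a recursion whose only self-consistent growth rate forces $R\gtrsim n^\delta$; equivalently, show $\mathds E[X_{k+1}]\ge(1+c)\mathds E[X_k]$ for some $c=c(\beta)>0$. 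The real difficulty is to show that long shortcut edges cannot push the resistance below polynomial growth: one must argue that although each shortcut that is used does lower the resistance, it simultaneously forces an auxiliary crossing sub-problem of comparable scale --- the near endpoint of a spanning edge lies an $\Omega(1)$ fraction of the way into the interval --- so the saving is at most a bounded factor per dyadic scale, hence at most a polynomial factor across $\log_2 n$ scales. Making this precise needs a delicate conditioning on the ($O(1)$-in-expectation) collection of spanning edges at each scale together with control of the dependence between scales; a cleaner route may be to compare the network with a hierarchical (ultrametric) resistor model, for which the recursion is exact, and to transport the lower bound through a rough-isometry-type comparison of Dirichlet energies.

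It remains to pass from expectations to the $\asymp_P$ statements and to see $\mathds E[\cdot]\asymp n^\delta$ rather than merely $n^{\delta+o(1)}$. The key point is that fluctuations do not accumulate: written at the level of $\log X_k$, the series composition of the two copies of $X_k$ contracts the variance of $\log X_k$ by a factor bounded below $1$, while the $O(1)$ parallel shortcuts add only $O(1)$, so $\operatorname{Var}(\log X_k)$ stays bounded uniformly in $k$. Consequently $\mathds E[\log X_k]=k\delta\log 2+O(1)$, with $\delta:=\lim_k\tfrac1k\log_2\mathds E[X_k]$ existing (the same recursion supplies the sub- and super-multiplicativity needed for the limit), and $X_k$ concentrates: $X_k\asymp_P 2^{k\delta}$ and $\mathds E[X_k]\asymp 2^{k\delta}$. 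The upper tail required for $\asymp_P$ is in fact immediate from Markov's inequality once $\mathds E[X_k]\lesssim 2^{k\delta}$ is known, and the lower tail is handled by the same multi-scale analysis used for the lower bound on $\mathds E[X_k]$. Finally, by self-similarity $R(0,[-n,n]^c)$ is itself a crossing resistance at scale $n$: the hierarchical flow gives $R(0,[-n,n]^c)\lesssim n^\delta$ and the recursive lower bound applies to it verbatim, so $R(0,[-n,n]^c)\asymp_P n^\delta$ and $\mathds E[R(0,[-n,n]^c)]\asymp n^\delta$; the second assertion of Theorem~\ref{mainr1} is precisely the statement already obtained for $X_k$ with $2^k\asymp n$.
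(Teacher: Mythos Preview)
Your high-level outline --- reduce to a single scale-indexed quantity, prove sub- and super-multiplicativity, apply Fekete, then upgrade to $\asymp_P$ --- matches the paper's architecture. The upper bound via a hierarchical flow is also essentially what \cite{Baumler23a} does, and the paper simply cites that for $\delta<1$. But the two places where the real work lies are handled only heuristically in your sketch, and the mechanisms you suggest would not go through.

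\textbf{Supermultiplicativity.} Your route to the existence of $\delta$ (and to $\mathds E[X_k]\asymp 2^{k\delta}$ rather than $2^{k\delta+o(1)}$) is the claim that $\operatorname{Var}(\log X_k)$ stays bounded because ``series composition contracts the variance of $\log$ while $O(1)$ parallel shortcuts add only $O(1)$''. This is where the argument breaks. The two sub-blocks at scale $2^{k}$ are \emph{not} independent (they share long edges), and the parallel shortcuts do not act additively on $\log X_k$: a single shortcut can drop the resistance by an arbitrarily large factor, and the flow may disperse over many shortcuts so that no single edge carries enough current to recover a useful energy lower bound. The paper's main technical contribution is precisely a mechanism to control this dispersion: a coarse-graining scheme in which intervals are declared ``very good'' if the optimal internal energy of \emph{any} unit through-flow is at least $\alpha_2\Lambda(m)$ (Definition~\ref{def-alphagood-2}), together with an exponential-decay estimate for connected clusters of not-very-good intervals (Proposition~\ref{P-red-many-tree}, via the BK inequality) and a multi-scale covering of the bad clusters (Section~\ref{sect-cg}). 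Only after this machinery is in place does the energy inequality \eqref{property-g2} let one compare $R_{[0,mn)}(0,mn-1)$ with $\Lambda(m)\cdot R^G_{V_n}(\varpi(0),\varpi(n-1))$ and obtain $\Lambda(mn)\gtrsim\Lambda(m)\Lambda(n)$. Nothing in your proposal substitutes for this step; a variance contraction on $\log X_k$ is not available without it.

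\textbf{Lower bound $\delta>0$ and concentration.} You correctly flag the lower bound as the main obstacle, but the two suggested routes --- a recursively defined test function, or a rough-isometry comparison with a hierarchical model --- are both programmatic. The paper imports $\delta>0$ from \cite{DFH24+}, whose proof is itself a multi-scale energy argument, not a comparison with an ultrametric model (the LRP graph is far from tree-like, so Dirichlet-energy comparison with a hierarchical network is not straightforward). For the $\asymp_P$ statements, the paper does \emph{not} bound $\operatorname{Var}(\log X_k)$; instead it proves a genuine second-moment bound $\mathds E[R_{[0,n)}(i,j)^2]\le C\Lambda(n)^2$ (Proposition~\ref{prop-2thmoment}) by an iterative cut-point / bridging argument, and combines it with the weak supermultiplicativity to compare point-to-box and box-to-box resistances to $\Lambda(n)$ (Propositions~\ref{R-Lambda} and~\ref{box-box-tail}). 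The lower tail is then obtained by a bootstrapping scheme over geometric annuli (the proof of Proposition~\ref{box-box-tail}), not from a log-variance bound.
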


As a corollary for the proof of Theorem  \ref{mainr1} below, we can obtain the following lower tail of  the resistance.

\begin{corollary}\label{cor-lt}
For all $\beta>0$ and sufficiently small $\varepsilon\in(0,1)$, there exists a constant $q>0$ {\rm(}depending only on $\beta${\rm)} such that for all $n\in \mathds{N}$,
\begin{equation*}
\mathds{P}\left[R(0,[-n,n]^c)\geq \varepsilon n^{\delta}\right]\geq 1-\varepsilon^{q},
\end{equation*}
where $\delta>0$ is the constant defined in Theorem \ref{mainr1}.
\end{corollary}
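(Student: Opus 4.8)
\noindent\textbf{Proof strategy for Corollary~\ref{cor-lt}.}
The corollary is a quantitative form of the lower bound half of Theorem~\ref{mainr1}, and the plan is to revisit that proof and record that it in fact produces an error probability which is polynomially small in the prefactor. I would package this as a negative moment bound: the first step is to show that there exist $s=s(\beta)>0$ and $C=C(\beta)<\infty$ such that
\[
\mathds{E}\big[R(0,[-n,n]^c)^{-s}\big]\le C\,n^{-s\delta}\qquad\text{for all }n\in\mathds{N}.
\]
Granting this, Markov's inequality gives $\mathds{P}\big[R(0,[-n,n]^c)<\varepsilon n^{\delta}\big]=\mathds{P}\big[R(0,[-n,n]^c)^{-s}>\varepsilon^{-s}n^{-s\delta}\big]\le C\varepsilon^{s}$; taking $q=s/2$ and restricting to $\varepsilon\le C^{-2/s}$, so that $C\varepsilon^{s}\le\varepsilon^{s/2}$, yields the corollary.

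All the work is in the negative moment bound, for which I would reuse the multiscale/renormalisation structure behind the lower bound of Theorem~\ref{mainr1}. Write $R_k:=R(0,[-2^k,2^k]^c)$ and let $\mathcal F_k$ be the $\sigma$-algebra generated by the edges revealed through scale $k$. The argument should be organised so as to yield a scale-localised conditional estimate of the form $R_{k+1}\ge Z_{k+1}R_k$ with $Z_{k+1}\ge1$ and $\mathds{E}\big[Z_{k+1}^{-s}\mid\mathcal F_k\big]\le 2^{-s\delta}$ for all $k\ge k_0$ and some fixed base scale $k_0=k_0(\beta)$; this is exactly the lower bound of Theorem~\ref{mainr1} in a uniform, per-scale form, and the conditional effective-resistance estimate of Theorem~\ref{mainr1} --- for $R([-m,m],[-2m,2m]^c)$ given $[-m,m]\nsim[-2m,2m]^c$ --- is the natural input producing the multiplicative increment $Z_{k+1}$ after one renormalisation step. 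Iterating and peeling off one conditional expectation at a time gives $\mathds{E}[R(0,[-n,n]^c)^{-s}]\le\mathds{E}[R_{k_0}^{-s}]\,2^{-s\delta(\lfloor\log_2 n\rfloor-k_0)}$. Since the edges at $0$ form a cutset, the Nash--Williams inequality gives $R(0,[-n,n]^c)\ge 1/\deg(0)$, and $\deg(0)$ --- a sum of independent Bernoulli variables with summable means --- has moments of all orders, so $\mathds{E}[R_{k_0}^{-s}]\le\mathds{E}[\deg(0)^{s}]<\infty$; this establishes the displayed bound for $n\ge2^{k_0}$, and the finitely many smaller $n$ are absorbed into $C$ using the same crude estimate.

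The main obstacle, and essentially the only point needing care beyond what is already in the proof of Theorem~\ref{mainr1}, is that the per-scale estimate must hold with the \emph{sharp} exponent $\delta$ and \emph{uniformly} in $k$. A renormalisation scheme attaining only an exponent $\delta-\eta$ would give $\mathds{E}[R^{-s}]\le Cn^{-s(\delta-\eta)}$, which is useless here, as the Markov step would then yield $C\varepsilon^{s}n^{s\eta}$, unbounded in $n$. Similarly, a purely qualitative per-scale bound of the type $\mathds{P}[Z_{k+1}\ge\lambda\mid\mathcal F_k]\ge p$ with fixed $\lambda>1$, $p>0$ only controls moderate deviations of $\log R\approx\sum_k\log Z_k$ below its mean at a Gaussian rate $\exp\{-c(\log(1/\varepsilon))^2/\log n\}$, which does not close for fixed small $\varepsilon$ and large $n$; one genuinely needs the conditional Laplace-transform control $\mathds{E}[Z_{k+1}^{-s}\mid\mathcal F_k]\le2^{-s\delta}$. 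Hence the corollary should be read off from the \emph{tight} form of the renormalisation --- the same precision underlying the matching upper bound $\mathds{E}[R(0,[-n,n]^c)]\le Cn^{\delta}$ --- after which the iteration, the Markov bound and the small-$n$ truncation are all routine.
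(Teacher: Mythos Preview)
Your proposal identifies the right target (a quantitative lower tail), but the route you sketch has a real gap at its core step, and the paper's argument is quite different.

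The gap is precisely the one you flag yourself: you need the conditional per-scale bound $\mathds{E}[Z_{k+1}^{-s}\mid\mathcal F_k]\le 2^{-s\delta}$ with the \emph{sharp} exponent $\delta$, uniformly in $k$. Nothing in the paper furnishes this. The supermultiplicativity (Proposition~\ref{supermult}) is an estimate on the \emph{expectation} $\Lambda(\cdot)$, not a pathwise or conditional inequality of the form $R_{k+1}\ge Z_{k+1}R_k$; and Theorem~\ref{mainr1} only gives $R([-m,m],[-2m,2m]^c)\asymp_P m^\delta$ conditionally, which does not convert into a conditional negative-moment bound without further work. Your final paragraph essentially asserts that the ``tight form of the renormalisation'' delivers this, but you do not say how, and I do not see how to extract it from the machinery available. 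Without it, as you yourself note, the iteration only yields an exponent $\delta'<\delta$, which is useless for the statement.

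The paper sidesteps this difficulty entirely by \emph{not} iterating over dyadic scales. Instead it performs a single renormalisation at an $\varepsilon$-dependent scale: set $m\asymp\varepsilon^{1/\delta}N$ and $n=N/m\asymp\varepsilon^{-1/\delta}$. On the event $\mathcal H$ from the coarse-graining (Section~\ref{sect-supermult}), which fails with probability at most $n^{-2}=\mathrm{poly}(\varepsilon)$, one gets
\[
R(0,[-N,N]^c)\;\ge\; c\,\Lambda(m)\,R^G(\varpi(0),W_n^c)\;\asymp\; c\,\varepsilon N^\delta\cdot R^G(\varpi(0),W_n^c).
\]
Now one only needs $R^G(\varpi(0),W_n^c)\ge c_*$ with probability $1-\mathrm{poly}(\varepsilon)$. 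By scaling invariance this is $R(0,[-n,n]^c)\ge c_*$, and since $n\asymp\varepsilon^{-1/\delta}$, the crude a priori polynomial lower bound of Lemma~\ref{Lem-RNtail-1} (which uses only some positive exponent $\delta^*$, not the sharp $\delta$) already gives this with the required probability. The point is that choosing $m$ to depend on $\varepsilon$ absorbs the prefactor into the scale, so that what remains is a constant lower bound at a large scale --- and for that, any positive growth exponent suffices.
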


\begin{remark}
(1) Note that the fact $\delta<1$ follows from \cite{Baumler23a} and the fact $\delta > 0$ follows from \cite{DFH24+}. All the other content of Theorem \ref{mainr1} is the contribution of the present work, including the existence of $\delta$.

(2) It is a natural question to derive the scaling limit of the resistance, from which one may hope to derive the scaling limit of the random walk using the method of resistance form as in \cite{Croydon18}.
However, this is challenging even in light of the proof for the scaling limit of the chemical distance as in \cite{DFH23}, since the resistance is a metric of rather different nature (for example, it is not a length metric, which is an important axiom in \cite{DFH23}, inherited from \cite{GM21,DG23}).

(3) As kindly suggested by Takashi Kumagai, our current understanding on effective resistances already allow to derive a few random walk estimates using techniques in \cite{CCK22}. We chose not to do it here since on the one hand this paper is already rather long, and on the other hand we feel it may be more efficient to derive properties of random walks after a full development on resistances (ideally, after deriving the scaling limit, although it appears to be a major challenge currently). As a result, we refrain from deriving estimates on random walks at this point.
\end{remark}

\subsection{Related work}\label{relatew}

The effective resistance serves as a powerful tool for analyzing random walks on graphs in general. It is particularly relevant to various aspects of random walks, including recurrence/transience, the heat kernel and mixing times, see e.g. \cite{AF02,LPW09,K14}.

Recently, inspired by the study of random walks on the LRP model, there has been growing interest in analyzing the effective resistances associated with it.
More precisely, consider a sequence $\{p_x\}_{x\in \mathds{Z}^d}$, where $p_x\in [0,1]$ and $p_x=p_{x'}$ for all $x,x'\in \mathds{Z}^d$ such that $|x|=|x'|$. Assume that
\begin{equation*}
0<\beta:=\lim_{|x|\rightarrow \infty}p_x|x|^s<\infty
\end{equation*}
for some parameter $s>0$. The LRP on $\mathds{Z}^d$ is then defined by edges $\langle x,y\rangle$ occurring independently with probability $p_{x-y}$. In particular, in one dimension $p_1$ is the nearest-neighbor connecting probability, consistent with our aforementioned convention.

As previously mentioned, significant progress has been made in understanding the properties of random walks in the non-critical (i.e.\ $(d,s)\neq (d,2d)$) LRP model. We will now provide a detailed overview of studies in this area, particularly in relation to effective resistances.
In \cite{KM08}, the authors developed methods to estimate volumes and effective resistances from points to boxes, leading to  corresponding heat kernel estimates for the case when $d=1,\ s>2$ and $p_1=1$.
In fact, they established their results in a more general setting involving random media.
Additionally, \cite{M08} provided some up-to-constant estimates for the box-to-box resistances for $d=1,\ s\in (1, 2) \cup (2, \infty)$ as well as for $d \geq 2$ and $s\in (d, d+2) \cup (d +2, \infty)$.
Furthermore, the scaling limits of random walks on the LRP model were studied by \cite{CS13} and \cite{BT24}.
Specifically, \cite{CS13} showed that for $s\in (d,d+1)$, the random walk on the infinite cluster of the LRP model converges to an $\alpha$-stable process with $\alpha=s-d$.
Notably, in the case where $d=1$ and $s>2$, \cite{CS13} further established that the random walk converges to a Brownian motion. These results were extended in \cite{BT24} to the case of  $d\geq 2$ and $s\in [d+1,d+2)$, which showed that the associated random walk also converges to an $\alpha$-stable process with $\alpha=s-d$.

However, the study of  effective resistance and random walk in the critical LRP model, especially for $d\in \{1,2\}$,  presents significant challenges. For the critical LRP with $d\geq 3$, the growth exponent of resistance has been obtained in \cite{M08}.
In \cite{B02}, it was shown that for $s=2d$ with $d\in \{1,2\}$, the random walk is recurrent. This was established by proving that the effective resistance from the origin to $[-n,n]^c$ diverges as $n$ increases.
This result was later extended in \cite{Baum23b} to encompass more general LRP models characterized by  weight distributions that satisfy certain moment conditions.
It is worth noting that \cite{M08} also presented  bounds on box-to-box resistances for dimensions $d\in\{1,2\}$, including a specific constant lower bound for the case when $d=1$.

More recently, in \cite{DFH24+}, we established polynomial lower bounds for both point-to-box and box-to-box resistances in the case of $d=1$ and $s=2$.
A key tool to prove such lower bounds for effective resistances in the critical model was multi-scale analysis. 
This approach enabled us to effectively combine energies generated by flows passing through subintervals of various scales, allowing us to derive effective lower bounds. Based on this, the goal of this paper is to further establish the polynomial growth of the effective resistances, namely Theorem \ref{mainr1}.

\subsection{Outline of the proof}
In this section, we offer an overview for the proof of Theorem \ref{mainr1}.
At a high level, our strategy for proving the polynomial growth of the effective resistance $R(0,[-n,n]^c)$ shares similarities with the approach used to establish the polynomial growth of graph distance in \cite{Baumler23a}.
However, significant challenges arise in the context of the effective resistance.
Intuitively, when considering graph distance,
traversing an edge counts as taking one step; whereas in the case of effective resistance, the contribution of each edge depends on the amount of flow passing through it.
Therefore, when a flow passes through multiple edges from a ``bad interval'' (i.e. where the energy generated within the interval is too low), it becomes dispersed among these edges. A natural attempt to address the insufficiency of energy in these ``bad intervals'' is to make use of the energy generated by flows through ``good intervals'' nearby.
However, this leads to considerable complexity in the renormalization.


In order to describe our approach in more details, we begin by introducing some notations.
Fix $\beta>0$. For $n\in \mathds{N}$ and $i,j\in [0,n)$, we let $R_{[0,n)}(i,j)$ be the effective resistance between $i$ and $j$ restricted to the interval $[0,n)$, that is,
\begin{equation}\label{def-Rinsd}
\begin{aligned}
R_{[0,n)}(i,j)&=\inf\left\{\frac{1}{2}\sum_{k\sim l}f^2_{kl}: f\ \text{is a unit flow from $i$ to $j$ and }f_{kl}=0\right.\\
&\quad \quad \quad \quad\quad \quad \quad \quad\quad \quad \quad \quad \quad \quad  \text{for all }\langle k,l\rangle\in \mathcal{E}\setminus\mathcal{E}_{[0,n)^2} \Bigg\}.
\end{aligned}
\end{equation}
Denote
\begin{equation}\label{def-Lambda}
\Lambda (n,\beta)=\max_{i,j\in [0,n)}\mathds{E}\left[R_{[0,n)}(i,j)\right].
\end{equation}
For simplicity, we will write $\Lambda(n,\beta)$ as $\Lambda(n)$ when there is no risk of confusion.

As we will show in Section \ref{sect-severaltype}, several types of resistances, including the point-to-box resistance $R(0,[-n,n]^c)$ and the box-to-box resistance $R([-n,n],[-2n,2n]^c)$ (conditioned on the absence of edges connecting $[-n,n]$ and $[-2n,2n]^c$), are comparable to $\Lambda(n)$.
Therefore, to prove the polynomial growth of $R(0,[-n,n]^c)$ and $R([-n,n],[-2n,2n]^c)$, it suffices to establish that $\Lambda(n)$ exhibits both submultiplicativity and supermultiplicativity.

The proof of the submultiplicativity of $\Lambda(n)$ (see Section~\ref{sect-submult}) primarily relies on a renormalization argument, where the main additional novelty in light of \cite{Baumler23a} is a comparison result involving flows in two similar electric networks (see Lemma \ref{network}):
when an electric flow enters a certain point through multiple edges simultaneously, removing one of the edges will increase flows through the remaining edges.
By this comparison, we can overcome the difficulty that multiple edges sharing a common endpoint may disperse the flow too thinly. 

The key challenge of this work is to establish the supermultiplicativity of $\Lambda(n)$.  The primary tools we employ are coarse-graining argument and renormalization.

First, by using a variational  characterization of effective resistance from \cite[Proposition 2.3]{BDG20} and an enhanced lower bound for resistance from \cite[Theorem 1.1]{DFH24+},
we can show that the effective resistance between two points that are relatively close to each other can be controlled by the effective resistance between two points that are far away (see Section \ref{sect-far}).
This implies that $\Lambda(n)$ is within a constant factor of the resistance $\mathds{E}\left[R_ {[0,n)}(0,n-1)\right]$. Therefore, we only need to estimate $\mathds{E}\left[R_ {[0,n)}(0,n-1)\right]$.

Next, the most crucial step is to establish the weak supermultiplicativity of $\mathds{E}\left[R_ {[0,n)}(0,n-1)\right]$, as incorporated in Proposition \ref{weak-supermult}.
To achieve this, fix two sufficiently large $m,n\in \mathds{N}$. We divide the interval $[0,mn)$ into $n$ smaller intervals, each with length $m$.
We refer to such an interval as very good if its internal resistance is sufficiently large (see Definition \ref{def-alphagood-1}).
By selecting appropriate parameters, we can show that with high probability, such an interval is indeed very good (see Proposition \ref{prop-alphagood-1}).
This implies that when a unit flow travels from $0$ to $mn-1$, it will generate sufficiently large energy in most of the small intervals.
However, there are some small intervals where the flow does not generate enough energy. Our main effort is to address these intervals.
To do this, we employ a coarse-graining argument to show that, with high probability, we can find intervals at different scales to cover those problematic intervals (see Section \ref{sect-cg}).
Consequently, we can obtain the weak supermultiplicativity of $\mathds{E}\left[R_ {[0,n)}(0,n-1)\right]$ by taking a union bound over all unit flows (see Section \ref{sect-proofwm}). It is worth emphasizing that the weak supermultiplicativity of $\mathds{E}\left[R_ {[0,n)}(0,n-1)\right]$ plays an important role in the proof of the comparability between serval types of resistances discussed in Section \ref{sect-severaltype}.

Finally, by using the resistance estimates from Section \ref{sect-severaltype} and employing a similar coarse-graining argument as mentioned above, we can establish the supermultiplicativity of $\Lambda(n)$ in Section \ref{sect-supermult}, and then prove Theorem \ref{mainr1} in Section \ref{sect-proofmr}.

\subsection{Notational conventions}
We denote $\mathds{N}=\{1,2,3,\cdots\}$. For any $i,j\in \mathds{N}\cup \{0\}$ with $i<j$, we will define $[i,j]=\{i,i+1,\cdots, j\}$ and $[i,j)=\{i,i+1,\cdots, j-1\}$. When we refer to an interval $I$, it always mean $I\cap \mathds{Z}$.
For any interval $I,J\subset \mathds{Z}$, we denote $\mathcal{E}_{I\times J}$ for the edge set consisting of edges with one endpoint in $I$ and the other in $J$. Additionally, we denote $R_I(x,y)$ as the effective resistance within the interval $I$, i.e., we only consider edges with both endpoints inside $I$ only. For any $u\in \mathds{Z}$ and $r\in \mathds{N}$, let $B_r(u):=(u-r,u+r)\cap \mathds{Z}$.

For any graph $G=(V,E)$ and any $I_1,I_2\subset V$, we write $E_{I_1\times I_2}$ for the edge set consisting of edges that have  one endpoint in $I_1$ and  the other in $I_2$. For any vertex set $I\subset V$ and any edge set $H\subset E$, we denote $\#I$ and $\#H$ as the numbers of vertices and edges in $I$ and $H$, respectively.

Throughout the paper, we use $c_1, C_1, c_2, \ldots$ to denote positive constants that are numbered by sections. Note that these constants may vary from section to section.
Additionally, we use $c_{\cdot,*}$ and $C_{\cdot,*}$ to denote consistent positive constants that remain unchanged throughout the paper.

\section{Submultiplicativity}\label{sect-submult}

Recalling \eqref{def-Rinsd} and \eqref{def-Lambda}, the objective of this section is to establish the submultiplicativity of $\Lambda (n)$ as follows.

\begin{proposition}\label{submult}
For all $\beta>0$, the sequence $\{\Lambda(n)\}_{n\geq 1}$ is submultiplicative. That is, there exists a constant $C_{1,*}=C_{1,*}(\beta)<\infty$ {\rm(}depending only on $\beta${\rm)} such that for all $n,m\in \mathds{N}$,
$$
\Lambda(mn)\leq C_{1,*}\Lambda(m)\Lambda(n).
$$
\end{proposition}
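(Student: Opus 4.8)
The plan is to prove submultiplicativity by a renormalization (divide-and-conquer) argument: to connect two points in $[0,mn)$ by a low-energy flow, first route the flow along a path of $m$ consecutive blocks of length $n$ each, paying a flow-within-a-block cost of order $\Lambda(n)$ per block, and then account for the extra cost of transitioning between blocks using a flow of scale $m$ on the coarse-grained network whose ``vertices'' are the blocks. More precisely, fix $n,m\in\mathds N$ and $i,j\in[0,mn)$; write $[0,mn)=\bigsqcup_{k=0}^{m-1}B_k$ with $B_k=[kn,(k+1)n)$. We want to build a unit flow from $i$ to $j$ inside $[0,mn)$ whose energy is at most $C\Lambda(m)\Lambda(n)$ in expectation. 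The natural construction is: take a near-optimal unit flow $g$ for $R_{[0,m)}$ between the (indices of the) blocks containing $i$ and $j$ — this $g$ is supported on pairs of blocks and distributes a total current of $1$; then, within each block $B_k$, superpose scaled copies of near-optimal interior flows that carry the prescribed currents $g_{kl}$ between $B_k$ and its neighbouring blocks (and between $i$ or $j$ and a ``port'' vertex of its block). Summing over $k$ produces a unit flow from $i$ to $j$ in $[0,mn)$.

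The key difficulty — flagged in the outline as the ``main additional novelty'' — is that a block $B_k$ may be forced to send and receive current to/from several neighbouring blocks through several distinct edges sharing a common endpoint inside $B_k$, and naively this would let the flow disperse too thinly, inflating the energy. The remedy is the comparison Lemma~\ref{network}: when a flow enters a vertex through multiple edges, deleting one of those edges only increases the currents on the remaining ones. Using this one shows that the intra-block flow realizing the required boundary currents can be taken to have energy bounded, up to a universal constant, by $(\sum_l |g_{kl}|)^2$ times an interior resistance of scale $n$; and crucially $\sum_k (\sum_l |g_{kl}|)^2$ is comparable to the energy of $g$, i.e.\ to $R_{[0,m)}$ of the coarse network. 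Combining, the total energy is $\lesssim \big(\max_{x,y} R_{[0,n)}(x,y)\big)\cdot R_{[0,m)}(\cdot,\cdot)$; taking expectations, using independence of the edge configurations restricted to disjoint blocks versus the ``between-block'' edges (here one must be slightly careful, since block-to-block edges are exactly the edges of the coarse network, so the coarse flow $g$ and the intra-block flows depend on disjoint sets of edges), and finally taking the maximum over $i,j$, yields $\Lambda(mn)\le C_{1,*}\Lambda(m)\Lambda(n)$.

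The step I expect to be the main obstacle is making the block-transition bookkeeping rigorous: one must specify how the unit current entering a block at $i$ (or the block adjacent in $g$) is routed to the handful of vertices of $B_k$ that are endpoints of between-block edges carrying current in $g$, control the energy of that routing uniformly in the geometry of which edges are present, and verify that the superposition over all $k$ is genuinely a flow with the right net flows and is supported only on edges of $[0,mn)$. This is exactly where Lemma~\ref{network} does the work, replacing the ``length one step per edge'' accounting of \cite{Baumler23a} by an energy accounting that is insensitive to how many parallel-ish edges a vertex has. A secondary technical point is the independence/measurability argument needed to split the expectation into $\mathds E[R_{[0,n)}]$-type factors for each block and an $\mathds E[R_{[0,m)}]$-type factor for the coarse network; this is clean because the edge sets $\mathcal E_{B_k\times B_k}$ and $\bigcup_{k\ne l}\mathcal E_{B_k\times B_l}$ are disjoint, so the relevant random variables are independent, and after conditioning on the coarse configuration the worst case over block pairs is bounded by $\Lambda(n)$.
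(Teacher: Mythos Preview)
Your overall architecture matches the paper's: renormalize into blocks, take the unit electric flow $g$ on the coarse network, and route it through each block via superpositions of intra-block electric flows, exploiting independence of the within-block and between-block edge sets to factor the expectation. One cosmetic difference: the paper takes blocks of size $m$ (so the coarse network has $n$ vertices), but this is just a relabeling.

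There is, however, a genuine gap in how you locate the role of Lemma~\ref{network}. The intra-block energy bound ``$\lesssim\big(\sum_l|g_{kl}|\big)^2\cdot\Lambda(n)$'' does not require Lemma~\ref{network} at all; it follows from Cauchy--Schwarz applied to the superposition of intra-block electric flows (this is the paper's derivation of \eqref{sub-multi-3}). The hard step is your claim that $\sum_k\big(\sum_l|g_{kl}|\big)^2$ is comparable to the energy $\sum_{k,l}g_{kl}^2$ of $g$: this is \emph{false} pointwise, since a coarse vertex with $s$ incoming edges each carrying current $1/s$ gives a ratio of $s$. The paper resolves this by proving the comparison \emph{in expectation} (Lemma~\ref{lem:compare}): for each vertex $\varpi(k)$, one couples configurations with many incoming edges to configurations with fewer by deleting edges, invokes Lemma~\ref{network} to see that deletion only increases the surviving currents, and then uses that the number of long edges incident to a block has bounded second moment uniformly in $n$. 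So Lemma~\ref{network} acts on the \emph{coarse} flow $g$, not on the intra-block flows, and the argument is probabilistic rather than deterministic. Your plan as written would stall at exactly this point unless you insert this expectation comparison.
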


The main approach for proving Proposition \ref{submult} involves comparing the electric flows through a single edge before and after increasing the conductance in a nearby edge. To streamline its use in the proof, we will perform preliminary work for general electric networks in Section \ref{cmp-2graph}. Then, in Section \ref{proof-submult}, we will present the proof of Proposition \ref{submult} by using the renormalization and the comparison result established in Section \ref{cmp-2graph}.

\subsection{A comparison result for electric networks}\label{cmp-2graph}
In order to state and prove the aforementioned comparison result, we will introduce some notations that will be used consistently throughout this subsection.

Let $G=(V,E)$ be a finite, connected graph with undirected edges.
For each edge $e=\langle u,v\rangle\in E$, we assign a deterministic conductance $c_e=c_{uv}\in [0,\infty)$, thereby forming an electric network denoted as $(G, \{c_e\}_{e\in E})$.
Note that an edge with a conductance of 0 is considered  removed from the graph here.
For any two distinct vertices $x,y\in V$, let $R(x,y;\{c_e\}_{e\in E})$ represent the effective resistance between $x$ and $y$ in the network $(G, \{c_e\}_{e\in E})$, which is defined as
\begin{equation}\label{def-resist}
R\left(x,y;\{c_e\}_{e\in E}\right)=\inf\left\{\frac{1}{2}\sum_{u\sim v}\frac{f_{uv}^2}{c_{uv}}:\ f \text{ is a unit flow from $x$ to $y$ in }G\right\}.
\end{equation}

In the following, we will fix conductances $\{c_e\}_{e\in E}$ and select two distinct vertices $x,y\in V$.
Let $g$ denote the unit electric flow from $x$ to $y$, i.e., the flow attaining the infimum in \eqref{def-resist}. Then we have
\begin{equation*}\label{def-g}
R(x,y;\{c_e\}_{e\in E})=\frac{1}{2}\sum_{u\sim v} \frac{g^2_{uv}}{c_{uv}}.
\end{equation*}
Now assume that $w,w_1,w_2\in V$ are three distinct vertices such that $g_{w_1w},\ g_{w_2w}>0$.
We define a new sequence of conductances $\{c'_e\}_{e\in E}$ on the edges of the graph $G$ as follows:
\begin{equation}\label{def-ce'}
c'_e=
\begin{cases}
c_{w_2w}+c_\triangle,\quad & e=\langle w_2,w\rangle,\\
c_e,\quad & e\in E\setminus \{\langle w_2,w\rangle\},
\end{cases}
\end{equation}
where $c_\triangle>0$.
Similarly, let $g'$ denote the unit electric flow from $x$ to $y$ with respect to $(G,\{c'_e\}_{e\in E})$.

We then present the following comparison result between $g_{w_1 w}$ and $g'_{w_1w}$ (see Figure \ref{flow-submult} below for an illustration).

\begin{lemma}\label{network}
For fixed $x,y\in V$, let $g$ be a unit electric flow from $x$ to $y$ in the network $(G,\{c_e\}_{e\in V})$.
Suppose that $w,w_1,w_2$ are three distinct vertices such that $g_{w_1w},g_{w_2w}>0$.
Then for the network $(G,\{c'_e\}_{e\in E})$ defined in \eqref{def-ce'}, we have
\begin{equation}\label{gw1}
g'_{w_1w}\leq g_{w_1w},
\end{equation}
where $g'$ is the  unit electric flow from $x$ to $y$ in the network $(G,\{c'_e\}_{e\in E})$.
\end{lemma}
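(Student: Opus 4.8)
The plan is to use the standard variational / electrical-network machinery. The key identity is Thomson's principle together with the fact that adding conductance only decreases effective resistance, combined with an explicit formula for how the unit current flow changes when one conductance is perturbed. First I would recall that the unit current flow $g$ from $x$ to $y$ is the unique minimizer of the energy $\frac12\sum_{u\sim v} f_{uv}^2/c_{uv}$ over unit flows, and equivalently is characterized by Ohm's law: there is a potential $h$ with $g_{uv}=c_{uv}(h(u)-h(v))$ and $h(x)-h(y)=R(x,y;\{c_e\})$ (normalizing the total current to be $1$). Using the network reduction / series-parallel point of view, I would replace the edge $\langle w_2,w\rangle$ with conductance $c_{w_2w}+c_\triangle$ by an equivalent picture in which we add a \emph{new} parallel edge $e_\triangle=\langle w_2,w\rangle$ of conductance $c_\triangle$ alongside the original one. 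The flow $g'$ on $(G,\{c'_e\})$ then corresponds to the flow on the augmented graph $G^+$ (with the extra parallel edge), and on $G^+$ the current on the pair of parallel edges splits in proportion to their conductances, so the total current from $w_2$ to $w$ through these two edges is $g'_{w_2w}$ in the obvious notation. Thus it suffices to compare currents in $G$ and in $G^+=G\cup\{e_\triangle\}$, i.e.\ to understand the effect of adding a single edge $e_\triangle$ in parallel.

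The heart of the argument is then the following: adding the edge $e_\triangle=\langle w_2,w\rangle$ of positive conductance strictly increases the effective conductance between $w_2$ and $w$ (and leaves it unchanged in the degenerate case $c_\triangle=0$), hence by the standard monotonicity of currents under changing a single conductance — which itself follows from Rayleigh's monotonicity principle applied to the dual network, or from the explicit rank-one update formula for the current flow — the current $g'_{w_1w}$ into $w$ along \emph{every other} edge $\langle w_1,w\rangle$ with $g_{w_1w}>0$ cannot increase. More precisely, I would use the formula expressing the new flow $g'$ as $g$ minus a multiple of the unit current flow from $w_2$ to $w$ in the original network $G$: writing $g^{(w_2,w)}$ for that flow and $R^G(w_2,w)$ for the corresponding resistance, one has
\begin{equation*}
g'_{uv}=g_{uv}-\frac{c_\triangle\,(h(w_2)-h(w))}{1+c_\triangle R^G(w_2,w)}\,g^{(w_2,w)}_{uv},
\end{equation*}
valid for all edges $\langle u,v\rangle\in E$, where $h$ is the potential associated to $g$. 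Since $g_{w_2w}>0$ forces $h(w_2)-h(w)>0$, and since $g^{(w_2,w)}_{w_1w}\ge 0$ (the unit current flow from $w_2$ to $w$ is nonnegative into its sink along every edge — a consequence of the maximum principle for the harmonic potential, i.e.\ $h^{(w_2,w)}(w_1)\ge h^{(w_2,w)}(w)$), the subtracted term is nonnegative on the edge $\langle w_1,w\rangle$, which gives exactly $g'_{w_1w}\le g_{w_1w}$.

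The main obstacle I anticipate is justifying the sign statement $g^{(w_2,w)}_{w_1 w}\ge 0$ cleanly, i.e.\ that in the \emph{unperturbed} network the unit current flow from $w_2$ to $w$ sends a nonnegative amount of current \emph{into} $w$ along each incident edge. This is the maximum principle: the potential $h^{(w_2,w)}$ is harmonic off $\{w_2,w\}$, attains its minimum at the sink $w$, so $h^{(w_2,w)}(u)\ge h^{(w_2,w)}(w)$ for all $u$, whence $g^{(w_2,w)}_{uw}=c_{uw}\bigl(h^{(w_2,w)}(u)-h^{(w_2,w)}(w)\bigr)\ge 0$. One must be slightly careful that edges of conductance $0$ are genuinely absent so that no division-by-zero occurs, and that $G$ (restricted to positive-conductance edges) is connected enough for the flows and potentials to be well defined; these are covered by the standing hypotheses. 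The rank-one update formula itself is a routine consequence of linearity of Ohm's law (superposition of the flow on the original network and a correcting flow forced by the extra edge), so I would state it and verify it by checking it is a unit flow satisfying Ohm's law for $\{c'_e\}$, then invoke uniqueness.
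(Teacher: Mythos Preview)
Your proposal is correct and is essentially the same argument as the paper's, just phrased in the language of flow superposition rather than matrix algebra: the paper derives the identical rank-one update by applying the Sherman--Morrison formula to the inverse of the reduced Laplacian, and then uses exactly your maximum-principle observation (the potential for the unit $w_2\to w$ flow attains its minimum at the sink $w$) to get the sign. Your formulation is a bit cleaner in that it avoids the paper's case split according to whether $w=y$; one small slip is that your displayed formula for $g'_{uv}$ holds only on edges other than $\langle w_2,w\rangle$ (since the conductance there changes), but you only apply it on $\langle w_1,w\rangle$, so this is harmless.
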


\begin{proof}
Without loss of generality, we assume that $V=\{1,\cdots,n\}$ and $y=n$. Let $\{U_v\}_{v\in V}$ (resp. $\{U'_v\}_{v\in V}$) be the electric potential field with $U_n=0$ (resp.\ $U'_n=0$) under the flow $g$ (resp.\ $g'$) on the electric network $(G,\{c_e\}_{e\in E})$ (resp.\ $(G,\{c'_e\}_{e\in E})$). Thus, by Ohm's law we have
\begin{equation}\label{def-U}
U_u-U_v=g_{uv}/c_{uv}\quad\text{for all } \langle u,v\rangle\in E.
\end{equation}
From the above definition, we have
  \begin{equation}\label{Ohm}
    \sum_{v\neq u}c_{uv}(U_u-U_v)=\sum_{v\neq u}g_{uv}=\delta_{x}(u)\quad \text{for all }u\in V\setminus\{n\},
  \end{equation}
  where $\delta_{\cdot}$ denotes the Dirac measure.

In addition, for any $i\in \{1,2,\cdots,n-1\}$, let $\mathbf{D}_i$ be the $(n-1)$-dimensional column vector whose $i$-th component is 1 and other components are 0. Denote  $\mathbf{U}=(U_1,\cdots,U_{n-1})^T$. Then \eqref{Ohm} is equivalent to
  \begin{equation}\label{KCL-G1}
    \mathbf{C}\mathbf{U}=\mathbf{D}_x,
  \end{equation}
  where $\mathbf{C}=(C_{uv})_{u,v\in[1,n)}$ is the $(n-1)\times (n-1)$ matrix defined by
  \begin{equation}\label{def-Cij}
    C_{uv}=
    \begin{cases}
      -c_{uv}, &u\neq v,\\
      \sum_{w\neq u} c_{uw}, &u=v.
    \end{cases}
  \end{equation}
Moreover, based on the uniqueness of the electric flow on a finite connected graph, it is evident that the matrix $\mathbf{C}$ is invertible. We define $\mathbf{Z}$ as its inverse, that is,   $\mathbf{Z}=(Z_{uv})_{u,v\in[1,n)}=\mathbf{C}^{-1}$.
Therefore, \eqref{KCL-G1} implies that
  \begin{equation}\label{KCL-G2}
    \mathbf{U}=\mathbf{Z}\mathbf{D}_x.
  \end{equation}

Similarly, for the electric network $(G,\{c'_e\}_{e\in E})$, let $\mathbf{U}'=(U_1',\cdots,U_{n-1}')^T$. We also have
  \begin{equation}\label{KCL-G-1}
    \mathbf{C'}\mathbf{U'}=\mathbf{D}_x\quad \text{and}\quad  \mathbf{U}'=\mathbf{Z}'\mathbf{D}_x,
  \end{equation}
where $\mathbf{C}'$ is defined in \eqref{def-Cij} by replacing $\{c_e\}_{e\in E}$ with $\{c'_e\}_{e\in E}$, and $\mathbf{Z}'=(\mathbf{C}')^{-1}$.

Recall that $w,w_1,w_2\in V$ are three distinct vertices such that $g_{w_1w},\ g_{w_2w}>0$. Combining this with \eqref{def-U} and the assumption $U_n=0$, we note that $w_1\neq n$.
In the following, we will divide our discussion into two cases based on whether or not $w=n$.

We start with the case where $w=n$.
In this scenario, we have
$$
g_{w_1w}=c_{w_1w}U_{w_1} \quad \text{and}\quad  g'_{w_1w}=c_{w_1w}U'_{w_1}.
$$
Thus, to prove \eqref{gw1}, it suffices to show that $U_{w_1}\geq U_{w_1}'$.
Moreover, by \eqref{KCL-G2} and \eqref{KCL-G-1}, this is equivalent to proving
  \begin{equation}\label{goal-A}
    Z_{w_1 x}\geq Z_{w_1x}'.
  \end{equation}
Indeed, from $w=n$ we see that $\mathbf{C}'=\mathbf{C}+c_\triangle\mathbf{D}_{w_2}\mathbf{D}_{w_2}^T$ with $c_\triangle>0$. From this, along with the facts $\mathbf{Z}=\mathbf{C}^{-1}$ and $\mathbf{Z}'=(\mathbf{C}')^{-1}$, we claim that
  \begin{equation}\label{relation-Z-2}
    \mathbf{Z}'=\mathbf{Z}-\widehat{c_\triangle}\mathbf{Z}\mathbf{D}_{w_2}\mathbf{D}_{w_2}^T\mathbf{Z}
  \end{equation}
  with $\widehat{c_\triangle}:=\frac{c_\triangle}{1+c_\triangle\mathbf{D}_{w_2}^T\mathbf{Z}\mathbf{D}_{w_2}}$.
Indeed, since $\mathbf{D}_\cdot$ is a column vector, it is clear that $\mathbf{D}_{w_2}^T\mathbf{Z} \mathbf{D}_{w_2}\in \mathds{R}$. Combining this with the definitions of $\widehat{c_\triangle}$ and $\mathbf{C}'$, we get that
\begin{equation*}
\begin{aligned}
\left(\mathbf{Z}-\widehat{c_\triangle}\mathbf{Z}\mathbf{D}_{w_2}\mathbf{D}_{w_2}^T\mathbf{Z} \right)\mathbf{C}'
&=\left(\mathbf{Z}-\widehat{c_\triangle}\mathbf{Z}\mathbf{D}_{w_2}\mathbf{D}_{w_2}^T\mathbf{Z} \right)\left(\mathbf{C}+c_\triangle\mathbf{D}_{w_2}\mathbf{D}_{w_2}^T\right)\\
&=\mathbf{I}+c_\triangle \mathbf{Z}\mathbf{D}_{w_2}\mathbf{D}_{w_2}^T
-\widehat{c_\triangle}\mathbf{Z}\mathbf{D}_{w_2}\mathbf{D}_{w_2}^T
-\widehat{c_\triangle}c_\triangle\left(\mathbf{D}_{w_2}^T\mathbf{Z}\mathbf{D}_{w_2}\right)\mathbf{Z}\mathbf{D}_{w_2}\mathbf{D}_{w_2}^T\\
&=\mathbf{I}+\left(c_\triangle-\widehat{c_\triangle}-\widehat{c_\triangle}c_\triangle
\left(\mathbf{D}_{w_2}^T\mathbf{Z}\mathbf{D}_{w_2}\right)\right)\mathbf{Z}\mathbf{D}_{w_2}\mathbf{D}_{w_2}^T=\mathbf{I}.
\end{aligned}
\end{equation*}
This implies \eqref{relation-Z-2}.

We now claim that $\widehat{c_\triangle}>0$.
In fact, by the definition of the matrix $\mathbf{C}$ in \eqref{def-Cij}, it can be easily verified that $\mathbf{C}$ is a diagonally dominated symmetric matrix with all diagonal entries positive.
This implies that $\mathbf{C}$ is positive definite, leading to the conclusion that $\mathbf{Z}=\mathbf{C}^{-1}$ is also positive definite.
Consequently, given $c_\triangle>0$, we find that $\widehat{c_\triangle}>0$.
Combining this with \eqref{relation-Z-1}, we can conclude that to establish \eqref{goal-A}, it suffices to prove
  \begin{equation}\label{ZZ}
    \left(\mathbf{Z}\mathbf{D}_{w_2}\mathbf{D}_{w_2}^T\mathbf{Z}\right)_{w_1x}=Z_{w_1w_2}Z_{w_2x}\geq 0.
  \end{equation}
To do this, from \eqref{KCL-G2}, the assumptions $w=n$, $g_{w_2w}>0$ and $U_n=0$, we have
\begin{equation}\label{Zw2x}
Z_{w_2x}=U_{w_2}=U_{w_2}-U_w=c_{w_2w}g_{w_2w}>0.
\end{equation}
Additionally, consider a unit electric flow from $w_2$ to $w$ with respect to $\{c_e\}_{e\in E}$.
Recall $w = n$ and denote the associated electric potential field as $\mathbf{U}^*=(U_1^*,\cdots,U_{n-1}^*)^T$ with $U_w^*=U_n^*=0$.
Then, similar to \eqref{KCL-G1} and \eqref{KCL-G2}, we get that
  \begin{equation*}
    \mathbf{U}^*=\mathbf{Z}\mathbf{D}_{w_2},
  \end{equation*}
which implies $Z_{w_1w_2}=U^*_{w_1}$.
Since $w$ is the only sink, we have $0=U_w^*=\min_{v\in V} U_v^*\leq U^*_{w_1}=Z_{w_1w_2}$.
Combining this with \eqref{Zw2x} yields \eqref{ZZ}. That is, we obtain \eqref{gw1} when $w=n$.

We next consider the case where $w\neq n$.  The proof follows a similar approach, though the details differ somewhat.
Since $g_{w_1w}=c_{w_1w}(U_{w_1}-U_w)$, it suffices to show that $U_{w_1}-U_w\geq U_{w_1}'-U_w'$.
By \eqref{KCL-G2} and \eqref{KCL-G-1}, this is equivalent to showing
  \begin{equation*}\label{goal-B}
    Z_{w_1x}-Z_{wx}\geq Z_{w_1x}'-Z_{wx}'.
  \end{equation*}
To achieve this,  let $\mathbf{D}_{uv}=\mathbf{D}_u-\mathbf{D}_v$ for all $u\neq v$.
By the definition of $\{c'_e\}_{e\in E}$, it can be checked that $\mathbf{C}'=\mathbf{C}+c_\triangle \mathbf{D}_{w_2w}\mathbf{D}_{w_2w}^T$.
Using the similar argument for the case where $w=n$, we obtain
\begin{equation*}\label{relation-Z-1}
\mathbf{Z}'=\mathbf{Z}-\widehat{c_\triangle}'\mathbf{Z}\mathbf{D}_{w_2w}\mathbf{D}_{w_2w}^T\mathbf{Z}
\end{equation*}
with $\widehat{c_{\triangle}}':=\frac{c_\triangle}{1+c_\triangle\mathbf{D}_{w_2w}^T\mathbf{Z}\mathbf{D}_{w_2w}}$.
Thus, we only need to show that
  \begin{equation}\label{ZDDZ}
    (\mathbf{Z}\mathbf{D}_{w_2w}\mathbf{D}_{w_2w}^T\mathbf{Z})_{w_1x}-(\mathbf{Z}\mathbf{D}_{w_2w}\mathbf{D}_{w_2w}^T\mathbf{Z})_{wx}
    =(Z_{w_1w_2}-Z_{w_1w}-Z_{ww_2}+Z_{ww})(Z_{w_2x}-Z_{wx})\geq 0.
  \end{equation}
Indeed, according to \eqref{KCL-G2} and the assumption $g_{w_2w}>0$, one has that
\begin{equation}\label{Z-Z}
Z_{w_2x}-Z_{wx}=U_{w_2}-U_{w}=c_{w_2w}g_{w_2w}> 0.
\end{equation}
In addition, consider a unit electric flow from $w_2$ to $w$ with respect to $\{c_e\}_{e\in E}$, and denote $\mathbf{U}^*=(U^*_1,\cdots,  U^*_{n-1})$  as the associated electric potential field with $U^*_n=0$ (note that here we used a slightly different convention as opposed to the usual choice of setting $U^*_w = 0$). Similar to \eqref{KCL-G1} and \eqref{KCL-G2} we also get that
  \begin{equation*}
    \mathbf{U}^*=\mathbf{Z}(\mathbf{D}_{w_2}-\mathbf{D}_{w}).
  \end{equation*}
Combining this with the fact that $w$ is the only sink, we derive
\begin{equation*}
Z_{w_1w_2}-Z_{w_1w}-Z_{ww_2}+Z_{ww}=U_{w_1}^*-U_{w}^*=U_{w_1}^*-\min_{v\in V}U_v^*\geq0.
\end{equation*}
This, along with \eqref{Z-Z}, yields \eqref{ZDDZ}. Hence, the proof is complete.
\end{proof}

\begin{figure}[htbp]
\centering
\includegraphics[scale=1]{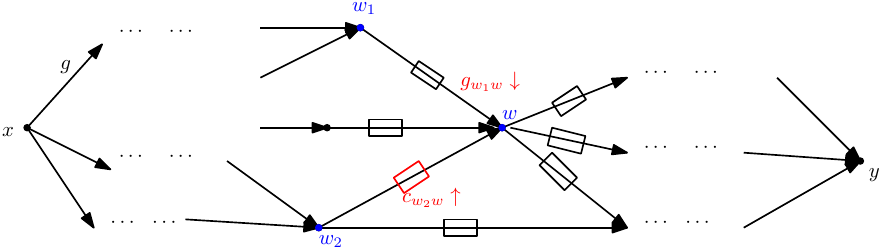}
\caption{The illustration for Lemma \ref{network}. The flow $g$ represents the unit electric flow from $x$ to $y$, satisfying $g_{w_1w},g_{w_2w}>0$. When we increase the conductance $c_{w_2w}$ (the red rectangle) on the edge $\langle w_2,w\rangle$, the amount of flow $g$ through the edge $\langle w_1,w\rangle$, i.e. $g_{w_1w}$, will decrease.}
\label{flow-submult}
\end{figure}

\subsection{Proof of Proposition~\ref{submult}}\label{proof-submult}
In this section, we aim to employ the renormalization technique and Lemma \ref{network} to complete the proof of Proposition~\ref{submult}.
To this end, we will fix sufficiently large $m,n\in \mathds{N}$.

Let $G=(V,E)$ be the renormalization from the $\beta$-LRP model by identifying the intervals $I_i:=[im,(i+1)m)$ to vertices $\varpi(i)$ for all $i\in \mathds{Z}$. That is, $V=\{\varpi(0),\varpi(-1),\varpi(1),\cdots\}$ and the edge set is defined as
\begin{equation*}
E=\left\{\langle \varpi(i),\varpi(j)\rangle:\ i\neq j\text{ and there exists an edge $\langle x,y\rangle \in \mathcal{E}$ such that $x\in I_i$ and $y\in I_j$}\right\}.
\end{equation*}
Note that if there are multiple edges between two intervals $I_i$ and $I_j$ in the edge set $\mathcal{E}$, we only include a single edge $\langle \varpi(i),\varpi(j)\rangle$ in the edge set $E$.
By the self-similarity of the model, it is obvious that $G$ is also the critical $\beta$-LRP on $\mathds{Z}$.
By placing a unit resistance on each edge, $G$ can be viewed as a random electric network. For simplicity, we will also refer to this network as $G$.
Denote $V_n=\{\varpi(0),\cdots, \varpi(n-1)\}$.
For any $i,j\in [0,n)$, let $R^G_{V_n}(\varpi(i),\varpi(j))$ denote the effective resistance in $G$ restricted to $V_n$, that is,
\begin{equation}\label{def-RGV}
\begin{aligned}
  R^G_{V_n}(\varpi(i),\varpi(j))&=\inf\left\{\frac{1}{2}\sum_{\varpi(k)\sim \varpi(l)}g_{\varpi(k)\varpi(l)}^2:\ g \text{ is a unit flow from $\varpi(i)$ to $\varpi(j)$}\right. \\
 & \quad \quad\quad \quad\quad  \text{such that $g_{\varpi(k)\varpi(l)}=0$ for all }\langle \varpi(k),\varpi(l)\rangle \in E\backslash  E_{V_n\times V_n}\Bigg\}.
 \end{aligned}
\end{equation}
It is worth emphasizing that, since $G$ is also a critical $\beta$-LRP model on $\mathds{Z}$, we have
\begin{equation*}
  \mathds{E}\left[R^G_{V_n}(\varpi(i),\varpi(j))\right]=\mathds{E}\left[R_{[0,n)}(i,j)\right]\quad \text{for all }i,j\in [0,n).
\end{equation*}
Therefore,
\begin{equation*}
  \Lambda(n)=\max_{i,j\in[0,n)}\mathbb{E}\left[R^G_{V_n}(\varpi(i),\varpi(j))\right].
\end{equation*}

Next, we fix $a,b\in [0,mn)$. Let $i_a,j_b\in [0,n)$ such that $a\in I_{i_a}$ and $b\in I_{j_b}$. Assume that $g$ is a unit electric flow from $\varpi(i_a)$ to $\varpi(j_b)$ in the graph $G$ restricted to $V_n$.
We want to construct a unit flow $f$ from $a$ to $b$ in the original $\beta$-LRP model according to the flow $g$.
To achieve this,  let
\begin{equation}\label{IG_in}
I^G_{i,\rm in}=\left\{k\in [0,n):\ g_{\varpi(k)\varpi(i)}>0\right\}\quad \text{for } i\in [0,n)\setminus \{i_a\}
\end{equation}
and
\begin{equation}\label{IG_out}
I^G_{i,\rm out}=\left\{l\in[0,n):\ g_{\varpi(i)\varpi(l)}>0\right\}\quad \text{for }i\in [0,n)\setminus \{j_b\}.
\end{equation}
In particular, denote $I^G_{i_a,\rm in}=\{i_a\}$ and $I^G_{j_b,\rm out}=\{j_b\}$.
Additionally, for any $\langle \varpi(i),\varpi(j)\rangle \in E$ with $g_{\varpi(i)\varpi(j)}>0$, denote the directed edge set $$\mathcal{E}_{i\rightarrow j}=\{\langle u,v\rangle\in \mathcal{E}:\ u\in I_i\ \text{and }v\in I_j\}.$$
By the definition of renormalization, it is clear that $\mathcal{E}_{i\rightarrow j}\neq \varnothing$.
For convenience, for each $i\in [0,n)$, denote $\mathcal{E}_i=\mathcal{E}_{I_i\times I_i}$.
We will now construct a unit flow $f$ from $a$ to $b$ in the $\beta$-LRP model as follows (see Figure \ref{g-f} for an illustration).

\begin{itemize}
  \item[(1)] For any $\langle \varpi(i),\varpi(j)\rangle \in E$ with $g_{\varpi(i)\varpi(j)}>0$, choose an edge $\langle x_{i\rightarrow j},y_{i\rightarrow j}\rangle\in \mathcal{E}_{i\rightarrow j}$ such that
  \begin{equation*}
  |x_{i\rightarrow j}-y_{i\rightarrow j}|=\text{arg}\min\left\{|x-y|:\langle x,y\rangle\in  \mathcal{E}_{i\rightarrow j}\right\}.
  \end{equation*}
  Then we define
  \begin{equation*}
    f_{xy}=
  \begin{cases}
  g_{\varpi(i)\varpi(j)},\quad & \langle x,y\rangle=\langle x_{i\rightarrow j},y_{i\rightarrow j}\rangle, \\
  0,\quad &\langle x,y\rangle\in \mathcal{E}_{i\rightarrow j}\setminus \{\langle x_{i\rightarrow j},y_{i\rightarrow j}\rangle\},
  \end{cases}
  \end{equation*}
  and let $f_{yx}=-f_{xy}$ for all $\langle x,y\rangle\in \mathcal{E}_{i\rightarrow j}$.

\medskip

  \item[(2)] In this step, we construct the flow $f$ inside intervals $I_i$ for all $i\in [0,n)$. For that, let $I_{i_a,\rm in}=\{a\}$ and
      \begin{equation*}
      I_{j,\rm in}=\left\{y_{i\rightarrow j}:\ i\in [0,n)\text{ such that }\langle x_{i\rightarrow j},y_{i\rightarrow j}\rangle\in \mathcal{E}_{i\rightarrow j} \right\}
      \end{equation*}
for all $j\in [0,n)\setminus \{i_a\}$.
Analogously, we define
      \begin{equation*}
      I_{j_b,\rm out}=\{b\}\quad \text{and}\quad I_{i,\rm out}=\left\{x_{i\rightarrow j}:\ j\in [0,n)\text{ such that }\langle x_{i\rightarrow j},y_{i\rightarrow j}\rangle\in \mathcal{E}_{i\rightarrow j} \right\}.
      \end{equation*}
      for all $i\in [0,n)\setminus \{j_b\}$. Recall that $I^G_{\cdot,\rm in}$ and $I^G_{\cdot,\rm out}$ are defined in \eqref{IG_in} and \eqref{IG_out}, respectively.

Now for any  $i\in [0,n)$, let $\{\theta_{kl}\}_{k\in I_{i,\rm in},\ l\in I_{i,\rm out}}$ be a sequence of nonnegative numbers such that
    \begin{equation*}
    \sum_{l\in I^G_{i,\rm out}}\theta_{kl}= g_{\varpi(k)\varpi(i)}\quad \text{for all }k\in I^G_{i,\rm in}
    \end{equation*}
and
\begin{equation*}
\sum_{k\in I^G_{i,\rm in}}\theta_{kl}=g_{\varpi(i)\varpi(l)}\quad \text{for all }l\in I^G_{i,\rm out}.
\end{equation*}
For any fixed $k\in I^G_{i,\rm in}$ and $l\in I^G_{i,\rm out}$, denote by $\phi^{(k,l)}$  the unit electric flow from $y_{k\rightarrow i}$ to $x_{i\rightarrow l}$ restricted to the interval $I_i$. Then we define
\begin{equation*}
f_{xy}=\sum_{k\in I^G_{i,\rm in}}\sum_{l\in I^G_{i,\rm out}}\theta_{kl}\phi^{(k,l)}_{xy}
\quad\text{ for all }\langle x,y\rangle\in \mathcal{E}_i.
\end{equation*}
\end{itemize}

\begin{figure}[htbp]
    \centering
    \subfigure{\includegraphics[scale=0.5]{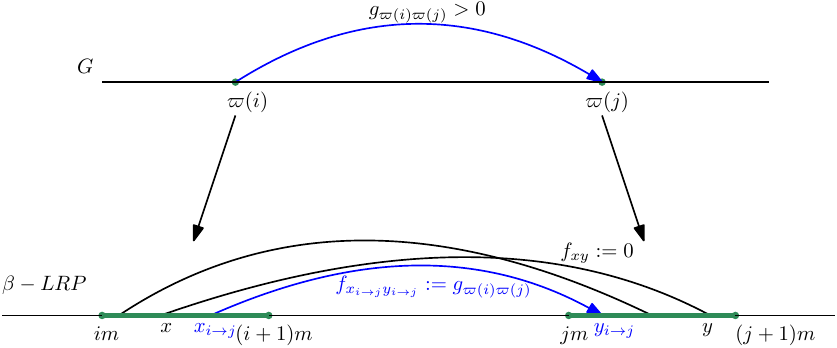}}
    \quad
    \subfigure{\includegraphics[scale=0.6]{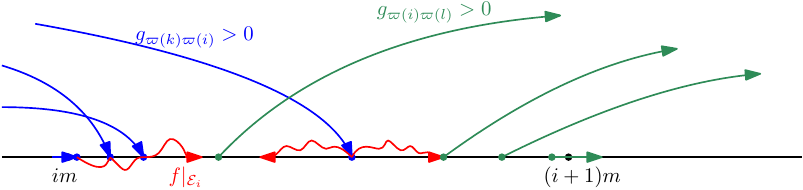}}
    \caption{The illustration for the construction of flow $f$. The flow $g$ represents the unit electric flow from $\varpi(i_a)$ to $\varpi(j_b)$ in the graph $G$. The left figure illustrates step (1), where the blue curve represents the edge $\langle x_{i\rightarrow j},y_{i\rightarrow j} \rangle$. The right figure  corresponds to step (2), where the blue point represents the set $I_{i,\rm in}$, while the dark green points correspond to the set $I_{i,\rm out}$. The red curves indicate the flow $f$ from $I_{i,\rm in}$ to $I_{i,\rm out}$ restricted to the interval $I_i$,  with the amount of $\sum_{k:g_{\varpi(k)\varpi(i)>0}}g_{\varpi(k)\varpi(i)}$.
    }
    \label{g-f}
    \end{figure}

From the above construction, it can be verified that $f$ is a unit flow from $a$ to $b$ restricted in the interval $[0,mn)$ with respect to the $\beta$-LRP model.
Furthermore, on the one hand, the flow $g$ (accordingly, $R^G_{V_n}(\varpi(i_a),\varpi(j_b))$), $I_{i,\rm in}$ and $I_{i,\rm out}$ for all $i\in [0,n)$  all depend only on the edge set
$$
\widehat{\mathcal{E}}:=\mathcal{E}\setminus (\cup_{i\in [0,n)}\mathcal{E}_i).
$$
On the other hand, given $\widehat{\mathcal{E}}$, the energies generated by $\phi^{(k,l)}$ for all $k\in I^G_{i,\rm in}$ and all $l\in I^G_{i,\rm out}$ depend only on the edge set $\mathcal{E}_i$ for all $i\in [0,n)$.
Therefore, by Cauchy-Schwartz inequality and the fact that
\begin{equation*}
\sum_{k\in I^G_{i,\rm in}}\sum_{l\in I^G_{i,\rm out}}\theta_{kl}= \sum_{\varpi(k):g_{\varpi(k)\varpi(i)}>0}g_{\varpi(k)\varpi(i)},
\end{equation*}
we have
  \begin{align}
    &2\mathbb{E}\left[R_{[0,mn)}(a,b)\ |\widehat{\mathcal{E}}\right]
    \leq \mathbb{E}\left[\sum_{x\sim y} f_{xy}^2\ |\widehat{\mathcal{E}}\right]\label{sub-multi-1}\\
    &=\sum_{\varpi(i),\varpi(j)\in V_n:\varpi(i)\sim \varpi(j)}g_{\varpi(i)\varpi(j)}^2
    +\sum_{\varpi(i)\in V_n}\mathds{E}\left[\sum_{\langle x,y\rangle\in\mathcal{E}_i }f_{xy}^2\ |\widehat{\mathcal{E}}\right]\nonumber\\
    &= \sum_{\varpi(i),\varpi(j)\in V_n:\varpi(i)\sim \varpi(j)}g_{\varpi(i)\varpi(j)}^2
    +\sum_{\varpi(i)\in V_n}\mathds{E}\left[\sum_{\langle x,y\rangle\in\mathcal{E}_i }\left(\sum_{k\in I^G_{i,\rm in}}\sum_{l\in I^G_{i,\rm out}}\theta_{kl}\phi^{(k,l)}_{xy}\right)^2\ |\widehat{\mathcal{E}}\right]\nonumber\\
    &\leq \sum_{\varpi(i),\varpi(j)\in V_n:\varpi(i)\sim \varpi(j)}g_{\varpi(i)\varpi(j)}^2
    + \sum_{\varpi(i)\in V_n}\left(\sum_{k\in I^G_{i,\rm in}}\sum_{l\in I^G_{i,\rm out}}\theta_{kl}\right)
    \mathds{E}\left[\sum_{\langle x,y\rangle\in\mathcal{E}_i }\sum_{k\in I^G_{i,\rm in}}\sum_{l\in I^G_{i,\rm out}}\theta_{kl}\left(\phi^{(k,l)}_{xy}\right)^2\ |\widehat{\mathcal{E}}\right]\nonumber\\
    &=\sum_{\varpi(i),\varpi(j)\in V_n:\varpi(i)\sim \varpi(j)}g_{\varpi(i)\varpi(j)}^2\nonumber\\
    &\quad +\sum_{\varpi(i)\in V_n}\left(\sum_{\varpi(k):g_{\varpi(k)\varpi(i)}>0}g_{\varpi(k)\varpi(i)}\right)\sum_{k\in I^G_{i,\rm in}}\sum_{l\in I^G_{i,\rm out}}\theta_{kl}
    \mathds{E}\left[\sum_{\langle x,y\rangle\in\mathcal{E}_i }\left(\phi^{(k,l)}_{xy}\right)^2\ |\widehat{\mathcal{E}}\right]\nonumber\\
        &=\sum_{\varpi(i),\varpi(j)\in V_n:\varpi(i)\sim \varpi(j)}g_{\varpi(i)\varpi(j)}^2\nonumber\\
    &\quad +2\sum_{\varpi(i)\in V_n}\left(\sum_{\varpi(k):g_{\varpi(k)\varpi(i)}>0}g_{\varpi(k)\varpi(i)}\right)\sum_{k\in I^G_{i,\rm in}}\sum_{l\in I^G_{i,\rm out}}\theta_{kl}
    \mathds{E}\left[R_{I_i}(y_{k\rightarrow i},x_{i\rightarrow l})\ |\widehat{\mathcal{E}}\right].\nonumber
  \end{align}

Additionally, according to the independence of edges and the definition of $\Lambda(\cdot)$ in \eqref{def-Lambda}, we obtain that for all $i\in [0,n)$, $y_{k\rightarrow i}\in I_{i,\rm in}$ and $x_{i\rightarrow l}\in I_{i,\rm out}$,
\begin{equation*}\label{sub-multi-2}
  \mathds{E}\left[R_{I_i}(y_{k\rightarrow i},x_{i\rightarrow l})\ |\widehat{\mathcal{E}}\right]
  \leq \max_{x,y\in I_i}\mathbb{E}\left[R_{I_i}(x,y)\right]= \Lambda(m).
\end{equation*}
Substituting this into \eqref{sub-multi-1} gives
\begin{equation}\label{sub-multi-3}
 2\mathbb{E}\left[R_{[0,mn)}(a,b)\ |\widehat{\mathcal{E}}\right]
 \leq \sum_{\varpi(i),\varpi(j)\in V_n:\varpi(i)\sim \varpi(j)}g_{\varpi(i)\varpi(j)}^2
    +2\Lambda(m)\sum_{\varpi(i)\in V_n}\left(\sum_{\varpi(k):g_{\varpi(k)\varpi(i)}>0}g_{\varpi(k)\varpi(i)}\right)^2.
\end{equation}

Next, we will present the following lemma to control the final term on the RHS of \eqref{sub-multi-3}.

\begin{lemma}\label{lem:compare}
  For all $\beta>0$, there exists a constant $C_2=C_2(\beta)<\infty$ {\rm(}depending only on $\beta${\rm)} such that the following holds.
  For any two distinct vertices $\varpi(i),\varpi(j)\in V_n$, let $g$ be a unit electric flow from $\varpi(i)$ to $\varpi(j)$ restricted to $V_n$.
  Then for all $\varpi(k)\in V_n$,
  \begin{equation}\label{eq:compare}
    \mathbb{E}\left[\left(\sum_{\varpi(l):g_{\varpi(l)\varpi(k)}>0}g_{\varpi(l)\varpi(k)}\right)^2\right]\leq C_2\mathbb{E}\left[\sum_{\varpi(l):g_{\varpi(l)\varpi(k)}>0}g_{\varpi(l)\varpi(k)}^2\right].
  \end{equation}
\end{lemma}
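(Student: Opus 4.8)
The plan is to reduce Lemma~\ref{lem:compare} to a statement purely about the geometry of the $\beta$-LRP renormalized graph $G$, combined with the comparison principle of Lemma~\ref{network}. Fix $\varpi(k)\in V_n$ and write $m(\varpi(k)):=\sum_{\varpi(l):g_{\varpi(l)\varpi(k)}>0}g_{\varpi(l)\varpi(k)}$ for the total flow entering $\varpi(k)$ (equivalently leaving it, by Kirchhoff's node law when $\varpi(k)\neq \varpi(i),\varpi(j)$; the endpoint cases contribute an extra $\pm1$ which only helps and can be absorbed into the constant). The left side of \eqref{eq:compare} is $\mathbb{E}[m(\varpi(k))^2]$ and the right side is $\mathbb{E}[\sum_l g_{\varpi(l)\varpi(k)}^2]$. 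By Cauchy--Schwarz, $m(\varpi(k))^2 \le D_k \sum_l g_{\varpi(l)\varpi(k)}^2$, where $D_k$ is the number of neighbours $\varpi(l)$ of $\varpi(k)$ carrying positive inflow; so the naive bound costs a factor equal to a degree, which is \emph{not} integrable in the critical LRP (degrees have heavy tails). The whole point of the lemma is that the electric flow cannot actually put comparable mass on many edges into a single vertex, so one should not lose the full degree.

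The key mechanism, which I would isolate as the heart of the argument, is \emph{monotonicity of the inflow under edge deletion at~$\varpi(k)$}: this is exactly Lemma~\ref{network} read in the contrapositive direction (removing an edge $\langle w_2,w\rangle$, i.e.\ sending $c_\triangle \to$ a negative perturbation that zeroes it out, increases the flow on every other edge $\langle w_1,w\rangle$ into $w$). So I would order the neighbours of $\varpi(k)$ with positive inflow as $\varpi(l_1),\varpi(l_2),\dots$ and delete them one at a time (or, more cleanly, compare the original network to the one with \emph{only} the single edge $\langle \varpi(l_r),\varpi(k)\rangle$ retained among the inbound edges at $\varpi(k)$): in the reduced network the flow through that surviving edge is \emph{at least} $g_{\varpi(l_r)\varpi(k)}$. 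But in the reduced network the inflow at $\varpi(k)$ is carried by a single edge, so it equals (up to the $\pm1$ boundary term) the effective-resistance current, and one has the deterministic/probabilistic bound $g_{\varpi(l_r)\varpi(k)} \le$ the current that a unit source would push through a \emph{series} path, which is controlled because $\langle \varpi(l_r),\varpi(k)\rangle$ together with one more edge out of $\varpi(k)$ forms a two-edge series segment carrying all the flow — hence $g_{\varpi(l_r)\varpi(k)}$ is bounded by the flow one would see in a network where that segment is the only conduit, which in expectation is $O(1/|l_r - k|^{?})$-type. Concretely, I expect the estimate to take the shape: for each neighbour $\varpi(l)$, $\mathbb{E}[g_{\varpi(l)\varpi(k)}^2 \mid \text{edge } \langle \varpi(l),\varpi(k)\rangle \text{ present}]$ is summable in $l$ because a flow that uses the long edge $\langle l,k\rangle$ to carry amount $a$ pays energy at least (roughly) $a^2$ on that edge alone, and the network outside can reroute only so much; summing these tail contributions against the connection probabilities $p_{l,k}\asymp |l-k|^{-1}$ gives a convergent series, and that convergent sum is the constant $C_2$.

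Assembling: decompose $m(\varpi(k)) = \sum_l g_{\varpi(l)\varpi(k)} \mathbbm{1}[\varpi(l)\sim\varpi(k)]$; expand the square as a double sum; for the diagonal terms we get exactly $\sum_l g_{\varpi(l)\varpi(k)}^2$, which is the target; for the off-diagonal terms $g_{\varpi(l)\varpi(k)} g_{\varpi(l')\varpi(k)}$ with $l\neq l'$, apply $ab \le \tfrac12(a^2+b^2)$ but crucially first use the monotonicity from Lemma~\ref{network} to bound, say, $g_{\varpi(l')\varpi(k)}$ by its value in the network with edge $\langle \varpi(l),\varpi(k)\rangle$ removed — decorrelating the two factors enough to take expectations and sum. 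An alternative, perhaps cleaner bookkeeping: let $N_k := \#\{l: g_{\varpi(l)\varpi(k)} \ge \tfrac12 m(\varpi(k))\}$; Lemma~\ref{network} forces $N_k$ to be small (if two edges each carried $\ge \tfrac12 m(\varpi(k))$, deleting one would push the other above $m(\varpi(k))$, still consistent, so one actually argues via the energy: $\sum_l g_{\varpi(l)\varpi(k)}^2 \ge N_k \cdot \tfrac14 m(\varpi(k))^2$, so it suffices to show $\mathbb{E}[N_k^{-1} \cdot (\text{stuff})]$... ) — I would try both and keep whichever makes the conditional-expectation estimates cleanest.

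\medskip

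\textbf{Main obstacle.} The serious difficulty is not invoking Lemma~\ref{network} (that is handed to us) but \emph{quantifying} how much flow a single long edge into $\varpi(k)$ can carry, in expectation, in the full random network $G$ restricted to $V_n$ — i.e.\ getting a bound on $\mathbb{E}[g_{\varpi(l)\varpi(k)}^2 \mathbbm{1}[\varpi(l)\sim\varpi(k)]]$ that is summable over $l\in[0,n)$ with a constant independent of $n$. This requires showing the flow doesn't concentrate on long edges, which in turn needs an a priori lower bound on the resistance "seen in parallel" by such an edge — precisely the kind of input provided by the enhanced lower bound of \cite[Theorem 1.1]{DFH24+} (and possibly the weak supermultiplicativity referenced in the outline). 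Handling the conditioning correctly — the current $g$ depends on the whole network, so conditioning on the presence/absence of one edge must be done via the monotonicity lemma rather than naive independence — is where the argument must be most careful.
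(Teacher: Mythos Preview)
Your proposal correctly identifies Lemma~\ref{network} (monotonicity of the inflow on a fixed edge under deletion of competing edges at the same vertex) as the essential tool, but it misdiagnoses the obstacle and consequently reaches for far heavier machinery than is needed.

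The assertion that degrees in this model have heavy tails is false. The renormalized graph $G$ is again a $\beta$-LRP on $\mathds{Z}$, so the connection probability satisfies $p_{0,j}\asymp |j|^{-2}$; hence $\sum_{j\neq 0}p_{0,j}<\infty$ and the degree of any vertex is a sum of independent Bernoulli variables with summable parameters, so it has all moments finite (this is exactly what is used, e.g., in Lemma~\ref{lem-eta}). Thus the naive Cauchy--Schwarz bound $m(\varpi(k))^2\le D_k\sum_l g_{\varpi(l)\varpi(k)}^2$, where $D_k=\#\{l:g_{\varpi(l)\varpi(k)}>0\}$, fails not because $D_k$ is non-integrable but because $D_k$ and the flows $g$ are correlated. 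This is the real obstacle, and it does not require any resistance lower bound from \cite{DFH24+} or any estimate on ``how much flow a long edge can carry''.

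The paper's proof handles the correlation by a \emph{change of configuration}. After Cauchy--Schwarz one must control $\sum_{s\ge1}s^2\,\mathds{E}\big[\sum_l g_{\varpi(l)\varpi(k)}^2\I_{\{\#J_{k,\rm in}=s\}}\big]$. For a configuration $\omega$ with $\#J_{k,\rm in}(\omega)=s$ and a fixed $\varpi(l)\in J_{k,\rm in}$, delete from $\omega$ all edges joining $\varpi(k)$ to $J_{k,\rm in}\setminus\{\varpi(l),\varpi(k\pm1)\}$, obtaining a configuration $\omega^-_{k,l}$. Iterating Lemma~\ref{network} gives $g_{\varpi(l)\varpi(k)}(\omega)\le g_{\varpi(l)\varpi(k)}(\omega^-_{k,l})$. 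Regroup the sum over $(\omega,s)$ by the value $\omega'=\omega^-_{k,l}$; the only thing left to prove is that for each fixed $\omega'$,
\[
\sum_{s\ge1}\ \sum_{\omega:\,\omega^-_{k,l}=\omega'} s^2\,p(\omega)\ \le\ C(\beta)\,p(\omega').
\]
But the preimages of $\omega'$ differ from $\omega'$ only in the presence of extra long edges at $\varpi(k)$, and the number $\xi$ of such edges is a sum of independent Bernoullis with $\sum_i q_i\le 2\sum_{i\ge2}p_{0,i}<\infty$. The bound then reduces to $\mathds{E}[(\xi+O(1))^2]/\mathds{P}[\xi=0]\le C(\beta)$, which is immediate.

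In short: your monotonicity idea is exactly right, but the argument closes with a second-moment bound on the vertex degree, not with flow or resistance estimates; no input from \cite{DFH24+} or from weak supermultiplicativity is needed for this lemma.
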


\begin{proof}
  It is evident that we only need to consider the graph $G$ restricted to $V_n$, which we denote as $G_n=(V_n,E_n)$.
  Since neighboring points are connected by edges in our model, $G_n$ is clearly a finite connected graph.
  Clearly, there are a finite number of configurations for $G_n$, each with a positive probability.
  For each configuration $\omega$, let $p(\omega)$ represent its probability, and $E_n(\omega)$ denote its edge set.

  For each $\varpi(k)\in V_n$, define
  $$
  J_{k,\rm in}=\left\{\varpi(l)\in V_n:\ g_{\varpi(l)\varpi(k)}>0\right\}.
   $$
   Additionally, for each $s\in\mathds{N}$, denote $\Omega_s(k)$ as the set of all configurations for which $\# J_{k,\rm in}(\omega)=s$.
 By applying Cauchy-Schwartz inequality, it is sufficient to find a constant $C(\beta)>0$ depending only on $\beta$ such that for  all $\varpi(k)\in V_n$,
  \begin{equation}\label{eq:compare2}
    \sum_{s\geq 1}s^2\mathbb{E}\left[\sum_{\varpi(l)\in J_{k,\rm in}}g_{\varpi(l)\varpi(k)}^2\I_{\{\omega\in\Omega_s(k)\}}\right]\leq C(\beta)\mathbb{E}\left[\sum_{\varpi(l):g_{\varpi(l)\varpi(k)}>0}g_{\varpi(l)\varpi(k)}^2\right].
  \end{equation}

To achieve this, for any $\varpi(l)\in J_{k,\rm in}$ and any $s\in \mathds{N}$, consider a configuration $\omega\in \Omega_s(k)$ such that $\varpi(l)\in J_{k,\rm in}$.
We then define $\omega_{k,l}^{-}$ as the configuration obtained from $\omega$ by removing all edges (or equivalently, setting the conductances of those edges to 0) that connect $\varpi(k)$ to $J_{k,\rm in}\setminus \{\varpi(l),\varpi(k-1),\varpi(k+1)\}$ directly.
By applying Lemma \ref{network} iteratively, with $w=\varpi(k)$, $w_1=\varpi(l)$ and  $w_2$ taken over vertices in $J_{k,\rm in}\setminus \{\varpi(l),\varpi(k-1),\varpi(k+1)\}$, we arrive at
  \begin{equation}\label{ww-}
    0<g_{\varpi(l)\varpi(k)}(\omega)\leq g_{\varpi(l)\varpi(k)}(\omega^-_{k,l}).
  \end{equation}
  As a result, we can deduce that
  \begin{equation}\label{eq:compare3}
    \begin{aligned}
      \text{LHS of \eqref{eq:compare2}}&=\sum_{s\geq 1}\sum_{\omega\in\Omega_s(k)} \sum_{\varpi(l)\neq \varpi(k)} s^2g_{\varpi(l)\varpi(k)}(\omega)^2\I_{\{\varpi(l)\in J_{k,\rm in}(\omega)\}}p(\omega)\\
      &\overset{\text{\eqref{ww-}}}{\leq}  \sum_{\varpi(l)\neq \varpi(k)}\sum_{s\geq 1}\sum_{\omega\in\Omega_s(k)} s^2g_{\varpi(l)\varpi(k)}(\omega_{k,l}^-)^2\I_{\{\varpi(l)\in J_{k,\rm in}(\omega_{k,l}^-)\}}p(\omega)\\
      &\leq \sum_{\varpi(l)\neq \varpi(k)}\sum_{\omega'}\sum_{s\geq 1}\sum_{\omega:\omega_{k,l}^-=\omega'}s^2g_{\varpi(l)\varpi(k)}(\omega')^2\I_{\{\varpi(l)\in J_{k,\rm in}(\omega')\}}p(\omega)\\
      &\leq \sum_{\varpi(l)\neq \varpi(k)}\sum_{\omega'}g_{\varpi(l)\varpi(k)}(\omega')^2\I_{\{\varpi(l)\in J_{k,\rm in}(\omega')\}}\left(\sum_{s\geq 1}\sum_{\omega:\omega_{k,l}^-=\omega'}s^2p(\omega)\right).
    \end{aligned}
  \end{equation}
  In the second and third inequality, we proceeded by fixing any possible $\omega'$ and summing over all $\omega\in\Omega_s(k)$ such that $\omega_{k,l}^-=\omega'$.

To establish an upper bound for the sum in the bracket on the RHS of \eqref{eq:compare3}, we will focus on the case where $\varpi(l)\notin \{\varpi(k-1),\varpi(k+1)\}$. Similar arguments can be applied when $\varpi(l)\in \{\varpi(k-1),\varpi(k+1)\}$.
For $s\geq 1$, consider any two configurations $\omega\in \Omega_s(k)$ and $\omega'$, such that $\omega^-_{k,l}=\omega'$.

It is clear that the edges in $\mathcal{E}(\omega')\subset \mathcal{E}(\omega)$ and the edges in $\mathcal{E}(\omega)\setminus \mathcal{E}(\omega')$ share the common endpoint $\varpi(k)$.
Denote $\xi$ as the number of vertices in $V_n\setminus J_{k,\rm in}(\omega')=V_n\setminus\{\varpi(l),\varpi(k-1),\varpi(k+1)\}$ that are  directly  connected to $\varpi(k)$. Then by the independence of edges in our model, we have
\begin{equation*}
  \begin{aligned}
    p(\omega')&=\mathds{P}\left[\varpi(u)\sim \varpi(v)\text{ for all }\langle \varpi(u),\varpi(v)\rangle \in \mathcal{E}(\omega')\right]\mathds{P}[\xi=0]\\
    &\qquad\cdot \mathbb{P}[\varpi(u)\nsim \varpi(v)\text{ for all }\langle \varpi(u),\varpi(v)\rangle \in \mathcal{E}\setminus\mathcal{E}(\omega')\text{ with $\varpi(u),\varpi(v)\neq \varpi(k)$}].
  \end{aligned}
\end{equation*}
Consequently, we find that
  \begin{equation}\label{eq:compare4}
    \sum_{s\geq 1}\sum_{\omega\in\Omega_s(k):\omega_{k,l}^-=\omega'}s^2p(\omega)\leq \frac{p(\omega')}{\mathds{P}[\xi=0]}\sum_{s\geq 1 }s^2\mathbb{P}[\xi=\mathfrak{h}(s)],
  \end{equation}
where
\begin{equation*}
\mathfrak{h}(s)=
\begin{cases}
s-1,\quad & \varpi(k-1),\varpi(k+1)\notin J_{k,\rm in}(\omega'),\\
s-2,\quad & \#\left(\{\varpi(k-1),\varpi(k+1)\}\cap J_{k,\rm in}(\omega')\right)=1,\\
s-3,\quad & \{\varpi(k-1),\varpi(k+1)\}\subset J_{k,\rm in}(\omega').
\end{cases}
\end{equation*}

In addition, it can be observed that $\xi$ can be viewed as the sum of a sequence of independent Bernoulli variables $\{X_i\}_{i=1}^{n-3}$ with parameters $\{q_i\}_{i=1}^{n-3}$ satisfying
$$
\max_i q_i\leq \max_{i\geq 2}p_{0,i}=p_{0,2}\quad  \text{and}\quad \sum_{i=1}^{n-3}q_i\leq 2\sum_{i=2}^{\infty}p_{0,i}=:\lambda_0(\beta)<\infty.
$$
Hence, there exists a constant $C(\beta)>0$ (depending only on $\beta$) such that for all sufficiently large $n\in \mathds{N}$,
  \begin{equation*}\label{eq:N1}
  \mathbb{P}[\xi=0]= \prod_{i=1}^{n-3} (1-q_i)\geq \prod_{i=2}^{n-3}(1-p_{0,i})\geq C(\beta)^{-1/2},
  \end{equation*}
and
    \begin{equation*}\label{eq:N2}
    \begin{aligned}
    \sum_{s\geq 1}s^2\mathbb{P}[\xi=\mathfrak{h}(s)]\leq \mathbb{E}\left[(\xi+3)^2\right]
    &= \mathrm{Var}(\xi)+\left(\mathds{E}[\xi]+3\right)^2\\
    &\leq \sum_{i=1}^{n-3}q_i+ \left(\sum_{i=1}^{n-3}q_i+3\right)^2 \leq C(\beta)^{1/2}.
  \end{aligned}
  \end{equation*}
  As a result, we have
  \begin{equation*}
    \sum_{s\geq 1}s^2\frac{\mathbb{P}[\xi=\mathfrak{h}(s)]}{\mathbb{P}[\xi=0]}\leq C(\beta).
  \end{equation*}
  Applying this to \eqref{eq:compare4} yields that for any configuration $\omega'$,
  \begin{equation*}\label{eq:compare5}
    \sum_{s\in\mathds{N}}\sum_{\omega\in\Omega_s(k):\omega_{k,l}^-=\omega'}s^2p(\omega)\leq C(\beta)p(\omega').
  \end{equation*}
  Combining this with \eqref{eq:compare3}, we obtain that
  \begin{equation*}
    \begin{aligned}
      \text{LHS of \eqref{eq:compare2}}&\leq C(\beta)\sum_{\varpi(l)\neq \varpi(k)}\sum_{\omega'}g_{\varpi(l),\varpi(k)}(\omega')^2\I_{\{\varpi(l)\in J_{k,\rm in}(\omega')\}}p(\omega')\leq \text{RHS of \eqref{eq:compare2}},
    \end{aligned}
  \end{equation*}
  which completes the proof.
\end{proof}

With Lemma~\ref{lem:compare} at hand,  we can present the

\begin{proof}[Proof of Proposition~\ref{submult}]
  Taking expectations on both sides of \eqref{sub-multi-3} and combining it with \eqref{eq:compare} for all $\varpi(k)\in V_n$, we obtain
  \begin{equation}\label{uper-R(a,b)}
    \begin{aligned}
      2\mathbb{E}\left[R_{[0,mn)}(a,b)\right]&\leq 2\mathbb{E}\left[R^G_{V_n}R(\varpi(i_a),\varpi(j_b))\right]
      +2C_2(\beta)\Lambda(m)\mathbb{E}\left[\sum_{\varpi(i)\in V_n}\sum_{\varpi(k):g_{\varpi(k)\varpi(i)}>0}g_{\varpi(k)\varpi(i)}^2\right]\\
      &\leq 2C_2(\beta)\Lambda(m)(2C_2(\beta)^{-1}+\Lambda(n)),
    \end{aligned}
  \end{equation}
  where $C_2<\infty$ (depending only on $\beta$) is the constant defined in Lemma~\ref{lem:compare}.
  In addition, from \cite[Theorem 1.1]{DFH24+} we find that $\lim_{n\to\infty}\Lambda(n)=\infty$. Combining this with \eqref{uper-R(a,b)} completes the proof.
 \end{proof}

\section{Resistances of far away points}\label{sect-far}
For $n\in \mathds{N}$ and $i,j\in [0,n)$, recall that the resistance between $i$ and $j$ within $[0,n)$ is defined by \eqref{def-Rinsd}.

The aim of this section is to establish the following result, which shows that the effective resistance between points that are relatively close to each other can be controlled by the effective resistance between two points that are far away.

\begin{proposition}\label{R(0,n)main}
For all $\beta>0$, there exists a constant $C'_{1,*}=C'_{1,*}(\beta)<\infty$ {\rm(}depending only on $\beta${\rm)} such that for all  $n\in \mathds{N}$ and all $i,j\in [0,n)$,
\begin{equation*}
\mathds{E}\left[R_{[0,n)}(i,j)\right]\leq C'_{1,*}\mathds{E}\left[R_{[0,n)}(0,n-1)\right].
\end{equation*}
Therefore,
\begin{equation*}
\mathds{E}\left[R_{[0,n)}(0,n-1)\right]\leq \Lambda(n)\leq C'_{1,*}\mathds{E}\left[R_{[0,n)}(0,n-1)\right].
\end{equation*}
\end{proposition}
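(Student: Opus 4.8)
The plan is to show that for any $i,j\in[0,n)$, the resistance $R_{[0,n)}(i,j)$ is at most a constant multiple (in expectation) of $R_{[0,n)}(0,n-1)$. The natural first move is to reduce to comparing a ``short'' resistance against a ``long'' one of the same interval, and then to exploit a variational characterization: by \cite[Proposition 2.3]{BDG20}, the effective resistance between two vertices can be written as a supremum of a ratio of a squared linear functional of a potential over its Dirichlet energy, which makes monotonicity-in-boundary-conditions arguments tractable. So I would begin by recalling that formula and setting up notation for it on the interval $[0,n)$ with only the internal edges $\mathcal{E}_{[0,n)^2}$ active.

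The core step is a \emph{surgery at the scale of the two endpoints}. Given $i<j$ in $[0,n)$, I would like to build a near-optimal potential (or flow) for $R_{[0,n)}(0,n-1)$ out of the corresponding optimal object for $R_{[0,n)}(i,j)$, paying only a bounded multiplicative cost. Concretely: take a unit flow from $0$ to $i$ supported on the sub-block $[0,i]$, then the near-optimal unit flow from $i$ to $j$, then a unit flow from $j$ to $n-1$ supported on $[j,n-1]$; concatenating gives a unit flow from $0$ to $n-1$, and $R_{[0,n)}(0,n-1)$ is at most the total energy, which is (by Cauchy–Schwarz on the three pieces, or simply additivity up to cross terms that vanish because the blocks overlap only at single vertices) bounded by $3$ times the sum of the three block energies. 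Since $[0,i]$ and $[j,n-1]$ are themselves critical $\beta$-LRP intervals, the expected energies of the flanking pieces are at most $\Lambda(i)$ and $\Lambda(n-j)$ respectively, hence at most $\Lambda(n)$ — but that is the wrong direction, so instead I must go the other way: bound $\mathbb E[R_{[0,n)}(i,j)]$ from \emph{above} by $\mathbb E[R_{[0,n)}(0,n-1)]$. For that I would instead restrict an optimal flow for $R_{[0,n)}(0,n-1)$: cut the interval at $i$ and $j$, use the ``glue'' structure and the enhanced polynomial lower bound \cite[Theorem 1.1]{DFH24+} to argue that the portion of energy generated strictly between $i$ and $j$ (or in suitable blocks straddling them) is a positive fraction of the total, so that $\mathbb E[R_{[0,n)}(i,j)]$, which dominates the resistance of that sub-network, cannot be much larger — but this needs the weak supermultiplicativity of $\mathbb E[R_{[0,n)}(0,n-1)]$ (Proposition \ref{weak-supermult}) to control $\mathbb E[R_{[0,n)}(0,n-1)]$ from below by a constant times $\mathbb E[R_{[0,m)}(\,\cdot\,)]$ for the relevant sub-scale $m$.

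Putting it together: I expect the argument to run by first handling the two extreme cases $\{i=0, j \text{ arbitrary}\}$ and $\{i \text{ arbitrary}, j=n-1\}$ via the flow-concatenation trick together with \cite[Theorem 1.1]{DFH24+} (which guarantees the flanking blocks contribute at least a polynomially small, but crucially \emph{not} smaller than a fixed fraction of, the full-scale resistance once one also invokes submultiplicativity from Proposition \ref{submult} to prevent $\Lambda(i)$ from being comparable to $\Lambda(n)$ only when $i$ is comparable to $n$), and then composing these two cases to pass from $(0,n-1)$ to $(i,j)$ for general $i,j$. The second displayed inequality of the Proposition is then immediate: $\mathbb E[R_{[0,n)}(0,n-1)]\le \Lambda(n)=\max_{i,j}\mathbb E[R_{[0,n)}(i,j)]\le C'_{1,*}\,\mathbb E[R_{[0,n)}(0,n-1)]$.

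The main obstacle is making the ``a positive fraction of the energy is generated away from the endpoints'' step quantitative and uniform in $n$: a priori an optimal flow from $0$ to $n-1$ could route almost all of its energy through edges incident to $0$ or to $n-1$, so that chopping near $i$ and $j$ destroys almost nothing and the comparison fails. Overcoming this is exactly where \cite[Theorem 1.1]{DFH24+} (the polynomial \emph{lower} bound for sub-interval resistances) and \cite[Proposition 2.3]{BDG20} (to localize energy via the dual potential picture) must be combined with a union bound over the $O(n^2)$ choices of $(i,j)$; the union bound is affordable only because the lower-bound estimates from \cite{DFH24+} come with stretched-exponential tails, so I would need to check that the tail quality stated there is strong enough to beat the polynomial number of endpoint pairs.
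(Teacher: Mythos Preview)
Your proposal identifies the right ingredients (the variational formula from \cite{BDG20} and the polynomial lower bound from \cite{DFH24+}) but misses the paper's organizing simplification and introduces a circular dependency.

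The paper's proof of the proposition itself is a two-liner once a prior lemma is in hand: by Rayleigh monotonicity (removing edges only raises resistance), $R_{[0,n)}(i,j)\le R_{[i,j]}(i,j)$; by translation invariance, $\mathds{E}[R_{[i,j]}(i,j)]=\mathds{E}[R_{[0,j-i]}(0,j-i)]$. So everything reduces to a pure \emph{scale comparison}: $\mathds{E}[R_{[0,n')}(0,n'-1)]\le C\,\mathds{E}[R_{[0,n)}(0,n-1)]$ for all $n'\le n$. That is Lemma \ref{R-renorm}, and the real work is there (via Proposition \ref{propR-renorm}): renormalize $[0,mn)$ into $n$ blocks of length $m$, show each block is ``very good'' (internal energy $\gtrsim m^\delta$) with overwhelming probability using Lemma \ref{Lem-RNtail-1}, and then use the cutset variational formula (Lemma \ref{vc-R}) to transport the optimal cutset family on the renormalized graph $G_n$ to the block-weighted network $G'_n$, obtaining $\mathds{E}[R_{[0,mn)}(i,j)]\gtrsim\mathds{E}[R_{[0,n)}(i',j')]$. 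No surgery at the specific endpoints $i,j$ is ever attempted, and no union bound over the $O(n^2)$ pairs is needed.

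Two concrete gaps in your route. First, you invoke Proposition \ref{weak-supermult} to control the sub-scale, but in the paper's logical order that proposition is proved \emph{after} Proposition \ref{R(0,n)main} and explicitly uses it in the last line of its proof, so the dependency is circular. Second, your ``restrict the optimal $0\to(n{-}1)$ flow and localize energy between $i$ and $j$'' step does not produce an upper bound on $R_{[0,n)}(i,j)$: even granting that a positive fraction of the energy sits in $[i,j]$, that controls a block-to-block crossing resistance from below, not the point-to-point resistance $R_{[0,n)}(i,j)$ from above (in a long-range model the optimal $0\to(n{-}1)$ flow need not pass through the vertices $i$ or $j$ at all). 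The monotonicity step $R_{[0,n)}(i,j)\le R_{[i,j]}(i,j)$ is exactly what collapses the problem to a clean scale comparison and makes the endpoint geometry irrelevant.
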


The main input for proving Proposition \ref{R(0,n)main} is to establish that the effective resistances on a small-scale graph can be controlled by the corresponding effective resistances on a large-scale graph, as outlined below.
In fact, we just present a concise version here; however, in the proof of Proposition  \ref{R(0,n)main}, we will require a slightly stronger version (see Lemma \ref{R-renorm} below).

\begin{proposition}\label{propR-renorm}
For $\beta>0$, there exists a constant $C_2<\infty$ {\rm(}depending only on $\beta${\rm)} such that the following holds. For all $m,n\in \mathds{N}$ and all $i,j\in [0,mn)$, denote $i'=\lfloor i/m\rfloor$ and $j'=\lfloor j/m\rfloor$. Then
\begin{equation}\label{Rn-Rmn}
\mathds{E}\left[R_{[0,n)}(i',j')\right]\leq C_2\mathds{E}\left[R_{[0,mn)}(i,j)\right].
\end{equation}
\end{proposition}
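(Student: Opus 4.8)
The plan is to deduce Proposition~\ref{propR-renorm} by comparing a unit flow in the renormalized network $G$ (where intervals $I_k=[km,(k+1)m)$ are contracted to vertices $\varpi(k)$) with a suitably constructed unit flow in the fine network on $[0,mn)$. The natural direction is the reverse of what appears in Section~\ref{sect-submult}: there one \emph{lifted} a flow on $G$ to a flow on the fine graph, whereas here I want to \emph{project} a near-optimal unit flow $f$ from $i$ to $j$ in $[0,mn)$ down to a unit flow on $G$ from $\varpi(i')$ to $\varpi(j')$. Concretely, given a unit flow $f$ from $i$ to $j$ supported on $\mathcal{E}_{[0,mn)^2}$, define for each pair $k\neq l$ with $\varpi(k)\sim\varpi(l)$ in $G$ the contracted flow value $g_{\varpi(k)\varpi(l)}:=\sum_{x\in I_k,\,y\in I_l} f_{xy}$, i.e.\ the net flow that $f$ sends from the block $I_k$ to the block $I_l$. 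One checks directly that $g$ is an antisymmetric flow on $G$, that $g$ is supported on $E_{V_n\times V_n}$, and that conservation of $f$ at each vertex inside a block (together with the source/sink being inside $I_{i'}$ and $I_{j'}$) forces $\sum_l g_{\varpi(k)\varpi(l)}=0$ for $k\neq i',j'$ and $=\pm1$ for $k=i',j'$; hence $g$ is a unit flow from $\varpi(i')$ to $\varpi(j')$ restricted to $V_n$.

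The next step is the energy comparison. For a single edge $\langle\varpi(k),\varpi(l)\rangle\in E$, let $\mathcal{E}_{k\to l}=\{\langle x,y\rangle\in\mathcal{E}:x\in I_k,y\in I_l\}$; then $g_{\varpi(k)\varpi(l)}^2=\bigl(\sum_{\langle x,y\rangle\in\mathcal{E}_{k\to l}} f_{xy}\bigr)^2\le \#\mathcal{E}_{k\to l}\cdot\sum_{\langle x,y\rangle\in\mathcal{E}_{k\to l}} f_{xy}^2$ by Cauchy--Schwarz. Therefore
\begin{equation*}
R^G_{V_n}(\varpi(i'),\varpi(j'))\le \tfrac12\sum_{\varpi(k)\sim\varpi(l)} g_{\varpi(k)\varpi(l)}^2 \le \Bigl(\max_{\varpi(k)\sim\varpi(l)}\#\mathcal{E}_{k\to l}\Bigr)\cdot \tfrac12\sum_{x\sim y} f_{xy}^2,
\end{equation*}
and optimizing over $f$ gives, on the event under consideration, $R^G_{V_n}(\varpi(i'),\varpi(j'))\le M\cdot R_{[0,mn)}(i,j)$ where $M:=\max_{k\neq l:\,\varpi(k)\sim\varpi(l),\ \mathcal{E}_{k\to l}\cap(\text{relevant edges})}\#\mathcal{E}_{k\to l}$ is the largest number of fine edges realizing a given renormalized edge among those carrying flow. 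Since $\mathbb{E}[R_{[0,n)}(i',j')]=\mathbb{E}[R^G_{V_n}(\varpi(i'),\varpi(j'))]$ by self-similarity, it remains to take expectations and control the correlation between $M$ and $R_{[0,mn)}(i,j)$.

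The main obstacle is precisely this last point: $M$ is not bounded, and worse, it is not independent of the fine resistance. I would handle it by only counting multiplicities for \emph{long-range} renormalized edges used by the flow, noting that each block has exactly $m$ nearest-neighbor-type internal structure and that between two blocks $I_k,I_l$ with $|k-l|\ge 2$ the number of potential edges is at most $m^2$, with the expected number being $O(m^2/(|k-l|m)^2)=O(|k-l|^{-2})$ summably small; more carefully, I would split the edges of $f$ into those internal to some block, those between adjacent blocks, and those between far blocks, bound the far-block contribution to the renormalized energy by a Cauchy--Schwarz step with weights tuned to $\mathbb{E}[\#\mathcal{E}_{k\to l}]$ rather than the worst case, and use the independence of $\mathcal{E}_{k\to l}$ from $\widehat{\mathcal{E}}$-conditioned internal flows as in Section~\ref{sect-submult}. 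The cleanest route, and the one I expect the authors take, is to prove the slightly stronger Lemma~\ref{R-renorm} that keeps track of the identities of the source/sink blocks and of which renormalized edges carry flow, then integrate out the multiplicities using that $\mathbb{E}[(\#\mathcal{E}_{k\to l})^p]\le C_p$ uniformly for adjacent blocks and decays for far blocks, so that the resulting constant $C_2$ depends only on $\beta$ and not on $m,n$.
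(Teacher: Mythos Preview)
Your projection $g_{\varpi(k)\varpi(l)}=\sum_{x\in I_k,y\in I_l}f_{xy}$ is correct and is indeed the starting point of the paper's argument (Lemma~\ref{lemR-R^G'}). The gap is in the energy comparison. Your Cauchy--Schwarz step loses a factor $\#\mathcal{E}_{k\to l}$, and there is no way to ``integrate out'' this factor: the optimal flow $f$ depends on \emph{all} edges of $[0,mn)$, so the multiplicities are genuinely correlated with $R_{[0,mn)}(i,j)$, and the conditioning trick from Section~\ref{sect-submult} does not separate them. Your guess that the authors prove Lemma~\ref{R-renorm} first is also backwards: that lemma is deduced from Proposition~\ref{propR-renorm}, not the other way around.

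The paper avoids edge multiplicities altogether by routing the comparison through the \emph{internal} block resistances. It introduces a second coarse network $G'_n$ on $V_n$ in which each edge $\langle\varpi(k),\varpi(l)\rangle$ carries resistance $r'_e=\min\{R_{I_k},R_{I_l}\}$, where $R_{I_k}$ is the optimal internal energy of the block $I_k$ defined in \eqref{def-RIi}. With this choice, the same projected flow $g$ gives directly $R_{[0,mn)}(x,y)\ge R^{G'}_{V_n}(\varpi(i'),\varpi(j'))$ whenever every block is $(\delta,\alpha)$-very good (Lemma~\ref{lemR-R^G'}); no multiplicity ever enters. The comparison between $R^{G'}$ and $R^{G}$ is then done via the cutset variational formula (Lemma~\ref{vc-R}): the optimal cutset decomposition for $G_n$ is reused in $G'_n$ with conductances rescaled by $r'_e$, and on the high-probability event that all $R_{I_k}\ge e^{-3C_4}m^{1/\log n}$ (which uses Lemma~\ref{Lem-RNtail-1} from \cite{DFH24+} through the very-good-interval estimates of Section~\ref{est-good}) this yields $R^{G'}\ge c\,R^{G}$. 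The complementary event has probability $\le n^{-2}$ and is absorbed using the crude bound $R^{G}_{V_n}\le n$. The missing idea in your proposal is precisely this use of the a priori lower bound on internal block resistances in place of any control on inter-block edge counts.
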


Most of this section is dedicated to the proof of Proposition \ref{propR-renorm}. The primary method for this proof involves renormalization and a variational characterization of effective resistance in \cite{BDG20}.  The following lemma provides the necessary lower bound estimates for resistances, which represents a slightly stronger version of \cite[Theorem 1.1]{DFH24+}.

\begin{lemma}\label{Lem-RNtail-1}
For all $\beta>0$, there exist constants $0<c_*<C_*<\infty$ and $\delta^*=\delta^*(\beta)>0$ {\rm(}all depending only on $\beta${\rm)} such that the following holds. For any $\varepsilon\in (0,1/4)$, $\delta\in (0,\delta^*]$ and any $N\in \mathds{N}$,
\begin{equation}\label{est-RNhat}
\mathds{P}\left[\widehat{R}_N\geq c_*\varepsilon^{C_*\delta}N^{\delta}\right]\geq 1-\varepsilon,\quad  \mathds{P}\left[R(0,[-N,N]^c)\geq c_*\varepsilon^{C_*\delta}N^{\delta}\right]\geq 1-\varepsilon,
\end{equation}
and
\begin{equation*}\label{R-box}
\mathds{P}\left[R([-N,N],[-2N,2N]^c)\geq c_*\varepsilon^{C_*\delta}N^{\delta}\ | \mathcal{E}_{[-N,N]\times [-2N,2N]^c}=\emptyset\right]\geq 1-\varepsilon,
\end{equation*}
where
\begin{equation*}
\begin{aligned}
\widehat{R}_N&:=\inf\left\{\frac{1}{2}\sum_{u\sim v}f_{uv}^2:\ f\text{ is a unit flow from $(-\infty,0]$ to $(N,+\infty)$}\right.\\
&\quad \quad \quad \quad \quad \quad \quad \quad\quad \quad \text{and $f_{uv}=0$ for all }\langle u,v\rangle \in \mathcal{E}_{(-\infty,0]\times (N,+\infty)}\Bigg\}.
\end{aligned}
\end{equation*}
\end{lemma}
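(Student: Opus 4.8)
\textbf{Proof strategy for Lemma \ref{Lem-RNtail-1}.}

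The plan is to deduce all three estimates from the single polynomial lower bound already available in \cite[Theorem 1.1]{DFH24+}, which asserts that for some $\delta^* = \delta^*(\beta) > 0$ and some constant $c_0 > 0$ one has $\mathds{P}[R(0,[-N,N]^c) \geq c_0 N^{\delta}] \geq 1/2$ (say) for all $\delta \in (0,\delta^*]$ and all $N$; the enhancement is to upgrade the fixed probability $1/2$ to a probability $1-\varepsilon$ at the cost of only a polynomial factor $\varepsilon^{C_*\delta}$ in the bound. First I would fix the notation and observe that the three quantities $\widehat R_N$, $R(0,[-N,N]^c)$, and $R([-N,N],[-2N,2N]^c)$ (conditioned on $\mathcal{E}_{[-N,N]\times[-2N,2N]^c}=\emptyset$) are all, up to universal multiplicative constants and conditioning, comparable to $\Lambda(N)$ via the results promised in Section~\ref{sect-severaltype}; alternatively, and more self-containedly, I would run the amplification argument directly on each of the three and note the proofs are identical in structure. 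So it suffices to prove the amplification for, say, $R(0,[-N,N]^c)$.

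The key step is a multi-scale (concatenation) argument. Write $N = \prod_{i} m_i$ as a product of $k \asymp \log(1/\varepsilon)/\log m$ blocks each of size $m$, where $m = m(\varepsilon,\delta)$ is chosen appropriately. By submultiplicativity/supermultiplicativity-type reasoning on nested scales, or more directly by the series law for resistances in a one-dimensional-like structure, the event $\{R(0,[-N,N]^c) \text{ small}\}$ forces the resistance to be small at \emph{every} one of the $k$ intermediate scales $[-N/m^j, N/m^j]^c$ simultaneously. Since the model is scale-invariant (self-similar under the renormalization $I_i \mapsto \varpi(i)$ described in Section~\ref{proof-submult}), the resistances at the distinct dyadic-like scales are — after removing the edges interior to each block, which only helps — dominated by independent copies of the single-scale event. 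Each single-scale event $\{R \geq c_0 m^{\delta}\}$ has probability at least $1/2$ by \cite{DFH24+}, so the probability that it fails at \emph{all} $k$ scales is at most $2^{-k} \leq \varepsilon^{c_1}$ for a suitable $c_1 = c_1(\delta)$; and on the complementary event the resistances multiply (series law across scales), giving $R \geq c_0^k m^{k\delta} \asymp (\text{const})^{k} N^{\delta}$. Tuning $m$ and $k$ so that $(\text{const})^{k}$ absorbs into $\varepsilon^{C_*\delta}$ and the failure probability is $\leq \varepsilon$ yields \eqref{est-RNhat}.

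The main obstacle I anticipate is making the ``resistances at different scales are essentially independent and multiply'' step rigorous. Resistance is not additive the way graph distance is: a unit flow from $0$ to $[-N,N]^c$ need not pass through a single vertex at each intermediate scale, so one cannot naively invoke the series law. The correct tool is the variational lower bound for effective resistance from \cite[Proposition 2.3]{BDG20} (cited as the main input for Proposition~\ref{propR-renorm}), which lets one lower-bound $R(0,[-N,N]^c)$ by summing contributions extracted from disjoint annular regions $A_j := [-N/m^{j-1},N/m^{j-1}]^c \setminus [-N/m^{j},N/m^{j}]^c$; the contributions from distinct $A_j$ depend on disjoint edge sets (after discarding the long edges that cross several annuli, which is where a small-probability bad event must be controlled — this is precisely the role of the conditioning $\mathcal{E}_{[-N,N]\times[-2N,2N]^c}=\emptyset$ in the third estimate), hence are genuinely independent, and each is bounded below by an independent copy of the \cite{DFH24+} event. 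I would therefore: (i) set up the annular decomposition and the variational bound; (ii) bound the probability that some edge bridges too many annuli (a union bound using $\sum_j p_{i,j} \asymp$ const from \eqref{LRP}); (iii) on the good event, apply \cite{DFH24+} independently in each annulus and a large-deviation estimate for the number of ``bad'' annuli where the lower bound fails; (iv) combine to get that at least a constant fraction of the $k$ annuli contribute $\gtrsim m^{\delta}$ each, except on an event of probability $\leq \varepsilon$, and sum. The same argument, with $[-N,N]^c$ replaced by $(N,+\infty)$ and the half-line flow, handles $\widehat R_N$, and with the box-to-box setup handles the third inequality.
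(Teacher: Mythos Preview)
The paper's proof is completely different from your proposal, and your approach has a genuine quantitative gap.

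\textbf{What the paper actually does.} The paper does \emph{not} amplify a fixed-probability bound. It simply revisits the multi-scale proof of \cite[Theorem 3.1]{DFH24+} and tracks the $\varepsilon$-dependence of the internal parameters there: the constants $M_0,M_1$ in that proof can each be chosen as $\widetilde C_i\log(1/\varepsilon)$, and once $M=\widetilde C\log(1/\varepsilon)$ is plugged into the bound $\widehat R_N\ge \widetilde c_1 M^{-1}\exp\{\delta^*(n-2(L+1)M)\}$ (for $N=2^n$), the factor $\exp\{-2(L+1)\delta^* M\}$ is exactly $\varepsilon^{C_*\delta^*}$. The bounds for $R(0,[-N,N]^c)$ and the box-to-box resistance then follow from \cite[(4.7) and Lemma 4.3]{DFH24+}. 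No bootstrap, no new annular decomposition.

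\textbf{Problems with your approach.} First, a circularity: you suggest invoking Section~\ref{sect-severaltype} to transfer between the three quantities, but those results rely on Proposition~\ref{weak-supermult}, whose proof (Section~\ref{sect-cg}) uses Lemma~\ref{Lem-RNtail-1} through Remark~\ref{rem-alpha}. Second, a slip: you write ``the resistances multiply (series law across scales), giving $R\ge c_0^k m^{k\delta}$''. The series law \emph{adds} resistances; there is no multiplicative law here, and with nested annuli at radii $N,N/m,\dots,N/m^K$ the sum is dominated by the outermost term, not a product.

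The main issue, however, is the one you flag as the obstacle and do not resolve. To run your amplification you need $K\asymp\log(1/\varepsilon)$ annuli whose ``unbridged and good'' events are (close to) independent. But an edge bridging annulus $i$ (from $[-r_{i-1},r_{i-1}]$ to $[-r_i,r_i]^c$) has probability $\asymp\beta/\rho$ when the ratio is $r_i/r_{i-1}=\rho$; requiring all $K$ annuli to be unbridged with probability $\ge 1-\varepsilon$ forces $\rho\gtrsim K/\varepsilon$, and then the inner radius $N/\rho^K$ satisfies $\rho^K\asymp(\log(1/\varepsilon)/\varepsilon)^{C\log(1/\varepsilon)}$, so the lower bound you extract is $N^\delta\cdot\varepsilon^{C\delta\log(1/\varepsilon)}$ rather than $N^\delta\cdot\varepsilon^{C_*\delta}$. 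The variational formula from \cite{BDG20} does not fix this: it lets you split the energy into cutset contributions, but the same long edges still prevent the cutset events in different annuli from being independent. A version of your argument does appear later in the paper (proof of Proposition~\ref{R-Lambda}, with annuli of ratio $K^2$), but there only a qualitative $o(1)$ probability is needed; the precise $\varepsilon^{C_*\delta}$ form required here, which is used downstream in Remark~\ref{rem-alpha} and the coarse-graining of Section~\ref{sect-cg}, seems out of reach of the black-box amplification.
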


\begin{proof}
Fix $\varepsilon \in (0,1/4)$.
We begin by considering the case where $N=2^n$ for some $n\in \mathds{N}$.
From \cite[Proof of Theorem 3.1]{DFH24+}, we claim that there exist constants $\widetilde{c}_1=\widetilde{c}_1(\beta)>0$, $\delta^*=\delta^*(\beta)>0$ and $L=L(\beta)>0$ (all depending only on $\beta$) such that
\begin{equation}\label{DFH24-Thm3.1}
\mathds{P}\left[\widehat{R}_N\geq \widetilde{c}_1M^{-1}\e^{\delta^*(n-2(L+1)M)}\right]\geq 1-\varepsilon,
\end{equation}
where $M=\widetilde{C}\log(1/\varepsilon)$ for some constant $\widetilde{C}=\widetilde{C}(\beta)<\infty$ depending only on $\beta$.
In order to see this, from \cite[(3.25)]{DFH24+}, we find that the parameter $M_0$ in \cite[Proof of Theorem 3.1]{DFH24+} can be taken as $\widetilde{C}_1\log(1/\varepsilon)$ for some constant $\widetilde{C}_1=\widetilde{C}_1(\beta)<\infty$.
Additionally, according to \cite[(3.26)]{DFH24+} and \cite[Proof of Lemma 3.12]{DFH24+}, the parameter $M_1$ in \cite[Proof of Theorem 3.1]{DFH24+} can similarly be taken as $\widetilde{C}_2\log(1/\varepsilon)$ for some constant $\widetilde{C}_2=\widetilde{C}_2(\beta)<\infty$.
Thus, we can choose $M=\max\{M_0,M_1\}=\max\{\widetilde{C}_1,\widetilde{C}_2\}\log(1/\varepsilon):=\widetilde{C}\log(1/\varepsilon)$ in \cite[Proof of Theorem 3.1]{DFH24+}, allowing us to establish  \eqref{DFH24-Thm3.1}.

We now let $C_*=4(L+1)\widetilde{C}<\infty$ (depending only on $\beta$). Then from \eqref{DFH24-Thm3.1} we can see that there exists a constant $\widetilde{c}_2>0$ (depending only on $\beta$) such that for any $\delta\in (0,\delta^*]$,
\begin{equation*}
\mathds{P}\left[\widehat{R}_N\geq \widetilde{c}_2\varepsilon^{C_*\delta}N^{\delta}\right]\geq 1-\varepsilon,
\end{equation*}
which implies the desired statement for $\widehat{R}_N$ when $N=2^n$. Using the similar arguments as in the last paragraph in \cite[Proof of Theorem 3.1]{DFH24+}, we can extend the  desired result for $\widehat{R}_N$ to general $N\geq 1$.

Finally, the results concerning $R(0,[-N,N]^c)$ and $R([-N,N],[-2N,2N]^c)$ can be obtained by combining the above estimate for $\widehat{R}_N$ with \cite[(4.7)]{DFH24+} and \cite[Lemma 4.3]{DFH24+}, respectively.
\end{proof}

For the exponent $\delta^*(\beta)$ in Lemma \ref{Lem-RNtail-1}, we add the following remark.
\begin{remark}\label{rem-delta*}
(1) From \eqref{LRP}, we observe that the edge set $\mathcal{E}$ of the $\beta$-LRP model is nondecreasing in distribution with respect with $\beta$. Combining this with the monotonicity of the effective resistance in relation to the edges, we can see that the exponent $\delta^*(\beta)$ in Lemma \ref{Lem-RNtail-1} can be chosen as a decreasing function of $\beta$.

(2) It is worth emphasizing that Lemma \ref{Lem-RNtail-1} implies that there exists a constant $c>0$ (depending only on $\beta$) such that for all $N\in \mathds{N}$,
\begin{equation*}
\mathds{E}\left[\widehat{R}_N\right]\geq cN^{\delta^*},\quad \mathds{E}\left[R(0,[-N,N]^c)\right]\geq cN^{\delta^*},
\end{equation*}
and
\begin{equation*}
\mathds{E}\left[R([-N,N],[-2N,2N]^c)\ | \mathcal{E}_{[-N,N]\times [-2N,2N]^c}=\emptyset\right]\geq cN^{\delta^*}.
\end{equation*}
\end{remark}

In the remaining part of this section, we will perform preliminary work in Section \ref{est-good}.
Then, in Section \ref{sect-vf} we will review a variational characterization of effective resistance from \cite[Proposition 2.3]{BDG20}. Additionally, Sections \ref{proof-weaksup} and \ref{sect-R0n} are devoted to the proofs of Propositions \ref{propR-renorm} and \ref{R(0,n)main}, respectively.

\subsection{Good intervals and associated estimates} \label{est-good}

Throughout this subsection, we will fix $m,n\in \mathds{N}$ to be sufficiently large.
Recall that $\delta^*>0$ is the exponent from Lemma \ref{Lem-RNtail-1}.
For convenience, we denote the interval $[im,(i+1)m)$ in $\mathds{Z}$ as $I_i$, and then define
\begin{equation}\label{def-Ki-1}
\mathcal{K}_i=\left\{u\in I_i:\ \text{there exists $v\in [(i-1)m,(i+2)m)^c$ such that }\langle u,v\rangle\in \mathcal{E}\right\}.
\end{equation}
We will also let $R_{I_i}(\cdot,\cdot)$ represent the effective resistance within the interval $I_i$ as defined in \eqref{def-Rinsd}.

For any two intervals $J_1=[x_1,x_2],J_2=[x_3,x_4]\subset \mathds{R}$ with $-\infty\leq x_1<x_2<x_3<x_4\leq +\infty$,
we define $\widehat{R}(J_1,J_2)$ as the effective resistance generated by unit flows (confined to $[x_1,x_4]$) passing through the interval $[x_2,x_3]$, that is,
\begin{equation}\label{def-Rhat}
\begin{aligned}
\widehat{R}(J_1,J_2)&=\inf\left\{\frac{1}{2}\sum_{u\sim v}f^2_{uv}:\ f\ \text{is a unit flow from $J_1$ to }J_2\right.\\
&\quad\quad \quad \quad \quad \quad \quad\quad \text{and $f_{uv}=0$ for all }\langle u,v\rangle\in \mathcal{E}_{(-\infty,x_2]\times [x_3,+\infty)}\cup \mathcal{E}_{[x_1,x_4]^c\times(x_2,x_3)}\Bigg\}.
\end{aligned}
\end{equation}


We now define very good intervals, which states that the energy generated by any unit flow within the interval is not too low.

\begin{definition}\label{def-alphagood-1}
For $i\in \mathds{Z}$, $\delta\in (0,\delta^*]$ and $\alpha=(\alpha_1,\alpha_2)\in (0,1)^2$, we say the interval $I_i$ is \textit{$(\delta,\alpha)$-very good}  if it satisfies the following conditions.
\begin{itemize}
\item[(1)]For any two different edges $\langle u_1,v_1\rangle,\langle u_2,v_2\rangle\in \mathcal{E}$ with $v_1,u_2\in I_i$, $u_1\in I_i^c$ and $v_2\in [(i-1)m,(i+2)m)^c$, we have $|v_1-u_2|\geq \alpha_1 m$.
  \medskip

\item[(2)] The internal (optimal) energy
\begin{equation}\label{def-RIi}
R_{I_i}:=\inf_{\theta}\left(\sum_{u\in \mathcal{K}_i}\theta_u^2R_{I_i}\left(u,B_{\alpha_1m}(u)^c\right)+\left(1-\sum_{u\in \mathcal{K}_i}\theta_u\right)^2\widehat{R}(I_{i-1},I_{i+1})\right) \geq \alpha_2 m^{\delta},
\end{equation}
where the infimum is taken over $\theta=\{\theta_u\}_{u\in \mathcal{K}_i}$ with $\theta_u\geq 0$ and $\sum_{u\in \mathcal{K}_i}\theta_u\leq 1$ (see Figure \ref{figure-RIi} for an illustration).
\end{itemize}
\end{definition}

\begin{figure}[htbp]
\centering
\includegraphics[scale=0.7]{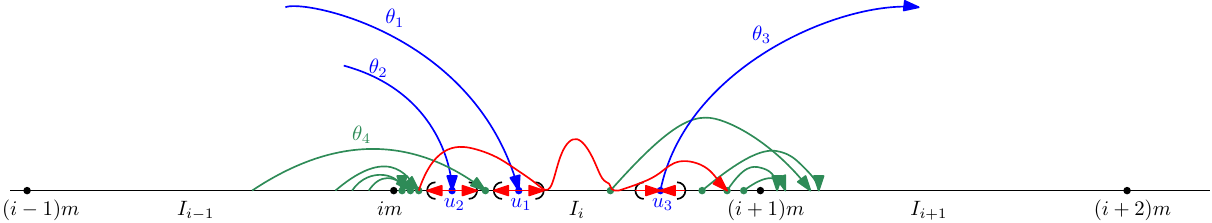}
\caption{The illustration for definition of $R_{I_i}$ in \eqref{def-RIi}. The blue curves represent the long edges in $\mathcal{E}_{I_i\times [(i-1)m,(i+2)m)^c}$, while the green curves represent the edges in $\mathcal{E}_{I_i\times (I_{i-1}\cup I_{i+1})}$. When a unit flow $\theta$ enters the interval $I_i$ and then flows out, there are two possible scenarios: either some subflows of $\theta$ enter through the blue long edges (as $\theta_1$ and $\theta_2$) or exit through them (as $\theta_3$), or some subflows of $\theta$ enter and exit through the green edges (as $\theta_4$). When the flow enters or exits through the blue long edges, it must generate energies $\theta^2_kR(u_k,B_{\alpha_1 m}(u_k)^c)$, $k=1,2,3$ in the vicinity of the endpoints of the long edges (red lines). Additionally, when the flow enters and exits through the green edges, it must generate energy in the interval $I_i$ given by $\theta^2_4\widehat{R}(I_{i-1},I_{i+1})$, as depicted by the red curves.
}
\label{figure-RIi}
\end{figure}

The aim of this section is to establish a lower bound on the probability for an interval to be very good.

\begin{proposition}\label{prop-alphagood-1}
For all $\beta>0$, $\delta\in (0,\delta^*]$ and $\varepsilon\in (0,1)$, there exists $\alpha=(\alpha_1,\alpha_2)\in (0,1)^2$ {\rm(} $\alpha_1$ depends only on $\beta$ and $\varepsilon$, while $\alpha_2$ depends only on $\beta$, $\delta$ and $\varepsilon${\rm)} such that for all $i\in \mathds{Z}$,
\begin{equation*}
\mathds{P}\left[I_i\text{ is $(\delta,\alpha)$-very good}\right]\geq 1-\varepsilon.
\end{equation*}
\end{proposition}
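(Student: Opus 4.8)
The plan is to control the two defining conditions of a $(\delta,\alpha)$-very good interval separately and then take a union bound. For condition (1) — that any two edges $\langle u_1,v_1\rangle, \langle u_2,v_2\rangle\in\mathcal{E}$ with $v_1,u_2\in I_i$, $u_1\in I_i^c$, $v_2\in[(i-1)m,(i+2)m)^c$ satisfy $|v_1-u_2|\geq\alpha_1 m$ — I would use the summability of the connection probabilities. By translation invariance we may take $i=0$. The expected number of \emph{ordered pairs} of such edges with $|v_1-u_2|<\alpha_1 m$ is bounded by a sum of $p_{u_1,v_1}\,p_{u_2,v_2}$ over the relevant index ranges; since $p_{x,y}\asymp\beta|x-y|^{-2}$ and both $v_1$ and $u_2$ lie in $I_0=[0,m)$ with $|v_1-u_2|<\alpha_1 m$, while $u_1\le -1$ and $v_2\ge 2m$ (or $v_2<-m$), the sum over $u_1$ of $p_{u_1,v_1}$ is $O(1/\mathrm{dist}(v_1,I_0^c))$ and similarly for $v_2$, and summing over the $O(\alpha_1 m^2)$ choices of the pair $(v_1,u_2)$ at mutual distance $<\alpha_1 m$ one gets a bound of the form $C(\beta)\alpha_1$ (the two $m$'s from the pair count cancelling against the two $1/m$-type decays from the long edges, up to logarithmic factors which one absorbs by shrinking $\alpha_1$ slightly). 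Hence choosing $\alpha_1=\alpha_1(\beta,\varepsilon)$ small enough makes $\mathds{P}[\text{condition (1) fails}]\le C(\beta)\alpha_1\le\varepsilon/2$ by Markov's inequality. One should double-check the borderline logarithmic divergence when $v_1$ is within $O(1)$ of the boundary of $I_0$, but those few boundary vertices contribute a bounded amount and can be handled by enlarging the constant.

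For condition (2), the point is that the internal optimal energy $R_{I_i}$ in \eqref{def-RIi} dominates each of the constituent resistances $R_{I_i}(u, B_{\alpha_1 m}(u)^c)$ and $\widehat{R}(I_{i-1},I_{i+1})$ in the sense that, roughly, $R_{I_i}$ behaves like a harmonic-mean / series combination, so it suffices to show each of these ingredient resistances is at least of order $m^\delta$ with high probability. Concretely: $\widehat{R}(I_{i-1},I_{i+1})$ is (by the translation/scaling invariance of the model and the definition \eqref{def-Rhat}) comparable to the box-to-box resistance $R([-m,m],[-2m,2m]^c)$ conditioned on the absence of direct long edges, and $R_{I_i}(u,B_{\alpha_1 m}(u)^c)$ is a point-to-box resistance at scale $\alpha_1 m$; both are bounded below by $c_*\varepsilon'^{C_*\delta}(\alpha_1 m)^{\delta}$ with probability $\ge 1-\varepsilon'$ by Lemma \ref{Lem-RNtail-1} (applied with $N\asymp\alpha_1 m$, and using $\delta\le\delta^*$). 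The only subtlety is that condition (2) quantifies over \emph{all} $u\in\mathcal{K}_i$ simultaneously, so a naive union bound over $u$ costs a factor $\#\mathcal{K}_i$ which is not bounded. To get around this I would argue at the level of the variational quantity \eqref{def-RIi} directly: given any admissible $\theta$, either $1-\sum_u\theta_u\ge 1/2$, in which case the $\widehat{R}(I_{i-1},I_{i+1})$ term already gives $\ge\frac14\widehat{R}(I_{i-1},I_{i+1})$, or $\sum_u\theta_u\ge 1/2$, in which case by Cauchy–Schwarz $\sum_u\theta_u^2 R_{I_i}(u,B_{\alpha_1 m}(u)^c)\ge (\sum_u\theta_u)^2 / \sum_u R_{I_i}(u,B_{\alpha_1 m}(u)^c)^{-1}\ge \frac14\big(\sum_u R_{I_i}(u,B_{\alpha_1 m}(u)^c)^{-1}\big)^{-1}$. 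Thus it suffices to bound $\sum_{u\in\mathcal{K}_i} R_{I_i}(u,B_{\alpha_1 m}(u)^c)^{-1}$ from above by $C m^{-\delta}$ with high probability; taking expectations, $\mathds{E}\big[\sum_{u\in\mathcal{K}_i}R_{I_i}(u,B_{\alpha_1 m}(u)^c)^{-1}\big]\le \mathds{E}[\#\mathcal{K}_i]\cdot\max_u\mathds{E}[R_{I_i}(u,B_{\alpha_1 m}(u)^c)^{-1}]$, where $\mathds{E}[\#\mathcal{K}_i]=O(1)$ by the summability of $\{p_{x,y}\}$ and $\mathds{E}[R_{I_i}(u,B_{\alpha_1 m}(u)^c)^{-1}]\le C(\alpha_1 m)^{-\delta}$ — the latter because $R_{I_i}(u,B_{\alpha_1 m}(u)^c)\ge 1$ always (it contains at least one unit edge), so its reciprocal is bounded by $1$, and combined with the lower tail from Lemma \ref{Lem-RNtail-1} one gets a bound on the reciprocal's expectation of the right order after optimizing $\varepsilon'$. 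Then Markov gives condition (2) with probability $\ge 1-\varepsilon/2$ for a suitable $\alpha_2=\alpha_2(\beta,\delta,\varepsilon)$.

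Combining the two estimates via a union bound yields $\mathds{P}[I_i\text{ is }(\delta,\alpha)\text{-very good}]\ge 1-\varepsilon$, with $\alpha_1$ depending only on $(\beta,\varepsilon)$ and $\alpha_2$ on $(\beta,\delta,\varepsilon)$, as claimed. The main obstacle I anticipate is the step for condition (2): one must avoid a union bound over the unbounded set $\mathcal{K}_i$, which forces the reformulation in terms of $\sum_u R_{I_i}(u,\cdot)^{-1}$ and requires a good upper bound on $\mathds{E}[R_{I_i}(u,B_{\alpha_1 m}(u)^c)^{-1}]$; here the crude bound $R_{I_i}(u,\cdot)\ge 1$ must be combined carefully with the polynomial lower tail of Lemma \ref{Lem-RNtail-1} to extract the correct power $m^{-\delta}$, and one must make sure the conditioning implicit in $\widehat{R}$ (no direct long edges between $I_{i-1}$ and $I_{i+1}$) is compatible with the hypotheses of Lemma \ref{Lem-RNtail-1}. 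A secondary issue is the logarithmic correction in the first-moment computation for condition (1), which is harmless but should be acknowledged.
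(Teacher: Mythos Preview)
Your treatment of condition~(1) has a subtle but real $m$-dependence: the expected number of bad edge pairs is not $O(\alpha_1)$ but $O(\alpha_1\log m)$, because $\sum_{v_1\in I_0}1/\mathrm{dist}(v_1,I_0^c)\asymp\log m$ (the ``borderline'' contribution is not from $O(1)$ vertices but from all of $[0,\alpha_1 m)$, giving $\sum_{d=1}^{\alpha_1 m}1/d\asymp\log(\alpha_1 m)$). Since $\alpha_1$ must be chosen independently of $m$, a first moment on pairs cannot close. The paper instead partitions $I_0$ into $1/\alpha_1$ subintervals $J_k$ of length $\alpha_1 m$ and bounds $\mathds P[F_1^c]\le\sum_k\mathds P[A_k]$ where $A_k=\{I_0^c\sim J_k\}\cap\{\widetilde J_k\sim[-m,2m)^c\}$; the point is that the first factor is a \emph{probability} and hence capped at $1$ near the boundary, yielding $\sum_k\mathds P[A_k]\lesssim\alpha_1\sum_{k=0}^{1/\alpha_1}\min(1,1/k)\asymp\alpha_1\log(1/\alpha_1)$, independent of $m$.

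For condition~(2), your harmonic-mean reformulation is correct (in fact $R_{I_i}=(\sum_{u\in\mathcal K_i}R_u^{-1}+\widehat R^{-1})^{-1}$ exactly by the parallel law), but the Markov step does not close. From Lemma~\ref{Lem-RNtail-1} one only has $\mathds P[R_u<s]\lesssim(s/(\alpha_1 m)^\delta)^{1/(C_*\delta)}$, so integrating the layer-cake for $R_u^{-1}\in[0,1]$ gives $\mathds E[R_u^{-1}]=O((\alpha_1 m)^{-\delta})$ \emph{only when} $C_*\delta<1$; for $C_*\delta\ge 1$ one gets at best $O((\alpha_1 m)^{-1/C_*})$ (or a $\log m$ correction at the threshold), and then $\mathds P[\sum_u R_u^{-1}>(\alpha_2 m^\delta)^{-1}]$ blows up with $m$. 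Since no relation $C_*\delta^*<1$ is available, this is a genuine gap. The paper sidesteps it by first introducing the event $\{\#\mathcal K_i\le M\}$, which has probability $\ge 1-e^{-c_1M}$ (Lemma~\ref{lem-good-1}); on this event a plain union bound over at most $M$ points reduces everything to $\mathds P[R_u<\alpha_2' m^\delta]\le e^{-M}/(M+1)$ for each individual $u$, which Lemma~\ref{Lem-RNtail-1} supplies directly, and the infimum over $\theta$ then costs only a harmless factor $(M+1)^{-1}$ absorbed into $\alpha_2$ (Remark~\ref{M-vg}). The missing idea in your sketch is precisely this preliminary truncation of $\#\mathcal K_i$.
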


Before moving on to the proof, we would like to discuss the selection of the parameters $(\alpha_1,\alpha_2)$ in Proposition \ref{prop-alphagood-1}.

\begin{remark}\label{rem-alpha}
From the proof of Proposition \ref{prop-alphagood-1} below, it is evident that for $\delta\in (0,\delta^*]$ and sufficiently small $\varepsilon\in(0,1)$, the parameter $\alpha=(\alpha_1, \alpha_2)$ can be chosen as $\alpha_1=\varepsilon^{C_3}$ and $\alpha_2=\varepsilon^{C_4\delta}$ for some constants $C_3,\ C_4<\infty$ depending only on $\beta$, see \eqref{P-E1-1} and \eqref{def-alpha2} for more details.
\end{remark}

To prove Proposition \ref{prop-alphagood-1}, we will define certain events that are stronger versions of Definition \ref{def-alphagood-1} (see Definition \ref{def-verygood-1} below).
Next, we will establish some basic facts about these events before concluding the proof of Proposition \ref{prop-alphagood-1}.

The first definition is for the number of points in some interval $I_i$ which are incident to long edges. For that, recall that $\mathcal{K}_i$ is defined in \eqref{def-Ki-1}.

\begin{definition}\label{def-good-1}
For $i\in \mathds{Z}$ and $M\in \mathds{N}$, we say the interval $I_i$ is \textit{$M$-good} if
\begin{equation}\label{def-xi-1}
\xi_i:=\#\mathcal{K}_i\leq M.
\end{equation}
\end{definition}

We have the following estimate for the probability of an interval being $M$-good.

\begin{lemma}\label{lem-good-1}
For all $\beta>0$, there exists a constant $c_1=c_1(\beta)>0$ {\rm(}depending only on $\beta${\rm)} such that for all $i\in\mathds{Z}$ and sufficiently large $M\in \mathds{N}$,
$$
\mathds{P}[I_i\text{ is $M$-good}]\geq 1-{\rm e}^{-c_1M}.
$$
\end{lemma}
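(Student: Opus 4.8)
The plan is to bound $\xi_i=\#\mathcal{K}_i$ by a sum of independent indicators and then apply a standard Chernoff-type estimate. First I would note that, by the translation invariance of the $\beta$-LRP model, it suffices to treat $i=0$, so write $I_0=[0,m)$ and $J=[-m,2m)^c$. For each $u\in I_0$ let $X_u=\I_{\{u\sim v\text{ for some }v\in J\}}$; then $\xi_0=\sum_{u\in I_0}X_u$, but these are \emph{not} independent because a single long edge $\langle u,v\rangle$ with both endpoints in $I_0$ is impossible, yet two vertices $u,u'\in I_0$ could in principle be joined to the \emph{same} $v\in J$ — however that causes no dependence since distinct pairs $\langle u,v\rangle$ are independent. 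The genuine point is that the events $\{X_u=1\}$ depend on disjoint sets of potential edges (namely $\{\langle u,v\rangle:v\in J\}$ for different $u$ are disjoint edge-families), hence $\{X_u\}_{u\in I_0}$ \emph{are} independent. So $\xi_0$ is a sum of $m$ independent Bernoulli variables with parameters $q_u=\Pp[X_u=1]=1-\prod_{v\in J}(1-p_{u,v})$.

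Next I would estimate the mean. Using $p_{u,v}\le \beta\int_u^{u+1}\int_v^{v+1}|x-y|^{-2}\,\d x\,\d y$ and $1-\prod(1-p_{u,v})\le\sum_v p_{u,v}$, we get
\begin{equation*}
q_u\le \beta\int_u^{u+1}\int_{[-m,2m)^c}\frac{1}{|x-y|^2}\,\d y\,\d x\le \beta\int_u^{u+1}\Big(\frac1{x-(-m)}+\frac1{2m-x}\Big)\,\d x,
\end{equation*}
and summing over $u\in[0,m)$ yields $\Ee[\xi_0]=\sum_u q_u\le \beta\int_0^m\big(\tfrac1{x+m}+\tfrac1{2m-x}\big)\,\d x=2\beta\log 2=:\mu_0(\beta)<\infty$, a constant independent of $m$. (In fact one should be a touch more careful with the $u$ adjacent to the endpoints of $I_0$, but those contribute $O(1)$ many terms each bounded by $1$, so the bound $\Ee[\xi_0]\le C(\beta)$ for an absolute constant still holds.)

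Finally, since $\xi_0$ is a sum of independent $\{0,1\}$ variables with $\Ee[\xi_0]\le C(\beta)$, a standard Chernoff bound gives, for any $M\ge 2C(\beta)$,
\begin{equation*}
\Pp[\xi_0\ge M]\le \inf_{t>0}e^{-tM}\Ee[e^{t\xi_0}]\le \inf_{t>0}e^{-tM}\exp\big((e^t-1)\Ee[\xi_0]\big)\le e^{-M(\log(M/\Ee[\xi_0])-1)}\le e^{-c_1 M}
\end{equation*}
for a suitable $c_1=c_1(\beta)>0$ and all sufficiently large $M$ (taking $t=\log(M/\Ee[\xi_0])$). Hence $\Pp[I_0\text{ is }M\text{-good}]=\Pp[\xi_0\le M]\ge 1-e^{-c_1M}$, and translation invariance finishes the proof. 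I do not anticipate a serious obstacle here; the only mild subtlety is confirming the independence of $\{X_u\}_{u\in I_0}$ and correctly handling the boundary vertices of $I_0$ in the mean computation, both of which are routine.
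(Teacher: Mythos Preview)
Your proposal is correct and follows essentially the same approach as the paper: reduce to $i=0$ by translation invariance, bound $\mathds{E}[\xi_0]$ by a constant depending only on $\beta$, and apply Chernoff. The paper is terser (it does not explicitly discuss the independence of the $X_u$'s, which you correctly observe follows from the disjointness of the relevant edge families), but the argument is the same.
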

\begin{proof}
Due to the translation invariance of the LRP model, we only need to consider the case where $i=0$. By a simple calculation, we can see that there exists a constant $\widetilde{C}=\widetilde{C}(\beta)<\infty$ (depending only on $\beta$) such that
\begin{equation*}
\begin{aligned}
\mathds{E}[\xi_0]&=\sum_{u\in I_0}\sum_{v\in [-m,2m)^c}\mathds{P}[u\sim v]\\
&=\sum_{u\in I_0}\sum_{v\in [-m,2m)^c}\left[1-\exp\left\{-\int_{u}^{u+1}\int_v^{v+1}\frac{\beta}{|u-v|^2}\d u\d v\right\}\right] \leq \widetilde{C}.
\end{aligned}
\end{equation*}
Combining this with the Chernoff's bound, we obtain that for sufficiently large $M>\widetilde{C}$, there exists a constant $\widetilde{c}=\widetilde{c}(\beta)>0$ (depending only on $\beta$) such that
\begin{equation*}
\mathds{P}[\xi_0>M]\leq \e^{-\widetilde{c}M},
\end{equation*}
which implies the desired result.
\end{proof}

We next introduce a stronger version of very good interval defined in Definition \ref{def-alphagood-1} as follows.

\begin{definition}\label{def-verygood-1}
For $i\in \mathds{Z}$, $M\in\mathds{N}$, $\delta \in (0,\delta^*]$ and $\alpha=(\alpha_1,\alpha_2)\in (0,1)^2$, we say the interval $I_i$ is \textit{$(M,\delta,\alpha)$-very good} if it is $M$-good and also satisfies the following conditions.
\begin{itemize}

\item[(1)] For any two different edges $\langle u_1,v_1\rangle,\langle u_2,v_2\rangle\in \mathcal{E}$ with $v_1,u_2\in I_i$, $u_1\in I_i^c$ and $v_2\in [(i-1)m,(i+2)m)^c$, we have $|v_1-u_2|\geq \alpha_1 m$.
  \medskip

\item[(2)] For any $\langle u,v\rangle\in \mathcal{E}$ with $u\in I_i$ and $v\in [(i-1)m,(i+2)m)^c$, we have
$$
R_{I_i}\left(u,B_{\alpha_1m}(u)^c\right)\geq \alpha_2 m^{\delta}.
$$

\item[(3)] $\widehat{R}(I_{i-1},I_{i+1})\geq \alpha_2m^{\delta}$.
\end{itemize}
\end{definition}

 \begin{remark}\label{M-vg}
Note that if the interval $I_i$ is $(M,\delta,\alpha)$-very good, then it must be a $(\delta,\alpha')$-very good interval, where $\alpha'=(\alpha_1,\alpha_2(M+1)^{-1})$. To see this, by the definitions of $R_{I_i}$ in \eqref{def-RIi} and $M$-good interval in Definition \ref{def-good-1}, we have
\begin{equation*}
\begin{aligned}
R_{I_i}&=\inf_{\theta}\left(\sum_{u\in \mathcal{K}_i}\theta_u^2R_{I_i}\left(u,B_{\alpha_1m}(u)^c\right)+\left(1-\sum_{u\in \mathcal{K}_i}\theta_u\right)^2\widehat{R}(I_{i-1},I_{i+1})\right)\\
&\geq \alpha_2 m^{\delta}\inf_{\theta}\left(\sum_{u\in \mathcal{K}_i}\theta_u^2+\left(1-\sum_{u\in \mathcal{K}_i}\theta_u\right)^2\right)
\geq  \alpha_2(M+1)^{-1} m^{\delta},
\end{aligned}
\end{equation*}
where the infimum is taken over $\theta=\{\theta_u\}_{u\in \mathcal{K}_i}$ with $\theta_u\geq 0$ and $\sum_{u\in \mathcal{K}_i}\theta_u\leq 1$, and in the last inequality we used $\#\mathcal{K}_i\leq M$ (i.e., $I_i$ is $M$-good, see Definition \ref{def-good-1}). This implies Definition \ref{def-alphagood-1} (2) holds for $\alpha'$.
\end{remark}

In the following, we provide a lower bound for the probability of being $(M,\delta,\alpha)$-very good.

\begin{lemma}\label{P-goodint-1}
For all $\beta>0$, $\delta\in (0,\delta^*]$ and sufficiently large $M\in\mathds{N}$, there exists a constant $c_2=c_2(\beta)>0$ {\rm(}depending only on $\beta${\rm)} such that the following holds. There exist constants $\alpha=(\alpha_1,\alpha_2)\in (0,1)^2$ {\rm(}$\alpha_1$ depends only on $\beta$ and $M$, while $\alpha_2$ depends only on $\beta$, $\delta$ and $M${\rm)} such that for all $i\in \mathds{Z}$,
$$
\mathds{P}\left[I_i\ \text{is $(M,\delta,\alpha)$-very good}\ |I_i \text{ is $M$-good}\right]\geq 1-{\rm e}^{-c_2M}.
$$
\end{lemma}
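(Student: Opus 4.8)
\textbf{Proof proposal for Lemma~\ref{P-goodint-1}.}

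The plan is to estimate the conditional probabilities of the three failure events in Definition~\ref{def-verygood-1} separately, then take a union bound, always working on the event that $I_i$ is $M$-good so that at most $M$ points of $I_i$ are incident to long edges. By translation invariance we may take $i=0$, and throughout we condition on the edge set $\mathcal{E}_{I_0\times[-m,2m)^c}$ so that $\mathcal{K}_0$ (hence $\xi_0\le M$) is determined; the internal randomness inside $I_0$ and between $I_{-1},I_0,I_1$ remains available.

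First I would handle condition~(1). Given $\mathcal{K}_0=\{u_1,\dots,u_\xi\}$ with $\xi\le M$, the "incoming'' endpoints $v_1\in I_0$ of edges $\langle u_1,v_1\rangle$ with $u_1\in I_0^c$ form a random subset of $I_0$; by a computation like the one in Lemma~\ref{lem-good-1} their number has bounded expectation, so with probability $\ge 1-\mathrm e^{-c M}$ there are at most $M$ of them. On that event there are at most $M^2$ pairs $(v_1,u_2)$ to worry about, and for each fixed pair the event $|v_1-u_2|<\alpha_1 m$ fails automatically once we simply \emph{discard} realizations where some such pair is too close; more precisely, choosing $\alpha_1=\alpha_1(\beta,M)$ small enough, the probability that \emph{any} of the $\le M^2$ relevant pairs of "near endpoints'' lies within distance $\alpha_1 m$ is $\le M^2\cdot C\alpha_1 \le \mathrm e^{-c_2 M}/3$, using that each endpoint in $I_0$ attached to a long edge, conditioned on its existence, is roughly uniformly spread over $I_0$ (the probabilities $p_{u,v}$ vary by a bounded factor over the relevant ranges). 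This is routine but needs the bounded-multiplicity input.

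Next, conditions~(2) and~(3). For~(3), $\widehat R(I_{-1},I_1)$ is, up to the constraints in \eqref{def-Rhat}, an effective resistance of the type controlled by Lemma~\ref{Lem-RNtail-1} (specifically by $\widehat R_N$ with $N\asymp m$): applying \eqref{est-RNhat} with a suitable $\varepsilon=\varepsilon(M)\asymp \mathrm e^{-c M}$ and the given $\delta\le\delta^*$, we get $\widehat R(I_{-1},I_1)\ge c_*\varepsilon^{C_*\delta} m^\delta =: \alpha_2 m^\delta$ with probability $\ge 1-\mathrm e^{-cM}$, which fixes the scale of $\alpha_2=\alpha_2(\beta,\delta,M)$ as in Remark~\ref{rem-alpha}. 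For~(2), for each of the $\le M$ points $u\in\mathcal{K}_0$, $R_{I_0}(u,B_{\alpha_1 m}(u)^c)$ is again a point-to-box resistance inside an interval of length $\asymp m$ around $u$; once $\alpha_1$ has been fixed in the previous step, $\alpha_1 m\asymp_M m$, so Lemma~\ref{Lem-RNtail-1} (the $R(0,[-N,N]^c)$ statement, with $N\asymp \alpha_1 m$, after a translation and using that edges outside $I_0$ only help monotonically) gives $R_{I_0}(u,B_{\alpha_1 m}(u)^c)\ge c_*\varepsilon^{C_*\delta}(\alpha_1 m)^\delta$ with probability $\ge 1-\varepsilon$ for each fixed $u$; a union bound over the $\le M$ choices of $u$ costs a factor $M$, absorbed into $\varepsilon$, and we shrink $\alpha_2$ by the harmless factor $\alpha_1^{\delta^*}$ so that a single $\alpha_2$ works for both~(2) and~(3). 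Finally, summing the three failure probabilities and the multiplicity-failure probability, each of order $\mathrm e^{-cM}$ for large $M$, yields the claim with $c_2=c_2(\beta)$.

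The main obstacle I anticipate is condition~(1): one must argue that, conditionally on $\mathcal{K}_0$ and on the long-edge structure, the relevant endpoints inside $I_0$ are sufficiently "spread out'' that forcing pairwise separation $\ge\alpha_1 m$ only costs probability $O(M^2\alpha_1)$. This requires a careful bookkeeping of which edges are conditioned on versus free, and the observation that the conditional law of an endpoint of a long edge in $I_0$, given that the edge is present, has density bounded above and below by $\beta$-dependent constants across the $\Theta(m)$ candidate locations; the two-sided control is what makes the $O(\alpha_1)$ bound, uniform in $m$, legitimate. A secondary subtlety is ensuring the order of quantifiers is consistent: $\alpha_1$ must be chosen depending on $M$ (via the $M^2$ pair count) \emph{before} $\alpha_2$, which then depends on $\alpha_1$, $\delta$, and $M$ — exactly the dependency structure asserted in the statement.
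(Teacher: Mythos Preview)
Your treatment of conditions~(2) and~(3) is essentially the paper's argument (Lemma~\ref{P-E3}): apply Lemma~\ref{Lem-RNtail-1} with $\varepsilon\asymp \e^{-M}/(M+1)$ to each of the $\le M$ long-edge endpoints and to $\widehat R(I_{-1},I_1)$, then union bound. That part is fine.

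The gap is in condition~(1). You claim that the set of ``incoming'' endpoints $v_1\in I_0$ of edges $\langle u_1,v_1\rangle$ with $u_1\in I_0^c$ has bounded expectation, ``by a computation like the one in Lemma~\ref{lem-good-1}''. This is false: Lemma~\ref{lem-good-1} bounds the number of edges from $I_0$ to $[-m,2m)^c$, not to $I_0^c$. Edges from $I_0$ to the adjacent intervals $I_{-1}\cup I_1$ are abundant --- the expected number is $\sum_{x\in I_0}\sum_{y\in I_0^c}p_{x,y}\asymp\beta\log m$, diverging with $m$. Hence you cannot reduce to $\le M$ such $v_1$'s, and the ``$M^2$ pairs, each with probability $O(\alpha_1)$'' union bound collapses. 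Your anticipated fix (bounded density of long-edge endpoints) concerns the $u_2$'s, not the $v_1$'s; it does not address the unbounded count of $v_1$'s near $\partial I_0$.

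The paper handles this differently and avoids counting altogether. First, since both $F_1$ and $\{\xi_0\le M\}$ are decreasing in the edge set, FKG gives $\mathds P[F_1^c\mid I_0\text{ is }M\text{-good}]\le\mathds P[F_1^c]$, so the conditioning disappears. Then $I_0$ is partitioned into $1/\alpha_1$ subintervals $J_k$ of length $\alpha_1 m$; on $F_1^c$ some $J_k$ must simultaneously receive an edge from $I_0^c$ \emph{and} have a long edge from its $\alpha_1 m$-neighborhood $\widetilde J_k$ to $[-m,2m)^c$. These two events are independent, with probabilities $\lesssim 1/k$ and $\lesssim\alpha_1$ respectively (the first is large only near the boundary of $I_0$, the second is uniformly small), giving $\mathds P[F_1^c]\lesssim\sum_k \alpha_1/k\lesssim\alpha_1\log(1/\alpha_1)$. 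Taking $\alpha_1=\e^{-M}$ yields the bound. The point is that the two rarities (an incoming edge landing deep inside $I_0$, versus a long edge emanating from near $\partial I_0$) are traded off against each other within each $J_k$, rather than being counted separately.
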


Due to the translation invariance of the LRP model, we only need to prove Lemma \ref{P-goodint-1} with $i=0$. To do this, denote by $F_{k}(i)$ the events in Definition \ref{def-verygood-1} ($k$) for $k=1,2,3$, respectively. For simplicity, we will write $F_k(0)$ as $F_k$.

\begin{lemma}\label{P-E11}
For all $\beta>0$ and sufficiently large $M\in\mathds{N}$, there exist constants $c_3=c_3(\beta)>0$ {\rm(}depending only on $\beta${\rm)} and $\alpha_1\in (0,1)$ {\rm(}depending only on $\beta$ and $M${\rm)} such that
$$
\mathds{P}[F_1^c\ |I_0 \text{ is $M$-good}]\leq {\rm e}^{-c_3M}.
$$
\end{lemma}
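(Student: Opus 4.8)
\textbf{Proof proposal for Lemma~\ref{P-E11}.}
The plan is to show that the bad event $F_1^c$, namely the existence of two distinct edges $\langle u_1,v_1\rangle,\langle u_2,v_2\rangle\in\mathcal{E}$ with $v_1,u_2\in I_0$, $u_1\in I_0^c$, $v_2\in[-m,2m)^c$ and $|v_1-u_2|<\alpha_1 m$, is rare once $\alpha_1$ is chosen small (as a function of $\beta$ and $M$). The key observation is that conditionally on $I_0$ being $M$-good, the set $\mathcal{K}_0$ of endpoints in $I_0$ of long edges has at most $M$ elements. The event $F_1^c$ forces two such endpoints $v_1$ and $u_2$ to lie within distance $\alpha_1 m$ of each other, so I would first condition on the (random, but size $\le M$) set $\mathcal{K}_0$ and on the long edges incident to it, and then bound the conditional probability that some pair among these $\le M$ points is within distance $\alpha_1 m$.

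First I would set up the conditioning carefully. Let $\mathcal{G}$ denote the $\sigma$-algebra generated by the status of all edges with at least one endpoint in $[-m,2m)^c$ (equivalently, all ``long'' edges touching $I_0$, together with the edges entirely outside $[-m,2m)$); this determines $\mathcal{K}_0$, hence whether $I_0$ is $M$-good, and it determines all the pairs $(v_1,u_2)$ that can possibly witness $F_1^c$. On the event $\{I_0\text{ is }M\text{-good}\}$ we have $\#\mathcal{K}_0\le M$, so there are at most $\binom{M}{2}$ candidate pairs $\{v_1,u_2\}\subset\mathcal{K}_0$. Now I would argue that for $F_1^c$ to occur we need two \emph{distinct} points of $\mathcal{K}_0$ at distance $<\alpha_1 m$. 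Since the points of $\mathcal{K}_0$ are produced by independent long edges, their locations inside $I_0$ are, after further conditioning, ``spread out'' — more precisely, for any fixed point $w\in I_0$ the conditional probability that a given long edge lands its $I_0$-endpoint within distance $\alpha_1 m$ of $w$ is $O(\alpha_1)$ uniformly, by a direct estimate on the connection probabilities \eqref{LRP} (the relevant integral $\sum_{|x-w|<\alpha_1 m}\sum_{|y|\text{ long}}p_{x,y}$ is bounded by a constant times $\alpha_1$, uniformly in $m$, using $\sum_v |x-v|^{-2}\asymp |{\rm dist}(x,I_0^c)|^{-1}$ near the boundary but uniformly $O(\alpha_1)$ after summing over $x$ in a window of length $\alpha_1 m$). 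Hence the probability that \emph{any} of the $\le M$ long-edge endpoints falls near a previously placed one is at most $C(\beta)M^2\alpha_1$.

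Making this rigorous, I would order the (at most $M$) long edges incident to $I_0$ and reveal their $I_0$-endpoints one at a time; at each step the conditional law of the new endpoint is absolutely continuous with respect to a reference measure that assigns mass $O(\alpha_1)$ to any interval of length $\alpha_1 m$, uniformly in $\beta$, $m$ and the previously revealed edges. A union bound over the at most $M^2$ ordered pairs then gives
\[
\mathds{P}\left[F_1^c \mid I_0\text{ is }M\text{-good}\right]\le C_0(\beta)\, M^2 \alpha_1,
\]
for a constant $C_0(\beta)<\infty$. Choosing $\alpha_1=\alpha_1(\beta,M)$ small enough that $C_0(\beta)M^2\alpha_1\le {\rm e}^{-c_3 M}$ — for instance $\alpha_1=\mathrm{e}^{-2M}$ with $c_3=c_3(\beta)$ chosen accordingly, absorbing the polynomial factor $M^2$ and the constant $C_0(\beta)$ into the exponential for $M$ large — yields the claim. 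The same choice of $\alpha_1$ is compatible with Remark~\ref{rem-alpha}, since $M$ will eventually be taken of order $\log(1/\varepsilon)$, making $\alpha_1$ polynomial in $\varepsilon$.

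The main obstacle I anticipate is the conditioning bookkeeping: one must be careful that conditioning on $\{I_0\text{ is }M\text{-good}\}$ does not destroy the independence/spreading structure of the long-edge endpoints, and that the reference-measure bound $O(\alpha_1)$ on windows of length $\alpha_1 m$ is genuinely uniform in $m$ near the endpoints of $I_0$ (where the connection probabilities are largest). Handling the endpoints of $I_0$ — where a long edge can connect to a point at distance $1$ from the boundary with probability of order $1$ — requires summing the bound $\sum_{|x-w|<\alpha_1 m} \mathrm{dist}(x,I_0^c)^{-1}$ and checking it is still $O(\alpha_1 m \cdot (\alpha_1 m)^{-1})=O(1)$ after the correct normalization; this is the one place where a short but genuine computation with \eqref{LRP} is needed rather than a soft argument.
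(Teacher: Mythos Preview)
Your proposal rests on a misreading of the event $F_1$. Recall the definition (Definition~\ref{def-verygood-1}(1) with $i=0$): $F_1^c$ asks for two distinct edges $\langle u_1,v_1\rangle,\langle u_2,v_2\rangle\in\mathcal{E}$ with $v_1,u_2\in I_0$, $u_1\in I_0^c$, $v_2\in[-m,2m)^c$ and $|v_1-u_2|<\alpha_1 m$. Only the second edge is required to be \emph{long} (i.e.\ to reach $[-m,2m)^c$), so only $u_2$ is forced to lie in $\mathcal{K}_0$. The first edge merely has $u_1\in I_0^c$, which allows $u_1$ to sit in the adjacent intervals $I_{-1}\cup I_1=[-m,0)\cup[m,2m)$; in particular the nearest-neighbour edges $\langle -1,0\rangle$ and $\langle m-1,m\rangle$ qualify, as do all ``medium'' edges from $I_{\pm 1}$ into $I_0$. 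Thus your claim that ``for $F_1^c$ to occur we need two distinct points of $\mathcal{K}_0$ at distance $<\alpha_1 m$'' is false, and your $\sigma$-algebra $\mathcal{G}$ (generated by the long edges) does \emph{not} determine all possible witnesses $(v_1,u_2)$. The $M$-good conditioning controls only $\#\mathcal{K}_0$, not the medium-edge endpoints, whose expected number is of order $\log m$ and whose landing density blows up near $\partial I_0$; your ``$\le M^2$ pairs, each with probability $O(\alpha_1)$'' accounting therefore misses the dominant contribution.

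The paper avoids this difficulty in two steps. First, since both $F_1$ and $\{I_0\text{ is $M$-good}\}$ are edge-decreasing, the FKG inequality gives $\mathds{P}[F_1^c\mid I_0\text{ is $M$-good}]\le\mathds{P}[F_1^c]$, so one can bound the \emph{unconditional} probability and forget the delicate conditioning you flag as an obstacle. Second, the paper partitions $I_0$ into $1/\alpha_1$ subintervals $J_k$ of length $\alpha_1 m$ and covers $F_1^c$ by the events $A_k=\{I_0^c\sim J_k\}\cap\{\widetilde J_k\sim[-m,2m)^c\}$; by independence of the two edge families, $\mathds{P}[A_k]$ factors as a product. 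Near the boundary (small $k$) the first factor is $O(1)$ but the second is $O(\alpha_1)$; in the bulk the first factor decays like $1/k$ while the second stays $O(\alpha_1)$. Summing gives $\mathds{P}[F_1^c]\le c\alpha_1\log(1/\alpha_1)$, uniformly in $m$, and the choice $\alpha_1=\e^{-M}$ finishes. This spatial decomposition is exactly what handles the boundary contribution that your window estimate cannot.
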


\begin{proof}
Note that both the events $F_1$ and $\{I_0 \text{ is $M$-good}\}$ are decreasing with respect to edges. Hence, by the FKG inequality we obtain
\begin{equation*}\label{PE1|EM}
\mathds{P}[F_1^c\ |I_0 \text{ is $M$-good}]\leq \mathds{P}[F_1^c].
\end{equation*}
Thus, it suffices to bound $\mathds{P}[F_1^c]$ for the remainder of the proof.

For convenience, assume that $1/(2\alpha_1)\in \mathds{N}$ and is divisible with respect to $m$. Otherwise, we can replace $1/(2\alpha_1)$ with $\lfloor1/(2\alpha_1)\rfloor$ in the following proof.
Note that indivisibility will only introduce a small constant difference in the estimates in the proof, and thus will only make an immaterial difference. 
We now divide the interval $I_0=[0,m)$ into $1/\alpha_1$ small intervals of equal length, denoting them as $J_k$ for $k\in [0,1/\alpha_1-1]$.
We also define $\widetilde{J}_k$ as the union of $J_k$ and all intervals adjacent to $J_k$.
For $k\in [0,1/\alpha_1-1]$, we let $A_k$ be the event that there exist two edges $\langle u_1,v_1\rangle, \langle u_2,v_2\rangle\in \mathcal{E}$ such that $u_1\in [0,m)^c$, $v_1\in J_k$, $u_2\in \widetilde{J_k}$ and $v_2\in [-m,2m)^c$.
Then it is clear that
\begin{equation}\label{E01-A-1}
F_1^c\subset \bigcup_{k=0}^{1/\alpha_1-1} A_k.
\end{equation}
Hence we turn to bound $\mathds{P}[A_k]$ for each $k$.

Indeed, by the definition of $J_k$, on has $\text{dist}(J_k,I_0^c)\geq k\alpha_1 m$ and $\text{dist}(\widetilde{J}_k,[-m,2m)^c)\geq m$ for all $k\in [0,1/\alpha_1-1]$. Here $\text{dist}(I,J):=\min_{u\in I,v\in J}|u-v|$ for $I,J\subset \mathds{Z}$.
Combining this with the independence of edges in our model, we get that for each $k\in [0,1/(2\alpha_1)]$,
\begin{equation}\label{P-Ak}
\begin{aligned}
\mathds{P}[A_k]&=\left[1-\exp\left\{-\int_{[0,m)^c}\int_{J_k}\frac{\beta}{|x-y|^2}\d x\d y\right\}\right]\cdot\left[1-\exp\left\{-\int_{[-m,2m)^c}\int_{\widetilde{J}_k}\frac{\beta}{|x-y|^2}\d x\d y\right\}\right]\\
&\leq \left(1-\e^{-\widetilde{c}_1/k}\right)\cdot \left(1-\e^{-\widetilde{c}_2\alpha_1}\right) \\
&\leq\begin{cases}
\widetilde{c}_3\alpha_1,\quad &k=0, \\
\widetilde{c}_3\alpha_1/k,\quad &k\geq 1,
\end{cases}
\end{aligned}
\end{equation}
where $\widetilde{c}_r,\ r=1,2,3$ are positive constants depending only on $\beta$. Combining this with \eqref{E01-A-1}, we get that
\begin{equation}\label{P-E1-1}
\begin{aligned}
\mathds{P}[F_1^c]&\leq \sum_{k=0}^{1/\alpha_1-1}\mathds{P}[A_k]=2\sum_{k=0}^{1/(2\alpha_1)}\mathds{P}[A_k]\leq 2\widetilde{c}_3\alpha_1\left(1+\sum_{k=1}^{1/(2\alpha_1)}k^{-1}\right)\leq \widetilde{c}_4\alpha_1 \log(1/\alpha_1),
\end{aligned}
\end{equation}
where $\widetilde{c}_4$ is a positive constant depending only on $\beta$. Hence, the proof is complete by taking $\alpha_1=\e^{-M}$.
\end{proof}

We next turn to the events $F_2$ and $F_3$.

\begin{lemma}\label{P-E3}
For all $\beta>0$, $\delta\in (0,\delta^*]$, $\alpha_1\in(0,1)$ and sufficiently large $M\in\mathds{N}$, there exists a constant $\alpha_2\in (0,1)$ {\rm(}depending only on $\beta$, $M$, $\delta$ and $\alpha_1${\rm)} such that
$$
\mathds{P}\left[(F_2\cap F_3)^c\ | I_0 \text{ is $M$-good} \right]\leq {\rm e}^{-M}.
$$
\end{lemma}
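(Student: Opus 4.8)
The plan is to control the failure of $F_2$ and $F_3$ separately, using the tail estimates from Lemma~\ref{Lem-RNtail-1} together with a union bound over the (few) long edges emanating from $I_0$. Throughout we work with $i=0$ by translation invariance and we condition on $\{I_0\text{ is }M\text{-good}\}$, so there are at most $M$ points in $\mathcal{K}_0$ incident to long edges.

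\textbf{Bounding $\mathds{P}[F_3^c]$.} The event $F_3=\{\widehat R(I_{-1},I_1)\geq\alpha_2 m^\delta\}$ concerns a box-to-box type resistance through the interval $I_0$ of length $m$, with the flow confined to $I_{-1}\cup I_0\cup I_1$ (i.e.\ to $[-m,2m)$) and with no edges allowed directly from $(-\infty,-1]$ to $[m,\infty)$ or from $[-m,2m)^c$ into $(0,m)$. This matches (up to the deterministic restriction to a finite window, which only decreases the resistance relative to the full model but we want a lower bound, so one uses the conditional version) the quantity controlled by the third display of Lemma~\ref{Lem-RNtail-1}, or more directly $\widehat R_m$ after an affine relabeling. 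Applying that lemma with the interval of scale $\sim m$ and with the probability-$\varepsilon$ parameter chosen as $\e^{-c M}$ for a suitable small $c=c(\beta)$, we get a deterministic threshold $c_*(\e^{-cM})^{C_*\delta} m^\delta$ that holds with probability $\geq 1-\e^{-cM}$; setting $\alpha_2$ to be (a constant times) $\e^{-cM C_*\delta}$ — equivalently $\alpha_2=\varepsilon^{C_4\delta}$ in the notation of Remark~\ref{rem-alpha} with $\varepsilon=\e^{-M}$ — makes $\mathds{P}[F_3^c]\leq\e^{-cM}$. One must also check the conditioning on $\{I_0\text{ is }M\text{-good}\}$ does not hurt: $F_3$ and $M$-goodness are both decreasing in the edge set, so FKG gives $\mathds{P}[F_3^c\mid I_0\text{ is }M\text{-good}]\le\mathds{P}[F_3^c]$, exactly as in the proof of Lemma~\ref{P-E11}.

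\textbf{Bounding $\mathds{P}[F_2^c]$.} Here $F_2$ asks that for \emph{every} long edge $\langle u,v\rangle$ with $u\in I_0$, $v\in[-m,2m)^c$, one has $R_{I_0}(u,B_{\alpha_1 m}(u)^c)\geq\alpha_2 m^\delta$. For a \emph{fixed} point $u$ at distance $\geq \alpha_1 m/2$ from both endpoints of $I_0$ — which one may arrange by first intersecting with $F_1$ and throwing the boundary layer into a separate, negligible event — the ball $B_{\alpha_1 m}(u)$ sits inside $I_0$, so $R_{I_0}(u,B_{\alpha_1 m}(u)^c)\ge R(u,B_{\alpha_1 m}(u)^c)$ restricted to that ball, which is exactly a point-to-box resistance at scale $\alpha_1 m$; Lemma~\ref{Lem-RNtail-1} (middle estimate) gives $R(u,B_{\alpha_1m}(u)^c)\ge c_*\varepsilon^{C_*\delta}(\alpha_1 m)^\delta$ with probability $\ge 1-\varepsilon$. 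Now union bound: conditionally on $\{I_0\text{ is }M\text{-good}\}$ there are at most $M$ choices of $u$, so with $\varepsilon=\e^{-2M}$ the bad event has conditional probability $\le M\e^{-2M}\le\e^{-M}$ for large $M$, and the resulting threshold is $c_*\e^{-2MC_*\delta}(\alpha_1 m)^\delta=c_*\e^{-2MC_*\delta}\alpha_1^\delta\, m^\delta$; absorbing $c_*\e^{-2MC_*\delta}\alpha_1^\delta$ into the definition of $\alpha_2$ (consistent with $\alpha_1=\e^{-M}$ from Lemma~\ref{P-E11} and $\alpha_2=\varepsilon^{C_4\delta}$ of Remark~\ref{rem-alpha}) yields $F_2$. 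The one subtlety is independence/conditioning: the set $\mathcal{K}_0$ and the positions $u$ depend on the long edges incident to $I_0$, whereas $R_{I_0}(u,\cdot)$ depends only on edges \emph{inside} $I_0$, which are independent of those long edges; so one can condition on the realization of the long edges (hence on $\mathcal{K}_0$, which determines the finite list of $u$'s and also whether $I_0$ is $M$-good) and then apply the unconditional tail bound to each $R_{I_0}(u,\cdot)$ independently of that conditioning. Finally, $\mathds{P}[(F_2\cap F_3)^c\mid I_0\text{ is }M\text{-good}]\le\mathds{P}[F_2^c\mid\cdot]+\mathds{P}[F_3^c\mid\cdot]\le 2\e^{-cM}\le\e^{-M}$ after adjusting constants and taking $M$ large.

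\textbf{Main obstacle.} The delicate point is not any single estimate but the bookkeeping of \emph{which $\sigma$-algebra each quantity lives on}: $\widehat R(I_{-1},I_1)$, the ``far'' edges defining $\mathcal{K}_0$, and the ``near'' resistances $R_{I_0}(u,\cdot)$ must be disentangled so that the union bound over the random (but, on $\{I_0\text{ is }M\text{-good}\}$, boundedly many) points $u$ can be carried out against a clean independent tail bound, and so that all conditioning is on decreasing events where FKG applies. Once this is organized as above — condition on the long edges incident to $I_0$, use $M$-goodness to cap the number of terms, apply Lemma~\ref{Lem-RNtail-1} with $\varepsilon$ polynomially small in $\e^{-M}$, and choose $\alpha_1=\e^{-M}$, $\alpha_2$ a suitable power of $\e^{-M}$ times constants — the proof is routine. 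Assembling Lemmas~\ref{P-E11} and~\ref{P-E3} via $\{I_0\text{ is }M\text{-good}\}\cap F_1\cap F_2\cap F_3\subset\{I_0\text{ is }(M,\delta,\alpha)\text{-very good}\}$ then gives Lemma~\ref{P-goodint-1}, and combining with Remark~\ref{M-vg} and Lemma~\ref{lem-good-1} (choosing $M=M(\varepsilon)$) yields Proposition~\ref{prop-alphagood-1}.
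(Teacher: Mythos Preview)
Your strategy is the paper's: apply the tail bounds of Lemma~\ref{Lem-RNtail-1} to each point-to-box resistance $R_{I_0}(u,B_{\alpha_1 m}(u)^c)$ for $u\in\mathcal K_0$ and to $\widehat R(I_{-1},I_1)$, then union-bound conditionally on $\{\xi_0\le M\}$. The paper does this in a single stroke with $\varepsilon=\e^{-M}/(M+1)$ and $\alpha_2=c_*\bigl(\e^{-M}/(M+1)\bigr)^{C_*\delta}\alpha_1^\delta$, so that the union over the $M+1$ events (the $M$ points of $\mathcal K_0$ together with the $\widehat R$-event) lands exactly on $\e^{-M}$; there is no need to split $F_2$ and $F_3$ with separate $\varepsilon$'s and then juggle constants at the end.

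Two minor points. First, the detour through $F_1$ to keep $u$ away from $\partial I_0$ is unnecessary: for \emph{every} $u\in I_0$ one has $R_{I_0}\bigl(u,B_{\alpha_1 m}(u)^c\bigr)\ge R\bigl(u,B_{\alpha_1 m}(u)^c\bigr)$, since restricting the edge set to $\mathcal E_{I_0\times I_0}$ and the target set to its trace on $I_0$ both only increase resistance --- this is the one-line monotonicity the paper invokes directly. Second, your explicit treatment of the conditioning is in fact more careful than the paper's (which leaves it implicit): the long edges that determine $\mathcal K_0$ and $M$-goodness lie in $\mathcal E_{I_0\times[-m,2m)^c}$ and are independent of the edges determining both $R_{I_0}(u,\cdot)$ and $\widehat R(I_{-1},I_1)$, so independence already handles the conditioning for $F_3$ as well and FKG is not needed.
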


\begin{proof}
Recall that $\delta^*>0$ (depending only on $\beta$) is the exponent in Lemma \ref{Lem-RNtail-1}.
For fixed $M\in \mathds{N}$ and $\alpha_1\in (0,1)$, it follows from Lemma \ref{Lem-RNtail-1} with $\varepsilon=\e^{-M}/(M+1)$ and the monotonicity of the resistance that
for any $\delta\in (0,\delta^*]$ and any $\langle u,v\rangle\in \mathcal{E}$ with $u\in I_0$ and $v\in [-m,2m)^c$, we have
\begin{equation*}
\begin{aligned}
&\mathds{P}\left[R_{I_0}\left(u,B_{\alpha_1 m}(u)^c\right)< c_*(\e^{-M}/(M+1))^{C_*\delta} (\alpha_1m)^{\delta}\right]\\
&\leq\mathds{P}\left[R\left(u,B_{\alpha_1 m}(u)^c\right)< c_*(\e^{-M}/(M+1))^{C_*\delta} (\alpha_1m)^{\delta}\right]\leq
 \e^{-M}/(M+1)
\end{aligned}
\end{equation*}
and
\begin{equation*}
\begin{aligned}
&\mathds{P}\left[\widehat{R}(I_{-1},I_{1})< c_*(\e^{-M}/(M+1))^{C_*\delta} m^{\delta}\right]\\
&\leq\mathds{P}\left[\widehat{R}_m< c_*(\e^{-M}/(M+1))^{C_*\delta} m^{\delta}\right] \leq \e^{-M}/(M+1),
\end{aligned}
\end{equation*}
where $0<c_*<C_*<\infty$ are the constants (both depending only on $\beta$) defined in Lemma \ref{Lem-RNtail-1}. We now let
\begin{equation}\label{def-alpha2}
\alpha_2=c_*(\e^{-M}/(M+1))^{C_*\delta}\alpha_1^\delta
\end{equation}
in Definition \ref{def-verygood-1} (2) and (3).
Then combining this with the definition of $M$-good in Definition \ref{def-good-1}, we conclude that
$$
\mathds{P}\left[(F_2\cap F_3)^c\ | I_0 \text{ is $M$-good}\right]\leq (M+1)\cdot \e^{-M}/(M+1)=\e^{-M},
$$
which completes the proof.
\end{proof}

We now give the

\begin{proof}[Proof of Lemma \ref{P-goodint-1}]
Due to the translation invariance of the LRP model, we only need to consider the case where $i=0$.
For $\delta\in (0,\delta^*]$ and sufficiently large $M\in \mathds{N}$, it follows from Lemmas \ref{P-E11} and \ref{P-E3} that there exist constants $\alpha_1\in (0,1)$ (depending only on $\beta$ and $M$) and $\alpha_2\in (0,1)$ (depending only on $\beta$, $\delta$ and $M$) such that
\begin{equation*}
\begin{aligned}
&\mathds{P}\left[F_1^c\cup (F_2\cap F_3)^c\ |I_0 \text{ is $M$-good}\right]\\
&\leq \mathds{P}\left[F_1^c\ |I_0 \text{ is $M$-good}\right]+\mathds{P}\left[(F_2\cap F_3)^c\ |I_0 \text{ is $M$-good}\right]\leq \e^{-c_3M} +\e^{-M},
\end{aligned}
\end{equation*}
where $c_3>0$ (depending only on $\beta$) is the constant defined in Lemma  \ref{P-E11}.
Combining this with  Definition \ref{def-verygood-1} yields the desired result.
\end{proof}

With the above lemmas at hand, we can present the

\begin{proof}[Proof of Proposition \ref{prop-alphagood-1}]
For $\delta\in (0,\delta^*]$, sufficiently large $M\in \mathds{N}$ and $\alpha=(\alpha_1,\alpha_2)\in (0,1)^2$, according to Remark \ref{M-vg}, we can see that if the interval $I_i$ is $(M,\delta,\alpha)$-very good, then the internal energy
\begin{equation*}
\begin{aligned}
R_{I_i}=&\inf_{\theta}\left(\sum_{u\in \mathcal{K}_i}\theta_u^2R_{I_i}\left(u,B_{\alpha_1m}(u)^c\right)+\left(1-\sum_{u\in \mathcal{K}_i}\theta_u\right)^2\widehat{R}(I_{i-1},I_{i+1})\right)
\geq  \alpha_2(M+1)^{-1} m^{\delta},
\end{aligned}
\end{equation*}
where the infimum is taken over $\theta=\{\theta_u\}_{u\in \mathcal{K}_i}$ with $\theta_u\geq 0$ and $\sum_{u\in \mathcal{K}_i}\theta_u\leq 1$.
Combining this with Definition \ref{def-verygood-1} (1) and Definition \ref{def-alphagood-1}, we can see that $I_i$ is $(\delta,\alpha)$-very good with $\alpha_2$ replaced by $\alpha_2(M+1)^{-1}$.
Now for any $\varepsilon\in (0,1)$, choose $M\in \mathds{N}$ such that $\e^{-c_1M}+\e^{-c_2M}\leq \varepsilon$, where $c_1,c_2$ are the constants in Lemmas \ref{lem-good-1} and \ref{P-goodint-1}, respectively. Then we have
\begin{equation}\label{good-Mgood}
\begin{aligned}
&\mathds{P}\left[I_i\text{ is not $(\delta,\alpha)$-very good}\right]\\
&\leq \mathds{P}[I_i\text{ is not $M$-good}]+ \mathds{P}\left[I_i\ \text{is not $(M,\delta,\alpha)$-very good}\ |I_i \text{ is $M$-good}\right]\\
&\leq \e^{-c_1M}+\e^{-c_2M}\leq \varepsilon,
\end{aligned}
\end{equation}
which implies the desired result.
\end{proof}


\subsection{A variational characterization of effective resistance}\label{sect-vf}
To prove Proposition \ref{propR-renorm}, a key tool we employ is a variational characterization of effective resistance, which is derived from \cite[Proposition 2.3]{BDG20}.
To introduce this characterization, we will first present some notations.
Let $\widetilde{G}=(\widetilde{V},\widetilde{E})$ be a finite, unoriented, connected graph where each edge $e\in \widetilde{E}$ is equipped with a resistance $r_e\geq 0$. For convenience, we will use $\widetilde{G}$ to denote both the corresponding network as well as the underlying graph, and let $R^{\widetilde{G}}(\cdot,\cdot)$ denote the effective resistance on $\widetilde{G}$.

For $u,v\in \widetilde{V}$, we say that a set of edges $\pi$ is a cutset between $u$ and $v$ if each path from $u$ and $v$ uses an edge in $\pi$. Let $\Pi_{u,v}$ denote the set of all finite collections of cutsets between $u$ and $v$.

\begin{lemma}[{\cite[Proposition 2.3]{BDG20}}]\label{vc-R}
For any $u,v\in \widetilde{V}$, we have
\begin{equation}\label{R-G}
\frac{1}{R^{\widetilde{G}}(u,v)}=\inf_{\Pi\in \Pi_{u,v}}\inf_{\{c_{e,\pi}: e\in \widetilde{E}, \pi\in \Pi\}\in \mathscr{C}_{\Pi}}\left(\sum_{\pi\in\Pi}\frac{1}{\sum_{e\in\pi}c_{e,\pi}}\right)^{-1},
\end{equation}
where $\mathscr{C}_\Pi$ is the collection of all sets $\{c_{e,\pi}:\ e\in \widetilde{E},\pi\in \Pi\}\in \mathds{R}_+^{\widetilde{E}\times \Pi}$ such that
\begin{equation}\label{cond-ce}
\sum_{\pi\in \Pi}\frac{1}{c_{e,\pi}}\leq r_e\quad \text{for all }e\in \widetilde{E}.
\end{equation}
The infima in \eqref{R-G} are {\rm(}jointly{\rm)} achieved.
\end{lemma}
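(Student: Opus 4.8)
The plan is to reduce the statement to the well-known dual variational formula for effective conductance and to verify that the nested infimum on the right-hand side of \eqref{R-G} is, on the one hand, an upper bound for $1/R^{\widetilde{G}}(u,v)$ via an explicit construction from the electrical potential, and on the other hand a lower bound via the Thomson (flow) principle. Since this is \cite[Proposition 2.3]{BDG20}, I would not reprove it from scratch but rather recall the two inequalities and indicate the constructions; the content of the lemma is purely deterministic and depends only on the network structure.

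First, for the inequality $1/R^{\widetilde{G}}(u,v)\le \big(\text{RHS}\big)$: given a finite collection $\Pi$ of cutsets between $u$ and $v$ and conductances $\{c_{e,\pi}\}$ satisfying \eqref{cond-ce}, I would build a unit flow $\theta$ from $u$ to $v$ of small energy. The idea is that each cutset $\pi\in\Pi$ should "see" the full unit of current crossing it, and within $\pi$ the current should be split across its edges proportionally to $c_{e,\pi}$; the contribution of cutset $\pi$ to the energy is then $1/\sum_{e\in\pi}c_{e,\pi}$. Summing over $\pi$ and using that the $c_{e,\pi}$ for different $\pi$ combine on a single edge $e$ through the harmonic-type constraint \eqref{cond-ce} (so that the true resistance $r_e$ dominates the series $\sum_\pi 1/c_{e,\pi}$), one obtains a flow whose energy is at most $\sum_{\pi\in\Pi}1/\sum_{e\in\pi}c_{e,\pi}$. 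By Thomson's principle $R^{\widetilde{G}}(u,v)$ is at most this energy, giving the bound. Making the "build a flow" step precise — in particular producing an actual flow (conservation at every vertex) rather than merely an assignment of edge currents — is where one has to be slightly careful, and this is the main technical point; it is handled in \cite{BDG20} by a layered/iterative construction.

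For the reverse inequality $1/R^{\widetilde{G}}(u,v)\ge \big(\text{RHS}\big)$, I would exhibit a specific admissible $(\Pi,\{c_{e,\pi}\})$ coming from the harmonic function. Let $h$ be the voltage with $h(u)=R^{\widetilde{G}}(u,v)$, $h(v)=0$, normalized so that the total current from $u$ to $v$ is $1$; equivalently $h$ takes values in $[0,R^{\widetilde{G}}(u,v)]$. Slicing the range of $h$ into the level sets determined by the distinct vertex values, the edges crossing each level form a cutset $\pi$, and one sets $c_{e,\pi}$ to be (a suitable multiple of) the edge conductance scaled by the width of the corresponding voltage slab. The constraint \eqref{cond-ce} then becomes the identity that the voltage drop across $e$ equals $\sum$ of the slab widths through which $e$ passes, i.e. $r_e = \sum_\pi 1/c_{e,\pi}$ with equality; and each term $1/\sum_{e\in\pi}c_{e,\pi}$ equals the width of the slab divided by the current crossing it, which is exactly the width of the slab (current is $1$). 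Summing the slab widths recovers $R^{\widetilde{G}}(u,v)$, so this choice achieves the value $1/R^{\widetilde{G}}(u,v)$ on the right-hand side, simultaneously proving the reverse inequality and the assertion that the infima are attained.

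I expect the main obstacle to be the first (upper-bound) direction: turning an arbitrary feasible family $\{c_{e,\pi}\}$ into a genuine conserved unit flow of the claimed energy, since the cutsets in $\Pi$ may be arranged in a complicated, non-nested way. One clean route, following \cite{BDG20}, is to first argue that it suffices to consider \emph{nested} families of cutsets (replacing a general $\Pi$ by a refinement does not increase the objective after reallocating conductances), and then for nested cutsets the flow construction is the transparent "split proportionally at each layer" one sketched above. Once that reduction is in place, the remaining computations are the routine Cauchy--Schwarz / harmonic-sum manipulations indicated, and I would simply cite \cite[Proposition 2.3]{BDG20} for the details rather than reproduce them in full.
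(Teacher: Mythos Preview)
The paper does not prove this lemma at all; it is quoted verbatim from \cite[Proposition~2.3]{BDG20} and used as a black box, so there is no ``paper's own proof'' to compare against. Your instinct to cite \cite{BDG20} for the details is therefore exactly what the paper does.

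That said, your sketch of the argument contains a genuine directional error in the first half. You want to show $1/R^{\widetilde G}(u,v)\le\text{RHS}$, which unwinds to $R^{\widetilde G}(u,v)\ge \sum_{\pi\in\Pi}\big(\sum_{e\in\pi}c_{e,\pi}\big)^{-1}$ for \emph{every} admissible $(\Pi,\{c_{e,\pi}\})$; this is a \emph{lower} bound on the resistance. Constructing a unit flow and invoking Thomson's principle only ever yields \emph{upper} bounds on $R^{\widetilde G}(u,v)$, so the strategy you describe cannot give this direction. The correct argument runs the other way: take an arbitrary unit flow $\theta$ from $u$ to $v$ (in particular the electric current), and bound its energy from below. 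Using \eqref{cond-ce} and Cauchy--Schwarz on each cutset,
\[
\sum_{e}r_e\theta_e^2
\;\ge\;\sum_{e}\Big(\sum_{\pi}\frac{1}{c_{e,\pi}}\Big)\theta_e^2
\;=\;\sum_{\pi}\sum_{e}\frac{\theta_e^2}{c_{e,\pi}}
\;\ge\;\sum_{\pi}\frac{\big(\sum_{e\in\pi}|\theta_e|\big)^2}{\sum_{e\in\pi}c_{e,\pi}}
\;\ge\;\sum_{\pi}\frac{1}{\sum_{e\in\pi}c_{e,\pi}},
\]
the last step because every unit flow sends at least one unit of absolute current across any cutset. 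No flow construction, and in particular no reduction to nested cutsets, is needed here. Your second half (the harmonic level-set construction giving equality and hence attainment of the infima) is correct and is exactly the argument in \cite{BDG20}.
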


\subsection{Proof of Proposition \ref{propR-renorm}}\label{proof-weaksup}
We continue to assume that $m,n\in \mathds{N}$ are sufficiently large, unless otherwise specified.
Recall that $G=(V,E)$ represents the renormalization of the $\beta$-LRP model, where we  identify the interval $I_i=[im,(i+1)m)$ with the vertex $\varpi(i)$ for all $i\in \mathds{Z}$. The subgraph $G_n=(V_n,E_n)$ is defined as the restriction of $G$ to $V_n=\{\varpi(0),\varpi(1),\cdots, \varpi(n-1)\}$.

For each $i\in \mathds{Z}$, let $\eta_i$ denote the degree of $\varpi(i)$ in the graph $G$, that is,
\begin{equation}\label{def-eta}
\eta_i=\left\{\varpi(j)\in V:\ \varpi(j)\sim \varpi(i)\right\}.
\end{equation}

\begin{lemma}\label{lem-eta}
For each $i\in \mathds{Z}$ and sufficiently large $C_5>0$ {\rm(}depending only on $\beta${\rm)}, we have
\begin{equation*}\label{prob-eta}
\mathds{P}\left[\eta_i\geq C_5\log n\right]\leq {\rm e}^{-C_5\log n/2}.
\end{equation*}
\end{lemma}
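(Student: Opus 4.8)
The plan is to bound the degree $\eta_i$ of a renormalized vertex $\varpi(i)$ by recognizing it as a sum of independent indicator random variables with a uniformly bounded mean, and then applying a standard Chernoff-type tail estimate. By the translation invariance of the $\beta$-LRP model, it suffices to treat the case $i=0$.

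First I would write $\eta_0 = \sum_{j \neq 0} \I_{\{\varpi(j) \sim \varpi(0)\}}$, where the events $\{\varpi(j) \sim \varpi(0)\}$ for distinct $j$ are independent (since they depend on disjoint collections of edges $\mathcal{E}_{I_0 \times I_j}$ in the underlying model). The nearest-neighbour vertices $\varpi(\pm 1)$ are always connected to $\varpi(0)$, so $\eta_0 = 2 + \sum_{|j| \geq 2} \I_{\{\varpi(j) \sim \varpi(0)\}}$. For $|j| \geq 2$, the probability that some edge of $\mathcal{E}$ joins $I_0 = [0,m)$ to $I_j = [jm,(j+1)m)$ is at most $\sum_{u \in I_0}\sum_{v \in I_j} p_{u,v} \leq \beta \sum_{u \in I_0}\sum_{v \in I_j}\int_u^{u+1}\int_v^{v+1}|x-y|^{-2}\,\d x\,\d y$. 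Summing over all $|j| \geq 2$ (equivalently over all $v$ with $\mathrm{dist}(v, I_0) \geq m$) gives a bound of the form $\beta \sum_{u \in I_0}\int_{\mathrm{dist}(x,I_0)\geq m}|x-y|^{-2}\,\d y \leq \beta \cdot m \cdot \frac{C}{m} = C\beta$ by scaling, so $\mathds{E}[\eta_0] \leq 2 + C\beta =: \mu_0 < \infty$, a constant depending only on $\beta$ (and crucially independent of both $m$ and $n$).

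Next I would apply the Chernoff bound for sums of independent Bernoulli variables: for any $t > e^2 \mu_0$ one has $\mathds{P}[\eta_0 \geq t] \leq e^{-t}$, or more precisely $\mathds{P}[\eta_0 \geq t] \leq (e\mu_0/t)^t$, which is at most $e^{-t/2}$ once $t \geq e^2 \mu_0$. Taking $t = C_5 \log n$ with $C_5$ chosen large enough (depending only on $\beta$, via $\mu_0$) that $C_5 \log n \geq e^2 \mu_0$ for all $n$ under consideration — noting $n$ is assumed sufficiently large — yields
$$
\mathds{P}\left[\eta_0 \geq C_5 \log n\right] \leq e^{-C_5 \log n / 2},
$$
which is exactly the claimed estimate. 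By translation invariance this holds for every $i \in \mathds{Z}$.

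There is no serious obstacle here; this is a routine concentration estimate. The only point requiring a small amount of care is the scaling computation showing $\mathds{E}[\eta_0]$ is bounded uniformly in $m$ — this uses the exact form of the connection probabilities in \eqref{LRP} and the fact that $\int_{|x-y|\geq m}|x-y|^{-2}\,\d y = O(1/m)$ uniformly in $x \in [0,m)$, so that the $m$-dependence cancels. One should also be slightly careful that the constant $C_5$ is chosen uniformly so that $C_5\log n$ exceeds the Chernoff threshold $e^2\mu_0$; since the lemma only asserts the bound for sufficiently large $C_5$, this is automatic.
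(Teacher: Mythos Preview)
Your proof is correct and follows essentially the same approach as the paper: reduce to $i=0$ by translation invariance, write $\eta_0$ as a sum of independent Bernoulli indicators with a mean bounded by a constant depending only on $\beta$, and conclude via a Chernoff bound. The only cosmetic difference is that the paper invokes the self-similarity of the model (the renormalized graph $G$ is itself a $\beta$-LRP, so $\mathds P[\varpi(0)\sim\varpi(j)]=p_{0,j}$) to bound the mean directly, whereas you carry out the equivalent scaling computation by hand; both arrive at the same conclusion.
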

\begin{proof}
Due to the translation invariance of the LRP model, it suffices to consider the case where $i=0$. According to the definition of $\eta_0$ in \eqref{def-eta} and the fact that $\varpi(\pm 1)\sim \varpi(0)$, it is clear that $\eta_0=2+\sum_{j\neq \pm 1}\eta_{0j}$, where
\begin{equation*}
\eta_{0j}:=
\begin{cases}
1,\quad & \varpi(0)\sim \varpi(j),\\
0,\quad &\text{otherwise}.
\end{cases}
\end{equation*}
Note that $\{\eta_{0j}\}_{j\neq\pm 1}$ are independent.
In addition, by a simple calculation, we can see that there exist $\widetilde{C}_1,\widetilde{C}_2<\infty$ (depending only on  $\beta$) such that
\begin{equation*}
\mu:=\sum_{j\neq \pm 1}\mathds{E}\left[\eta_{0j}\right]=\sum_{j\neq \pm 1}\left(1-\exp\left\{-\beta\int_0^{1}\int_j^{j+1}\frac{1}{|x-y|^2}\d x\d y\right\}\right)\leq \sum_{j\neq \pm 1}\frac{\widetilde{C}_1\beta}{j^2}\leq \widetilde{C}_2.
\end{equation*}
Combining this with Chernoff's bound, we find that for $\widetilde{C}_3>2\widetilde{C}_2$,
\begin{equation*}
\mathds{P}\left[\eta_i\geq \widetilde{C}_3\log n\right]\leq \exp\left\{-\frac{(\widetilde{C}_3\log n/\mu-1)^2\mu}{1+\widetilde{C}_3\log n/\mu}\right\},
\end{equation*}
which implies the desired result for sufficiently large $\widetilde{C}_3$.
\end{proof}

For each $i\in \mathds{Z}$, recall that $\xi_i$ is the ``degree'' of the interval $I_i$ in the $\beta$-LRP model, as defined in \eqref{def-xi-1}. The following lemma provides an estimate of $\xi_i$ conditioned on the event $\{\eta_i< C_5\log n\}$.

\begin{lemma}\label{lem-xi}
For each $i\in \mathds{Z}$ and sufficiently large $C_5>0$,
\begin{equation*}
\mathds{P}\left[\xi_i\geq 2 C_5^2\log n \ | \eta_i< C_5\log n\right]\leq \exp\left\{-C_5^2\log n/3\right\}.
\end{equation*}
\end{lemma}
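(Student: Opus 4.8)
The plan is to condition on the renormalized degree $\eta_i$ being small and then bound the number $\xi_i = \#\mathcal{K}_i$ of sites in $I_i$ that are incident to a long edge (reaching outside $[(i-1)m,(i+2)m)$). By translation invariance we may set $i=0$. The key observation is that $\mathcal{K}_0$ decomposes according to which renormalized block the long edge lands in: every $u\in\mathcal{K}_0$ has an edge to some $v\notin[-m,2m)$, hence $v\in I_j$ for some $j\neq -1,0,1$ with $\varpi(j)\sim\varpi(0)$. Thus, conditioning on the (at most $C_5\log n$) blocks $I_j$ to which $\varpi(0)$ is connected, $\xi_0$ is dominated by a sum, over these $\lesssim C_5\log n$ blocks, of the number of sites in $I_0$ incident to at least one edge into that particular block $I_j$.

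The steps I would carry out are as follows. First, observe that conditioning on $\{\eta_0 < C_5\log n\}$ fixes the (random) set $\mathcal{J}$ of indices $j$ with $\varpi(j)\sim\varpi(0)$, and $\#\mathcal{J}\le C_5\log n$. Second, for each fixed block $I_j$, let $N_j$ denote the number of $u\in I_0$ with $u\sim v$ for some $v\in I_j$; then $\xi_0 \le \sum_{j\in\mathcal{J},\, j\neq -1,0,1} N_j$. Third, control each $N_j$: conditionally on $\varpi(j)\sim\varpi(0)$, the edges between $I_0$ and $I_j$ are still independent across the $m^2$ pairs (the conditioning only forbids \emph{all} of them being absent), and the unconditioned expectation $\sum_{u\in I_0}\sum_{v\in I_j}p_{u,v}$ is bounded by a constant times $m^2/\mathrm{dist}(I_0,I_j)^2 \le C$ uniformly in $j$ (using $|i-j|\ge 2$), so $\mathbb{E}[N_j \mid \varpi(j)\sim\varpi(0)]\le \widetilde{C}$ for an absolute constant. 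A Chernoff bound then gives $\mathbb{P}[N_j \ge t \mid \varpi(j)\sim\varpi(0)] \le e^{-c t}$ for $t$ large. Fourth, take a union bound: the total $\xi_0 = \sum_j N_j$ over $\#\mathcal{J}\le C_5\log n$ blocks exceeds $2C_5^2\log n$ only if some $N_j$ exceeds $2C_5\log n$ (by pigeonhole, since $2C_5^2\log n/(C_5\log n) = 2C_5$, but more carefully one wants the sum bound), so $\mathbb{P}[\xi_0\ge 2C_5^2\log n\mid \eta_0<C_5\log n]\le C_5\log n \cdot e^{-c C_5\log n} \le e^{-C_5^2\log n/3}$ for $C_5$ sufficiently large depending only on $\beta$.

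The main obstacle I anticipate is handling the conditioning cleanly: conditioning on $\{\eta_0 < C_5\log n\}$ is not simply conditioning on a fixed neighbor set, since it is the event that the number of connected blocks is small, and this is a decreasing-type constraint that is globally coupled across all blocks. The cleanest route is to first condition on the exact neighbor set $\mathcal{J}$ (every outcome of $\mathcal{J}$ with $\#\mathcal{J}<C_5\log n$ lies in the conditioning event), so that it suffices to prove the bound uniformly over all such $\mathcal{J}$; then, given $\mathcal{J}$, the edge-variables between $I_0$ and $I_j$ for $j\in\mathcal{J}$ are conditioned on being not-all-absent, and one must check this only inflates each $N_j$'s tail by a bounded factor (indeed $\mathbb{P}[N_j\ge t\mid \text{at least one edge }I_0\!-\!I_j]\le \mathbb{P}[N_j\ge t]/p_{0,j}$, and $p_{0,j}\asymp \beta/j^2$ is not bounded below — so one should instead argue directly that conditionally on one specified edge $\langle u_0,v_0\rangle$ being present, the remaining edges are untouched, i.e. $N_j-1$ still has a constant-mean Bernoulli-sum tail). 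Once this conditioning subtlety is dispatched, the rest is a routine union bound and Chernoff estimate, with all constants depending only on $\beta$ through $\lambda_0(\beta) = 2\sum_{i\ge 2}p_{0,i}$.
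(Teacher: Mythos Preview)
Your setup matches the paper's: reduce to $i=0$, decompose $\xi_0$ by target block $I_j$, and handle the conditioning block by block. The conditioning obstacle you flag (that $\mathds P[\varpi(0)\sim\varpi(j)]\asymp j^{-2}$ is not bounded below) is the right concern; the paper resolves it by the uniform conditional MGF bound
\[
\mathds E\bigl[e^{\lambda\xi_{0j}}\,\big|\,\varpi(0)\sim\varpi(j)\bigr]
= 1 + \frac{e^{(e^\lambda-1)\mathds E[\xi_{0j}]}-1}{\mathds P[\varpi(0)\sim\varpi(j)]}
\le \widetilde C_1 e^{\lambda},
\]
valid because both numerator and denominator are of order $j^{-2}$. (They use the edge count $\xi_{0j}=\#\mathcal E_{I_0\times I_j}$; your vertex count $N_j\le\xi_{0j}$ would work as well.)

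Your final pigeonhole-plus-union-bound step, however, is genuinely broken. If $\sum_{j\in\mathcal J}N_j\ge 2C_5^2\log n$ with $\#\mathcal J\le C_5\log n$, pigeonhole yields only that some $N_j\ge 2C_5$ --- a constant, as you yourself compute --- not $2C_5\log n$. The individual tail $\mathds P[N_j\ge 2C_5\mid\cdot]\le e^{-cC_5}$ is then a constant in $n$, and the union bound over $C_5\log n$ blocks gives $(C_5\log n)\,e^{-cC_5}\to\infty$. Your displayed bound $C_5\log n\cdot e^{-cC_5\log n}$ implicitly uses the wrong threshold.

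The fix is the one you allude to with ``more carefully one wants the sum bound'': apply Chernoff directly to the sum. Conditionally on any neighbor set $\mathcal J$ with $\#\mathcal J<C_5\log n$, the $\xi_{0j}$ are independent with the MGF bound above, so $\mathds E[e^{\lambda\xi_0}\mid\mathcal J]\le(\widetilde C_1 e^\lambda)^{C_5\log n}$, the conditional mean is at most $\widetilde C_1 C_5\log n\le C_5^2\log n$ for $C_5$ large, and a standard Chernoff optimization gives $\mathds P[\xi_0\ge 2C_5^2\log n\mid\eta_0<C_5\log n]\le e^{-C_5^2\log n/3}$. This is exactly the paper's route.
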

\begin{proof}
We only need to consider the case where $i=0$. For convenience, denote
$$
\xi_{0j}=\#\{\langle u,v\rangle\in \mathcal{E}:\ u\in I_0 \text{ and }v\in I_j \}.
$$
Then it is clear that $\xi_0=\sum_{j\neq \pm 1} \xi_{0j}$. Moreover, $\{\xi_{0j}\}$ are independent due to the independence of edges. Additionally, it follows from the definition of $\eta_0$  that there are only $\eta_0$ non-zero terms  among $\{\xi_{0j}\}$. Moreover, for each $j\neq \pm 1$, there exists a constant $\widetilde{C}_1<\infty$ (depending only on $\beta$) such that for all $\lambda\geq 0$,
\begin{equation*}
\begin{aligned}
\mathds{E}\left[\e^{\lambda \xi_{0j}}\ | \varpi(0)\sim \varpi(j)\right] =\frac{\mathds{E}[\e^{\lambda \xi_{0j}}]-\mathds{P}[\xi_{0j}=0]}{\mathds{P}[ \varpi(0)\sim \varpi(j)]}&=1+\frac{\mathds{E}[\e^{\lambda \xi_{0j}}]-1}{\mathds{P}[ \varpi(0)\sim \varpi(j)]}\\
&=1+\frac{\exp\left\{(e^\lambda -1)\mathds{E}[\xi_{0j}]\right\}-1}{\mathds{P}[ \varpi(0)\sim \varpi(j)]}\\
&\leq \widetilde{C}_1\e^\lambda \quad \quad\quad\text{by }\mathds{P}[ \varpi(0)\sim \varpi(j)]\asymp j^{-2}.
\end{aligned}
\end{equation*}
Consequently, we have
\begin{equation*}
\mathds{E}\left[\xi_0\ |\eta_0< C_5\log n\right]\leq \widetilde{C}_1C_5\log n \leq C^2_5\log n
\end{equation*}
for sufficiently large $C_5>0$. Combining this with the independence of $\{\xi_{0j}\}$  and Chernoff's bound, we arrive at
\begin{equation*}
\mathds{P}[\xi_0\geq 2C_5^2\log n \ | \eta_0< C_5\log n]\leq \exp\left\{-C_5^2\log n/3\right\}.
\end{equation*}
Hence, the proof is complete.
\end{proof}

We now introduce two electric networks on the subgraph $G_n$.
In the first network, we assign a unit resistance to each edge.
For simplicity, we will also refer to this network as  $G_n$, with the associated effective resistance denoted as $R^G_{V_n}(\cdot,\cdot)$, see \eqref{def-RGV}.

In the second network, we modify the resistances on the edges of the subgraph $G_n$, thereby creating a new electric network.
Specifically, recall that $R_{I_i}$ denotes the internal energy within $I_i$ as defined in \eqref{def-RIi} for all $i\in \mathds{Z}$.
For any edge $e=\langle \varpi(i),\varpi(j)\rangle \in E_n$, we assign a resistance $r'_e=\min\{R_{I_i}, R_{I_j}\}$ to $e$, resulting in a new electric network denoted as $G'_n$.
The effective resistance associated with this new network is denoted as $R^{G'}_{V_n}(\cdot,\cdot)$.
We now state  the following lemma.
\begin{lemma}\label{lemR-R^G'}
For $\delta\in (0,\delta^*]$ and $\alpha=(\alpha_1,\alpha_2)\in (0,1)^2$, assume that $I_i$ is $(\delta,\alpha)$-very good for all $i\in[0,n)$. Then for any $x,y\in [0,mn)$,
\begin{equation*}\label{R-R^G'}
R_{[0,mn)}(x,y)\geq R^{G'}_{V_n}(\varpi(i_x),\varpi(i_y)),
\end{equation*}
where $i_x,i_y\in [0,n)$ satisfying $x\in [i_xm,(i_x+1)m)$ and  $y\in [i_ym,(i_y+1)m)$.
\end{lemma}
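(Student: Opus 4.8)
The plan is to take any unit flow $f$ from $x$ to $y$ in $[0,mn)$ realizing (or nearly realizing) $R_{[0,mn)}(x,y)$, and to show that the energy it dissipates inside each interval $I_i$, $i \in [0,n)$, is at least $r'_{e}$ times (the square of) the net flow that $f$ pushes across the renormalized vertex $\varpi(i)$ in a suitable sense; summing these contributions produces a valid flow on $G'_n$ whose energy is bounded by that of $f$. More precisely, for each $i \in [0,n)$, let $\Phi_i$ denote the total amount of $f$-flow entering $I_i$ from outside along edges whose far endpoint lies in $[(i-1)m,(i+2)m)^c$ (the ``long'' edges counted by $\mathcal{K}_i$), split as $\{\theta_u\}_{u \in \mathcal{K}_i}$ according to the endpoint $u$, and let $1 - \sum_u \theta_u$ be (an upper bound for) the portion of flow entering/leaving $I_i$ only through the short edges to $I_{i-1}$ and $I_{i+1}$. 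Condition (1) in Definition \ref{def-alphagood-1} is exactly what guarantees that the balls $B_{\alpha_1 m}(u)$, $u \in \mathcal{K}_i$, together with the ``short-edge channel'' between $I_{i-1}$ and $I_{i+1}$, are disjoint regions inside $I_i$, so the energies dissipated by $f$ in these regions add up without overlap. Then the energy of $f$ restricted to $I_i$ is at least
$$
\sum_{u\in\mathcal{K}_i}\theta_u^2 R_{I_i}\big(u,B_{\alpha_1 m}(u)^c\big) + \Big(1-\sum_{u\in\mathcal{K}_i}\theta_u\Big)^2 \widehat R(I_{i-1},I_{i+1}) \;\ge\; R_{I_i} \;\ge\; \alpha_2 m^\delta,
$$
by the very-good hypothesis; here the first inequality is just the definition \eqref{def-RIi} of $R_{I_i}$ as an infimum over such $\theta$'s, applied to the particular $\theta$ induced by $f$.

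From here I would set up the comparison with $G'_n$ as follows. Define a flow $g$ on $G_n$ by letting $g_{\varpi(i)\varpi(j)}$ be the total net amount of $f$ crossing from $I_i$ to $I_j$ along all edges of $\mathcal{E}_{I_i\times I_j}$; by conservation of $f$ this $g$ is a unit flow from $\varpi(i_x)$ to $\varpi(i_y)$ on $G_n$ (flow balance at each $\varpi(i)$ with $i\neq i_x,i_y$ follows from flow balance of $f$ inside $I_i$ together with the fact that no $f$-flow escapes to $[0,mn)^c$). For each $i$, the quantity $\sum_{j}|g_{\varpi(i)\varpi(j)}|$ — or more to the point, the amount of flow $g$ pushes across $\varpi(i)$ — is bounded by the total flow $\le 1$ handled inside $I_i$, and the energy estimate of the previous paragraph says that the $f$-energy inside $I_i$ dominates $R_{I_i}$ times the square of the relevant crossing flow. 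Since $r'_e = \min\{R_{I_i},R_{I_j}\} \le R_{I_i}$ for every edge $e=\langle\varpi(i),\varpi(j)\rangle$, charging the energy of $f$ inside $I_i$ against the edges of $G'_n$ incident to $\varpi(i)$ that actually carry $g$-flow, and using Cauchy–Schwarz to pass from $\big(\sum_j |g_{\varpi(i)\varpi(j)}|\big)^2$ back to $\sum_j g_{\varpi(i)\varpi(j)}^2$ if needed, yields
$$
R_{[0,mn)}(x,y) \;=\; \tfrac12\sum_{u\sim v} f_{uv}^2 \;\ge\; \tfrac12\sum_{\varpi(i)\sim\varpi(j)} r'_{\langle\varpi(i),\varpi(j)\rangle}\, g_{\varpi(i)\varpi(j)}^2 \;\ge\; R^{G'}_{V_n}(\varpi(i_x),\varpi(i_y)),
$$
the last step being the variational definition of effective resistance in the network $G'_n$, since $g$ is an admissible unit flow there.

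The main obstacle is the bookkeeping in the middle step: making rigorous the claim that the $f$-energy inside a single very-good interval $I_i$ can be \emph{simultaneously} lower-bounded by the sum over the disjoint regions $B_{\alpha_1 m}(u)$ and the short-edge channel, \emph{and} that this sum is at least $R_{I_i}$ times the square of the crossing flow associated to every incident $G'_n$-edge that $g$ uses. One has to be careful that a single long edge at $u$ may carry flow both in and out, that the decomposition $\{\theta_u\}$ and the leftover $1-\sum\theta_u$ genuinely dominate all the flow crossing $\varpi(i)$, and that the factor $r'_e$ (a minimum over two endpoints) is correctly matched to whichever endpoint's interval is being charged — this is where replacing $R_{I_i},R_{I_j}$ by their minimum costs nothing but must be invoked cleanly. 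I expect condition (1) of Definition \ref{def-alphagood-1} (the $\alpha_1 m$-separation of long-edge endpoints) to be the linchpin that makes the disjointness, hence the additivity of energies, go through, and the rest to be a careful but routine application of flow conservation and Cauchy–Schwarz.
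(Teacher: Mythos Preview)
Your approach is essentially the paper's: define the coarse-grained flow $g_{\varpi(i)\varpi(j)}=\sum_{u\in I_i,\,v\in I_j}f_{uv}$, use the very-good hypothesis to lower-bound the $f$-energy attributable to each $I_i$ by $R_{I_i}$ times the square of the total incoming flow, then drop $R_{I_i}$ to $r'_e=\min\{R_{I_i},R_{I_j}\}$ and conclude by the flow definition of $R^{G'}_{V_n}$. One small correction on the step you flag as ``Cauchy--Schwarz'': what you actually need (and what the paper uses) is the elementary inequality $(\sum_j a_j)^2\ge\sum_j a_j^2$ for $a_j\ge 0$, applied with $a_j$ equal to the positive part $\sum_{u\in I_i,v\in I_j} f_{vu}\mathds{1}_{\{f_{vu}>0\}}$ of the $f$-flow from $I_j$ into $I_i$; this cleanly dominates $g_{\varpi(j)\varpi(i)}^2$ on the event $g_{\varpi(j)\varpi(i)}>0$, and summing over $i$ then over $j$ recovers exactly $\tfrac12\sum r'_e g^2$.
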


\begin{proof}
For fixed $x,y\in [0,mn)$, let $f$ be the unit electric flow from $x$ to $y$ restricted to $[0,mn)$ in the LRP model.
We construct a unit flow $g$ from $i_x$ to $i_y$ based on $f$, which is defined as
\begin{equation}\label{def-Rn-g}
g_{\varpi(i)\varpi(j)}=\sum_{u\in I_i}\sum_{v\in I_j}f_{uv}\quad \text{for all distinct } i,j\in [0,n).
\end{equation}
Additionally, as we mentioned before, when the interval $I_i$ is $(\delta,\alpha)$-very good for $i\in [0,n)$, the internal energy $R_{I_i}$ represents the minimum energy generated by any unit flow passing through the interval $I_i$.
Combining this with \eqref{def-Rn-g}, we find that when $I_i$ is $(\delta,\alpha)$-very good for all $i\in[0,n)$,
\begin{equation*}
\begin{aligned}
R_{[0,mn)}(x,y)&=\frac{1}{2}\sum_{u\sim v}f^{2}_{uv}\\
&\geq \sum_{i\in[0,n)}\left(\sum_{u\in I_i}\sum_{v\in I^c_i}f_{vu}\I_{\{f_{vu}>0\}}\right)^2R_{I_i}\\
&= \sum_{i\in[0,n)}\left(\sum_{u\in I_i}\sum_{j\in [0,n)\setminus\{i\}}\sum_{v\in I_j}f_{vu}\I_{\{f_{vu}>0\}}\right)^2R_{I_i}\\
&\geq  \sum_{i\in[0,n)}\sum_{j\in [0,n)\setminus\{i\}}\left(\sum_{u\in I_i}\sum_{v\in I_j}f_{vu}\I_{\{f_{vu}>0\}}\right)^2R_{I_i}\\
&\geq \sum_{i\in[0,n)}\sum_{j\in [0,n)\setminus\{i\}} g^2_{\varpi(i)\varpi(j)}\I_{\{g_{\varpi(i)\varpi(j)}>0\}}\min\{R_{I_i},R_{I_j}\}\\
&= \sum_{i\in[0,n)}\sum_{j\in [0,n)\setminus\{i\}} g^2_{\varpi(i)\varpi(j)}r'_{\langle \varpi(i),\varpi(j) \rangle}\I_{\{g_{\varpi(i)\varpi(j)}>0\}}\quad \quad \text{by the definition of }r'_e\\
&= R^{G'}_{V_n}(\varpi(i_x),\varpi(i_y)),
\end{aligned}
\end{equation*}
which implies the desired result.
\end{proof}

With the above lemmas at hand, we can present the

\begin{proof}[Proof of Proposition \ref{propR-renorm}]

Fix $m,n\in \mathds{N}$. We will first clarify that it suffices to consider the case when $n\in \mathds{N}$ is sufficiently large in our proof. In fact, it follows from Remark \ref{rem-delta*} (2) that for any distinct $i,j\in [0,mn)$,
\begin{equation}\label{exp-R}
\mathds{E}\left[R_{[0,mn)}(i,j)\right]\geq \mathds{E}\left[R(0,B_{|i-j|}(0)^c)\right]\geq \widetilde{c}_1|i-j|^{\delta^*}\geq \widetilde{c}_1
\end{equation}
for some constant $\widetilde{c}_1>0$ (depending only on $\beta$). Additionally, it is obvious that $R_{[0,n)}(i,j)\leq n$. Therefore, we can derive \eqref{Rn-Rmn} for small $n$ by taking sufficiently large $C_2>0$.

In the following, we assume that $n\in \mathds{N}$ is sufficiently large.
Recall that $\xi_i$ and $\eta_i$ are defined in \eqref{def-xi-1} and \eqref{def-eta}, respectively.
Throughout the proof, we will assume that $C_5>0$ (depending only on $\beta$) is sufficiently large with
\begin{equation}\label{cond-C5}
C_5\geq \max\{6,\ 1/c_2\},
\end{equation}
where $c_2>0$ (depending only on $\beta$) is the constant defined in Lemma \ref{P-goodint-1}.
Let $A$ be the event that $\eta_i\leq C_5\log n$ for all $i\in [0,n)$ and let $B$ be the event that $\xi_i\leq 2C_5^2\log n$ for all $i\in [0,n)$. By applying Lemmas \ref{lem-eta} and \ref{lem-xi} with the union bound, we obtain
\begin{equation}\label{prob-AB}
\begin{aligned}
\mathds{P}\left[A^c\right]&\leq n\exp\{-C_5\log n/2\}\leq n^{-2}  \\
\text{and}\quad \mathds{P}\left[ B^c\ |A\right]&\leq n\exp\left\{-C_5^2\log n/3\right\}\leq n^{-2}.
\end{aligned}
\end{equation}
Additionally, recall that $R_{I_i}$ is the energy defined in \eqref{def-RIi} and $C_4<\infty$ (depending only on $\beta$) is the constant defined in Remark \ref{rem-alpha}.
We let $D$ denote the event that
$$
R_{I_i}>\e^{-3C_4}m^{1/\log n}\quad \text{for all }i\in [0,n).
$$
Then applying  \eqref{good-Mgood} with $\delta=1/\log n$, $M=2C_5^2\log n$ and using Remark \ref{rem-alpha}, we have

\begin{equation}\label{prob-ABC}
\begin{aligned}
\mathds{P}[D^c\ |A\cap B]&\leq \sum_{i=0}^{n-1}\mathds{P}\left[R_{I_i}\leq \e^{-3C_4}m^{1/\log n}\ |A\cap B\right]\\
&\leq \sum_{i=0}^{n-1}\mathds{P}\left[I_i\text{ is not $(\delta,\alpha)$-very good}\  | I_i\ \text{is $2C_5^2\log n$-good}\right]\\
&\leq n\e^{-2c_2C_5^2\log n}\leq n^{-2}\quad \quad \text{by \eqref{cond-C5}}.
\end{aligned}
\end{equation}

Now, we turn our attention to the electric networks $G_n$ and $G_n'$.
By applying Lemma \ref{vc-R} to the electric network $G_n$, we can find that for any $\varpi(i),\varpi(j)\in V_n$, there exists a collection of cutsets $\Pi_{\varpi(i),\varpi(j)}$ between $\varpi(i)$ and $\varpi(j)$ in $G_n$ and an assignment $\{c_{e,\pi}^{G}:e\in E_n,\pi\in\Pi\}\in \mathds{R}_+^{E_n\times \Pi_{\varpi(i),\varpi(j)}}$ satisfying \eqref{cond-ce} with $r_e=1$ such that
\begin{equation*}
R^{G}_{V_n}(\varpi(i),\varpi(j))=\sum_{\pi\in\Pi_{\varpi(i),\varpi(j)}}\frac{1}{\sum_{e\in\pi}c^{G}_{e,\pi}}.
\end{equation*}

Furthermore, since the  electric network $G'_n$ shares the same vertex and edge sets as $G_n$, $\Pi_{\varpi(i),\varpi(j)}$ is also a collection of cutsets between $\varpi(i)$ and $\varpi(j)$ in $G'_n$.
Moreover, for any edge $e=\langle \varpi(k),\varpi(l)\rangle$ and any cutset $\pi\in \Pi_{\varpi(i),\varpi(j)}$, we define
$$
c^{G'}_{e,\pi}=c^{G}_{e,\pi}/r'_e=c^{G}_{e,\pi}/\min\{R_{I_k},R_{I_l}\}.
$$
It is clear that the assignment $\{c^{G'}_{e,\pi}:\ e\in E_n, \pi\in \Pi_{\varpi(i),\varpi(j)}\}$ satisfies \eqref{cond-ce} with $r_e$ replaced by $r'_e$.
Therefore, applying  Lemma \ref{vc-R} to the electric network $G_n'$ again, we obtain that
\begin{equation*}
R^{G'}_{V_n}(\varpi(i),\varpi(j))\geq \sum_{\pi\in\Pi_{\varpi(i),\varpi(j)}}\frac{1}{\sum_{e\in\pi}c^{G'}_{e,\pi}}
=\sum_{\pi\in\Pi_{\varpi(i),\varpi(j)}}\frac{1}{\sum_{e\in\pi}c^{G}_{e,\pi}/r'_e}.
\end{equation*}
Combining this with the definitions of events $A,B,D$, along with \eqref{prob-AB} and \eqref{prob-ABC} we obtain that
\begin{equation}\label{cond-G}
\begin{aligned}
\mathds{E}\left[R^{G'}_{V_n}(\varpi(i),\varpi(j))\ |G\right]&\geq \mathds{E}\left[R^{G'}_{V_n}(\varpi(i),\varpi(j))\I_{A\cap B\cap D}\ |G\right]\\
&\geq \e^{-3C_4}m^{1/\log n}\sum_{\pi\in\Pi_{\varpi(i),\varpi(j)}}\frac{1}{\sum_{e\in\pi}c^{G}_{e,\pi}}\mathds{E}\left[\I_{A\cap B\cap D}\ |G\right]\\
&=\e^{-3C_4}m^{1/\log n}\I_A\mathds{P}[B|A]\mathds{P}[D|AB]R^{G}_{V_n}(\varpi(i),\varpi(j))\\
&\geq\frac{1}{4}\e^{-3C_4}m^{1/\log n}\I_AR^{G}_{V_n}(\varpi(i),\varpi(j)).
\end{aligned}
\end{equation}
In addition, note that by \eqref{prob-AB} and the fact $R^{G}_{V_n}(\varpi(i),\varpi(j))\leq n$, one has
$$
\mathds{E}\left[\I_{A^c}R^{G}_{V_n}(\varpi(i),\varpi(j))\right]\leq n\mathds{P}\left[A^c\right]\leq n^{-2}\cdot n\leq n^{-1}.
$$
Consequently, by taking expectations on both sides of \eqref{cond-G} and by the scaling invariance which implies $\mathds{E}[R^{G}_{V_n}(\varpi(i),\varpi(j))]=\mathds{E}[R_{[0,n)}(i,j)]$,  we derive
\begin{equation*}
\begin{aligned}
\mathds{E}\left[R^{G'}_{V_n}(\varpi(i),\varpi(j))\right]&\geq \frac{1}{4}\e^{-3C_4}m^{1/\log n}\left(\mathds{E}\left[R^{G}_{V_n}(\varpi(i),\varpi(j))\right]-n^{-1}\right)\\
&\geq \widetilde{c}_2\e^{-3C_4}m^{1/\log n}\mathds{E}\left[R_{[0,n)}(i,j)\right]
\end{aligned}
\end{equation*}
for some $\widetilde{c}_2>0$ depending only on $\beta$. Here in the last inequality we used \eqref{exp-R} and the assumption that $n$ is sufficiently large.
Combining this with Lemma \ref{lemR-R^G'} we have that for any $x,y\in [0,mn)$,
\begin{equation*}
\mathds{E}\left[R_{[0,mn)}(x,y)\right]\geq \widetilde{c}_2\e^{-3C_3}m^{1/\log n}\mathds{E}\left[R_{[0,n)}(\lfloor x/m\rfloor, \lfloor y/m\rfloor)\right].
\end{equation*}
Finally, letting $C_2:=\widetilde{c}_2^{-1}\e^{3C_3}$ (depending only on $\beta$), we conclude the desired result.
\end{proof}

\subsection{Proof of Proposition \ref{R(0,n)main}}\label{sect-R0n}

To prove Proposition \ref{R(0,n)main}, we start with a slightly stronger version of Proposition \ref{propR-renorm}.

\begin{lemma}\label{R-renorm}
Let $\beta>0$ and $n,n'\in \mathds{N}$ with $n\geq n'$. For any $i,j\in [0,n)$, denote $i'=\lfloor\frac{n'}{n}i\rfloor$ and $j'=\lfloor\frac{n'}{n}j\rfloor$. Then there exists a constant $C_{6,*}=C_{6,*}(\beta)<\infty$ {\rm(}depending only on $\beta${\rm)} such that for all $i,j\in [0,n)$,
\begin{equation}\label{ER-sl}
\mathds{E}\left[R_{[0,n')}(i',j')\right]\leq C_{6,*}\mathds{E}\left[R_{[0,n)}(i,j)\right].
\end{equation}
\end{lemma}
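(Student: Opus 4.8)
The plan is to derive Lemma \ref{R-renorm} from Proposition \ref{propR-renorm} by interpolating between the scales $n'$ and $n$. The statement of Proposition \ref{propR-renorm} handles the case where $n = mn'$ is an exact multiple of $n'$, so the main task is to bridge the gap between consecutive multiples. Concretely, given $n \geq n'$, I would pick $m \in \mathds{N}$ so that $m n' \leq n < (m+1)n'$; then $n$ lies between the two scales $mn'$ and $(m+1)n'$ which differ only by a bounded multiplicative factor (at most $2$, say, once $m \geq 1$). The idea is to compare $R_{[0,n)}$ with $R_{[0,mn')}$ (or $R_{[0,(m+1)n')}$) and then apply Proposition \ref{propR-renorm} on the exact-multiple scale.

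First I would observe that it suffices to treat $n$ large, exactly as in the proof of Proposition \ref{propR-renorm}: by Remark \ref{rem-delta*}(2) one has $\mathds{E}[R_{[0,n)}(i,j)] \geq \widetilde{c}\,|i-j|^{\delta^*} \geq \widetilde{c}$ whenever $i \neq j$, while $R_{[0,n')}(i',j') \leq n'$ trivially, so for bounded $n$ (hence bounded $n'$) the inequality \eqref{ER-sl} holds by taking $C_{6,*}$ large; and if $i' = j'$ the left side is $0$. So assume $n$ is large. Next, the key monotonicity input: restricting the allowed edges can only increase effective resistance, so for the natural embedding $[0, mn') \subset [0,n)$ we have, for $a,b \in [0,mn')$,
\begin{equation*}
R_{[0,mn')}(a,b) \geq R_{[0,n)}(a,b) \geq \text{(something we can relate back)}.
\end{equation*}
More useful is the other direction. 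I would instead run the renormalization argument of Section \ref{proof-weaksup} directly at ratio $n'/n$ rather than $1/m$: partition $[0,n)$ into $n'$ blocks, but now the blocks have length $m$ or $m+1$ (namely $I_k = [\lfloor kn/n'\rfloor, \lfloor (k+1)n/n'\rfloor)$), which have comparable lengths since $m \leq n/n' < m+1$. The whole machinery of Section \ref{est-good} — the definition of $(\delta,\alpha)$-very good intervals, Proposition \ref{prop-alphagood-1}, Lemmas \ref{lem-eta}, \ref{lem-xi}, \ref{lemR-R^G'} and the variational characterization Lemma \ref{vc-R} — goes through verbatim with blocks of length $m$ or $m+1$ in place of exactly $m$, because each block is still a scaled copy (up to bounded distortion) of an interval of length $\asymp m$, and the internal-energy lower bound $R_{I_i} \gtrsim m^{1/\log n}$ only uses that the block length is $\asymp m \asymp n/n'$. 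Carrying the argument through yields
\begin{equation*}
\mathds{E}\left[R_{[0,n)}(i,j)\right] \geq \widetilde{c}\, \e^{-3C_4} (n/n')^{1/\log n}\, \mathds{E}\left[R_{[0,n')}(i',j')\right] \geq C_{6,*}^{-1}\, \mathds{E}\left[R_{[0,n')}(i',j')\right],
\end{equation*}
since $(n/n')^{1/\log n} \geq 1$, which is exactly \eqref{ER-sl}.

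Alternatively, and perhaps cleaner to write, I would avoid re-deriving the machinery and instead chain two applications of Proposition \ref{propR-renorm}: embed $[0,n') \subset [0,(m+1)n')$ by the block map of mesh $m+1$, apply Proposition \ref{propR-renorm} to get $\mathds{E}[R_{[0,n')}(i'',j'')] \leq C_2\,\mathds{E}[R_{[0,(m+1)n')}(\cdot,\cdot)]$, then compare $R_{[0,(m+1)n')}$ and $R_{[0,n)}$ — but this last comparison is between two non-nested (or barely nested) intervals of comparable length, and is itself essentially the content of Proposition \ref{R(0,n)main}-type reasoning, so it is not obviously easier. I therefore expect the first route — redoing the coarse-graining with blocks of two adjacent sizes $m$ and $m+1$ — to be the one actually used, and the main obstacle is the bookkeeping: verifying that every estimate in Section \ref{proof-weaksup} (the degree bounds $\eta_i, \xi_i \lesssim \log n$, the very-good probability bound with $\delta = 1/\log n$, and the flow-pushforward inequality of Lemma \ref{lemR-R^G'}) is insensitive to replacing the uniform block length $m$ by lengths in $\{m, m+1\}$, which is true because all constants there depend only on $\beta$ and the relevant scale enters only as $m^{1/\log n}$ with $m \asymp n/n'$. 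Handling the degenerate cases $i' = j'$ (resistance $0$, trivial) and small $n$ (crude bound $R_{[0,n')} \leq n'$ against $\mathds{E}[R_{[0,n)}] \geq \widetilde{c}$) completes the argument.
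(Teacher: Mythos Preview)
Your first route has a real gap at the final step. In the proof of Proposition \ref{propR-renorm} the argument concludes by invoking the \emph{exact} self-similarity of the $\beta$-LRP: when all blocks have the same length $m$, the renormalized graph $G$ has precisely the law of a $\beta$-LRP on $\{0,\ldots,n'-1\}$, whence $\mathds{E}[R^G_{V_{n'}}(\varpi(i'),\varpi(j'))] = \mathds{E}[R_{[0,n')}(i',j')]$. With blocks of lengths $m$ and $m+1$ this identity fails --- the renormalized graph is only approximately a $\beta$-LRP, with connection probabilities perturbed by bounded multiplicative factors (as bad as a factor of $4$ when $m=1$). To finish you would need to compare effective resistances between two long-range percolation models with uniformly comparable edge probabilities, which amounts to comparing $\Lambda(\cdot,\beta)$ with $\Lambda(\cdot,\beta')$ for $\beta'\neq \beta$; nothing in the paper supplies this, and it is not obvious. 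So ``goes through verbatim'' is not justified.

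The paper in fact takes a route closer to your dismissed second alternative, with a new device for the non-integer ratio. After the exact-multiple case $n=mn'$ (Proposition \ref{propR-renorm}), it couples the graphs on $[0,n)$ and on $[0,mn')$ (where $mn'\leq n<(m+1)n'$) through the underlying \emph{continuous} Poisson point process on $\mathds{R}^2$ with intensity $\beta|x-y|^{-2}$: both discrete graphs are read off the same point configuration, scaled by $n$ and by $mn'$ respectively. Every edge $\langle k,l\rangle$ of the graph on $[0,n)$ then maps under $x\mapsto \lfloor \tfrac{mn'}{n}x\rfloor$ to an edge of the graph on $[0,mn')$, with endpoints displaced by at most $1$ and multiplicity at most $4$ (since $mn'/n\in(1/2,1]$). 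Pushing forward the optimal unit flow through this map and controlling the resulting flow-merging via Lemma \ref{lem:compare} gives $\mathds{E}[R_{[0,mn')}(i'',j'')]\leq C\,\mathds{E}[R_{[0,n)}(i,j)]$; a second application of Proposition \ref{propR-renorm} then reduces $mn'$ to $n'$. The continuous coupling cleanly sidesteps the self-similarity defect that blocks your first approach, and is exactly the comparable-length comparison you flagged as the obstacle in your second.
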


\begin{proof}
It is obvious that the statement holds for $i=j$. In the rest of the proof, we assume that $i\neq j$.

We start with the case $n=mn'$ for some $m\in \mathds{N}$.  From Proposition \ref{propR-renorm}, there exists a constant $\widetilde{c}_1(\beta)>0$ (depending only on $\beta$) such that
\begin{equation}\label{case-n=mn'}
\mathds{E}[R_{[0,n)}(i,j)]\geq \widetilde{c}_1(\beta)\mathds{E}[R_{[0,n')}(i',j')].
\end{equation}
This implies \eqref{ER-sl} holds for $n/n'\in \mathds{N}$.

For the general $n\geq n'$, write $n=mn'+r$ for some $m\in \mathds{N}$ and $0\leq r<n'$.
We will employ a coupling with the underlying continuous model to complete the proof.
To this end, let $\widetilde{\mathcal{E}}$ be a Poisson point process on $\mathds{R}^{2}$ with intensity $\beta/|x-y|^2$. We now define a random graph $G=(V,E)$ as follows: let $V=[0,n)$ and for any $i,j\in V$, $\langle i,j\rangle\in E$ if and only if $[i,i+1)\times [j,j+1)\cap n\widetilde{\mathcal{E}}\neq \emptyset$. Here $n\widetilde{\mathcal{E}}:=\{\langle nx,ny\rangle:\ \langle x,y\rangle\in \widetilde{\mathcal{E}}\}$.
Similarly, we define the random graph $G'=(V',E')$ by replacing $n$ with $mn'$ in the definition of $G$.
From the above definitions we can see that for any $\langle k,l \rangle\in E$, there exist $x_{kl},y_{kl}\in \mathds{R}$ such that
$$
\langle x_{kl},y_{kl}\rangle\in [k,k+1)\times [l,l+1)\cap n\widetilde{\mathcal{E}}.
$$
This implies
$\langle \frac{mn'}{n}x_{kl},\frac{mn'}{n}y_{kl}\rangle\in [0,mn')^2\cap (mn')\widetilde{\mathcal{E}}$.
Note that if there are multiple pairs of $\langle x_{kl},y_{kl}\rangle$, we select the pair that maximizes $|x_{kl}-y_{kl}|$.
Therefore, we have that
\begin{equation*}\label{edgeGn'-1}
\left\langle \left\lfloor\frac{mn'}{n}x_{kl}\right\rfloor, \left\lfloor\frac{mn'}{n}y_{kl}\right\rfloor\right\rangle\in E'.
\end{equation*}
In addition, for any $\langle k_1,l\rangle, \langle k_2,l\rangle\in E$, one can check that
\begin{equation}\label{edgeGn'-2}
\left|\left\lfloor\frac{mn'}{n}y_{k_1l}\right\rfloor-\left\lfloor\frac{mn'}{n}y_{k_2l}\right\rfloor\right|\leq 1.
\end{equation}

Now fix $i,j\in [0,n)$. Let $f$ be a unit electric flow from $i$ to $j$ restricted to the interval $[0, n)$.
Based on $f$, we construct a unit flow $\widetilde{f}$ from $i''=\lfloor\frac{mn'}{n}i\rfloor$ to $j''=\lfloor\frac{mn'}{n}j\rfloor$ in the graph $G'$ as follows. For any $\langle k, l\rangle \in E$ with $|k-l|\geq 2$, denote
$$
E_{kl}:=\left\{\langle k_1,l_1\rangle:\ \left\lfloor\frac{mn'}{n}x_{kl}\right\rfloor=\left\lfloor\frac{mn'}{n}x_{k_1l_1}\right\rfloor\ \text{and}\ \left\lfloor\frac{mn'}{n}y_{kl}\right\rfloor=\left\lfloor\frac{mn'}{n}y_{k_1l_1}\right\rfloor\right\}.$$
Since $mn'\leq n<(m+1)n'$, it is obvious that $|E_{kl}|\leq 4$. Now let

$$
\widetilde{f}_{\lfloor\frac{mn'}{n}x_{kl}\rfloor \lfloor\frac{mn'}{n}y_{kl}\rfloor}=\sum_{\langle k_1,l_1\rangle\in E_{kl}}f_{k_1l_1}.
$$
In addition, for any $\langle k, l\rangle \in E$ with $|k-l|=1$, we define $\widetilde{f}_{\lfloor\frac{mn'}{n}x_{kl}\rfloor\lfloor\frac{mn'}{n}y_{kl}\rfloor}$ such that $f$ becomes a flow between $i''$ to $j''$ in the graph $G'$. From the fact $|E_{kl}|\leq 4$ for each $\langle k,l\rangle\in E $ and Lemma \ref{lem:compare} applied for the flow $f$, we can see that
\begin{equation*}
\begin{aligned}
&\mathds{E}[R_{[0,mn')}(i'',j'')]\leq \mathds{E}\left[\frac{1}{2}\sum_{\langle k',l'\rangle\in E'}\widetilde{f}_{k'l'}^2\right]
\leq \mathds{E}\left[\frac{1}{2}\left[2^4\sum_{\langle k,l\rangle\in E}f_{kl}^2+\sum_{i\in V}\left(\sum_{j\in V:f_{ji}>0}f_{ji}\right)^2\right]\right]\\
&\leq \mathds{E}\left[\frac{1}{2}\left[2^4\sum_{\langle k,l\rangle\in E}f_{kl}^2+\widetilde{C}_1\sum_{i\in V}\sum_{j\in V:f_{ji}>0}f_{ji}^2\right]\right]\\
&\leq \widetilde{C}_2\mathds{E}[R_{[0,n)}(i,j)]
\end{aligned}
\end{equation*}
for some constants $\widetilde{C}_1, \widetilde{C}_2<\infty$ (depending only on $\beta$).
Combining this with \eqref{case-n=mn'}, we get that there exist constants $\widetilde{c}_2(\beta),\ \widetilde{c}_3(\beta),\ \widetilde{c}_4(\beta) >0$ (all depending only on $\beta$) such that
\begin{equation*}
\begin{aligned}
\mathds{E}[R_{[0,n)}(i,j)]&\geq \widetilde{c}_2(\beta)\mathds{E}[R_{[0,mn')}(i'',j'')]\\
&\geq \widetilde{c}_3(\beta)\mathds{E}\left[R_{[0,n')}\left(\left\lfloor\frac{i''}{m}\right\rfloor, \left\lfloor\frac{j''}{m}\right\rfloor\right)\right]
\geq \widetilde{c}_4(\beta)\mathds{E}[R_{[0,n')}(i',j')].
\end{aligned}
\end{equation*}
Hence, the proof is complete.
\end{proof}

\begin{proof}[Proof of Proposition \ref{R(0,n)main}]
For $i,j\in [0,n)$ with $j>i$, it follows from the monotonicity of effective resistance and Lemma \ref{R-renorm} that
\begin{equation*}
\mathds{E}[R_{[0,n)}(i,j)]\leq \mathds{E}[R_{[i,j]}(i,j)]=\mathds{E}[R_{[0,j-i]}(0,j-i)]\leq C_{6,*}\mathds{E}[R_{[0,n)}(0,n-1)],
\end{equation*}
where $C_{6,*}<\infty$ (depending only on $\beta$) is the constant defined in Lemma \ref{R-renorm}.
\end{proof}

\section{A weaker version of supermultiplicativity}\label{sect-weaksupermult}

In this section, we will prove a weaker version of the supermultiplicativity of effective resistance as outlined below.

\begin{proposition}\label{weak-supermult}
Recall $\delta^*$ from \ref{Lem-RNtail-1}.
For all $\beta>0$, there exists a constant $\delta=\delta(\beta)\in (0,\delta^*]$ {\rm(}depending only on $\beta${\rm)} such that for all $m,n\in \mathds{N}$ large enough,
\begin{equation*}
\mathds{E}\left[R_{[0,mn-1]}(0,mn-1)\right]\geq m^\delta \mathds{E}\left[R_{[0,n-1]}(0,n-1)\right].
\end{equation*}
\end{proposition}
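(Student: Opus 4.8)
The plan is to renormalise $[0,mn)$ into the $n$ consecutive blocks $I_i=[im,(i+1)m)$, $i\in[0,n)$, collapse each $I_i$ to a vertex $\varpi(i)$ as in Section~\ref{sect-submult}, and to compare the microscopic resistance $R_{[0,mn)}(0,mn-1)$ with the effective resistance $R^G_{V_n}(\varpi(0),\varpi(n-1))$ of the renormalised unit-resistance network $G$ on $V_n=\{\varpi(0),\dots,\varpi(n-1)\}$ defined in \eqref{def-RGV}. Since $G$ is again a critical $\beta$-LRP, scaling invariance yields the exact identity $\mathds{E}[R^G_{V_n}(\varpi(0),\varpi(n-1))]=\mathds{E}[R_{[0,n)}(0,n-1)]$, so it is enough to prove that, with probability at least $1-n^{-2}$,
\begin{equation*}
R_{[0,mn)}(0,mn-1)\;\ge\; c\,m^{\delta_0}\,R^G_{V_n}(\varpi(0),\varpi(n-1))
\end{equation*}
for some constants $c=c(\beta)>0$ and $\delta_0=\delta_0(\beta)\in(0,\delta^*]$; then, since $c$ does not depend on $m$, one has $c\,m^{\delta_0}\ge m^{\delta}$ for any fixed $\delta<\delta_0$ once $m$ is large, and on the exceptional event (probability $\le n^{-2}$) the trivial bounds $R_{[0,mn)}(0,mn-1)\ge 0$, $R^G_{V_n}\le n$ together with $\mathds{E}[R_{[0,n)}(0,n-1)]\gtrsim n^{\delta^*}$ (Remark~\ref{rem-delta*} and monotonicity) make the error negligible when taking expectations. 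I would take $\delta_0=\delta^*/2$ and $\delta=\delta_0/2$, say.

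In the ``clean'' case, where every block $I_i$ ($i\in[0,n)$) is $(\delta_0,\alpha)$-very good for the parameter $\alpha=(\alpha_1,\alpha_2)$ of Proposition~\ref{prop-alphagood-1} (so $R_{I_i}\ge\alpha_2 m^{\delta_0}$ by Remark~\ref{M-vg}), this is immediate: Lemma~\ref{lemR-R^G'} gives $R_{[0,mn)}(0,mn-1)\ge R^{G'}_{V_n}(\varpi(0),\varpi(n-1))$ for the network $G'$ with edge resistances $\min\{R_{I_i},R_{I_j}\}\ge\alpha_2 m^{\delta_0}$, and rescaling capacities in the cutset characterisation (Lemma~\ref{vc-R}) exactly as in the proof of Proposition~\ref{propR-renorm} shows $R^{G'}_{V_n}\ge \alpha_2 m^{\delta_0}R^G_{V_n}$. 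By Proposition~\ref{prop-alphagood-1} each block is very good with probability $\ge1-\varepsilon$, but not all of them are, and handling the ``bad'' blocks is the actual work.

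For that I would run a coarse-graining argument. Fix a large $L$ with $L^2\varepsilon<1$ and consider the hierarchy of intervals of lengths $m,mL,mL^2,\dots$, a scale-$k$ interval being a union of $L$ consecutive scale-$(k-1)$ intervals; call it \emph{bad} if it is not $(\delta_0,\alpha)$-very good at scale $mL^k$ — which, crucially, still has probability $\le\varepsilon$ uniformly in $k$ because the parameters $\alpha_1=\varepsilon^{C_3}$, $\alpha_2=\varepsilon^{C_4\delta_0}$ of Remark~\ref{rem-alpha} are scale-independent. I then introduce the recursively defined event that a scale-$k$ interval is \emph{sparsely bad} when at least two of its $L$ children are sparsely bad (scale-$0$ sparsely bad $=$ bad); a Chernoff estimate, using that the relevant events on disjoint same-scale intervals are only weakly correlated, gives $\mathds{P}[\text{scale-}k\text{ sparsely bad}]\le (L^2\varepsilon)^{2^k}$. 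A union bound over the $O(n)$ intervals and $O(\log n)$ scales then shows that, with probability $\ge1-n^{-2}$, every bad block lies in a not-sparsely-bad interval of scale at most a constant $K=K(\beta,\varepsilon)$; covering each bad block by such an interval and discarding inclusions produces a family of pairwise disjoint intervals, each good at its own scale, covering all bad blocks, of total length a small fraction of $mn$. That a not-sparsely-bad scale-$k$ interval really does have internal energy $\gtrsim (mL^k)^{\delta_0}\ge m^{\delta_0}$ I would prove by induction on $k$, applying a scale-$mL^{k-1}$ version of Lemma~\ref{lemR-R^G'} to its $\ge L-1$ good children and absorbing the single possibly-bad child through the separation condition in Definition~\ref{def-alphagood-1}(1).

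Finally, given such a covering, the lower bound on $R_{[0,mn)}(0,mn-1)$ follows by energy bookkeeping: for an arbitrary unit flow $f$ from $0$ to $mn-1$, its energy inside each good block and inside each covering interval is at least a constant times $m^{\delta_0}$ times the square of the flow $f$ sends across it (the content of Definition~\ref{def-alphagood-1} and of its scale-$mL^k$ analogue), and assembling these with the cutset description of $R^G_{V_n}$ (Lemma~\ref{vc-R}) yields $R_{[0,mn)}(0,mn-1)\gtrsim m^{\delta_0}R^G_{V_n}(\varpi(0),\varpi(n-1))$; since the left side is an infimum over a continuum of flows while the quantities on the right depend only on the finitely many combinatorial routing patterns, this step is turned into an estimate by a union bound over a discretisation of the unit flows (and, where parallel microscopic edges between two blocks must be merged, Lemma~\ref{lem:compare}, as in Proposition~\ref{submult}). \textbf{The main obstacle} is exactly this coarse-graining and its accounting: making the sparsely-bad probabilities decay fast enough for the union bound while keeping the covering intervals of bounded scale; showing inductively, at every scale, that a not-sparsely-bad interval still carries internal energy comparable to $(mL^k)^{\delta_0}$ in spite of one bad child (which reuses the comparison ideas behind Lemma~\ref{lemR-R^G'}); and converting the per-flow estimates into a genuine infimum lower bound — all while folding the many $\varepsilon$- and $\alpha_2$-dependent constants into a single exponent $\delta<\delta^*$ by taking $m$ large.
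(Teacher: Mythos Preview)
Your overall framework is right and matches the paper: renormalise into blocks, compare $R_{[0,mn)}(0,mn-1)$ to $R^G_{V_n}(\varpi(0),\varpi(n-1))$ via the very-good intervals, and reduce to handling the bad blocks. The clean case via Lemma~\ref{lemR-R^G'} is exactly what the paper does for Proposition~\ref{propR-renorm}.

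However, your coarse-graining mechanism is genuinely different from the paper's, and there is a real gap. The paper does \emph{not} use a geometric hierarchy of intervals of lengths $m,mL,mL^2,\dots$; it works entirely on the fixed renormalised graph $G_n$ and considers the \emph{graph-connected} components of bad vertices (``red animals''). These animals are measured by their cardinality and degree in $G_n$ (Definition~\ref{def-ak-clt}), not by geometric extent, and the decisive ingredient is a resistance condition \emph{on the renormalised graph itself} in an annulus around each animal (Definition~\ref{def-a1bad}). The merging operations LEAF/NNAF (Definitions~\ref{def-LECF}--\ref{def-NNCF}) are designed precisely to absorb long edges of $G_n$ into the animals so that the surrounding annuli are clean.

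Your geometric ``sparsely bad'' recursion misses this graph structure. Two bad blocks $I_i,I_j$ that are far apart geometrically but joined by an edge $\varpi(i)\sim\varpi(j)$ in $G_n$ form a single obstruction the flow can route through, yet your scheme covers them by disjoint intervals. More concretely, the inductive step you flag as ``the main obstacle'' --- that a not-sparsely-bad scale-$k$ interval carries internal energy $\gtrsim (mL^k)^{\delta_0}$ despite one bad child --- does not follow from the separation condition in Definition~\ref{def-alphagood-1}(1): that condition concerns spacing of long-edge endpoints at a \emph{single} scale and says nothing about a child whose internal resistance may be arbitrarily small. The paper sidesteps this entirely. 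It never claims large microscopic energy for a region containing bad blocks; instead it proves (\eqref{energy-L-B}--\eqref{property-g2}) that for any flow $g$ on $G_n$, the energy on edges \emph{inside} a bad animal is at most $1/a_k$ times the energy in a surrounding $G_n$-annulus, so that dropping all bad-animal edges costs only a factor $\prod_k(1+1/a_k)\le C_1$. This uses that the animal has at most $a_k$ vertices while the annulus has renormalised resistance $\gtrsim \exp\{c\,a_k^{1/2}\}$ --- a balance between combinatorial size and graph resistance that is invisible in a purely geometric picture.

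Two smaller points: the ``weak correlation'' you need for the Chernoff step is not free --- very-goodness of $I_i$ depends on long edges to arbitrary distance, and the paper spends Propositions~\ref{P-red-many}--\ref{P-red-many-tree} (BK inequality, conditioning on at most two prescribed incident edges per vertex) to get exponential decay for red-component sizes; and your claim that the covering scale $K$ is a constant independent of $n$ cannot hold, since the union bound over $n$ blocks forces $K$ to grow (the paper's $K_*$ in \eqref{def-K} is of order $(\log n)^2$).
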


 \begin{remark}\label{dcs-delta}
We note that the exponent $\delta $ in \eqref{exponent-delta} from the proof of Proposition \ref{weak-supermult} below  is derived from Lemma \ref{Lem-RNtail-1}. By combining this observation with Remark \ref{rem-delta*}, we can conclude that the exponent $\delta$ in Proposition \ref{weak-supermult} can also be chosen as a decreasing function of $\beta$.
\end{remark}

Hereon, we provide a general overview on our proof of Proposition \ref{weak-supermult}. For fixed sufficiently large $m,n\in \mathds{N}$, we divide the interval $[0,mn)$ uniformly into $n$ smaller intervals of length $m$, and we employ the notation of very good interval as defined in Definition \ref{def-alphagood-1}.
As shown in Proposition \ref{prop-alphagood-1}, with appropriately chosen parameters, each such interval is very good with high probability.
This implies that when a unit flow flows from $0$ to $mn-1$, it will generate sufficiently large energy in most of the small intervals.
However, there are some small intervals where the flow does not generate enough energy. Our main effort is to address these intervals.

To achieve this, we proceed in two steps. First,
we introduce the renormalization $G=(V,E)$ from the $\beta$-LRP model by identifying the intervals $I_i$ to $\varpi(i)$. Recall that $G_n=(V_n,E_n)$ is the restriction of $G$ to $V_n=\{\varpi(0),\varpi(1),\cdots, \varpi(n-1)\}$.
We say a vertex $\varpi(i)\in V$ is not very good if $I_i$ is not very good. We then consider the not very good component in $G_n$, which is a connected component of $G_n$  consisting of not very good vertices.
By strengthening the estimates from Section \ref{sect-far} along with the BK inequality, we prove that the growth of the size of such not very good component exhibits exponential decay in probability (see Proposition \ref{P-red-many-tree}).

In the second step, we will employ a coarse-graining argument, which will be accomplished in Section \ref{sect-cg}. Specifically, we will iteratively define ``good'' components at various scales, where the energy generated by the flow passing through these components is significantly smaller than the energy generated by the nearby intervals at the same scale.
We will then show that, with high probability, all not very good components can be considered ``good'', that is, we can find intervals at various scales to cover all these not very good components. Combining these statements, we can conclude Proposition \ref{weak-supermult}.

We now introduce some notation which will be used repeatedly. Throughout this section, we fix sufficiently large $m,n\in \mathds{N}$. Recall that $\delta^*$ (depending only on $\beta$) is the exponent in Lemma \ref{Lem-RNtail-1} and  we write the intervals $[im,(i+1)m)$ as $I_i$ for all $i\in \mathds{Z}$. For $\delta\in(0,\delta^*]$ and $\alpha=(\alpha_1,\alpha_2)\in (0,1)^2$, the $(\delta,\alpha)$-very good interval is defined in Definition \ref{def-alphagood-1}.
In the following, we will say the vertex $\varpi(i)$ is $(\delta,\alpha)$-very good if $I_i$ is $(\delta,\alpha)$-very good.

\subsection{Some estimates for not very good components}
The goal of this section is to estimate the probability of the occurrence of not $(\delta,\alpha)$-very good component, which is a connected component in $G$ consisting of not $(\delta,\alpha)$-very good vertices.
To do this, we let $\mathcal{CS}_k(0)$ denote the collection of all connected subsets of the graph $G$ that are of size $k$ and include the vertex $\varpi(0)$.

\begin{proposition}\label{P-red-many-tree}
For any $\beta>0$, $\delta\in (0,\delta^*]$ and sufficiently large $M>0$, there exist constants $c_1>0$ {\rm(}depending only on $\beta${\rm)} and $\alpha=(\alpha_1,\alpha_2)\in (0,1)^2$ {\rm(}depending only on $\beta$ and $M${\rm)} such that for all $k\in\mathds{N}$,
\begin{equation*}
\mathbb{P}\left[\exists Z\in\mathcal{CS}_k(0): \forall \varpi(j)\in Z\setminus \{\varpi(0)\}, \varpi(j)\text{ is not $(\delta,\alpha)$-very good}\right]\leq {\rm e}^{-c_1Mk}.
\end{equation*}
\end{proposition}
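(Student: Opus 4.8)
The plan is to bound the probability by a union bound over the (at most $C^k$ many) connected subsets $Z\in\mathcal{CS}_k(0)$ — using the standard fact that a graph of bounded degree has exponentially many connected subsets of size $k$ through a fixed vertex — and for each fixed $Z$ to show that the probability that every vertex of $Z\setminus\{\varpi(0)\}$ fails to be $(\delta,\alpha)$-very good is at most $\mathrm{e}^{-c M k}$ for a large constant. The subtlety is that ``$\varpi(j)$ is $(\delta,\alpha)$-very good'' is \emph{not} a local event: through Definition~\ref{def-alphagood-1} it involves the resistance $\widehat R(I_{j-1},I_{j+1})$ and the quantities $R_{I_j}(u,B_{\alpha_1 m}(u)^c)$, which are measurable with respect to edges in a neighbourhood $[(j-1)m,(j+2)m)$ of $I_j$, and moreover each individual failure probability is only $\varepsilon$ (Proposition~\ref{prop-alphagood-1}), not exponentially small in $M$. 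So two things must be arranged: (i) extract a \emph{decreasing} (or at least BK-compatible) sub-event whose failure is controlled, and (ii) handle the overlap of the neighbourhoods of nearby $\varpi(j)$'s.

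Concretely, I would first replace ``not $(\delta,\alpha)$-very good'' by the larger event ``not $(M,\delta,\alpha)$-very good given $M$-good, or not $M$-good'', using Remark~\ref{M-vg} (with $\alpha_2$ rescaled by $(M+1)^{-1}$, absorbed into the choice of $\alpha$). By Lemmas~\ref{lem-good-1} and~\ref{P-goodint-1}, for each $j$ the event that $I_j$ is not $(M,\delta,\alpha)$-very good has probability at most $\mathrm{e}^{-c_1 M}+\mathrm{e}^{-c_2 M}\le \mathrm{e}^{-cM}$, i.e.\ already exponentially small in $M$ — this is exactly the gain over Proposition~\ref{prop-alphagood-1}. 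Next, to kill the dependence, observe that the event ``$I_j$ is not $(M,\delta,\alpha)$-very good'' is the union of: the decreasing event $F_1(j)^c$ (a $2$-edge configuration statement), the increasing-type events $F_2(j)^c,F_3(j)^c$ (each a lower bound on a resistance failing, hence a decreasing event in the conductances — so an \emph{increasing} event in the edge set being present, but in fact these resistance-lower-bound failures are decreasing events in $\mathcal{E}$ restricted to the relevant window), and the event $\{I_j$ not $M$-good$\}$ (decreasing). I would then invoke the BK/van den Berg--Kesten inequality: since each vertex's bad event is determined by edges in the window $W_j:=[(j-1)m,(j+2)m)$, and the windows $W_j$ for $j$ ranging over $Z\setminus\{\varpi(0)\}$ overlap at most boundedly (each edge lies in at most a bounded number of windows), the occurrences are ``almost disjoint''; one passes to a sub-collection $Z'\subset Z\setminus\{\varpi(0)\}$ of size $\ge k/C'$ with pairwise disjoint windows, on which the events are genuinely independent, giving
\[
\mathbb{P}\bigl[\forall \varpi(j)\in Z\setminus\{\varpi(0)\},\ \varpi(j)\text{ not }(\delta,\alpha)\text{-very good}\bigr]
\le \prod_{j\in Z'}\mathbb{P}\bigl[\varpi(j)\text{ not }(M,\delta,\alpha)\text{-very good}\bigr]
\le \mathrm{e}^{-cM k/C'}.
\]
Finally, combining with the $\le C^k$ bound on $\#\mathcal{CS}_k(0)$ (which holds because $G$ has exponential tails on the degree, so one may further restrict to the event that all relevant degrees are $\le$ some constant, or more robustly work directly with a tree-counting argument on the LRP), choosing $M$ large enough that $\mathrm{e}^{-cM/C'}\cdot C < \mathrm{e}^{-c_1 M}$, yields the claim with $c_1:=c/(2C')$.

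The main obstacle, which the proof has to confront carefully, is the last point: $G$ does not have uniformly bounded degree, so $\#\mathcal{CS}_k(0)$ is a priori not bounded by $C^k$. The clean way around this is to note that the event in question, restricted to a fixed $Z$, still only depends on edges inside $\bigcup_{j}W_j$, and to reorganize the union bound: instead of summing over all connected $Z$, sum over the edges realizing connectivity of $Z$, pay $p_{i,j}$ for each long edge used, and combine $\sum_{j} p_{i,j}<\infty$ (the summability built into the critical LRP) with the per-vertex exponential-in-$M$ bound. This is the same ``connected-set counting weighted by connection probabilities'' device used in Bäumler's work on graph distance in critical LRP, and adapting it here — making the weighted sum over connected subsets converge while keeping the $\mathrm{e}^{-cMk}$ factor — is where the real work lies. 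Everything else (the BK reduction, the window-overlap combinatorics, the choice of $\alpha=(\varepsilon^{C_3},\varepsilon^{C_4\delta})$ from Remark~\ref{rem-alpha}) is routine given the lemmas already established.
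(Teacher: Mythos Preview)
Your high-level architecture (union bound over connected structures, exponential-in-$M$ failure per vertex, B\"aumler-style tree counting) matches the paper's, but the key step where you claim near-independence is wrong and this is a genuine gap.

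You assert that ``each vertex's bad event is determined by edges in the window $W_j:=[(j-1)m,(j+2)m)$'', and hence that passing to a subcollection with disjoint windows gives independence. This is false. Condition~(1) of Definition~\ref{def-alphagood-1} involves edges $\langle u_2,v_2\rangle$ with $u_2\in I_j$ and $v_2\in[(j-1)m,(j+2)m)^c$, i.e.\ edges leaving the window; likewise $\mathcal K_j$ is defined via such long edges. A single long edge from $I_j$ to $I_{j'}$ with $|j-j'|$ large can simultaneously witness $F_1(j)^c$ and $F_1(j')^c$ --- and in a connected subset $Z$ of $G$, the vertices are typically linked by exactly such long edges. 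Disjoint windows do not decouple these events, and BK does not give you independence from disjoint spatial supports; it requires disjoint \emph{witnesses}, which you have not produced.

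The paper's route around this is substantially more delicate than your sketch. First, it works conditionally on the event $\max_l\xi_{i_l}\le M$ (bounded number of long edges per interval). Under this restriction, for any $k$ vertices one can greedily select $\lfloor k/(M+1)\rfloor$ of them whose $F_1^c$ events are certified by disjoint edge sets (each chosen vertex ``uses up'' at most $M$ others via shared long edges), and only then apply BK (Lemma~\ref{P-Ek1-many}); a parallel argument handles $(F_2\cap F_3)^c$ (Lemma~\ref{P-Ek3}). The degree restriction is then removed by a Chernoff bound on $\sum_l\xi_{i_l}$ (Proposition~\ref{P-red-many-strong}). Crucially, all of this is carried out under conditioning on events $H_{i_l}$ of the form ``$\varpi(i_l)$ is joined to at most two specified vertices'', which is exactly the conditioning one incurs after reducing $Z$ to a spanning tree and selecting the $\sim k/6$ vertices of tree-degree $\le 2$ with mutually disjoint tree-neighbourhoods. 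The tree counting then combines with this \emph{conditional} bound via $\mathbb E[|\mathcal T_k(0)|]\le(4\mu_\beta)^k$. Your proposal omits both the degree-truncation-plus-BK mechanism and the conditioning on tree edges; without these, the product bound you wrote down does not follow.
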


For $i\in \mathds{Z}$, recall that $\xi_i$ is the ``degree'' of the interval $I_i$ in the $\beta$-LRP model as defined in \eqref{def-xi-1}.
The following proposition serves as the main input for Proposition \ref{P-red-many-tree}. It shows that an animal (i.e., connected subset, drawing terminology from the lattice animal) composed of vertices in $G$ that are $(\delta,\alpha)$-not very good and possess relatively low degrees is not likely to be excessively large.

\begin{proposition}\label{P-red-many}
For any $\beta>0$, $\delta\in (0,\delta^*]$ and  sufficiently large $M>0$, there exist constants $c_2>0$ {\rm(}depending only on $\beta${\rm)} and $\alpha=(\alpha_1,\alpha_2)\in(0,1)^2$ {\rm(}depending only on $\beta$ and $M${\rm)} such that the following holds.
For any $k\in\mathds{N}$ and any $k$ distinct vertices $\varpi(i_1),\cdots,\varpi(i_k)\in V$, let  $\{W_{i_l}\}_{1\leq l\leq k}$ be a sequence of sets such that
\begin{equation*}
W_{i_l}\subset V\setminus \{\varpi(i_1),\cdots,\varpi(i_k)\}\quad  \text{and}\quad  0\leq \#W_{i_l}\leq 2\quad  \text{for each } l\in[1,k].
\end{equation*}
Then we have
  \begin{equation*}
    \mathbb{P}\left[ \varpi(i_l)\text{ is not $(\delta,\alpha)$-very good for all $l\in[1,k]$ and }\max_{1\leq l\leq k}\xi_{i_l}\leq M \ | \cap_{l=1}^k H_{i_l}\right]\leq {\rm e}^{-c_2 Mk},
  \end{equation*}
where $H_{i_l}$ is the event that $\varpi(i_l)\sim \varpi(j)$  for all $\varpi(j)\in W_{i_l}$. If $W_{i_l}=\emptyset$, then $H_{i_l}$ is regarded as the entire sample space.
\end{proposition}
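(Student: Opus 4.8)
The plan is to reduce the statement to an independent-product estimate by conditioning away the edges that make the events $H_{i_l}$ occur, and then to extract, from each fixed vertex $\varpi(i_l)$, an independent chance of being $(\delta,\alpha)$-very good. First I would fix $\beta>0$, $\delta\in(0,\delta^*]$ and a large $M$, and choose the parameters: apply Proposition \ref{prop-alphagood-1} (equivalently, Lemmas \ref{lem-good-1} and \ref{P-goodint-1}) with $\varepsilon$ small enough, together with the quantitative forms in Remarks \ref{rem-alpha} and \ref{M-vg}, so that $\alpha=(\alpha_1,\alpha_2)$ depends only on $\beta$ and $M$ and so that $\mathbb P[I_i\text{ is not }(\delta,\alpha)\text{-very good and }\xi_i\le M]$ is at most $\e^{-c M}$ for a constant $c=c(\beta)>0$. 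The subtlety is that the event ``$I_i$ is $(\delta,\alpha)$-very good'' in Definition \ref{def-alphagood-1} depends only on edges incident to or inside $[(i-1)m,(i+2)m)$, i.e.\ on a bounded neighbourhood of $\varpi(i)$ in $G$, whereas the $W_{i_l}$ and the events $H_{i_l}$ may involve long edges landing anywhere; so I would first separate the short-range information that determines very-goodness from the long-range edge information carried by $\cap_l H_{i_l}$.

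The key structural step is a \emph{thinning / spatial decoupling} argument to handle the fact that the neighbourhoods $[(i_l-1)m,(i_l+2)m)$ of different $\varpi(i_l)$ may overlap and that the conditioning $\cap_l H_{i_l}$ may be positively or negatively correlated with very-goodness. I would argue as follows. Since each $\#W_{i_l}\le 2$, the event $H_{i_l}$ is a conjunction of at most two ``edge present'' events; I would like to say that conditioning on $\cap_l H_{i_l}$ only \emph{adds} edges to the configuration. Because very-goodness is essentially a decreasing event in a relevant subset of edges (this is exactly the FKG-type observation used in Lemma \ref{P-E11}, noting that the events $F_1,F_2,F_3$ governing Definition \ref{def-verygood-1} are decreasing in the long edges, and $\{\xi_i\le M\}$ is decreasing), adding the edges prescribed by the $H_{i_l}$ can only \emph{help} the event we want to bound the probability of --- but we want an \emph{upper} bound, so I need the opposite monotonicity. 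The cleaner route is to condition on the full collection of long edges: reveal all edges of $G$ of length $\ge 2$ (equivalently, $\widehat{\mathcal E}$ in the notation of Section \ref{sect-submult}), which determines all the $\xi_{i_l}$, all the events $H_{i_l}$, and the event in Definition \ref{def-verygood-1}(1); conditionally on this information and on $\cap_l H_{i_l}$ and on $\max_l\xi_{i_l}\le M$, the remaining randomness determining Definition \ref{def-verygood-1}(2)--(3) for each $\varpi(i_l)$ --- namely the \emph{short} edges inside the blocks $I_{i_l-1},I_{i_l},I_{i_l+1}$ and the resistances $R_{I_{i_l}}(u,B_{\alpha_1 m}(u)^c)$ and $\widehat R(I_{i_l-1},I_{i_l+1})$ --- is governed by disjoint (hence independent) edge sets \emph{once we thin the index set}. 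Concretely, from $\{\varpi(i_1),\dots,\varpi(i_k)\}$ I would greedily extract a subcollection of size $\ge k/K_0$, for a universal $K_0$ depending only on the bounded overlap multiplicity, whose block-neighbourhoods $\{I_{i_l-1}\cup I_{i_l}\cup I_{i_l+1}\}$ are pairwise disjoint; for this subcollection the corresponding short-range very-goodness events are conditionally independent, each failing with probability $\le \e^{-cM}$ by the choice of parameters and by Lemma \ref{P-goodint-1} applied conditionally on being $M$-good (which here is guaranteed by $\xi_{i_l}\le M$).

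Putting these together, conditionally on $\cap_l H_{i_l}$ the probability that every $\varpi(i_l)$ is not very good with all degrees $\le M$ is bounded by the probability that every vertex in the thinned subcollection fails short-range very-goodness, which by conditional independence is at most $(\e^{-cM})^{k/K_0}=\e^{-c_2 M k}$ with $c_2=c/K_0$. Proposition \ref{P-red-many-tree} then follows in the usual way (as sketched before its statement): the number of connected subsets of size $k$ containing $\varpi(0)$ in the renormalised graph $G$ is controlled --- using that $G$ is a critical $\beta$-LRP on $\mathbb Z$ whose degrees have exponential tails (Lemma \ref{lem-eta}) --- by something like $C^k$ times the product of local degrees, so a union bound over all $Z\in\mathcal{CS}_k(0)$, together with the per-animal bound $\e^{-c_2 M k}$ and the degree tail to pay for the $\xi_{i_l}>M$ vertices, closes the estimate once $M$ is large. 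I expect the main obstacle to be precisely the decoupling in the middle paragraph: verifying carefully that, after revealing the long edges and conditioning on the $H_{i_l}$, the short-range events controlling Definition \ref{def-verygood-1}(2)--(3) over a positive fraction of the $\varpi(i_l)$ really are independent and really do retain the $\e^{-cM}$ failure probability uniformly --- in particular that the conditioning on $\max_l\xi_{i_l}\le M$ (which is what lets us invoke Lemma \ref{P-goodint-1}) does not destroy this, which is handled by the FKG argument already present in Lemma \ref{P-E11} combined with the fact that $\{\xi_{i}\le M\}$ depends only on the long edges we have revealed.
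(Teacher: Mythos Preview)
Your plan has a real gap: it treats only one of the two ways a block can fail to be $(\delta,\alpha)$-very good. By Definition~\ref{def-alphagood-1}, ``not very good'' means that either the separation condition~(1) fails (the event $F_1(i)^c$ in the notation of Definition~\ref{def-verygood-1}) or the internal-energy condition~(2) fails. Your scheme --- reveal $\widehat{\mathcal E}$, then thin and exploit conditional independence of the short-range events governing Definition~\ref{def-verygood-1}(2)--(3) --- addresses only the second failure mode. Once $\widehat{\mathcal E}$ is revealed, the events $F_1(i_l)$ are already determined; on the portion of the sample space where \emph{all} $F_1(i_l)^c$ hold, every $\varpi(i_l)$ is automatically not very good regardless of the internal edges, and the conditional probability of the target event is $1$. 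So the implication ``all $\varpi(i_l)$ not very good $\Rightarrow$ all vertices in the thinned set fail short-range very-goodness'' is simply false, and your final product bound $(\e^{-cM})^{k/K_0}$ does not follow.

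The paper closes exactly this gap by a dichotomy: on the target event, either at least $k/2$ indices satisfy $F_1(i_l)^c$, or at least $k/2$ satisfy $(F_2\cap F_3)(i_l)^c$. The second branch is handled along the lines you sketch (Lemma~\ref{P-Ek3}). The first branch requires two ingredients missing from your plan. First, an explicit bound $\mathbb P[F_1(i)^c\mid H_i]\le \e^{-c_3 M(M+1)}$ (Lemma~\ref{P-Ek1-new}), obtained by direct computation --- you already noted that FKG goes the wrong way here since both $F_1^c$ and $H_i$ are increasing in the edges, but revealing $\widehat{\mathcal E}$ does not evade this: one still has to bound the \emph{unconditional} probability (over $\widehat{\mathcal E}$) that many $F_1^c$ occur given $\cap_l H_{i_l}$. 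Second, a BK-inequality step (Lemma~\ref{P-Ek1-many}): under $\max_l\xi_{i_l}\le M$ one can extract only $\sim k/(M+1)$ indices whose $F_1^c$ events are certified by disjoint edge sets, because a single long edge can land in several of the $I_{i_l}$. Since this thinning is by a factor $\sim M$ rather than a universal $K_0$, the per-index failure probability must be of order $\e^{-cM^2}$, which forces $\alpha_1\approx \e^{-M^2}$ rather than the $\e^{-M}$ suggested by the single-block estimates in Section~\ref{est-good}.
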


To prove Proposition \ref{P-red-many}, we recall that for any $i\in  \mathds{Z}$,  $F_{k}(i)$ denotes the event in Definition \ref{def-verygood-1} ($k$) for $k=1,2,3$, respectively.
It is clear from \eqref{P-E1-1} with taking $\alpha_1=\e^{-M(M+1)}$ and the translation invariance of the LRP model, we have
\begin{equation*}
\mathds{P}[F_{1}(i)^c]\leq \e^{-cM(M+1)}
\end{equation*}
for some $c>0$ depending only on $\beta$.
We will now make a slight modification to the proof of Lemma \ref{P-E11} to derive the following estimate.

\begin{lemma}\label{P-Ek1-new}
  For all $\beta>0$ and sufficiently large $M>0$, there exist constants $c_3=c_3(\beta)>0$ {\rm(}depending only on $\beta${\rm)} and $\alpha_1\in (0,1)$ {\rm(}depending only on $\beta$ and $M${\rm)} such that for each $i\in \mathds{Z}$ and each $W\subset V\setminus\{\varpi(i)\}$ with $0\leq \#W\leq 2$,
  $$
  \mathds{P}\left[F_1(i)^c\ |\varpi(i)\sim \varpi(j)\text{ for all } \varpi(j)\in W\right]\leq {\rm e}^{-c_3M(M+1)}.
  $$
\end{lemma}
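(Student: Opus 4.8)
The statement is a conditional version of Lemma~\ref{P-E11}, where we additionally condition on $\varpi(i)\sim\varpi(j)$ for at most two prescribed vertices $\varpi(j)\in W$. The plan is to rerun the proof of Lemma~\ref{P-E11} essentially verbatim, tracking how the conditioning affects each step, and to observe that conditioning on the presence of a bounded number of edges can only change the relevant probabilities by a bounded multiplicative factor. By translation invariance we may take $i=0$.

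\medskip

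\textbf{Step 1: Reduce to an unconditioned estimate up to a bounded factor.} Recall from the proof of Lemma~\ref{P-E11} that $F_1(0)^c\subset\bigcup_{k=0}^{1/\alpha_1-1}A_k$, where $A_k$ is the event that there exist edges $\langle u_1,v_1\rangle,\langle u_2,v_2\rangle\in\mathcal{E}$ with $u_1\in[0,m)^c$, $v_1\in J_k$, $u_2\in\widetilde{J}_k$, $v_2\in[-m,2m)^c$. Let $\mathcal{E}_W$ denote the (at most two) edges $\langle\varpi(0),\varpi(j)\rangle$, $\varpi(j)\in W$; in the original $\beta$-LRP model these correspond to the events that $I_0$ and $I_j$ are joined by at least one edge. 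The key point is that the event $A_k$ depends on edges that are independent of the family $\{I_0\sim I_j\}_{\varpi(j)\in W}$ \emph{except} possibly for the finitely many edge slots lying in $I_0\times I_j$ for $\varpi(j)\in W$; there are at most $O(m^2)$ such slots in total, but each individual edge $\langle x,y\rangle$ with $x\in I_0$, $y\in I_j$ has connection probability $p_{x,y}$, and conditioning the whole configuration on $I_0\sim I_j$ reweights by a factor at most $1/\mathds{P}[I_0\sim I_j]$, which is bounded below by a constant depending only on $\beta$ (since $\varpi(0)\sim\varpi(\pm1)$ always and the long-range connection probabilities are uniformly comparable to $|i-j|^{-2}$, bounded away from $0$ only for the nearest ones — more precisely, one uses that conditioning on an increasing event of bounded ``cost'' only inflates probabilities of other events by $O_\beta(1)$). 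Formally: for any event $B$,
\[
\mathds{P}\!\left[B\ \middle|\ \bigcap_{\varpi(j)\in W}\{I_0\sim I_j\}\right]
\le \frac{\mathds{P}[B]}{\prod_{\varpi(j)\in W}\mathds{P}[I_0\sim I_j]}
\le C_\beta^{\#W}\,\mathds{P}[B]\le C_\beta^2\,\mathds{P}[B],
\]
because $\{I_0\sim I_j\}$ is an increasing event and, by the FKG/Harris inequality, conditioning on the intersection of increasing events can only increase $\mathds{P}[B]$ if $B$ is increasing — and here one first monotone-couples $A_k$ above by an increasing event, or alternatively just uses the crude bound $\mathds{P}[B\mid C]\le\mathds{P}[B]/\mathds{P}[C]$ together with the uniform lower bound $\mathds{P}[I_0\sim I_j]\ge c_\beta'$ obtained from the two nearest-neighbour edges already forcing $\varpi(0)\sim\varpi(\pm1)$ and, for longer $j$, from $p_{x,y}\gtrsim_\beta |x-y|^{-2}$ on at least one slot being harmless since we only need an upper bound on the reweighting and $\#W\le 2$. (If some $\varpi(j)\in W$ is at bounded graph distance the factor is a constant; if far away, $\mathds{P}[I_0\sim I_j]$ could be small, so instead we use FKG: $F_1(0)^c$ is a \emph{decreasing} event, hence conditioning on an increasing event only \emph{decreases} its probability, giving directly $\mathds{P}[F_1(0)^c\mid\bigcap H]\le\mathds{P}[F_1(0)^c]$.)

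\medskip

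\textbf{Step 2: Conclude.} With the FKG observation of Step~1 in hand — $F_1(0)^c=\{$Definition~\ref{def-verygood-1}(1) fails$\}$ is decreasing in the edge configuration, while each $H_{i_l}=\{\varpi(0)\sim\varpi(j):\varpi(j)\in W\}$ is increasing — the Harris inequality gives
\[
\mathds{P}\!\left[F_1(0)^c\ \middle|\ \textstyle\bigcap_{\varpi(j)\in W}\{\varpi(0)\sim\varpi(j)\}\right]\le\mathds{P}[F_1(0)^c].
\]
Now apply the bound \eqref{P-E1-1} from the proof of Lemma~\ref{P-E11}: $\mathds{P}[F_1(0)^c]\le\widetilde c_4\,\alpha_1\log(1/\alpha_1)$. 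Choosing $\alpha_1=\mathrm{e}^{-M(M+1)}$ (as flagged before the statement of Lemma~\ref{P-Ek1-new}) yields $\mathds{P}[F_1(0)^c]\le\widetilde c_4\,M(M+1)\,\mathrm{e}^{-M(M+1)}\le\mathrm{e}^{-c_3M(M+1)}$ for a suitable $c_3=c_3(\beta)>0$ and all sufficiently large $M$. Translation invariance transfers this from $i=0$ to general $i\in\mathds{Z}$, completing the proof.

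\medskip

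\textbf{Main obstacle.} The only real subtlety is making Step~1 rigorous: justifying that conditioning on the presence of the (up to two) edges of $W$ does not spoil the estimate. The clean way — and the one I expect the authors intend — is the FKG/Harris monotonicity argument, since $F_1(i)^c$ is a decreasing event and $H_{i_l}$ increasing, so no reweighting constant is needed at all; the alternative crude conditioning bound $\mathds{P}[\cdot\mid C]\le\mathds{P}[\cdot]/\mathds{P}[C]$ would require a uniform lower bound on $\mathds{P}[\varpi(i)\sim\varpi(j)]$, which fails for far-apart $j$, so the FKG route is the essential point. Everything else is a line-by-line repeat of Lemma~\ref{P-E11} with $\alpha_1$ taken to be $\mathrm{e}^{-M(M+1)}$ instead of $\mathrm{e}^{-M}$.
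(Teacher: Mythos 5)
Your proposal hinges on a monotonicity claim that is backwards, and the mistake is fatal.

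You assert that $F_1(0)^c$ is a \emph{decreasing} event and therefore conditioning on the increasing event $H_W=\bigcap_{\varpi(j)\in W}\{\varpi(0)\sim\varpi(j)\}$ can only decrease its probability. But $F_1(0)$ itself is decreasing in the edge configuration: it demands that \emph{for every} pair of edges $\langle u_1,v_1\rangle$, $\langle u_2,v_2\rangle$ meeting the stated constraints we have $|v_1-u_2|\geq\alpha_1 m$; adding edges can only introduce more pairs that need to satisfy this, so it is harder for $F_1(0)$ to hold. Thus $F_1(0)^c$ (``there exist two nearby offending edges'') is \emph{increasing}. This is exactly consistent with the FKG step used in Lemma~\ref{P-E11}, which conditions on the \emph{decreasing} event $\{I_0\ \text{is}\ M\text{-good}\}$: there, one increasing and one decreasing event are negatively correlated, so the conditional probability goes down. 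Here both $F_1(0)^c$ and $H_W$ are increasing, hence positively correlated, and FKG gives $\mathds{P}[F_1(0)^c\mid H_W]\geq\mathds{P}[F_1(0)^c]$ --- the \emph{wrong} direction. The heuristic is also clear: conditioning on the extra long edges demanded by $H_W$ makes it more likely, not less, that two offending edges land close together in $I_0$. You correctly flagged that the crude bound $\mathds{P}[B\mid C]\leq\mathds{P}[B]/\mathds{P}[C]$ fails when $\varpi(j)$ is far away (since $\mathds{P}[\varpi(0)\sim\varpi(j)]\asymp|j|^{-2}$ can be tiny), but the FKG fallback you then invoke is not available.

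The paper's actual proof has to work harder precisely because of this. For each covering event $A_k$ it decomposes the conditioning event $H$ according to the number of edges joining $\bigcup_{\varpi(j)\in W}I_j$ to the $O(\alpha_1 m)$-neighborhood $\widetilde J_k$: $H_{k,0}$, $H_{k,1}$, $H_{k,2}$. On $H_{k,0}$ the edges realizing $A_k$ are independent of the edges realizing $H$ and one recovers $\mathds{P}[A_k]$ up to a bounded factor; on $H_{k,1}$ the single shared edge is accounted for by replacing one of the two edges needed for $A_k$ by an edge to $I_j$, costing roughly $\alpha_1/k$ again; and $H_{k,2}$ has cost of order $\alpha_1^2$ by a Poisson second-moment computation. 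Summing over $k$ then yields the same $\alpha_1\log(1/\alpha_1)$ bound as in the unconditional case (this is \eqref{P-AkH} and the subsequent computation), after which taking $\alpha_1=\mathrm{e}^{-M(M+1)}$ closes the argument exactly as you do in Step~2. So your Step~2 and the choice of $\alpha_1$ match the paper, but Step~1 needs to be replaced by this case analysis; the FKG shortcut is structurally unavailable.
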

\begin{proof}
  Without loss of generality, we can assume that $i=0$, $W\neq \emptyset$ and $|j|>1$ for all $\varpi(j)\in W$.
  For convenience, we also assume that $1/(2\alpha_1)\in \mathds{N}$. Otherwise, we can replace $1/(2\alpha_1)$ with $\lfloor 1/(2\alpha_1)\rfloor$.

  Recall the intervals $J_k,\widetilde{J}_k$ and the events $A_k$ with $k\in \mathbb[0,1/\alpha_1-1)$ are defined in the proof of Lemma \ref{P-E11} (preceding to \eqref{E01-A-1}).
  Let $H$ denote the event that $\varpi(0)\sim \varpi(j)$ for all $\varpi(j)\in W$.
  For each $k\in \mathbb[0,1/\alpha_1-1)$, based on the number of edges connecting $\cup_{j:\varpi(j)\in W}I_j$ and $\widetilde{J}_k$, we can decompose the event $H$ into the following three events:
  \begin{equation*}
  H_{k,l}=H\cap\left\{\#\mathcal{E}_{\cup_{j:\varpi(j)\in W}I_j\times \widetilde{J}_k}=l\right\}\ \text{for }l=0,1\quad \text{and}\quad H_{k,2}=H\cap \left\{\#\mathcal{E}_{\cup_{j:\varpi(j)\in W}I_j\times \widetilde{J}_k}\geq 2\right\}.
  \end{equation*}
Then by the independence of edges,  we arrive at
\begin{equation}\label{eq:P-Ek1}
    \begin{aligned}
      \mathbb{P}[A_k\ |H]&= \frac{\mathbb{P}[A_k\cap H_{k,0}]+\mathbb{P}[A_k\cap H_{k,1}]+\mathbb{P}[A_k\cap H_{k,2}]}{\mathbb{P}[H]}\\
      &\leq \mathbb{P}[A_k]\prod_{j:\varpi(j)\in W}\frac{\mathbb{P}[I_j\sim I_0\setminus \widetilde{J}_k]}{\mathbb{P}[I_j\sim I_0]}
      +\mathbb{P}[I_0^c\setminus\cup_{j:\varpi(j)\in W}I_j\sim J_k]\sum_{j:\varpi(j)\in W}\frac{\mathbb{P}[I_j\sim \widetilde{J}_k]}{\mathbb{P}[I_j\sim I_0]}\\
      &\quad \quad +\frac{\mathbb{P}[A_k\cap H_{k,2}]}{\mathbb{P}[H]}.
    \end{aligned}
  \end{equation}
 Note that by some direct calculations, we can find $0<\widetilde{c}_1<\widetilde{C}_1$ (both depending only on $\beta$) such that the following holds for each $|j|>1$, $\alpha_1<1/10$ and $k\in [0,1/(2\alpha_1)]$:
  \begin{align*}
      \mathbb{P}[I_j\sim I_0]&=1-\exp\left\{-\beta\int_{I_0}\int_{I_j} \frac{dxdy}{|x-y|^2}\right\}\in [\widetilde{c}_1 j^{-2},\widetilde{C}_1 j^{-2}];\\
    \mathbb{P}[I_j\sim \widetilde{J}_k]&=1-\exp\left\{-\beta\int_{\widetilde{J}_k}\int_{I_j} \frac{dxdy}{|x-y|^2}\right\}\in [\widetilde{c}_1\alpha_1 j^{-2},\widetilde{C}_1\alpha_1 j^{-2}];\\
    \mathbb{P}[I_j\sim I_0\setminus\widetilde{J}_k]&=1-\exp\left\{-\beta\int_{I_0\setminus \widetilde{J}_k}\int_{I_j} \frac{dxdy}{|x-y|^2}\right\}\in [\widetilde{c}_1 j^{-2},\widetilde{C}_1 j^{-2}];\\
    \mathbb{P}[I_0^c\sim J_k]&=1-\exp\left\{-\beta\int_{J_k}\int_{I_0^c} \frac{dxdy}{|x-y|^2}\right\}\in [\widetilde{c}_1 k^{-1},\widetilde{C}_1 k^{-1}],\quad k\neq 0.
  \end{align*}
  Applying these estimates to \eqref{eq:P-Ek1} yields that there exists a constant $\widetilde{c}_2>0$ depending only on $\beta$ such that for each $k\in[0,1/(2\alpha_1)]$,
  \begin{equation}\label{eq:P-Ek1-2}
    \mathbb{P}[A_k\ |H]\leq \widetilde{c}_2\mathbb{P}[A_k]+\widetilde{c}_2\alpha_1\left(\frac{1}{k}\I_{\{k\neq 0\}}+\I_{\{k=0\}}\right)+\frac{\mathbb{P}[A_k\cap H_{k,2}]}{\mathbb{P}[H]}.
  \end{equation}

  In the following, we turn to estimate the last term on the RHS of \eqref{eq:P-Ek1-2}.
  We start with the case where $\# W=1$. For convenience, denote by $W=\{\varpi(j)\}$ for some $|j|>1$.
  In this case, $H_{k,2}$ implies that $\# \mathcal{E}_{I_j\times \widetilde{J}_k}\geq 2$.  Consequently (below we denote by $\mathrm{Poi}(\lambda)$ a Poisson variable with mean $\lambda$),
  \begin{equation}\label{eq:P-Ek1-3}
    \frac{\mathbb{P}[A_k\cap H_{k,2}]}{\mathbb{P}[H]}=\frac{\mathbb{P}[{\rm Poi}(\widetilde{\lambda}_{j,k})\geq 2]}{\mathbb{P}[{\rm Poi}(\lambda_j)\geq 1]}\leq \widetilde{c}_3 \alpha_1^2 |j|^{-2}\leq \widetilde{c}_3 \alpha_1^2,
  \end{equation}
  for some $\widetilde{c}_3>0$ (depending only on $\beta$), where $\widetilde{\lambda}_{j,k}:=\beta\int_{I_j}\int_{\widetilde{J}_k}\frac{dxdy}{|x-y|^2}$ and $\lambda_j:=\beta\int_{I_j}\int_{I_0}\frac{dxdy}{|x-y|^2}$.

For the case when $\# W=2$, we need to consider one more case when $I_j\sim \widetilde{J}_k$ for both $\varpi(j)\in W$.
Using a similar calculation we can also get \eqref{eq:P-Ek1-3} with another constant $\widetilde{c}_4$ (depending only on $\beta$) instead of $\widetilde{c}_3$.
Taking $\widetilde{c}=\max\{\widetilde{c}_2,\widetilde{c}_3,\widetilde{c}_4\}$ and combining this with \eqref{eq:P-Ek1-2} yields that for each $k\in[0,1/(2\alpha_1)]$,
  \begin{equation}\label{P-AkH}
    \mathbb{P}[A_k\ |H]\leq \widetilde{c}\left(\mathbb{P}[A_k]+\frac{\alpha_1}{k}\I_{\{k\neq 0\}}+\alpha_1\I_{\{k=0\}}+\alpha_1^2\right).
  \end{equation}
 Hence, we can complete the proof by applying \eqref{P-Ak} to \eqref{P-AkH} and then using a straightforward computation (similar to that in \eqref{P-E1-1}).
\end{proof}

In the following, we consider the near independence of $F_1(i)$ among different $i\in \mathds{Z}$.

\begin{lemma}\label{P-Ek1-many}
  For all $\beta>0$ and sufficiently large $M\in\mathds{N}$, let $\alpha_1$ and $c_3$ be the constants defined in Lemma~\ref{P-Ek1-new}. Then the following holds. For any $k\in\mathds{N}$ and $\varpi(i_1),\cdots \varpi(i_k)\in V$, let $\{W_{i_l}\}_{1\leq l\leq k}$ and $\{H_{i_l}\}_{1\leq l\leq k}$ be the sets and events defined in  Proposition~\ref{P-red-many}. Then we have
  $$
  \mathbb{P}\left[\cap_{l=1}^k F_1(i_l)^c,\ \max_{1\leq l\leq k}\xi_{i_l}\leq M \ |\cap_{l=1}^k H_{i_l}\right]\leq 2^k{\rm e}^{-c_3 Mk}.
  $$
\end{lemma}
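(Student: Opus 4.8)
The plan rests on a structural observation: the event $F_1(i)^{c}$ from Definition~\ref{def-verygood-1}(1) depends only on the edges incident to the interval $I_i$, since both witnessing edges in that definition have an endpoint in $I_i$. Hence, for distinct indices the only edges influencing both $F_1(i_l)^{c}$ and $F_1(i_{l'})^{c}$ are the edges directly joining $I_{i_l}$ and $I_{i_{l'}}$; and since $\xi_{i_l}=\#\mathcal K_{i_l}$ counts endpoints in $I_{i_l}$ of edges reaching the far region $[(i_l-1)m,(i_l+2)m)^{c}$ --- some of which may land in another $I_{i_{l'}}$ --- the same edges are the only source of dependence among the events $\{\xi_{i_l}\le M\}$.

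First I would condition on the configuration $\omega^{*}$ of all edges that are incident to two of the intervals $I_{i_1},\dots,I_{i_k}$ (a finite collection of potential edges). Given $\omega^{*}$, each event $F_1(i_l)^{c}\cap\{\xi_{i_l}\le M\}$ becomes a function of the edges from $I_{i_l}$ to vertices outside $\{\varpi(i_1),\dots,\varpi(i_k)\}$, and these edge sets --- together with the defining edges of $H_{i_l}$, whose other endpoints lie in $W_{i_l}\subset V\setminus\{\varpi(i_1),\dots,\varpi(i_k)\}$ --- are pairwise disjoint over $l$. Therefore, conditionally on $\omega^{*}$ the pairs $\bigl(F_1(i_l)^{c}\cap\{\xi_{i_l}\le M\},H_{i_l}\bigr)$ are mutually independent, and
\begin{equation*}
\mathbb{P}\Bigl[\,\textstyle\bigcap_{l=1}^{k}\bigl(F_1(i_l)^{c}\cap\{\xi_{i_l}\le M\}\bigr)\ \Big|\ \textstyle\bigcap_{l}H_{i_l},\ \omega^{*}\Bigr]=\prod_{l=1}^{k}\mathbb{P}\bigl[\,F_1(i_l)^{c},\ \xi_{i_l}\le M\ \big|\ H_{i_l},\ \omega^{*}\,\bigr].
\end{equation*}

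Next I would bound each factor. Let $\omega^{*}_l$ be the part of $\omega^{*}$ incident to $I_{i_l}$, and let $B_l=B_l(\omega^{*}_l)$ be the (deterministic, given $\omega^{*}_l$) event that $\omega^{*}_l$ together with the always-present nearest-neighbour edges already witnesses $F_1(i_l)^{c}$; up to lower-order contributions this is the event that some long edge of $\omega^{*}_l$ has its $I_{i_l}$-endpoint within distance $\alpha_1 m$ of the boundary of $I_{i_l}$, so $\mathbb{P}[B_l]\lesssim\alpha_1$ by a computation as in \eqref{P-Ak}. On $B_l$ the factor is at most $1$. Off $B_l$, the event $F_1(i_l)^{c}$ forces a \emph{fresh} random edge from $I_{i_l}$ that completes a bad pair with one of the at most $M$ long-edge endpoints in $\mathcal K_{i_l}$ (here we use $\xi_{i_l}\le M$); repeating the sub-block union bound from the proof of Lemma~\ref{P-E11}, with the conditioning on $H_{i_l}$ costing only a bounded factor exactly as in the computation leading to \eqref{P-AkH} and \eqref{P-E1-1} in Lemma~\ref{P-Ek1-new}, bounds this by $\e^{-c_3M(M+1)}$ with $\alpha_1=\e^{-M(M+1)}$. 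Thus $\mathbb{P}[F_1(i_l)^{c},\xi_{i_l}\le M\mid H_{i_l},\omega^{*}]\le\I_{B_l}+\e^{-c_3M(M+1)}$.

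Averaging over $\omega^{*}$ and expanding the product then gives
\begin{equation*}
\mathbb{P}\Bigl[\,\textstyle\bigcap_{l=1}^{k}\bigl(F_1(i_l)^{c}\cap\{\xi_{i_l}\le M\}\bigr)\ \Big|\ \textstyle\bigcap_{l}H_{i_l}\Bigr]\le\sum_{S\subseteq\{1,\dots,k\}}\e^{-c_3M(M+1)(k-|S|)}\,\mathbb{P}\Bigl[\,\textstyle\bigcap_{l\in S}B_l\ \Big|\ \textstyle\bigcap_{l}H_{i_l}\Bigr],
\end{equation*}
so the task reduces to proving $\mathbb{P}[\bigcap_{l\in S}B_l\mid\bigcap_l H_{i_l}]\le q_1^{|S|}$ for a suitably small $q_1=q_1(\beta,M)$; since the above sum equals $(\e^{-c_3M(M+1)}+q_1)^{k}$ and both $q_1$ and $\e^{-c_3M(M+1)}$ are at most $\e^{-c_3M}$ for $M$ large, this yields at most $(2\e^{-c_3M})^{k}=2^{k}\e^{-c_3Mk}$, which is the claim. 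This last step is the main obstacle: the inter-interval edges are shared, so a single edge of $\omega^{*}$ may force $B_l$ for as many as two indices $l$, and already-revealed edges can make some $B_l$ deterministically true. I would handle it by revealing the edges of $\omega^{*}$ one interval at a time, using that $B_l$ always requires an edge meeting the length-$\alpha_1 m$ boundary zone of $I_{i_l}$ so that each step contributes a conditional probability $\lesssim\alpha_1$ (equivalently, a BK-type argument with multiplicity two); everything else reduces to the single-interval estimates already established in Lemmas~\ref{P-E11} and \ref{P-Ek1-new}.
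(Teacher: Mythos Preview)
Your conditional-independence setup is sound: once you reveal $\omega^{*}$, the remaining randomness is indeed disjoint across the indices $l$. The gap is in the factor bound
\[
\mathbb{P}\bigl[F_1(i_l)^{c},\,\xi_{i_l}\le M\ \big|\ H_{i_l},\,\omega^{*}\bigr]\le \I_{B_l}+\e^{-c_3M(M+1)}.
\]
This fails already in the simplest situation. Suppose $\omega^{*}_l$ consists of a single long edge to some far $I_{i_{l'}}$ whose $I_{i_l}$-endpoint $p$ sits at distance $2\alpha_1 m$ from the left boundary of $I_{i_l}$. Then $B_l$ does \emph{not} hold (no pair of revealed edges is within $\alpha_1 m$), yet $F_1(i_l)^{c}$ now only asks for a fresh first-type edge from $I_{i_l}^{c}$ landing in $(p-\alpha_1 m,p+\alpha_1 m)$. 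That probability is
\[
1-\exp\Bigl\{-\beta\int_{\alpha_1 m}^{3\alpha_1 m}\int_{-\infty}^{0}\frac{dx\,dy}{(y-x)^{2}}\Bigr\}=1-3^{-\beta},
\]
which is of order one, not $\e^{-c_3M(M+1)}$. The point is that in the sub-block estimate \eqref{P-Ak} the bound $\mathbb{P}[A_k]\lesssim \alpha_1/k$ is a \emph{product} of two small factors; once $\omega^{*}$ supplies the second-type edge, only the $1/k$ factor survives, and summing over at most $M$ such targets gives a quantity of order $\log M$, not $\alpha_1$. Enlarging $B_l$ to absorb this case forces $\mathbb{P}[B_l]$ up to order one, and then the product bound and the last step collapse simultaneously.

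The paper handles the dependence completely differently and never conditions on $\omega^{*}$. It uses the degree constraint $\xi_{i_l}\le M$ structurally: each $I_{i_l}$ is joined by a long edge to at most $M$ of the other intervals $I_{i_{l'}}$, so a greedy thinning extracts at least $\lfloor k/(M+1)\rfloor$ indices among $i_1,\dots,i_k$ with no long edge between any two selected intervals. For this subcollection the events $F_1(i_{j})^{c}$ are witnessed by pairwise disjoint edge sets, so the BK inequality together with the single-interval bound of Lemma~\ref{P-Ek1-new} gives $(\e^{-c_3M(M+1)})^{k/(M+1)}=\e^{-c_3Mk}$; the binomial choice of the subcollection contributes the factor $2^{k}$. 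In short, the role of $\xi_{i_l}\le M$ is not to cap the number of targets inside a conditional bound, but to cap the \emph{interaction graph} among the $k$ intervals so that a large independent set can be pulled out.
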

\begin{proof}
  We first claim that if the events $\cap_{\ell=1}^k F_1(i_\ell)^c$ and $\max_{1\leq \ell\leq k}\xi_{i_\ell}\leq M $ both occur, then we can find $\lfloor k/(M+1)\rfloor+1$ events of the form $F_{1}(i_\ell)^c$ that are ``certified'' by disjoint edges sets (that is, there exists disjoint edge sets where each edge set ensures the occurrence of an event).
To illustrate this, we construct these $i_l$'s step by step.

   We begin by setting $j_1=1$.
   Since $F_1(i_{j_1})=F_1(i_1)$ is determined by the edges that have at least one endpoint in the interval  $[(i_1-1)m,(i_1+2)m)$, there are at most $M$ indices $i_l$  such that $|i_l-i_1|\geq 2$ and $\varpi(i_l)$ is directly connected to $\varpi(i_{1})$.
    We then remove these indices from the set $\{\varpi(i_1),\cdots, \varpi(i_k)\}$ and define $i_{j_2}$ as the smallest remaining index. This process is repeated for each selected $i_{j_l}$ until no new index can be found among  $\{\varpi(i_1),\cdots, \varpi(i_k)\}$.

  Since in each iteration we select one index $i_l$ and remove at most $M$ indices from the set $\{\varpi(i_1),\cdots, \varpi(i_k)\}$, we can conclude that there are at least $\lfloor k/(M+1)\rfloor$ events of the form $F_{1}(i_l)^c$ that are certified by disjoint edge sets. This, combined with the BK inequality \cite{BK85}, Lemma \ref{P-Ek1-new} and the independence of the different events $H_{i_l}$, implies that
  \begin{equation*}
  \begin{aligned}
    &\mathbb{P}\left[\bigcap_{l=1}^k F_1(i_l)^c,\ \max_{1\leq l\leq k}\xi_{i_l}\leq M\ |\cap_{l=1}^k H_{i_l}\right]\\
    &\leq \binom{k}{\lfloor k/(M+1)\rfloor}\left(\e^{-c_3M(M+1)}\right)^{k/(M+1)}\leq 2^k\e^{-c_3Mk},
    \end{aligned}
  \end{equation*}
  which completes the proof.
\end{proof}

We next turn to the events $F_2(i)$ and $F_3(i)$.

\begin{lemma}\label{P-Ek3}
For all $\beta>0$, $\delta\in (0,\delta^*]$, sufficiently large $M>0$ and $\alpha_1\in(0,1)$, there exist constants $\alpha_2=\alpha_2(\beta)\in (0,1)$ {\rm(}depending only on $\beta$, $M$ and $\alpha_1${\rm)} and $c_4>0$ {\rm(}depending only on $\beta${\rm)} such that the following holds. For any $k\in\mathds{N}$ and $\varpi(i_1),\cdots \varpi(i_k)\in V$, let $\{W_{i_l}\}_{1\leq l\leq k}$ and $\{H_{i_l}\}_{1\leq l\leq k}$ be the sets and events defined in Proposition~\ref{P-red-many}. Then we have
$$
\mathbb{P}\left[\bigcap_{l=1}^k \left(F_2(i_l)\cap F_3(i_l)\right)^c,\ \max_{1\leq l\leq k}\xi_{i_l}\leq M\ |\cap_{\ell=1}^k H_{i_\ell}\right]\leq {\rm e}^{-c_4Mk}.
$$
\end{lemma}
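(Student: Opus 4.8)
The plan is to adapt the single-interval estimate Lemma~\ref{P-E3} to $k$ intervals by first conditioning away the long edges, so that the events $F_2(i_l)\cap F_3(i_l)$ become spatially local, and then exploiting genuine independence across well-separated indices. Let $\mathcal{F}^*$ be the $\sigma$-algebra generated by all edges $\langle u,v\rangle$ with $u\in I_a$, $v\in I_b$ and $|a-b|\ge 2$. The first observation is that, after discarding from each $W_{i_l}$ any vertex $\varpi(i_l\pm1)$ (for which the constraint in $H_{i_l}$ is automatic, since consecutive vertices are always adjacent), the events $H_{i_l}$, the variables $\xi_{i_l}$, and the random sets $\mathcal{K}_{i_l}$ are all $\mathcal{F}^*$-measurable. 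The second observation is that, conditionally on $\mathcal{F}^*$, the event $F_2(i)\cap F_3(i)$ depends only on the edges with both endpoints in $[(i-1)m,(i+2)m)$: indeed $F_2(i)$ asks of the now-deterministic vertices $u\in\mathcal{K}_i$ only that $R_{I_i}(u,B_{\alpha_1m}(u)^c)\ge\alpha_2 m^\delta$, which by \eqref{def-Rinsd} uses only edges inside $I_i$; and $\widehat R(I_{i-1},I_{i+1})$ uses only edges inside $[(i-1)m,(i+2)m)$, since the edges between $I_{i-1}$ and $I_{i+1}$ lie in the forbidden set of \eqref{def-Rhat} and, on $\mathds Z$, an optimal flow gains nothing from a detour outside this interval.

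Next I would fix the parameters as in Lemma~\ref{P-E3}, but taking care that $\alpha_2$ avoids dependence on $\delta$. Put $\varepsilon=\e^{-M}/(M+1)$ and, with $0<c_*<C_*<\infty$ and $\delta^*$ from Lemma~\ref{Lem-RNtail-1}, set $\alpha_2=c_*(\varepsilon^{C_*}\alpha_1)^{\delta^*}$; this depends only on $\beta$, $M$, $\alpha_1$, and lies in $(0,1)$ once $M$ is large. Because $\varepsilon^{C_*}\alpha_1<1$ and $\delta\le\delta^*$ we have $\alpha_2 m^\delta\le c_*\varepsilon^{C_*\delta}(\alpha_1 m)^\delta$ and $\alpha_2 m^\delta\le c_*\varepsilon^{C_*\delta}m^\delta$; hence, exactly as in the proof of Lemma~\ref{P-E3} (monotonicity of resistance under restriction, translation invariance, and Lemma~\ref{Lem-RNtail-1}), for every $i\in\mathds Z$ and every $u\in I_i$,
\[
\mathbb{P}\big[R_{I_i}(u,B_{\alpha_1m}(u)^c)<\alpha_2 m^\delta\big]\le\varepsilon,\qquad \mathbb{P}\big[\widehat R(I_{i-1},I_{i+1})<\alpha_2 m^\delta\big]\le\varepsilon .
\]
Since the edges inside $[(i-1)m,(i+2)m)$ are independent of $\mathcal{F}^*$, a union bound over the at most $M$ vertices of $\mathcal{K}_i$ gives, on the $\mathcal{F}^*$-event $\{\xi_i\le M\}$, that $\mathbb{P}[(F_2(i)\cap F_3(i))^c\mid\mathcal{F}^*]\le(M+1)\varepsilon=\e^{-M}$.

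The last ingredient is a greedy extraction: from the distinct indices $i_1,\dots,i_k$ choose $L\subset\{1,\dots,k\}$ so that $\{i_l:l\in L\}$ are pairwise at distance $\ge3$, by repeatedly taking the smallest remaining value and deleting it together with all values within distance $2$ of it; each step removes at most $5$ values, so $\#L\ge k/5$. For $l\ne l'$ in $L$ the intervals $[(i_l-1)m,(i_l+2)m)$ and $[(i_{l'}-1)m,(i_{l'}+2)m)$ are disjoint, so conditionally on $\mathcal{F}^*$ the events $\{(F_2(i_l)\cap F_3(i_l))^c\}_{l\in L}$ are independent. Noting that $\bigcap_{l=1}^k H_{i_l}$ and $\{\max_l\xi_{i_l}\le M\}$ are $\mathcal{F}^*$-measurable and that $\bigcap_{l=1}^k(F_2(i_l)\cap F_3(i_l))^c\subset\bigcap_{l\in L}(F_2(i_l)\cap F_3(i_l))^c$, conditioning on $\mathcal{F}^*$ and using the tower property yields
\[
\mathbb{P}\Big[\bigcap_{l=1}^k(F_2(i_l)\cap F_3(i_l))^c,\ \max_{1\le l\le k}\xi_{i_l}\le M\ \Big|\ \bigcap_{l=1}^k H_{i_l}\Big]\le\big(\e^{-M}\big)^{\#L}\le\e^{-Mk/5},
\]
which is the assertion with $c_4=1/5$. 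I expect the only genuinely delicate points to be the bookkeeping ones flagged above: verifying that $\widehat R(I_{i-1},I_{i+1})$ and $R_{I_i}(u,B_{\alpha_1m}(u)^c)$ are functions of the edges inside $[(i-1)m,(i+2)m)$ and inside $I_i$ respectively — this is precisely what legitimizes the conditional independence — and checking that after trimming the trivial constraints the conditioning events $H_{i_l}$ are $\mathcal{F}^*$-measurable, so that the tower-property computation in the last display is valid. Neither is deep, but both are essential to the independence-given-$\mathcal{F}^*$ mechanism.
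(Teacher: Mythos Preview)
Your proof is correct and, in fact, cleaner than the paper's. Both arguments start from the same single-interval input (Lemma~\ref{P-E3}/Lemma~\ref{Lem-RNtail-1}), but they decouple the $k$ intervals differently.

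The paper does not condition on the long edges. Instead it observes that, on $\{\xi_{i_l}\le M\}$, the random variable $R_{i_l}$ depends on at most $M$ edges with one endpoint outside $[(i_l-1)m,(i_l+2)m)$; it then runs a greedy extraction \emph{along the graph} (as in Lemma~\ref{P-Ek1-many}), discarding at each step the indices reached by those long edges together with the spatially adjacent ones, to produce $\lfloor k/(M+4)\rfloor$ indices whose $R_{i_l}$'s are mutually independent and independent of all $H_{i_l}$. To compensate for the loss of a factor $M+4$ in the number of usable indices, the paper takes the sharper single-interval parameter $\varepsilon=\e^{-M(M+4)}/(M+1)$, so that the product bound still yields $\e^{-\tilde c Mk}$ after absorbing the binomial coefficient.

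Your route sidesteps this combinatorics entirely: conditioning on $\mathcal{F}^*$ makes $\mathcal{K}_{i_l}$, $\xi_{i_l}$ and the (trimmed) $H_{i_l}$ deterministic, and reduces $(F_2(i)\cap F_3(i))^c$ to a function of the short-range edges inside $[(i-1)m,(i+2)m)$. Independence is then purely geometric, the greedy extraction keeps $k/5$ indices regardless of $M$, and the milder choice $\varepsilon=\e^{-M}/(M+1)$ already suffices, yielding the explicit constant $c_4=1/5$. Your care in choosing $\alpha_2=c_*(\varepsilon^{C_*}\alpha_1)^{\delta^*}$ so that it is $\delta$-free is also a genuine improvement over the paper, whose $\alpha_2$ (cf.\ \eqref{def-alpha2}) implicitly carries a $\delta$. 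The two bookkeeping points you flag---that $\widehat R(I_{i-1},I_{i+1})$ uses only edges between adjacent or identical blocks inside $[(i-1)m,(i+2)m)$ (any excursion outside must loop back through $I_{i-1}$ or $I_{i+1}$ because both crossing edge families are forbidden), and that the trimmed $H_{i_l}$ are $\mathcal{F}^*$-measurable---are exactly right and are what makes the conditional-independence step go through.
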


\begin{proof}
Recall that $\delta^*>0$ (depending only on $\beta$) is the exponent in Lemma \ref{Lem-RNtail-1}.
For fixed $M\in \mathds{N}$, $\delta\in (0,\delta^*]$ and $\alpha_1\in (0,1)$, it follows from Lemma \ref{Lem-RNtail-1} that there exists a constant $\alpha_2>0$ (depending on $\beta$, $M$ and $\alpha_1$) such that for any $l\in [1,k]$ and $\langle u,v\rangle\in \mathcal{E}$ with $u\in I_{i_l}$ and $v\in ((i_l-1)m,(i_l+2)m]^c$, we have
\begin{equation*}
\mathds{P}\left[R_{I_{i_l}}\left(u,B_{\alpha_1 m}(u)^c\right)< \alpha_2 m^{\delta}\right]
\leq \mathds{P}\left[R\left(u,B_{\alpha_1 m}(u)^c\right)< \alpha_2 m^{\delta}\right]
\leq \e^{-M(M+4)}/(M+1)
\end{equation*}
and
\begin{equation*}
  \mathds{P}\left[\widehat{R}(I_{i_l-1},I_{i_l+1})<\alpha_2m^{\delta}\right]
  \leq \mathds{P}\left[\widehat{R}_m<\alpha_2m^{\delta}\right]
  \leq \e^{-M(M+4)}/(M+1).
\end{equation*}
Consequently,  for any $l\in [1,k]$ one has
\begin{equation}\label{P-Iil}
\mathds{P}\left[R_{i_l}<\alpha_2 m^{\delta},\ \xi_{i_l}\leq M\right]\leq (M+1)\cdot\e^{-M(M+4)}/(M+1)=\e^{-M(M+4)},
\end{equation}
where
\begin{equation*}
R_{i_l}:=\min_{u\in \mathcal{K}_{i_l}}\left\{R_{I_{i_l}}\left(u,B_{\alpha_1 m}(u)^c\right),\ \widehat{R}(I_{i_l-1},I_{i_l+1})\right\}
\end{equation*}
and $\mathcal{K}_{i_l}$ is defined in \eqref{def-Ki-1}.

We now consider the independence of $R_{i_l}$ among different $i_l$. It is clear that on the event $\max_{1\leq l\leq k}\xi_{i_l}\leq M$, the resistance $R_{i_l}$ depends on at most $M$ edges which has one endpoint outside the interval $[(i_l-1)m,(i_l+2)m)$.
Therefore, using the similar construction in the proof of Lemma \ref{P-Ek1-many}, we can see that there are at least $\lfloor k/(M+4)\rfloor$ indices $i_l$ such that the corresponding $R_{i_l}$ are independent of one another as well as independent of the events $H_{i_l}$. Combining this with \eqref{P-Iil} we obtain that
  \begin{equation*}
  \begin{aligned}
    &\mathbb{P}\left[\bigcap_{l=1}^k (F_2(i_l)\cap F_3(i_l))^c,\ \max_{1\leq l\leq k}\xi_{i_l}\leq M\ |\cap_{l=1}^k H_{i_l}\right]\\
    &\leq \binom{k}{\lfloor k/(M+4)\rfloor}\left(\e^{-M(M+4)}\right)^{k/(M+4)}\leq \e^{-\widetilde{c}Mk}
    \end{aligned}
  \end{equation*}
for some constant $\widetilde{c}>0$ depending only on $\beta$.
\end{proof}

With the above lemmas at hand, we can present the

\begin{proof}[Proof of Proposition~\ref{P-red-many}]
For $\beta>0$, $\delta\in (0,\delta^*]$ and sufficiently large $M\in \mathds{N}$, let $\alpha=(\alpha_1,\alpha_2)\in (0,1)^2$ (depending only on $\beta$ and $M$) be the constants chosen in Lemmas \ref{P-Ek1-many} and \ref{P-Ek3}.

Note that on the event $\cap_{l=1}^k\{\varpi(i_l)\text{ is not $(\delta,\alpha)$-very good and }\xi_{i_l}\leq M\}$, there are  either at least $k/2$ indices $i_l$ for which $F_1(i_l)^c\cap \{\xi_{i_l}\leq M\}$ occurs or at least $k/2$ indices $i_l$ for which $(F_2(i_l)\cap F_3(i_l))^c\cap \{\xi_{i_l}\leq M\}$ occurs.
Therefore, from Lemmas~\ref{P-Ek1-many} and \ref{P-Ek3} we have
\begin{equation*}
  \begin{aligned}
    &\mathbb{P}\left[ \varpi(i_l)\text{ is not $(\delta,\alpha)$-very good for all $l\in[1,k]$ and }\max_{1\leq l\leq k}\xi_{i_l}\leq M \ | \cap_{l=1}^k H_{i_l}\right]\\
    &\leq \binom{k}{\lfloor k/2\rfloor+1}(2^k\e^{-c_3Mk/2} +\e^{-c_4Mk/2})\leq \e^{-\widetilde{c}Mk}
  \end{aligned}
\end{equation*}
for some $\widetilde{c}>0$ depending only on $\beta$, where $c_3,c_4>0$ (both depending only on $\beta$) are the constants in Lemmas \ref{P-Ek1-many} and \ref{P-Ek3}, respectively.
\end{proof}

We will now remove the restrictions on the degrees of the vertices from Proposition~\ref{P-red-many} to obtain the following estimate.

\begin{proposition}\label{P-red-many-strong}
For any $\beta>0$, $\delta\in (0,\delta^*]$ and  sufficiently large $M>0$, there exist constants $c_{5}>0$ {\rm(}depending only on $\beta${\rm)} and $\alpha=(\alpha_1,\alpha_2)\in(0,1)^2$ {\rm(}depending only on $\beta$ and $M${\rm)} such that the following holds.
For any $k\in\mathds{N}$ and any $k$ distinct vertices $\varpi(i_1),\cdots,\varpi(i_k)\in V$, let  $\{W_{i_l}\}_{1\leq l\leq k}$ and $\{H_{i_l}\}_{1\leq l\leq k}$ be the sets and events defined in Proposition~\ref{P-red-many}. Then we have
  \begin{equation}\label{prob-notgoodclst}
    \mathbb{P}\left[ \varpi(i_l)\text{ is not $(\delta,\alpha)$-very good for all $l\in[1,k]$} \ | \cap_{l=1}^k H_{i_l}\right]\leq {\rm e}^{-c_5 Mk}.
  \end{equation}
\end{proposition}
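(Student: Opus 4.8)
The plan is to deduce Proposition~\ref{P-red-many-strong} from Proposition~\ref{P-red-many} by isolating the marked vertices of large degree. Set $S=\{l\in[1,k]:\xi_{i_l}>M\}$. On the event that $\varpi(i_l)$ is not $(\delta,\alpha)$-very good for all $l$, either $|S|\geq k/2$, or $|S^c|\geq k/2$ and, for every $l\in S^c$, the vertex $\varpi(i_l)$ is not $(\delta,\alpha)$-very good while $\xi_{i_l}\leq M$. In the second case a union bound over the at most $2^k$ subsets $T\subseteq S^c$ of size $\lceil k/2\rceil$, together with Proposition~\ref{P-red-many} applied to the subcollection $\{\varpi(i_l):l\in T\}$ with the induced families $\{W_{i_l}\}_{l\in T}$ and $\{H_{i_l}\}_{l\in T}$, bounds the conditional probability by $2^k{\rm e}^{-c_2Mk/2}\leq{\rm e}^{-c_2Mk/4}$ for $M\geq M_0(\beta)$. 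Here one uses that the events $\{H_{i_l}\}_{l=1}^k$ depend on pairwise disjoint edge sets (each joins $I_{i_l}$ to an \emph{unmarked} interval), hence are independent, so replacing the conditioning $\bigcap_{l=1}^kH_{i_l}$ by $\bigcap_{l\in T}H_{i_l}$ costs at most a bounded factor; this is the same bookkeeping as in the proofs of Lemmas~\ref{P-Ek1-many} and \ref{P-Ek3}.

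It therefore suffices to control the first case, and for this I claim that there is $c>0$ (depending only on $\beta$) such that, for every $S'\subseteq[1,k]$ with $|S'|\geq k/2$ and every $M$ large, $\mathds{P}[\xi_{i_l}>M \text{ for all } l\in S'\mid \bigcap_{l=1}^kH_{i_l}]\leq{\rm e}^{-cMk}$; a union bound over the $\leq 2^k$ such sets then gives $\mathds{P}[|S|\geq k/2\mid\bigcap_lH_{i_l}]\leq{\rm e}^{-cMk/2}$ for $M$ large, which finishes the proof. The key idea is to split the long edges incident to each marked interval into an ``external'' and an ``internal'' part: writing $\xi_{i_l}\leq N_{i_l}=N_{i_l}^{\mathrm{ext}}+N_{i_l}^{\mathrm{int}}$, where $N_{i_l}^{\mathrm{int}}$ counts the edges joining $I_{i_l}$ to the other marked intervals $I_{i_{l'}}$ with $|i_l-i_{l'}|\geq2$ and $N_{i_l}^{\mathrm{ext}}$ counts the remaining edges from $I_{i_l}$ to $[(i_l-1)m,(i_l+2)m)^c$, one observes that on $\{\xi_{i_l}>M \text{ for all } l\in S'\}$ either at least $|S'|/2$ of the indices $l\in S'$ satisfy $N_{i_l}^{\mathrm{ext}}>M/2$, or $\sum_{l\in S'}N_{i_l}^{\mathrm{int}}>M|S'|/4$.

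For the first alternative, the variables $\{N_{i_l}^{\mathrm{ext}}\}_l$ depend on pairwise disjoint edge sets (an edge from $I_{i_l}$ to an unmarked interval is counted only in $N_{i_l}^{\mathrm{ext}}$) and hence are independent, and conditioning on $\bigcap_{l=1}^kH_{i_l}$ only forces at most $\#W_{i_l}\leq2$ of the edges counted by $N_{i_l}^{\mathrm{ext}}$ while preserving independence. Since $N_{i_l}$ is stochastically dominated by a Poisson variable of mean $O(1)$ (as $\mathds{E}[\xi_{i_l}]=O(1)$; cf.\ the proof of Lemma~\ref{lem-good-1}), a Chernoff bound yields $\mathds{P}[N_{i_l}^{\mathrm{ext}}>M/2\mid\bigcap_lH_{i_l}]\leq{\rm e}^{-M}$ for $M$ large, and a further union bound over which $\lceil|S'|/2\rceil$ indices achieve this gives $\binom{|S'|}{\lceil|S'|/2\rceil}{\rm e}^{-M|S'|/2}\leq{\rm e}^{-M|S'|/4}\leq{\rm e}^{-Mk/8}$. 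For the second alternative, $\sum_{l\in S'}N_{i_l}^{\mathrm{int}}$ is at most twice the number of edges joining two marked intervals at distance $\geq2$, which is a sum of independent Bernoulli variables, independent of $\bigcap_{l=1}^kH_{i_l}$ (whose edges join marked to unmarked intervals), with mean at most $\sum_{1\leq a<b\leq k}C|i_a-i_b|^{-2}\leq C'k$; since $M|S'|/4\geq Mk/8$ dominates $C'k$ for $M$ large, a Poisson-type tail bound gives $\mathds{P}[\sum_{l\in S'}N_{i_l}^{\mathrm{int}}>M|S'|/4]\leq{\rm e}^{-cMk}$. Combining the two alternatives proves the claim.

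The main obstacle is exactly this high-degree case. A large degree tends to make an interval \emph{not} $(\delta,\alpha)$-very good (e.g.\ it makes the separation event $F_1$ fail), so one cannot simply say ``high degree is rare'' and feed it into Proposition~\ref{P-red-many}; the decisive point is the external/internal decomposition, which produces external edge counts that are genuinely independent across the marked intervals (amenable to independence plus a Chernoff bound) and an internal edge count whose total is a single low-mean sum of independent Bernoulli variables (controlled by one large-deviation estimate). A more routine but tedious point is to check that the conditioning on $\bigcap_{l=1}^kH_{i_l}$ and the mild dependence between the degree events and the very-good events of neighbouring marked intervals cost only constant factors; this is handled exactly as in Section~\ref{sect-far} and in the proofs of Lemmas~\ref{P-Ek1-many} and \ref{P-Ek3}.
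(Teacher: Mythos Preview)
Your approach is essentially the paper's. Both split according to whether at least half the marked indices have $\xi_{i_l}\le M$, apply Proposition~\ref{P-red-many} to a subcollection of size $\ge k/2$ with bounded degree, and control the remaining large-degree event by decomposing the long edges incident to the marked intervals into external (marked-to-unmarked) and internal (marked-to-marked) parts, followed by Chernoff-type bounds. The paper streamlines your last step: instead of your two-level dichotomy (at least $|S'|/2$ of the $N_{i_l}^{\mathrm{ext}}$ are large, or $\sum_{l}N_{i_l}^{\mathrm{int}}$ is large), it bounds the single event $\{\sum_{l}\xi_{i_l}\ge Mk/2\}$ directly by writing $\sum_l\xi_{i_l}\le 2Y$ with $Y:=\sum_l\widetilde\xi_{i_l}+\sum_{|i_l-i_{l'}|>1}\xi_{i_l,i_{l'}}$ (precisely your external and internal edge counts summed), observing that all these summands are independent and independent of $\cap_lH_{i_l}$ with $\mathbb E[Y\mid\cap_lH_{i_l}]=O(k)$, and applying one Chernoff bound.

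One point of caution: your assertion that replacing the conditioning $\bigcap_{l=1}^kH_{i_l}$ by $\bigcap_{l\in T}H_{i_l}$ ``costs at most a bounded factor'' is not literally correct, since the omitted factor $\prod_{l\notin T}\mathbb P[H_{i_l}]^{-1}$ is in general unbounded. The paper is equally terse at this spot. The right fix---implicit in the proofs of Lemmas~\ref{P-Ek1-many} and \ref{P-Ek3}---is not to change the conditioning at all: those lemmas already deliver bounds under the \emph{full} conditioning $\bigcap_{l=1}^kH_{i_l}$ when applied to a subcollection $T$, because the extra events $H_{i_{l'}}$ for $l'\notin T$ concern only edges from $I_{i_{l'}}$ to unmarked intervals and are therefore independent of the $F$-events and the degrees $\xi_{i_l}$ for $l\in T$.
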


\begin{proof}
   It is obvious that if we have $\sum_{l=1}^k \xi_{i_l}\leq Mk/2 $, then there are at least $k/2$ indices $\varpi(i_l)$ such that $\xi_{i_l}\leq M$. As a result, applying Proposition~\ref{P-red-many} to this $k/2$ indices $i_l$, we get that
  \begin{equation}\label{up-LHS5}
    \text{LHS of \eqref{prob-notgoodclst}}\leq \e^{-c_2 Mk/2}+\mathbb{P}\left[\sum_{l=1}^k \xi_{i_l}\geq Mk/2\ \ |\cap_{l=1}^k H_{i_l}\right],
  \end{equation}
  where $c_2>0$ (depending only on $\beta$) is the constant in Proposition~\ref{P-red-many}.

 In the remaining part of the proof, we will estimate the last term on the RHS of \eqref{up-LHS5}.
 To this end, for $l\in [1,k]$, let $\widetilde{\xi}_{i_l}$ represent the number of edges in the LRP model that directly connect $[i_{l}m, (i_{l}+1)m)$ and $\mathds{Z}\setminus \left(\cup_{l'=1}^k [(i_{l'}-1)m, (i_{l'}+2)m)\right)$.
For $l,l'\in [1,k]$ with $|i_l-i_{l'}|>1$, let $\xi_{i_l,i_{l'}}$ denote the number of edges connecting the intervals $[i_{l}m, (i_{l}+1)m)$ and $[i_{l'}m, (i_{l'}+1)m)$ in the LRP model.
  It is evident, due to the independence of edges in the LRP model and the assumption $\varpi(i_l)\notin W_{i_l}$ for all $l\in [1,k]$, that $\{(\widetilde{\xi}_{i_l},H_{i_l})\}_{l=1}^k$ and $\{\xi_{i_l,i_{l'}}\}_{l,l'=1}^k$ are all independent.  In addition, note that if $\sum_{l=1}^k \xi_{i_l}\geq Mk/2$, then one has
  \begin{equation}\label{def-Y-0}
  \begin{aligned}
  2Y&:=2\left(\sum_{l,l'\in [1,k]:|i_l-i_{l'}|>1}\xi_{i_l,i_{l'}}+\sum_{l=1}^k \widetilde{\xi}_{i_l}\right)\geq 2\sum_{l,l'\in [1,k]:|i_{l}-i_{l'}|\geq 2}\xi_{i_l,i_{l'}}+\sum_{l=1}^k \widetilde{\xi}_{i_l}\\
  &\geq \sum_{l=1}^k\xi_{i_l}\geq Mk/2.
  \end{aligned}
  \end{equation}
Moreover, by a simple calculation, there exist constants $\widetilde{C}_1,\widetilde{C}_2<\infty$ (depending only on $\beta$) such that
\begin{equation*}
\begin{aligned}
\mathds{E}[Y\ |\cap_{l=1}^k H_{i_l}]
&\leq \sum_{l,l'\in [1,k]:|i_l-i_{l'}|>1}\mathbb{E}[\xi_{i_l,i_{l'}}]+\sum_{l=1}^k \mathbb{E}\left[\widetilde{\xi}_{i_l}\ | H_{i_l}\right]\\
&\leq \sum_{l=1}^k\sum_{i\in [lm,(l+1)m)} \sum_{j\in [(l-1)m,(l+2)m)^c}\frac{\widetilde{C}_1\beta}{|i-j|^2}+\sum_{l=1}^k \mathbb{E}\left[\widetilde{\xi}_{i_l}\ |H_{i_l}\right]\leq \widetilde{C}_2k.
\end{aligned}
\end{equation*}
Combining this with \eqref{def-Y-0}, the independence of  $\{(\widetilde{\xi}_{i_l},H_{i_l})\}_{l=1}^k$ and $\{\xi_{i_l,i_{l'}}\}_{l,l'=1}^k$ and Chernoff's bound, we can get that for all $M\geq 6\widetilde{C}_2$,
$$
\mathds{P}\left[\sum_{l=1}^k \xi_{i_l}\geq Mk/2\ |\cap_{l=1}^k H_{i_l}\right]\leq \mathds{P}\left[Y\geq Mk/4\ |\cap_{l=1}^k H_{i_l}\right]\leq \e^{-Mk/72}. 
$$
Hence, we can complete the proof by applying this to \eqref{up-LHS5}.
\end{proof}

To prove Proposition \ref{P-red-many-tree}, we also need some estimates for the connected subsets in the LRP model.
To this end, for any $k\in \mathds{N}$, recall that $\mathcal{CS}_k(0)$ denotes the collection of all connected subsets of the graph $G$ that have  size $k$ and contain $\varpi(0)$.
Let $\mathcal{TA}_k(0)$ be the set of admissible trees $T$ on $V=\{\varpi(0),\varpi(-1),\varpi(1),\cdots\}$ that have size $k$ and contain $\varpi(0)$, where an admissible tree is a tree that can possibly occur in $G$.
We also let $\mathcal{T}_k(0)$ denote the set of all trees in $G$ that have size $k$ and contain $\varpi(0)$.
Note that $\mathcal{TA}_k(0)$ is deterministic and $\mathcal{T}_k(0)$ is random.
It is worth emphasizing that each tree $T$ in $\mathcal{T}_k(0)$ can be viewed as a spanning tree of a connected subset $Z\in \mathcal{CS}_k(0)$. 
\begin{lemma}[{\cite[Lemma 3.1]{Baumler23a}}]\label{Baumlerlemma3.1}
For any $\beta>0$ and $k\in \mathds{N}$, $\mathds{E}[|\mathcal{T}_k(0)|]\leq (4\mu_\beta)^k$, where $\mu_\beta:=\mathds{E}[\rm{deg}(\varpi(0))]$.
\end{lemma}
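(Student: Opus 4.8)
The plan is to run the standard ``lattice-animal counting via exploration'' estimate, trading the complexity of the tree shape for a Catalan factor and the cost of placing each new vertex for a factor $\mu_\beta$.

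First I would reduce the claim to a deterministic sum. In $G$ the events $\{\varpi(i)\sim\varpi(j)\}$, over distinct unordered pairs $\{i,j\}$, are mutually independent, since $\{\varpi(i)\sim\varpi(j)\}$ is measurable with respect to the edges of $\mathcal E$ joining $I_i$ to $I_j$ and these blocks of potential edges are pairwise disjoint. Hence for any fixed abstract tree $T$ on a $k$-element subset of $V$ with $\varpi(0)\in T$ one has $\mathds P[T\subseteq G]=\prod_{\langle\varpi(i),\varpi(j)\rangle\in T}p_{i,j}$, and therefore
$$
\mathds E[|\mathcal T_k(0)|]=\sum_{T\in\mathcal{TA}_k(0)}\prod_{\langle\varpi(i),\varpi(j)\rangle\in T}p_{i,j}\;\le\;\sum_{(\tau,\phi)}\prod_{\langle a,b\rangle\in\tau}p_{\phi(a),\phi(b)},
$$
where the last sum ranges over all rooted ordered (plane) trees $\tau$ on $k$ vertices together with all maps $\phi$ from the vertex set of $\tau$ into $\mathds Z$ sending the root to $0$ (here we identify $\varpi(i)$ with $i$). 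The inequality holds because every element of $\mathcal T_k(0)$ is the image of at least one such pair $(\tau,\phi)$: root the tree at $\varpi(0)$ and fix any ordering of the children at each vertex.

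Next I would evaluate the right-hand side by a depth-first exploration of $\tau$. A rooted ordered tree on $k$ vertices corresponds bijectively to a Dyck word of length $2(k-1)$ with $k-1$ ``down'' and $k-1$ ``up'' symbols, and there are exactly $C_{k-1}=\frac{1}{k}\binom{2(k-1)}{k-1}\le 4^{k-1}$ such words. Fix one word $\pi$ and explore $\tau$ along it: the $j$-th ``down'' symbol creates a new edge out of the current vertex --- whose image $v$ is already determined by $\pi$ and the first $j-1$ choices only --- to a freely chosen vertex $u$, carrying the weight $p_{v,u}$. Since $\sum_{u\ne v}p_{v,u}=\mathds E[\deg(\varpi(v))]=\mu_\beta$ by translation invariance, independently of $v$, the $k-1$ nested sums telescope and contribute at most $\mu_\beta^{k-1}$ per word. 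Summing over the at most $4^{k-1}$ words and using $4\mu_\beta\ge 1$ (in fact $\mu_\beta\ge 2$, since consecutive intervals are always joined in $G$) yields
$$
\mathds E[|\mathcal T_k(0)|]\;\le\;4^{k-1}\mu_\beta^{k-1}\;\le\;(4\mu_\beta)^k.
$$

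I do not anticipate a genuine obstacle; the only points deserving care are the bookkeeping of the encoding --- that every tree through $\varpi(0)$ is produced by at least one (plane tree, placement) pair, and that the nested sums genuinely decouple because the current vertex at a given ``down'' step does not depend on the choices at that step or any later one --- and both are routine.
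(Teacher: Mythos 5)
Your argument is correct. The paper does not actually reprove this lemma — it simply cites it from B\"aumler — so there is no in-paper proof to compare against; but the plane-tree/Catalan encoding you use, with the depth-first exploration collapsing the nested sums to $\mu_\beta^{k-1}$ and the count of rooted ordered trees bounded by $C_{k-1}\le 4^{k-1}$, is the standard argument, and the appearance of the explicit constant $4$ in the statement strongly suggests it is the same proof as in the cited reference. The only points requiring care — that the events $\{\varpi(i)\sim\varpi(j)\}$ over distinct pairs are independent (which holds because they are determined by disjoint blocks of potential edges in $\mathcal E$, and the renormalized graph $G$ has the same law as the original $\beta$-LRP), that every tree through $\varpi(0)$ arises from at least one $(\tau,\phi)$, that the per-step sum $\sum_{u\ne v}p_{v,u}=\mu_\beta$ does not depend on $v$ by translation invariance, and that $\mu_\beta\ge 2$ so that $4^{k-1}\mu_\beta^{k-1}\le(4\mu_\beta)^k$ — you have all addressed correctly.
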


We now present the

\begin{proof}[Proof of Proposition \ref{P-red-many-tree}]
Fix $\beta>0$, $\delta \in (0,\delta^*]$ and sufficiently large $M>0$. We let $\alpha=(\alpha_1,\alpha_2)\in (0,1)^2$ be the constants defined in Proposition \ref{P-red-many-strong}.
  Without loss of generality, let $k'=k/6$ be an integer. Otherwise, we can replace $k/6$ with $\lfloor k/6\rfloor$.

  Note that every connected subset $Z\in \mathcal{CS}_k(0)$ corresponds to one of its spanning trees $T\in \mathcal{T}_k(0)$.
Therefore, we obtain that
    \begin{align}
      &\mathds{P}\left[\exists Z\in\mathcal{CS}_k(0): \forall \varpi(j)\in Z,\ \varpi(j)\text{ is not $(\delta,\alpha)$-very good}\right]\label{eq:to-fix-tree}\\
      &\leq \mathds{P}\left[\exists T\in\mathcal{T}_k(0): \forall \varpi(j)\in T,\ \varpi(j)\text{ is not $(\delta,\alpha)$-very good}\right]\nonumber\\
      &\leq \sum_{T\in \mathcal{TA}_k(0)} \mathds{P}\left[T\in \mathcal{T}_k(0)\right]\mathds{P}\left[ \forall \varpi(j)\in T,\ \varpi(j)\text{ is not $(\delta,\alpha)$-very good}\ |\text{all edges in $T$ exist}\right]\nonumber.
  \end{align}

  In addition, for each fixed $T\in \mathcal{TA}_k(0)$, since $T$ only contains $k-1$ edges, we can identify at least $3k'=k/2$ vertices $\varpi(i_1),\cdots,\varpi(i_{3k'})$ such that each $\varpi(i_l)$ is directly connected to at most two vertices in $T$.
  Let $W_{i_l}$ be the neighbor(s) of $\varpi(i_l)$ in $T$ for $l\in[1,3k']$. Then, we can select at least $k'$ vertices (also denoted by $\varpi(i_1),\cdots, \varpi(i_{k'})$) such that $W_{i_l}\cap\{\varpi(i_1),\cdots,\varpi(i_{k'})\}=\emptyset$ and we define the event $H_{i_l}=\{\varpi(i_l)\sim \varpi(j)\ \text{for all } \varpi(j)\in W_{i_l}\}$ for $1\leq l\leq k'$.
  Then from Proposition~\ref{P-red-many-strong} we can see that for any $T\in \mathcal{TA}_k(0)$,
  \begin{equation*}
    \begin{aligned}
    &\mathbb{P}\left[ \forall \varpi(j)\in T,\ \varpi(j)\text{ is not $(\delta,\alpha)$-very good}\ |\text{all edges in $T$ exist}\right]\\
    &\leq 2^k\mathbb{P}\left[ \forall l\in[1,k'],\ \varpi(i_l)\text{ is not $(\delta,\alpha)$-very good}|\cap_{l=1}^{k'}H_{i_l}\right]\\
    &\leq \e^{-c_5Mk'}=\e^{-c_5Mk/6},
    \end{aligned}
  \end{equation*}
where $c_5>0$ (depending only on $\beta$) is the constant defined in Proposition~\ref{P-red-many-strong}.
Combining  this with \eqref{eq:to-fix-tree} and Lemma \ref{Baumlerlemma3.1}, we get that
  \begin{equation*}
    \begin{aligned}
      &\mathbb{P}[\exists Z\subset\mathcal{CS}_k(0): \forall \varpi(j)\in Z,\ \varpi(j)\text{ is not $(\delta,\alpha)$-very good}]\\
      &\leq \e^{-c_5Mk/6}\sum_{T\in \mathcal{TA}_k(0)}\mathbb{P}[T\in \mathcal{T}_k(0)]=(4\mu_\beta)^k\e^{-c_5Mk/6},
    \end{aligned}
  \end{equation*}
  which implies the result.
\end{proof}

\subsection{Coarse-graining argument}\label{sect-cg}
Throughout this subsection, we fix sufficiently large $m,n\in \mathds{N}$ and $\delta\in (0,\delta^*]$.
 Recall that we write the interval $[im,(i+1)m)$ in $\mathds{Z}$ as $I_i$, and $G=(V,E)$ is the renormalization from the $\beta$-LRP model by identifying the intervals $[im,(i+1)m)$ to vertices $\varpi(i)$. Denote $V_n=\{\varpi(0),\varpi(1),\cdots, \varpi(n-1)\}$ and $G_n=(V_n,E_n)$ with $E_n:=E_{V_n\times V_n}$.

For simplicity, we will refer to a vertex $\varpi(i)\in V$ as an $\alpha$-\textit{red vertex} if it is not $(\delta,\alpha)$-very good. Otherwise, we will say $\varpi(i)$ is an $\alpha$-\textit{black vertex}. We say an edge $\langle \varpi(i),\varpi(j)\rangle$ is an $\alpha$-\textit{red edge} if both $\varpi(i)$ and $\varpi(j)$ are $\alpha$-red vertices.
Additionally, we will refer to a vertex set $L\subset V_n$ as an $\alpha$-\textit{red animal} if it is a connected subset in $G_n$ consisting only of $\alpha$-red vertices.
Furthermore, if no other $\alpha$-red vertex is directly connected to $L$, we will call $L$ an $\alpha$-\textit{red component}.
Let $\mathcal{L}$ denote the collection of all $\alpha$-red components that contain at least two vertices.

In this subsection we will employ the coarse-graining argument to classify 
$\alpha$-red components at various scales and subsequently identify corresponding good regions at each scale to cover them.
To achieve this, we now introduce some parameters which will be used repeatedly. Let $M$ be a sufficiently large number and define
\begin{equation}\label{def-Alambda}
A=M^{0.2}>1\quad \text{and}\quad \lambda=M^{-0.1}\in(0,1).
\end{equation}
We introduce a sequence of numbers $\{a_k\}_{k\geq 0}$ defined as
\begin{equation}\label{def-ai}
a_0=0,\quad  a_1=A\quad  \text{and} \quad a_k=\lfloor (1+\lambda) a_{k-1}\rfloor+1\quad \text{for all }k\geq 2.
\end{equation}
For convenience, we will denote
\begin{equation}\label{def-bk}
b_k=\exp\left\{M^{0.2}a_k^{1/2}\right\}\quad \text{and}\quad b_{k-1,k}=\exp\left\{M^{0.2}\left(\frac{a_{k-1}+a_k}{2}\right)^{1/2}\right\}   \quad \text{for all } k\geq 1,
\end{equation}
and let
\begin{equation}\label{def-K}
K_{*}=\min\left\{k:\exp\left\{c_6M^{0.25}a_k^{1/2}\right\}>2n^3\right\},
\end{equation}
where $c_6\in (0,1)$ (depending only on $\beta$) is a constant to be specified in Proposition \ref{lem-prob-CF} below.

Throughout this section, we will assume that $M$ is sufficiently large such that
\begin{equation}\label{cond-M1}
 \frac{3}{2(1+M^{-0.1})}\geq 1.49\quad \text{and}\quad  \quad b_{K_{*}}<n^{0.1}.
\end{equation}
Additionally, we will refer to an interval $J\subset V_n$ as an $a_k$-interval if $\#J=b_k$.

\subsubsection{Notations for the coarse-graining argument}\label{notat-cg}
We now introduce some general definitions (not limited to just $\alpha$-red animals) that will be useful in the coarse-graining argument.
For any subset $L\subset V$, we denote $\#L$ as the number of vertices in $L$ and define the degree of $L$ as
\begin{equation*}
\text{deg}(L)=\sum_{\varpi(i)\in L}\#\left\{\varpi(j)\in V_n\setminus L:\ \varpi(j)\sim \varpi(i) \right\}.
\end{equation*}
Let $\mathcal{C}_0$ be the set consisting of all admissible animals in the graph $G$ and let $\mathcal{C}$ be a subset of $\mathcal{C}_0$ (we treat $\mathcal{C}$ here and in Definition \ref{def-ak-clt} as some generic subset, and we will specify $\mathcal{C}$ later).

\begin{definition}\label{def-ak-clt}
For any $k\geq 1$ and any subset $L\subset V$, we say $L$ is an \textit{$a_k$-subset} if
\begin{equation*}
  \begin{aligned}
  &\# L\leq a_k \ \text{and}\ \mathrm{deg}(L)\leq 20\mu_\beta Ma_k,\\
  \text{but} \ &\# L> a_{k-1} \ \text{or}\ \mathrm{deg}(L)> 20\mu_\beta Ma_{k-1},
  \end{aligned}
\end{equation*}
 where $\mu_\beta=\mathds{E}[\text{deg}(\varpi(0))]$ as defined in Lemma \ref{Baumlerlemma3.1}. 
 Then 
  let $\mathcal{C}_{k}$ be the set of all $a_k$-subsets in $\mathcal{C}$.
\end{definition}

A crucial step in our coarse-graining argument is the merging and expansion of the red components. To accomplish this, we introduce the following two merging operations.
The first operation involves merging an animal with its adjacent long edges (see Figure~\ref{fig-LECF}).

\begin{definition}\label{def-LECF}
For any $k\geq 1$ and any animal $L\in \mathcal{C}_k$,  we define the following operation as $a_k$-long-edge animal fusion ($a_k$-LEAF). For any vertex $\varpi(i)\in V_n \setminus L$, if

\begin{itemize}
\item[(1)] $\varpi(i)\sim L$;
\medskip

\item[(2)] there exists a long edge $\langle\varpi(j_1),\varpi(j_2)\rangle \in E_n\setminus E_{L\times L}$ connecting $B_{b_{k-1}}(\varpi(i))$ and $B_{b_{k-1,k}}(\varpi(i))^c$ directly or connecting $B_{b_{k-1,k}}(\varpi(i))$ and $B_{b_{k}}(\varpi(i))^c$ directly,
\end{itemize}
then we color the edge between $\varpi(i)$ and $L$ in dashed red, add a dashed red edge between $\varpi(i)$ and $\varpi(j_1)$, and include both this dashed edge and the edge $\langle \varpi(j_1),\varpi(j_2)\rangle $ in the animal $L$ to form a new animal.
We say that $a_k$-LEAF is applied iteratively to $L$ if, after applying $a_k$-LEAF once to $L$ resulting in a new animal $L'$, we then apply $a_k$-LEAF again to $L'$, and so on multiple iterations  (until no further changes can be made). It is worth emphasizing that we still continue applying $a_k$-LEAF even if $L'$ has become an $a_{l}$-subset for some $l>k$.
It is important to note that this newly formed structure may not constitute a true animal in $G_n$, since it is possible that $\varpi(j_1)\nsim \varpi(i)$.
\end{definition}

\begin{figure}[h]
  \includegraphics[scale=0.8]{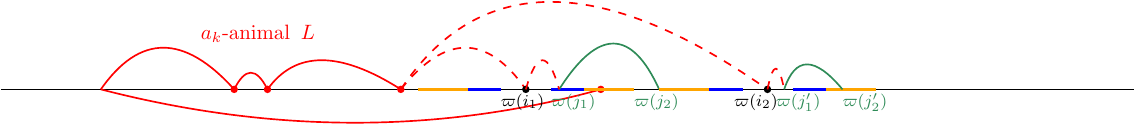}
  \caption{The illustration for Definition~\ref{def-LECF}. The red vertices and curves in the figure represent an $a_k$-animal. The black vertices $\varpi(i_1)$ and $\varpi(i_2)$ are both connected to $L$. The blue lines denote the annulus $B_{b_{k-1,k}}(\varpi(i_l))\setminus B_{b_{k-1}}(\varpi(i_l))$, while the orange lines represent the annulus $B_{b_{k}}(\varpi(i_l))\setminus B_{b_{k-1,k}}(\varpi(i_l))$  for $l=1,2$. The green curves represent the long edges $\langle \varpi(j_1),\varpi(j_2)\rangle$ and $\langle \varpi(j'_1),\varpi(j'_2)\rangle$, which connect $B_{b_{k-1,k}}(\varpi(i_1))$ to $B_{b_{k}}(\varpi(i_1))^c$ and  $B_{b_{k-1}}(\varpi(i_2))$ and $B_{b_{k-1,k}}(\varpi(i_2))^c$, respectively.}
  \label{fig-LECF}
\end{figure}

The second one involves the merging of two animals with the same scale that are close to each other (see Figure~\ref{fig-NNCF}).

\begin{definition}\label{def-NNCF}
  For any $k\geq 1$ and any animal $L\in \mathcal{C}_k$, we define the following operation as
$a_k$-near-neighbor animal fusion ($a_k$-NNAF). For any vertex $\varpi(i)\in V_n\setminus L$, if
\begin{enumerate}
  \item $\varpi(i)\sim L$;
  \medskip

  \item there exist another animal $L'\in\mathcal{C}_{k}$ and a vertex $\varpi(j)\in V_n\setminus L'$ such that $\varpi(j)\sim L' $ and
  $
     |i-j|\leq 2b_k,
  $
\end{enumerate}
then we will color the edge connecting $\varpi(i)$ and $L$ and the edge connecting $\varpi(j)$ and $L'$ in dashed red and then add a dashed red edge between $\varpi(i)$ and $\varpi(j)$. We apply $a_k$-NNAF to each pair of $L,L'\in\mathcal{C}_k$ satisfying the conditions above and add a couple of red dashed edges. Then we consider the subgraph consisting of all edges contained in some $L\in\mathcal{C}_k$ and all red dashed edges and refer to any component of this subgraph as a new animal. 
We also note that this newly formed structure may not constitute a true animal in $G_n$.
\end{definition}

\begin{figure}
  \includegraphics[scale=0.6]{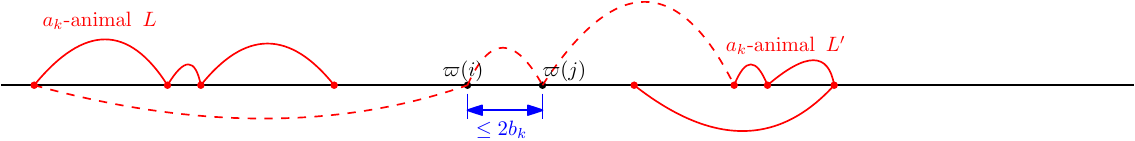}
  \caption{The illustration for Definition \ref{def-NNCF}. The red curves represent the two animals close to each other. The red dashed line represents the dashed red edge connecting them that we add.}
  \label{fig-NNCF}
\end{figure}

To control the energy generated by the flows passing through animals in $\mathcal{C}$, we also need to consider the resistances associated with the intervals adjacent to these animals. To address this, for any set $L\subset V_n$, denote
\begin{equation}\label{def-BL}
B_r(L)=\bigcup_{\varpi(i)\in V_n\setminus L: \varpi(i)\sim L}B_{r}(\varpi(i)).
\end{equation}
We then introduce the following condition for being bad.
\begin{definition}\label{def-a1bad}
  For any $k\geq 1$ and any animal $L\in \mathcal{C}_k$, we say that $L$ is \textit{$a_k$-bad} if
  \begin{equation}\label{eff-ik}
  R\left(B_{b_{k-1,k}}(L), B_{b_k}(L)^c;E_n\setminus E_{L\times L}\right)\leq c_*(20\mu_\beta Ma_k)^{-1}\exp\left\{\delta M^{0.2}a_k^{1/2}/2\right\},
  \end{equation}
  where $R(\cdot,\cdot;E_n\setminus E_{L\times L})$ denotes the effective resistance in $G_n$ after removing the edge set $E_{L\times L}$,
  $\delta\in (0,\delta^*]$ (being fixed) and $c_*,\ \delta^*$ (both depending only on $\beta$) are the constants defined in Lemma \ref{Lem-RNtail-1}.
\end{definition}

\subsubsection{Construction of a covering}\label{T-T}
We now return to considering the $\alpha$-red component set $\mathcal{L}$.
For simplicity, we will omit the notation $\alpha$ in this subsection and simply refer to the $\alpha$-red components and $\alpha$-black vertices as red components and black vertices, respectively.
For $k\geq 1$, recall that $\mathcal{L}_{k}$ is the collection of red components in $V_n$ that are also $a_k$-subsets, which is defined by replacing $\mathcal{C}$ with $\mathcal{L}$ in Definition \ref{def-ak-clt}.
Our goal is to expand the red components using the two merge operations and the resistance condition introduced in Section \ref{notat-cg}, and then establish the associated estimates to complete the coarse-graining argument.

We begin by defining the expansion of red components inductively as follows.
\begin{definition}\label{def-expansion}
Let $\widetilde{\mathcal{L}}_1=\mathcal{L}_{1}$. For $k\geq 1$, assuming that $\widetilde{\mathcal L}_1,\cdots,\widetilde{\mathcal{L}}_k$ have been defined, we next inductively define $\widetilde{\mathcal L}_{k+1}$ via the following steps.
\begin{enumerate}
  \item[(1)] Now for any pair of $L,L'\in\widetilde{\mathcal{L}}_k$ satisfying the conditions in Definition~\ref{def-NNCF}, we apply $a_k$-NNAF to them and add a few red dashed edges. 
      For each $l>k$, let $\widetilde{\mathcal{L}}_{k\to l}^{(1)}$ denote the set of animals $L$ such that $L$ is an $a_l$-subset and also a red component with respect to the subgraph containing all red and dashed red edges that have been added until now. 
      We also define $\widetilde{\mathcal{L}}_{k\to k}^{(1)}$ as the set of components with respect to the same subgraph that contain at least one animal in $\widetilde{\mathcal{L}}_{k}$ and are not included in $\widetilde{\mathcal{L}}_{k\to l}^{(1)}$ for all $l>k$.

  \item[(2)] Next, we expand animals in $\widetilde{\mathcal{L}}_{k\to k}^{(1)}$ using the $a_k$-LEAF, adding a couple of red dashed edges in each iteration. During this process, if two animals in  $\widetilde{\mathcal{L}}_{k\to k}^{(1)}$ add the same red dashed long edge from the $a_k$-LEAF, they will be merged together. (See Figure~\ref{LEAF-same-edge} for an illustration.)
  \begin{figure}[h]
    \includegraphics[scale=0.7]{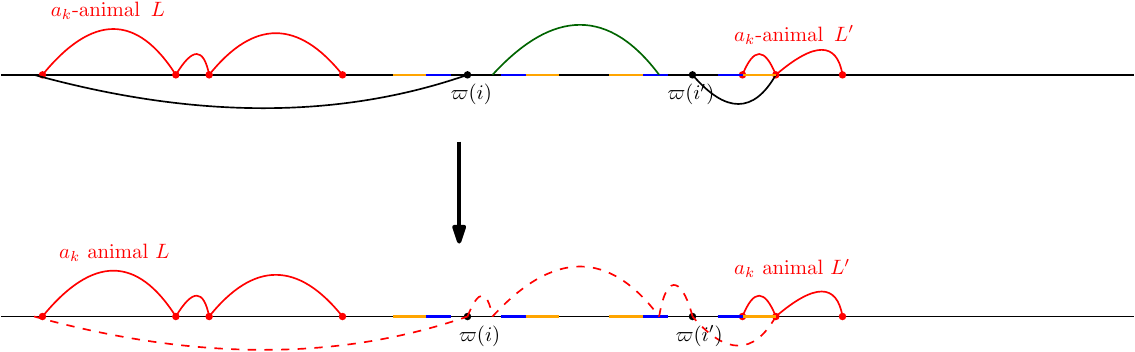}
    \caption{The red curves represent two $a_k$-animals, $L$ and $L'$. In the top figure, the green curve represents a long edge that we can add to both $L$ and $L'$ according to the $a_k$-LEAF operation. In the bottom figure, we illustrate the process of coloring the green long edge and the two black edges as red dashed curves, while also adding red dashed edges between them, effectively merging $L$ and $L'$ into a larger animal.}
    \label{LEAF-same-edge}
  \end{figure}
      This process is applied to all animals $L$ in $\widetilde{\mathcal{L}}_{k\to k}^{(1)}$.
      For each $l>k$, let $\widetilde{\mathcal{L}}_{k\to l}^{(2)}$ denote the set of animals $L$ such that $L$ is an $a_l$-subset and also a red component with respect to the subgraph containing all red and dashed red edges that have been added until now. 
      We also define $\widetilde{\mathcal{L}}_{k\to k}^{(2)}$ as the set of components with respect to the same subgraph that contain at least one animal in $\widetilde{\mathcal{L}}_{k\to k}^{(1)}$ and are not included in $\widetilde{\mathcal{L}}_{k\to l}^{(2)}$ for all $l>k$. 

  \item[(3)] Let $\widetilde{\mathcal{L}}^{(2)}_{k\to k,\mathrm{bad}}$ denote the set of all $a_k$-bad animals in $\widetilde{\mathcal{L}}^{(2)}_{k\to k}$ and define $\mathcal{G}_k=\widetilde{\mathcal{L}}^{(2)}_{k\to k}\setminus \widetilde{\mathcal{L}}^{(2)}_{k\to k,\mathrm{bad}}$.

  \item[(4)] Finally, we define
  \begin{equation}\label{L0ak}
  \widetilde{\mathcal{L}}_{k+1}=\mathcal{L}_{k+1}\bigcup\left(\bigcup_{j\leq k}\widetilde{\mathcal{L}}^{(1)}_{j\to (k+1)}\right)\bigcup\left(\bigcup_{j\leq k}\widetilde{\mathcal{L}}^{(2)}_{j\to (k+1)}\right)\bigcup \widetilde{\mathcal{L}}^{(2)}_{k\to k,\mathrm{bad}}.
  \end{equation}
  We will refer to the animals in $\widetilde{\mathcal{L}}_{k+1}$ as red $a_{k+1}$-animals (although a red animal may contain black vertices, from our procedure we can see that all the edges in these animals are red or dashed red).
  Note that an $a_{k+1}$-animal is not necessarily an $a_{k+1}$-subset as in Definition \ref{def-ak-clt}, since any $L \in \widetilde{\mathcal L}^{(2)}_{k\to k}$ is not an $a_{k+1}$-subset.
\end{enumerate}
\end{definition}
In addition, recall that the parameter $K_{*}$ is defined in \eqref{def-K}. Let $\mathcal{H}$ be the event that $\bigcup_{k>K_{*}}\widetilde{\mathcal{L}}_{k}=\emptyset$.

\subsubsection{Analysis of the covering}
The main focus of this subsection is to bound the probability of $\mathcal{H}$ as follows.

\begin{proposition}\label{prop-eventH}
For $\beta>0$ and sufficiently large $M\in \mathds{N}$ satisfying \eqref{cond-M1},
we have $\mathds{P}[\mathcal{H}]\geq1- n^{-2}$ for all $n\geq 1$.
\end{proposition}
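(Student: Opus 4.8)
\textbf{Proof proposal for Proposition \ref{prop-eventH}.}

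The plan is to show that, with probability at least $1-n^{-3}$ (say), \emph{every} red $a_k$-animal produced by the inductive procedure of Definition \ref{def-expansion} has size bounded by $a_{K_*}$, so that no animal survives to scale $K_*+1$. The natural quantity to track is, for each scale $k$, the total number of vertices contained in the red animals of $\widetilde{\mathcal L}_k$, together with their accumulated degree. First I would record the crude deterministic growth bookkeeping: each of the two fusion operations ($a_k$-LEAF and $a_k$-NNAF) and the bad-resistance promotion can only merge a controlled number of scale-$k$ objects and append a controlled number of extra vertices/edges, so that if $L$ enters step (1)--(4) as an $a_k$-subset it can be promoted to at most an $a_{k+c}$-subset for an absolute constant $c$, and in fact the parameters $A=M^{0.2}$, $\lambda=M^{-0.1}$, $b_k=\exp\{M^{0.2}a_k^{1/2}\}$ are tuned (via \eqref{cond-M1}, in particular $\tfrac{3}{2(1+M^{-0.1})}\ge 1.49$) precisely so that a single promotion step increases $a_k$ by a factor at most $1+\lambda$ while the number of fused pieces grows much more slowly. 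The upshot of this step is a purely combinatorial statement: a red $a_k$-animal with $k$ large must have arisen either from an unusually large red component $L\in\mathcal L_{k'}$ at some earlier scale, or from an unusually long chain of fusions, or from an unusually long run of $a_j$-bad animals — each of which is a rare event.

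Next I would convert "rare" into quantitative bounds. For the first mechanism, Proposition \ref{P-red-many-tree} gives exponential decay $\mathrm e^{-c_1 M k}$ in the size of any connected set of not-very-good vertices; summing over the at most $n$ possible base points and over $k\ge a_1=A=M^{0.2}$ kills the contribution with room to spare once $M$ is large, since $b_{K_*}<n^{0.1}$ forces $a_{K_*}\asymp (\log n)^2/M^{0.4}$, so $Mk \gtrsim M a_{K_*} \gtrsim (\log n)^2/M^{0.4} \gg 3\log n$. For the $a_k$-bad mechanism, Definition \ref{def-a1bad} compares the restricted resistance $R(B_{b_{k-1,k}}(L),B_{b_k}(L)^c; E_n\setminus E_{L\times L})$ to $c_*(20\mu_\beta M a_k)^{-1}\exp\{\delta M^{0.2}a_k^{1/2}/2\}$; the resistance over an annulus of width $\asymp b_k - b_{k-1,k}$ in the (renormalized, still critical $\beta$-LRP) graph $G_n$, even after deleting the $\le 20\mu_\beta M a_k$ edges inside $L$, is at least of order $(b_k/b_{k-1,k})^{\delta^*}$ up to the usual $\varepsilon$-loss by Lemma \ref{Lem-RNtail-1} (applied with the edge-deletion absorbed by monotonicity and a union bound over the $O(\mathrm{deg}(L))$ deleted edges), and $b_k/b_{k-1,k}=\exp\{M^{0.2}(a_k^{1/2}-((a_{k-1}+a_k)/2)^{1/2})\}$ is comparable to $\exp\{cM^{0.2}a_k^{-1/2}(a_k-a_{k-1})\} = \exp\{cM^{0.2}a_k^{-1/2}\lambda a_{k-1}\}\asymp \exp\{cM^{0.1}a_k^{1/2}\}$; choosing $c_6$ small relative to this constant and $\delta$ small makes being $a_k$-bad an event of probability $\le \exp\{-c M^{0.1+?}a_k^{1/2}\}$, and then a long run of consecutive bad scales has doubly-rare probability. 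For the fusion mechanisms, the number of animals that can be merged by $a_k$-NNAF within distance $2b_k$, or absorbed by $a_k$-LEAF through a long edge crossing an annulus of inner radius $b_{k-1}$, is controlled by a first-moment/Poisson count: the expected number of edges of $G_n$ crossing such an annulus from a fixed vertex is $O(1)$ (scale invariance of the $|x-y|^{-2}$ kernel), so a Chernoff bound gives exponential decay in the number of crossing long edges, and a BK-type argument over the tree of fused pieces (as in Lemma \ref{Baumlerlemma3.1} and the proof of Proposition \ref{P-red-many-tree}) gives exponential decay $(C\mu_\beta)^k \mathrm e^{-cMk}$ in the number of fused scale-$k$ pieces.

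Then I would assemble these: define for each scale $k\le K_*$ the event $\mathcal B_k$ that some animal in $\widetilde{\mathcal L}_k$ is "anomalous" in one of the three senses above at that scale, bound $\mathds P[\mathcal B_k]\le n\cdot(C\mu_\beta)^{a_k}\mathrm e^{-c M a_k}$ (the factor $n$ for the base point, the factor $(C\mu_\beta)^{a_k}$ for the choice of the underlying admissible tree via Lemma \ref{Baumlerlemma3.1}), observe that on $\bigcap_{k\le K_*}\mathcal B_k^c$ the inductive procedure cannot manufacture an animal of size exceeding $a_{K_*}$ — hence $\widetilde{\mathcal L}_k=\emptyset$ for all $k>K_*$, i.e.\ $\mathcal H$ holds — and finish with a union bound $\mathds P[\mathcal H^c]\le\sum_{k=1}^{K_*}\mathds P[\mathcal B_k]\le K_*\, n\, (C\mu_\beta)^{a_{K_*}}\mathrm e^{-cM a_{K_*}}\le n^{-2}$, where the last inequality uses $a_{K_*}\gtrsim M^{-0.4}(\log n)^2$ from \eqref{def-K}--\eqref{cond-M1} so that $Ma_{K_*}\gtrsim M^{0.6}(\log n)^2$ dominates $\log(K_* n) + a_{K_*}\log(C\mu_\beta)$ once $M$ is large. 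The main obstacle, and where I expect the real work to be, is step two's resistance estimate for $a_k$-bad animals: one must show that deleting the entire edge set $E_{L\times L}$ of a merged structure — which may itself be geometrically sprawling after LEAF/NNAF fusions and is \emph{not} a simple interval — still leaves an annular resistance comparable to the clean-graph lower bound of Lemma \ref{Lem-RNtail-1}; this requires carefully arguing that $L$ occupies a negligible fraction of each annulus $B_{b_k}(L)\setminus B_{b_{k-1,k}}(L)$ (using $\#L\le a_k \ll b_{k-1,k}$ and $\mathrm{deg}(L)\le 20\mu_\beta M a_k \ll b_k$) so that the deleted edges can be rerouted at negligible energy cost, or alternatively re-running the multi-scale lower-bound machinery of \cite{DFH24+} in the presence of the finitely many deletions. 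The bookkeeping in step one is tedious but routine; the probabilistic estimates in step three are standard once steps one and two are in place.
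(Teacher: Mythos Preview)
Your general architecture---three mechanisms (large red components, fusion growth, bad-resistance promotion) plus a union bound over scales---matches the paper's, but the quantitative bookkeeping has a genuine gap. You summarise the per-scale bound as $\mathds P[\mathcal B_k]\le n(C\mu_\beta)^{a_k}\mathrm e^{-cMa_k}$, yet you yourself compute that the $a_k$-bad mechanism only yields probability $\exp\{-cM^{0.1+?}a_k^{1/2}\}$, not $\exp\{-cMa_k\}$. With the correct square-root exponent, your entropy factor $(C\mu_\beta)^{a_k}$ is catastrophic: it is $\exp\{c'a_k\}$ while the gain is only $\exp\{-cM^{0.25}a_k^{1/2}\}$, and for $k$ near $K_*$ one has $a_k\gg M^{0.5}$ (indeed $a_{K_*}\asymp M^{-0.5}(\log n)^2$, not $M^{-0.4}(\log n)^2$), so the entropy wins. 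More structurally, your event $\mathcal B_k$ would have to include ``some animal in $\widetilde{\mathcal L}_k$ is $a_k$-bad'', but to union-bound over animals you need to control how many animals $\widetilde{\mathcal L}_k$ contains---which is precisely what the construction is about---so the argument is circular as written.

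The paper breaks this circularity by \emph{not} bounding per-scale bad events directly but instead proving inductively that $p_k(s):=\max_J\mathds P[\exists L\in\widetilde{\mathcal L}_k:\#L=s,\ J\sim L]\le\exp\{-c_6M^{0.25}\max(a_{k-1}^{1/2},sa_{k-1}^{-1/2},A^{1/2})\}$ (Proposition \ref{lem-prob-CF}); the entropy cost is then only $b_k=\exp\{M^{0.2}a_k^{1/2}\}$ (the number of $a_k$-intervals, not the number of admissible animals), which \emph{is} beaten by $M^{0.25}a_k^{1/2}$. The induction step absorbs the bad-promotion as a multiplicative factor $\exp\{-\widetilde C_*M^{0.2}a_{k-1}^{1/2}\}$ on top of the inductive $p_{k-1}$, and the choice $\lambda=M^{-0.1}$ is tuned exactly so that this factor beats the entropy increment $b_k/b_{k-1}$. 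Proposition \ref{prop-eventH} is then a one-line tail sum of $p_k$ over $k>K_*$. On your ``main obstacle'': the paper does not reroute around the deleted edges $E_{L\times L}$; instead it observes that, because $a_k$-LEAF has already absorbed every edge crossing from $B_{b_{k-1,k}}(\varpi(i))$ to $B_{b_k}(\varpi(i))^c$, the annular resistance $R_{\varpi(i),k-1}$ uses edges disjoint from those that determined the entire promotion history of $L$, hence is \emph{independent} of the conditioning event $\{L\in\widetilde{\mathcal L}^{(2)}_{(k-1)\to(k-1)}\}$---this is the real purpose of the two-annulus LEAF structure, and it replaces your sparsity/rerouting heuristic.
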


The main input of the proof of Proposition \ref{prop-eventH} is an estimate of the probability that an $a_k$-interval is connected to some red $a_k$-animal, as defined below. For $k,s\in \mathds{N}$, let
\begin{equation}\label{p0k}
  \begin{aligned}
    p_{k}(s)&=\max_{a_k\text{-interval }J\subset V_n}\mathbb{P}\left[\text{there exists $L\in \widetilde{\mathcal{L}}_{k}$ such that }\# L=s\ \text{and }J\sim L\right],\\
    q_{k}(s)&=\max_{a_k\text{-interval }J\subset V_n}\mathbb{P}\left[\text{there exists $L\in \widetilde{\mathcal{G}}_{k}$ such that }\# L=s\ \text{and }J\sim L\right], \\
    \widetilde{q}_{k}(s)&=\max_{a_k\text{-interval }J\subset V_n}\mathbb{P}\left[\text{there exists $L\in \widetilde{L}_{k\to k}^{(2)}$ such that }\# L=s\ \text{and }J\sim L\right],
  \end{aligned}
\end{equation}
where $J\sim L$ means that there exists $\varpi(i)\in J\cap (V_n\setminus L)$ such that $\varpi(i)\sim L$.
Then we have the following estimates.

\begin{proposition}\label{lem-prob-CF}
  For $\beta>0$ and sufficiently large $M\in \mathds{N}$ satisfying \eqref{cond-M1}, 
  there exists a constant $c_6>0$ {\rm(}depending only on $\beta${\rm)} such that for all $k,s\in\mathds{N}$,
    \begin{equation}\label{est-pk}
      p_{k}(s)\leq \exp\left\{-c_{6}M^{0.25}\max\left\{a_{k-1}^{1/2},sa_{k-1}^{-1/2},A^{1/2}\right\}\right\}
      \end{equation}
      and
      \begin{equation}\label{est-qk}
      q_k(s)\leq \widetilde{q}_k(s) \leq 1600\mu_\beta Ma_k \exp\left\{-c_6 M^{0.25}\max\left\{a_{k-1}^{1/2},sa_{k-1}^{-1/2},A^{1/2}\right\}\right\}.
    \end{equation}
\end{proposition}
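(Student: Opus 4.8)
The plan is to prove \eqref{est-pk} and \eqref{est-qk} simultaneously by strong induction on $k$, following the inductive construction of the red $a_k$-animals in Definition~\ref{def-expansion}. The base case $k=1$ is handled directly: a red $a_1$-animal in $\widetilde{\mathcal L}_1 = \mathcal L_1$ containing $s$ vertices and connected to a fixed $a_1$-interval $J$ forces the existence of a connected red subset of $G$ of size roughly $\max\{s, a_0, A\}$ reaching $J$, and Proposition~\ref{P-red-many-tree} (together with the fact that $\#J = b_1$ and a union bound over the $b_1$ vertices of $J$ and over the connected subsets through each) gives the bound $\exp\{-c_1 M s'\}$ for the appropriate $s'$; absorbing the polynomial-in-$n$ factors coming from $b_1 < n^{0.1}$ into the exponential (using $M^{0.25}a_1^{1/2} = M^{0.25}A^{1/2} = M^{0.35}$ which dominates $\log n$ once $K_*$ is chosen as in \eqref{def-K}) yields \eqref{est-pk} for $k=1$, and then \eqref{est-qk} for $k=1$ follows because $\widetilde{\mathcal L}^{(2)}_{1\to 1} \subset$ (things built out of $a_1$-animals by finitely many NNAF/LEAF operations) and each such merge only multiplies the count by the degree bound $20\mu_\beta M a_1$.

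For the inductive step, I would fix $k\geq 2$ and a target size $s$ and an $a_k$-interval $J$, and decompose the event ``$\exists L\in\widetilde{\mathcal L}_k$, $\#L=s$, $J\sim L$'' according to which of the four pieces of \eqref{L0ak} the animal $L$ came from: (i) $L\in\mathcal L_{k}$ is a genuine red component that is an $a_k$-subset — handled exactly as in the base case via Proposition~\ref{P-red-many-tree}, now with $s'=\max\{a_{k-1}^{1/2},sa_{k-1}^{-1/2},A^{1/2}\}$ coming from the $a_k$-subset constraint $\#L\le a_k$, $\mathrm{deg}(L)\le 20\mu_\beta M a_k$ versus the lower bounds at level $k-1$; (ii) $L\in\widetilde{\mathcal L}^{(1)}_{j\to k}$ for some $j\le k-1$ — here $L$ arose by merging several level-$j$ animals (via $a_j$-NNAF) whose sizes sum to something comparable to $a_k$; I would write $L$ as a union of $a_j$-animals $L_1,\dots,L_t$ with $\sum \#L_i \gtrsim a_k$ (or $\sum \mathrm{deg}(L_i)\gtrsim M a_k$), pick from each $L_i$ a witness $a_j$-interval $J_i$, and use the BK inequality together with the inductive bound $p_j(\#L_i)$ for each, so that the product telescopes; the geometric/combinatorial bookkeeping here (how many ways to choose $t$, the $J_i$, the sizes, given the geometric spacing $a_k\le (1+\lambda)a_{k-1}+1$) is the routine but lengthy part; (iii) $L\in\widetilde{\mathcal L}^{(2)}_{j\to k}$, which is entirely analogous but uses the $a_j$-LEAF merging of level-$j$ animals — here one additionally pays for the long edges used in the LEAF, whose probabilities are summable (each long edge connecting $B_{b_{j-1}}$ to $B_{b_{j-1,j}}^c$ etc.\ costs $\asymp b_{j-1}/b_{j-1,j}$, small), and I would bound the number of such edges by the degree bound; (iv) $L\in\widetilde{\mathcal L}^{(2)}_{k-1\to k-1,\mathrm{bad}}$ — the $a_{k-1}$-bad animals: these are animals $L'$ that \emph{are} $a_{k-1}$-subsets (so controlled by $\widetilde q_{k-1}(\#L')$) and additionally satisfy the low-resistance event \eqref{eff-ik}; here I would use independence of the resistance $R(B_{b_{k-2,k-1}}(L'),B_{b_{k-1}}(L')^c; E_n\setminus E_{L'\times L'})$ from the event defining $L'$ as a red $a_{k-1}$-subset (the former depends only on edges inside an annulus disjoint-ish from the edges certifying redness, or at least can be handled by conditioning as in the proof of Proposition~\ref{P-red-many}), combined with the tail bound from Lemma~\ref{Lem-RNtail-1} applied with $N\asymp b_{k-1}/b_{k-2,k-1}$ and $\varepsilon$ chosen so that $c_*\varepsilon^{C_*\delta}N^\delta$ matches the RHS of \eqref{eff-ik}; this gives an extra factor $\exp\{-c\, M^{0.2}a_{k-1}^{1/2}\}$-type gain, and since being bad only happens at the \emph{previous} scale and bumps the animal up one level, the bound \eqref{est-pk} at level $k$ absorbs the $\widetilde q_{k-1}$ bound times this gain.

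Having bounded $p_k(s)$ via (i)--(iv), the bound on $q_k(s)\le\widetilde q_k(s)$ in \eqref{est-qk} follows by the same reasoning as in the base case: every $L\in\widetilde{\mathcal L}^{(2)}_{k\to k}$ is obtained from animals counted by $p_k$ (the level-$k$ animals in $\widetilde{\mathcal L}_k$) by applying $a_k$-NNAF once and then $a_k$-LEAF iteratively, and each application is controlled by the degree bound $\mathrm{deg}(L)\le 20\mu_\beta M a_k$ for $a_k$-subsets, so the number of animals that can be formed and the number of edges involved contribute only the prefactor $1600\mu_\beta M a_k$ (say, $(20\mu_\beta M a_k)^2$ up to constants), while the $a_k$-LEAF long edges again cost a summable amount that is absorbed. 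I would then derive Proposition~\ref{prop-eventH} as a corollary: on $\mathcal H^c$ there is a nonempty $\widetilde{\mathcal L}_k$ for some $k>K_*$, hence some $a_k$-animal $L$ with $\#L>a_{k-1}>a_{K_*-1}$ connected to some $a_k$-interval; summing $p_k(s)$ over $s$ and over the $O(n)$ choices of interval and over $k>K_*$, and using that by \eqref{def-K} we have $\exp\{c_6 M^{0.25}a_{K_*}^{1/2}\}>2n^3$ so each term is $\le n^{-3}$ and the geometric growth of $a_k$ makes the tail sum over $k$ negligible, gives $\mathds P[\mathcal H^c]\le n^{-2}$.

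\textbf{Main obstacle.} The hard part will be case (ii)/(iii) of the inductive step: carefully organizing the merging history of a level-$k$ animal into a bounded-branching tree of level-$(k-1)$ (or lower) pieces so that the BK inequality can be applied cleanly, and checking that the combinatorial entropy of all such histories (choices of how many pieces, their sizes summing to $\gtrsim a_k$, their relative positions within distance $\lesssim b_k$, and which long edges were used) is beaten by the product of the inductive small-probability factors. The delicate point is that the geometric ratio $1+\lambda$ with $\lambda=M^{-0.1}$ is only \emph{barely} larger than $1$, so the scales $a_k$ grow slowly and one must be sure the gain per level (of order $M^{0.25}(a_k^{1/2}-a_{k-1}^{1/2})\asymp M^{0.25}\lambda a_{k-1}^{1/2}=M^{0.15}a_{k-1}^{1/2}$, still large once $a_{k-1}\ge A=M^{0.2}$) strictly dominates the entropy cost; keeping track of the three-way maximum $\max\{a_{k-1}^{1/2}, s a_{k-1}^{-1/2}, A^{1/2}\}$ through the merges is where the bookkeeping is most error-prone.
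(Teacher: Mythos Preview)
Your proposal is correct and follows essentially the same inductive strategy as the paper: induction on $k$, decomposition of $\widetilde{\mathcal L}_k$ according to \eqref{L0ak}, and for the bad piece (your case (iv)) the independence of the annular resistance event from the animal structure combined with Lemma~\ref{Lem-RNtail-1}. The paper's execution of your cases (ii)/(iii) organizes the merge history via an explicit depth-first exploration tree (Lemmas~\ref{est-j-k-explore}--\ref{p-j-k-2-2}) rather than invoking BK directly, but the underlying mechanism---the sub-animals and long edges live on disjoint edge sets so the probabilities multiply, and the entropy of merge trees is bounded by powers of $\mu_\beta M a_j$---is exactly what you outline, and your identification of the main obstacle matches the content of those lemmas.
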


 The proof of Proposition \ref{lem-prob-CF} relies on induction. To this end, we make some preparations. Specifically, we begin by considering the probability for merging multiple small red animals into a large red animal via NNAF.

\begin{lemma}\label{est-j-k-explore}
  For $\beta>0$, sufficiently large $M\in \mathds{N}$ and $\varpi(v)\in V_n$, the following holds.
  For any $K,s,k,j\in \mathds{N}$ with $j\leq k$, the probability that $\varpi(v)$ is connected to a red $a_k$-animal $L\in\widetilde{\mathcal{L}}_{j\to k}^{(1)}$ which contains $s$ vertices and consists of $K$ red $a_j$-animals in $\widetilde{\mathcal{L}}_{j}$, is at most
  $$
  (400\mu_\beta Ma_j)^K\sum_{x_1+\cdots+x_K=s, x_i\leq a_j}\prod_{i=1}^K p_{j}(x_i).
  $$
\end{lemma}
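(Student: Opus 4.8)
## Proof plan for Lemma~\ref{est-j-k-explore}

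The plan is to exploit the recursive structure of the NNAF merging operation. An animal $L\in\widetilde{\mathcal L}^{(1)}_{j\to k}$ that is assembled from $K$ red $a_j$-animals $L_1,\dots,L_K\in\widetilde{\mathcal L}_j$ arises by a chain of $a_j$-NNAF fusions: each fusion links two animals whose ``connection points'' $\varpi(i)$ and $\varpi(j)$ satisfy $|i-j|\le 2b_j$, and inserts a dashed red edge between them. Thus the assembled object carries a connected ``merge tree'' $T$ on the vertex set $\{L_1,\dots,L_K\}$, with $K-1$ tree edges. The key combinatorial point is that once we have explored $L_1,\dots,L_{r-1}$ together with their (bounded-degree) sets of connection points, the $r$-th animal $L_r$ joined into the cluster must be anchored, via one of its at most $\mathrm{deg}(L_r)\le 20\mu_\beta M a_j$ boundary vertices, within distance $2b_j$ of one of the already-explored connection points. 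I would phrase this as an exploration process: start from the animal containing $\varpi(v)$ (or from $\varpi(v)$ itself if it sits outside all of them), and reveal the animals one at a time in the order dictated by the merge tree.

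First I would set up the union bound over the shape of the merge tree $T$ (there are at most $4^K$ labelled trees on $K$ vertices, but this factor is harmless — I can absorb it, or more carefully, use that each new animal attaches to a specified already-revealed animal, so the tree structure is determined by the exploration order and needs no extra factor). Second, I would bound the probability of revealing the $r$-th animal $L_r$ with exactly $x_r$ vertices: conditionally on everything revealed so far, $L_r$ must be a red $a_j$-animal of size $x_r$ containing a prescribed vertex (one of the $\le 2b_j\cdot(\text{number of connection points revealed})$ admissible anchor sites), which by definition of $p_j(x_r)$ and a further union bound over the at most $20\mu_\beta M a_j$ boundary vertices of $L_r$ and the $\le 2b_j$ choices of the linking vertex contributes a factor $\le (C\mu_\beta M a_j)\cdot p_j(x_r)$, absorbing the $2b_j$-window count into the per-animal factor since the $p_j$-bound already contains the relevant $b_j$-scale decay. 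Multiplying over $r=1,\dots,K$ and summing over all compositions $x_1+\dots+x_K=s$ with each $x_i\le a_j$ gives the claimed bound $(400\mu_\beta M a_j)^K\sum_{x_1+\dots+x_K=s,\,x_i\le a_j}\prod_{i=1}^K p_j(x_i)$, after checking that the combinatorial prefactor $400$ comfortably dominates the product of: the branching factor for the merge tree, the $\le 20\mu_\beta M$ choice of boundary vertex, and the constant from the $2b_j$-window.

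The main obstacle — and the step requiring real care — is making the exploration argument rigorous so that the conditioning is legitimate: one must reveal edges in an order such that, at the moment $L_r$ and its linking dashed edge are revealed, the event ``$L_r$ is a red $a_j$-animal of size $x_r$'' depends only on edges not yet conditioned on, so that the bound $p_j(x_r)$ (which is an unconditional maximum probability) can be applied. This is exactly the style of argument used for Proposition~\ref{P-red-many-strong} via the events $H_{i_l}$: the dashed linking edge plays the role of the conditioning event $H$, and one appeals to the fact that $p_j(\cdot)$ was defined as a maximum over $a_j$-intervals $J$ so that it already dominates the conditional probability of finding such an animal incident to any prescribed short window, even after conditioning on a bounded number of ``external'' edges. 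A secondary technical point is bounding the number of connection points accumulated after revealing $r$ animals by $\mathrm{deg}(L_1\cup\cdots\cup L_r)\le 20\mu_\beta M r a_j$, which is what lets the per-step anchor count be absorbed into the $a_j$-factor rather than growing super-linearly in $r$; this is where Definition~\ref{def-ak-clt}'s degree constraint is essential.
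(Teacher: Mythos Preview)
Your approach is essentially the paper's: encode the $K$ constituent $a_j$-animals as a rooted tree explored depth-first from $\varpi(v)$, union-bound over the tree shape, bound the children of each node using the degree constraint, and multiply the $p_j(x_i)$ by independence since the animals and the linking edges are edge-disjoint.

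Two points in your write-up need correcting before the argument goes through. First, the $2b_j$-window is not absorbed by any ``$b_j$-scale decay'' in $p_j$; rather, an $a_j$-interval has cardinality $b_j$ by definition, so a window of width $\sim 4b_j$ is covered by at most $5$ such intervals, and $p_j(\cdot)$, being a maximum over $a_j$-intervals, applies to each. This is what turns the degree bound $20\mu_\beta M a_j$ into the per-node children bound $5\cdot 20\mu_\beta M a_j=100\mu_\beta M a_j$. Second, your phrasing ``one of the $\le 2b_j\cdot(\text{number of connection points revealed})$ admissible anchor sites'' describes an accumulation over \emph{all} previously revealed animals, which would cost a factor of order $K!$, not $C^K$. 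The fix is exactly what you hinted at earlier: first fix the tree shape (the paper does this via an up/down sequence $\mathbf w\in\{\mathrm u,\mathrm d\}^{2K-2}$, of which there are at most $4^K$ --- this is the source of the $4^K$, not a count of labelled trees, which would be $K^{K-2}$), so that each new animal attaches to its \emph{specified parent}; the total number of children summed over all nodes is then $K-1$, yielding $(100\mu_\beta M a_j)^{K-1}$, and $4^K\cdot(100\mu_\beta M a_j)^{K-1}\le(400\mu_\beta M a_j)^K$. With these two clarifications your argument matches the paper's.
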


\begin{proof}
  From the definition of $\widetilde{\mathcal{L}}^{(1)}_{j\to k}$ in Definition \ref{def-expansion}, it is clear that if $\varpi(v)\sim L$ for some $L\in\widetilde{\mathcal{L}}^{(1)}_{j\to k}$, which is composed of $K$ red $a_j$-animals in $\widetilde{\mathcal{L}}_{j}$ (note that these $a_j$-animals are connected by red dashed long edges, see the illustration in Figure \ref{LEAF-same-edge}), then we can find $K$ $a_j$-intervals $J_1,J_2,\cdots, J_K$ such that $\varpi(v)\in J_1$, and the following two conditions hold:
  \begin{enumerate}
    \item[(1)] For each $ l\in[1,K]$, $J_l$ is connected to some red $a_j$-animal in $\widetilde{\mathcal{L}}_{j}$.
    \medskip

    \item[(2)] For each $ l\in[1,K]$, there exists $l'\neq l$ such that the red $ a_j $-animals in $ \widetilde{\mathcal{L}}_j $ connected to $ J_l $ and $ J_{l'} $ can be merged together via $ a_j $-NNAF. In this case, we say that $ J_l $ and $ J_{l'} $ are \textit{animal neighbors}. In addition, any two intervals $J_l$ and $J_{l'}$ are connected by a sequence of intervals where the two neighboring intervals in the sequence are animal neighbors.
  \end{enumerate}

Based on these observations, we now employ a ``depth-first'' exploration process $(J'_i)_{i=1}^{2K-1}$ (see Figure \ref{explore} for an illustration) to encode a red $a_k$-animal  $L\in\widetilde{\mathcal{L}}^{(1)}_{j\to k}$, which consists of $K$ red $a_j$-animals in $\widetilde{\mathcal{L}}_{j}$.
Specifically, we define an auxiliary sequence $\textbf{w}=({\rm w}_i)_{i=1}^{2K-2}\in\{{\rm u},{\rm d}\}^{2K-2}$ as follows (here ``u'' means up and ``d'' means down):
  \begin{enumerate}
    \item[(i)] Start with $J'_1 = J_1$.
    \medskip

    \item[(ii)] For $i=2,\cdots,2K-1$, assume that $J_1',\cdots, J_{i-1}'$ have been defined.
      \begin{enumerate}
        \item[(a)] If there exists a $J_{i'}\notin\{J_1',\cdots J_{i-1}'\}$ such that $J_{i'}$ is an animal neighbor of $J'_{i-1}$,
        we select the leftmost $J_{i'}$ among them and set $J_{i}'=J_{i'}$.
        We then say that $J_{i}'$ is a child of $J_{i-1}'$ while $J_{i-1}'$ serves as the parent of $J_i'$. We set ${\rm w}_{i-1}=\rm d$.
        \item[(b)] Otherwise, we set $J_i'$ as the parent of $J_{i-1}'$ and let ${\rm w}_{i-1}=\rm u$.
      \end{enumerate}
  \end{enumerate}
  \begin{figure}[h]
    \includegraphics[scale=0.7]{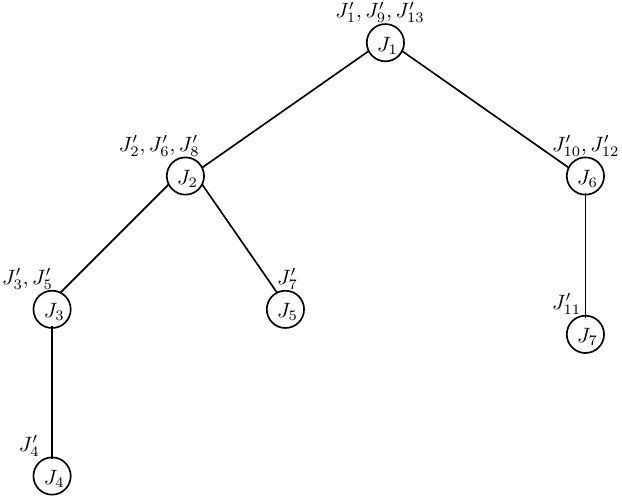}
    \caption{The illustration for the ``depth-first'' exploration process. In the figure, the process is generated by animal neighbor $\{J_i\}_{i=1}^7$, which are marked within the circles. the corresponding exploration sequence is represented by $(J'_i)_{i=1}^{13}$. For this exploration, we have $\textbf{w}=(\rm d,d,d,u,u,d,u,u,d,d,u,u)$.}
    \label{explore}
  \end{figure}

  We now consider the selection of $J_1,\cdots, J_K$ when $\textbf{w}$ is fixed.
  Our goal is to bound the number of ``trees'' formed by all $J_i$'s containing the fixed $J_1$, each having this specific up-and-down structure with respect to the animal neighboring relations. 

  Suppose the exploration process visits exactly $l$ children of some interval $J_i$.
  For convenience, we denote $L_i$ as the red $a_j$-animal connected to the interval $J_i$. According to the definition of $a_j$-animal in Definition \ref{def-ak-clt}, it follows that $\text{deg}(L_i)\leq 20\mu_\beta Ma_j$. Together with Definition \ref{def-NNCF} (1), this implies that $J_i$ has at most $5\times 20\mu_\beta Ma_j$ candidate children (a candidate child is an animal that may be chosen as a child in our exploration process). Hence, the number of ways to choose $l$ children from the candidate children of $J_i$ in an increasing order is dominated by $\left(100\mu_\beta Ma_j\right)^l$.
  This bound applies to all intervals $J_i$.
  In addition, note that the total number of children across all intervals is $K-1$, as each interval, except $J_1$, is the child of exactly one other interval.
  Therefore, the number of ``trees'' with a specified up-and-down structure can be bounded from above by $(100\mu_\beta Ma_j)^{K-1}$.

  Up to now, we have considered a fixed up-and-down structure. However, there are at most $4^K$ possible up-and-down structures $\textbf{w}$ (in fact, there are significantly fewer, since there are additional constraints, such as ${\rm u}_1 = \rm d$). Additionally, since all red animals and the exploration process among all $J_i$'s do not share the same edge, there is independence and we can multiply probabilities arising from $a_j$-animals in different intervals. Thus for fixed $x_1,\cdots,x_K$, the probability that $\varpi(v)$ is connected to a red $a_k$-animal $L\in\widetilde{\mathcal{L}}_{j\to k}^{(1)}$ which  consists of $K$ red $a_j$-animals $L_1,\cdots,L_K\in\widetilde{\mathcal{L}}_{j}$ with $\# L_i=x_i$ for each $i\in[1,K]$
  is dominated by
  $$
  4^K\cdot(100\mu_\beta Ma_j)^{K-1}\prod_{i=1}^K p_j(x_i)\leq  (400\mu_\beta Ma_j)^{K}\prod_{i=1}^K p_j(x_i).
  $$
  We now take a union bound over the possible number of vertices contained in each animal connected to $J_i$. This implies that the probability that  $\varpi(v)$ is connected to a red $a_k$-animal $L\in\widetilde{\mathcal{L}}_{j\to k}^{(1)}$ which contains $s$ vertices and consists of $K$ red $a_j$-animals in $\widetilde{\mathcal{L}}_{j}$,
  is dominated by
  $$
  (400\mu_\beta Ma_j)^{K}\sum_{x_1+\cdots+x_K=s}\prod_{i=1}^K p_j(x_i) .
  $$
  Thus, we complete the proof.
 \end{proof}

Using Lemma~\ref{est-j-k-explore}, we having the following estimate.

\begin{lemma}\label{prob-total-explore}
For any $\beta>0$, $k\geq 1$ and sufficiently large $M\in \mathds{N}$ satisfying \eqref{cond-M1}, 
assume that there exists a constant $c_6>0$ {\rm(}depending only on $\beta${\rm)} such that \eqref{est-pk} holds for all $j<k$ and all $s\leq a_j$. Then, for each $a_{k-1}<s\leq a_k$ and each $a_k$-interval $J$, we have
  \begin{equation}\label{eq-prob-total-explore}
    \mathbb{P}\left[\text{there exists $L\in \bigcup_{j<k}\widetilde{\mathcal{L}}_{j\to k}^{(1)}$ such that }\# L=s\ \text{and }J\sim L\right]\leq \frac{1}{4}\exp\left\{-c_{6}M^{0.25}sa_{k-1}^{-1/2}\right\}.
  \end{equation}
\end{lemma}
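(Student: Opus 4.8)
The plan is to prove Lemma~\ref{prob-total-explore} by summing the bound from Lemma~\ref{est-j-k-explore} over the scales $j<k$ and over the number $K$ of small red animals that constitute the target animal $L$. First I would fix an $a_k$-interval $J$, a vertex $\varpi(v)\in J$, and note that if $J\sim L$ for some $L\in\widetilde{\mathcal L}^{(1)}_{j\to k}$ with $\#L=s$, then some vertex of $J$ — and hence, at the cost of a factor $b_k$ (the length of $J$), a fixed vertex $\varpi(v)$ — is connected to such an $L$; actually it is cleaner to keep the max over $a_k$-intervals as in \eqref{p0k} and directly feed in the per-vertex bound, absorbing the factor $b_k=\exp\{M^{0.2}a_k^{1/2}\}$ at the end. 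For $L\in\widetilde{\mathcal L}^{(1)}_{j\to k}$ composed of $K$ animals in $\widetilde{\mathcal L}_j$, since each such animal is an $a_j$-subset it has $\#L_i>a_{j-1}$ or $\mathrm{deg}(L_i)>20\mu_\beta M a_{j-1}$, and $L$ being an $a_k$-subset forces $s>a_{k-1}$; moreover $\sum_i \#L_i=s$ with each $\#L_i\le a_j$, so $K\ge s/a_j\ge a_{k-1}/a_j$. By Lemma~\ref{est-j-k-explore} the probability of this event is at most $(400\mu_\beta M a_j)^K\sum_{x_1+\cdots+x_K=s,\ x_i\le a_j}\prod_{i=1}^K p_j(x_i)$.

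The key step is to bound $\sum_{x_1+\cdots+x_K=s}\prod_i p_j(x_i)$ using the inductive hypothesis \eqref{est-pk} for scale $j$. Since $p_j(x)\le \exp\{-c_6 M^{0.25}\max\{a_{j-1}^{1/2}, x a_{j-1}^{-1/2}, A^{1/2}\}\}\le \exp\{-c_6 M^{0.25}(a_{j-1}^{1/2}+ x a_{j-1}^{-1/2})/2\}$ (using $\max\ge$ average of two of the terms), the product over $i$ telescopes in the $x_i$-sum: $\prod_i p_j(x_i)\le \exp\{-\tfrac{c_6}{2} M^{0.25}(K a_{j-1}^{1/2} + s a_{j-1}^{-1/2})\}$, which is independent of the particular composition. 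The number of compositions of $s$ into $K$ parts is $\binom{s-1}{K-1}\le 2^s$. Combining, the contribution of a fixed pair $(j,K)$ is at most $2^s (400\mu_\beta M a_j)^K \exp\{-\tfrac{c_6}{2}M^{0.25}(K a_{j-1}^{1/2}+ s a_{j-1}^{-1/2})\}$. Here the crucial point is that $K a_{j-1}^{1/2}$ in the exponent beats $K\log(400\mu_\beta M a_j)$: since $a_{j-1}\ge A=M^{0.2}$ we have $M^{0.25}a_{j-1}^{1/2}\ge M^{0.35}\gg \log(M a_j)$ for $M$ large (recall $a_j\le 2n$ but in fact the whole argument runs with $a_k\le a_{K_*}$ and $b_{K_*}<n^{0.1}$, so $\log a_j = O(M^{0.2}\log n)$ is still dominated after we also exploit the $sa_{j-1}^{-1/2}$ term against $2^s$); similarly $M^{0.25}s a_{j-1}^{-1/2}\ge s$ absorbs $2^s$. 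So each $(j,K)$ term is at most, say, $\exp\{-\tfrac{c_6}{4}M^{0.25}(K a_{j-1}^{1/2}+ s a_{j-1}^{-1/2})\}$.

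Next I would sum over $K$ and $j$. For fixed $j$, summing the geometric-type series over $K\ge a_{k-1}/a_j$ (the minimal number of $a_j$-animals needed) gives a bound $\le 2\exp\{-\tfrac{c_6}{4}M^{0.25}(a_{k-1}a_j^{-1}\cdot a_{j-1}^{1/2} + s a_{j-1}^{-1/2})\}\le 2\exp\{-\tfrac{c_6}{8}M^{0.25}(a_{k-1}^{1/2}+ s a_{k-1}^{-1/2})\}$, where in the last step one uses the slow-growth relation $a_j=\lfloor(1+\lambda)a_{j-1}\rfloor+1$, i.e. $a_{j-1}/a_j\ge (1+\lambda)^{-1}$ and $a_{j-1}\le a_{k-1}$, so that $a_{k-1}a_j^{-1}a_{j-1}^{1/2}\ge (1+\lambda)^{-1}a_{k-1}a_{j-1}^{-1/2}\ge (1+\lambda)^{-1}a_{k-1}^{1/2}$ and $s a_{j-1}^{-1/2}\ge s a_{k-1}^{-1/2}$. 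Then summing over $j=1,\dots,k-1$ costs only a factor $k\le K_*$, which is again swallowed by the exponential since $k\le K_*$ and $b_{K_*}<n^{0.1}$ make $k$ polylogarithmic in $n$ while the exponent contains $M^{0.25}a_{k-1}^{1/2}\ge M^{0.45}$. Finally reinstating the factor $b_k=\exp\{M^{0.2}a_k^{1/2}\}$ from the union over $\varpi(v)\in J$: since $a_k\le (1+\lambda)a_{k-1}+1$ and $M^{0.2}<\tfrac{c_6}{16}M^{0.25}$ for large $M$, this factor is dominated by half of the gain, and we are left with a bound $\le \tfrac14\exp\{-c_6 M^{0.25}s a_{k-1}^{-1/2}\}$ after relabelling constants (keeping $a_{k-1}^{1/2}\ge A^{1/2}$ in reserve; only the $s a_{k-1}^{-1/2}$ term is needed for the stated conclusion).

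The main obstacle I expect is the bookkeeping of the competing exponential rates: one must verify that the combinatorial factors $(400\mu_\beta M a_j)^K$, the composition count $2^s$, and the sum over $j\le K_*$ and over $K$ are all genuinely dominated, and this hinges on the specific choices $A=M^{0.2}$, $\lambda=M^{-0.1}$, the bound $b_{K_*}<n^{0.1}$ from \eqref{cond-M1}, and the $\max$-structure of the inductive estimate \eqref{est-pk} (in particular the $A^{1/2}$ floor, which guarantees $a_{j-1}^{1/2}\ge A^{1/2}=M^{0.1}$ uniformly). A secondary subtlety is handling the degree alternative in the definition of $a_j$-subset — an animal in $\widetilde{\mathcal L}_j$ can be ``large'' because of high degree rather than many vertices — but this only helps, since it means $K\ge \mathrm{deg}$-type lower bounds are also available, and in any case the counting in Lemma~\ref{est-j-k-explore} already incorporated $\mathrm{deg}(L_i)\le 20\mu_\beta M a_j$ through the factor $(400\mu_\beta M a_j)^K$. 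I would therefore organize the write-up as: (i) reduce to a fixed $\varpi(v)$ and apply Lemma~\ref{est-j-k-explore}; (ii) do the composition sum via the telescoping product inequality; (iii) sum over $K$; (iv) sum over $j$ using the $a_j$-growth relation; (v) reinstate $b_k$ and collect constants.
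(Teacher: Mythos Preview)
Your overall strategy matches the paper's: reduce to a fixed vertex $\varpi(v)\in J$ at the cost of a factor $b_k$, apply Lemma~\ref{est-j-k-explore}, plug in the inductive bound \eqref{est-pk}, and sum over $K$ and $j$. However, there is a genuine gap in step~(ii). You bound the number of compositions of $s$ into $K$ positive parts by $2^s$ and then claim that $\tfrac{c_6}{2}M^{0.25}\,s\,a_{j-1}^{-1/2}\ge s\log 2$ absorbs this factor. This requires $M^{0.25}a_{j-1}^{-1/2}$ to be bounded below by a constant, i.e.\ $a_{j-1}\lesssim M^{0.5}$. But for $j$ close to $K_*$, the condition $b_{K_*}<n^{0.1}$ from \eqref{cond-M1} gives only $a_{K_*}^{1/2}\lesssim M^{-0.2}\log n$, so $a_{j-1}$ can be of order $M^{-0.4}(\log n)^2\gg M^{0.5}$ for large $n$ (recall $M$ is fixed, depending only on $\beta$). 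The $K a_{j-1}^{1/2}$ term does not rescue this: with $K\ge s/a_j$ it contributes roughly $s/a_{j-1}^{1/2}$, which has the same defect.

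The fix --- and this is exactly what the paper does --- is to bound the composition count by $a_j^K$ (since each $x_i\in[1,a_j]$) rather than $2^s$. This contributes $K\log a_j$ to the exponent, and since $\log a_j=O(\log\log n)$ while $M^{0.25}\max\{a_{j-1},A\}^{1/2}$ is at least $M^{0.35}$ for small $j$ and of order $\log n$ for $j$ near $K_*$, the absorption $\log a_j\ll M^{0.25}\max\{a_{j-1},A\}^{1/2}$ holds uniformly in $j$. With this single change your remaining steps (iii)--(v) go through essentially as written. A secondary point: your claim $a_{j-1}\ge A$ fails for $j=1$ since $a_0=0$; the paper handles this by consistently using $\max\{a_{j-1},A\}^{1/2}$ in place of $a_{j-1}^{1/2}$, and correspondingly drops your extra $x\,a_{j-1}^{-1/2}$ term in the product bound (which, once $K\ge s/a_j$ is used, yields nothing beyond the $K a_{j-1}^{1/2}$ term anyway). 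The paper also splits into the cases $a_{k-1}/a_j>3/2$ versus $\le 3/2$ to deal with the floor in $\lfloor s/a_j\rfloor+1$ cleanly, but that is cosmetic relative to your outline.
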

\begin{proof}
For fixed $k\geq 1$, recall that we assume \eqref{est-pk} holds for all $j<k$ and all $s\leq a_j$ . By combining this with Lemma~\ref{est-j-k-explore} and taking a union bound, we get that
    \begin{align}
      &\text{LHS of }\eqref{eq-prob-total-explore}\nonumber\\
      &\leq \sum_{j<k}b_k\sum_{K}\sum_{x_1+\cdots+x_K=s,x_i\leq a_j}(400\mu_\beta Ma_j)^K\prod_{i=1}^K p_j(x_i)\nonumber\\
      &\leq \sum_{j<k} b_k \sum_{K\geq s/a_j} \sum_{x_1+\cdots+x_K=s, x_i\leq a_j}(400\mu_\beta Ma_j)^K \exp\left\{-c_{6}KM^{0.25}\max\left\{a_{j-1},A\right\}^{1/2}\right\}\label{L2union}\\
      &\leq \sum_{j<k} b_k \sum_{K\geq s/a_j} \left(400\mu_\beta Ma_j^2\right)^K \exp\left\{-c_{6}KM^{0.25}\max\left\{a_{j-1},A\right\}^{1/2}\right\}\nonumber\\
      &\leq 2\sum_{j<k} b_k \left(400\mu_\beta Ma_j^2\exp\left\{-c_{6}M^{0.25}\max\left\{a_{j-1},A\right\}^{1/2}\right\}\right)^{\lfloor s/a_j\rfloor+1}.\nonumber
    \end{align}
Here, the term $b_k$ in the first line comes from taking a union bound over all $\varpi(v)\in J$ and the third inequality arises from the bound on the number of choices of $(x_1,\cdots,x_K)$.

In the following, we will estimate each term in the summation on the RHS of \eqref{L2union}. For $j<k$, we first consider the case where $a_{k-1}/a_j>3/2$.
It follows from the definition of $a_k$ in \eqref{def-ai} and the fact $a_{k-1}<s\leq a_k$ that
\begin{equation}\label{akak-1s}
\sqrt{a_{k-1}a_k}/s \leq 2a_{k-1}/s\leq 2.
\end{equation}
Combining this with the definition of $b_k$ in \eqref{def-bk} and the fact that $M$ is sufficiently large, we have
  \begin{equation}\label{case1}
    \begin{aligned}
      &b_k \left(400\mu_\beta Ma_j^2\exp\left\{-c_6M^{0.25}\max\left\{a_{j-1},A\right\}^{1/2}\right\}\right)^{\lfloor s/a_j\rfloor+1}\\
      &\leq \exp\left\{M^{0.2}a_k^{1/2}+sa_j^{-1}\left(-c_6 M^{0.25}\max\{a_{j-1},A\}^{1/2}+\ln\left(400\mu_\beta Ma_j^2\right)\right)\right\}\\
      &\leq \exp\left\{M^{0.2}a_k^{1/2}-0.99c_6 sa_j^{-1}M^{0.25}\max\{a_{j-1},A\}^{1/2}\right\}\\
      &= \exp\left\{-c_6 M^{0.25}sa_{k-1}^{-1/2}\left(0.99\sqrt{a_{k-1}a_{j-1}}/a_{j}-2c_6 ^{-1}M^{-0.05}\right)\right\}\quad \quad \text{by \eqref{akak-1s}}\\
      &\leq \exp\left\{-1.1c_6 M^{0.25}sa_{k-1}^{-1/2}\right\}\quad \quad \text{by }a_{k-1}/a_j>3/2.
    \end{aligned}
  \end{equation}

Additionally, for the case $a_{k-1}/a_j\leq 3/2$, since $\lambda=M^{-0.1}$ (see \eqref{def-Alambda}) and $a_{k-1}<s\leq a_k$, we can find that for sufficiently large $M$,
$$
1<s/a_j\leq a_k/a_j=\frac{\lfloor (1+\lambda)a_{k-1}\rfloor+1 }{a_j}<2.
$$
Thus, we have $\lfloor s/a_j\rfloor +1=2$. From this we obtain
  \begin{equation}\label{case2}
    \begin{aligned}
      &b_k \left(400\mu_\beta Ma_j^2\exp\left\{-c_6 M^{0.25}\max\{a_{j-1},A\}^{1/2}\right\}\right)^{\lfloor s/a_j\rfloor+1}\\
      &\leq \exp\left\{M^{0.2}a_k^{1/2}+2\left(-c_6 M^{0.25}\max\left\{a_{j-1},A\right\}^{1/2}+\ln\left(400\mu_\beta Ma_j^2\right)\right)\right\}\\
      &\leq \exp\left\{-c_6 M^{0.25}sa_{k-1}^{-1/2}\left(1.98\sqrt{a_{k-1}a_{j-1}/s}-2c_6 ^{-1}M^{-0.05}\right)\right\}\\
      &\leq \exp\left\{-1.1c_6 M^{0.25}sa_{k-1}^{-1/2}\right\}.
    \end{aligned}
  \end{equation}
  Applying \eqref{case1}, \eqref{case2} to \eqref{L2union} yields that for sufficiently large $M$,
  \begin{equation*}
    \text{LHS of }\eqref{eq-prob-total-explore}\leq 2k\exp\left\{-1.1 c_6 M^{0.25}sa_{k-1}^{-1/2}\right\}\leq \frac{1}{4}\exp\left\{-c_6 M^{0.25}sa_{k-1}^{-1/2}\right\},
  \end{equation*}
  which completes the proof.
\end{proof}

The following is a modified version of Proposition \ref{P-red-many-tree}.

\begin{lemma}\label{prob-1-bad}
For $\beta>0$ and sufficiently large $M\in \mathds{N}$, there exists a constant $c_7>0$ {\rm(}depending only on $\beta${\rm)} such that for all $k,s\in\mathbb{N}$ and all $\varpi(v)\in V_n$,
  \begin{equation}\label{P-Lk-s}
  \mathbb{P}\left[\text{there exists $L\in \mathcal{L}_{k}$ such that }\# L=s\ \text{and }\varpi(v)\sim L\right]\leq {\rm e}^{-c_7Ma_k}.
  \end{equation}
\end{lemma}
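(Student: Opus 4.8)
The plan is to reduce the statement directly to Proposition \ref{P-red-many-tree}. Recall that $\mathcal{L}_k$ consists of red components that are $a_k$-subsets, so any $L\in\mathcal{L}_k$ with $\#L=s$ satisfies $s>a_{k-1}$ or $\mathrm{deg}(L)>20\mu_\beta M a_{k-1}$; in either case, combined with $\#L\le a_k$ and $\mathrm{deg}(L)\le 20\mu_\beta M a_k$, we have control on both the size and the degree of $L$. The event in \eqref{P-Lk-s} requires in particular that there is a connected subset $Z$ of $G$ containing $\varpi(v)$, of size $s$, all of whose vertices other than possibly $\varpi(v)$ are not $(\delta,\alpha)$-very good (they are red).

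First I would handle the case $s\ge 2$ (the case $s=1$ being immediate by adjusting constants, since $\mathcal{L}_k$ contains only components with at least two vertices, or absorbing it into the bound). By translation invariance we may take $\varpi(v)=\varpi(0)$. For a red component $L\in\mathcal{L}_k$ with $\varpi(0)\sim L$, consider the subset $Z=L\cup\{\varpi(0)\}$; since $L$ is connected and $\varpi(0)$ is adjacent to $L$, $Z$ is connected, contains $\varpi(0)$, and has size $s+1$. Every vertex of $Z\setminus\{\varpi(0)\}=L$ is red, i.e.\ not $(\delta,\alpha)$-very good. Hence the event in \eqref{P-Lk-s} is contained in the event $\{\exists Z\in\mathcal{CS}_{s+1}(0): \forall \varpi(j)\in Z\setminus\{\varpi(0)\},\ \varpi(j)\text{ is not }(\delta,\alpha)\text{-very good}\}$, and Proposition \ref{P-red-many-tree} bounds this by ${\rm e}^{-c_1 M(s+1)}\le {\rm e}^{-c_1 Ms}$, with $c_1=c_1(\beta)$ and $\alpha=(\alpha_1,\alpha_2)$ depending only on $\beta$ and $M$.

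It remains to convert the exponent from $Ms$ into $Ma_k$. This is where the defining property of an $a_k$-subset is used: if $s\le a_{k-1}$ then necessarily $\mathrm{deg}(L)>20\mu_\beta M a_{k-1}$, which forces the presence of at least $20\mu_\beta M a_{k-1}$ edges leaving $L$, hence at least that many additional vertices of $G$ adjacent to $L$; one can then enlarge $Z$ by appending $\lceil a_k\rceil$-many vertices reachable from $L$ (using $a_k\le (1+\lambda)a_{k-1}+1\le 2a_{k-1}$ so that $a_k\le 20\mu_\beta M a_{k-1}$ for large $M$), producing a connected set $Z'\in\mathcal{CS}_{k'}(0)$ with $k'\ge a_k$, all of whose non-root vertices we still need to argue are red — but they need not be, so instead the cleaner route is: if $s>a_{k-1}$, apply the previous paragraph with exponent $Ms> Ma_{k-1}\ge (2(1+\lambda))^{-1}Ma_k$ and absorb the constant; if $s\le a_{k-1}$ (so $\mathrm{deg}(L)>20\mu_\beta Ma_{k-1}$), note that the $\mathrm{deg}(L)$ edges leaving $L$ each have an endpoint that, together with $L$, one can explore, and Lemma \ref{Baumlerlemma3.1} together with a union bound over which of these boundary vertices are red gives an extra exponential gain of order $Ma_{k-1}\ge cMa_k$. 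I expect the main obstacle to be exactly this second case: turning a high-degree-but-small red component into the claimed $a_k$-sized exponent, which requires carefully bounding the number of ways to realize such a high-degree connected set (via the tree-counting bound $\mathds{E}[|\mathcal{T}_\ell(0)|]\le(4\mu_\beta)^\ell$ from Lemma \ref{Baumlerlemma3.1}) against the probability cost $(400\mu_\beta M)^{-\text{something}}$ coming from Proposition \ref{P-red-many-strong}; the bookkeeping is routine but the matching of constants (ensuring the combinatorial factor $(4\mu_\beta)^{O(a_k)}$ is beaten by ${\rm e}^{-c_5 M a_{k-1}}$ for $M$ large) is the crux.
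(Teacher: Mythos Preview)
Your case split is exactly right, and the case $s>a_{k-1}$ is handled correctly: the event forces a connected set $Z\in\mathcal{CS}_{s+1}(\varpi(v))$ whose non-root vertices are all red, so Proposition~\ref{P-red-many-tree} gives ${\rm e}^{-c_1 M(s+1)}\le {\rm e}^{-c M a_{k-1}}\le {\rm e}^{-c' M a_k}$ since $a_k\le 2a_{k-1}$. This is precisely what the paper does.

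The gap is in the case $s\le a_{k-1}$. You propose to get the $a_k$-sized exponent by combining tree counting with the redness cost from Proposition~\ref{P-red-many-strong}. But that proposition only yields ${\rm e}^{-c_5 M s}$ (the exponent is the number of red vertices, which is $s$), and together with the combinatorial factor $(4\mu_\beta)^s$ this gives at best ${\rm e}^{-c M s}$. Since $s$ can be as small as $2$ while $a_k$ can be arbitrarily large, this does \emph{not} produce ${\rm e}^{-c M a_k}$. Your final sentence asserts the exponent $c_5 M a_{k-1}$, but Proposition~\ref{P-red-many-strong} does not give this.

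The paper's fix is to \emph{drop redness entirely} in this case and use only the degree condition. If $s\le a_{k-1}$ then by definition of an $a_k$-subset one must have $\mathrm{deg}(L)>20\mu_\beta M a_{k-1}$. So the event implies $\{\exists Z\in\mathcal{CS}_s(\varpi(v)):\ \mathrm{deg}(Z)\ge 20\mu_\beta M a_{k-1}\}$. For a \emph{fixed} connected set $Z$ of size $s$, $\mathrm{deg}(Z)$ is a sum of independent indicators with mean at most $\mu_\beta s\le \mu_\beta a_{k-1}$, so Chernoff gives $\mathds{P}[\mathrm{deg}(Z)\ge 20\mu_\beta M a_{k-1}]\le {\rm e}^{-6\mu_\beta M a_{k-1}}$. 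Combined with tree counting (Lemma~\ref{Baumlerlemma3.1}), a union bound yields $(4\mu_\beta)^s {\rm e}^{-6\mu_\beta M a_{k-1}}\le (4\mu_\beta)^{a_{k-1}} {\rm e}^{-6\mu_\beta M a_{k-1}}\le {\rm e}^{-\widetilde c M a_k}$ for large $M$. This is the argument of \cite[Lemma~3.2]{Baumler23a}, and it is the missing ingredient in your plan: the exponential gain in the small-$s$ case comes from the tail of the degree, not from the cost of being red.
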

\begin{proof}
For fixed $k\in\mathds{N}$, recall that $\mathcal{L}_k$ is defined in Definition \ref{def-ak-clt} with replacing $\mathcal{C}$ by $\mathcal{L}$.
   We first consider the case where $k>1$ and $s\leq a_{k-1}$. According to Definition \ref{def-ak-clt}, we see that  $\mathrm{deg}(L)\geq 20\mu_\beta Ma_{k-1}$.
   Then using the similar arguments as in the proof of \cite[Lemma 3.2]{Baumler23a}, we can obtain that for any $\varpi(v)\in V_n$,
  \begin{equation}\label{prob-L2}
    \begin{aligned}
      &\mathds{P}\left[\text{there exists $L\in \mathcal{L}_{k}$ such that }\#L=s,\ \varpi(v)\sim L\ \text{and } \mathrm{deg}(L)\geq 20\mu_\beta Ma_{k-1}\right]\\
      &\leq \mathds{P}\left[\exists Z\in \mathcal{CS}_{s}(\varpi(v)):\ \text{deg}(Z)\geq 20\mu_\beta M a_{k-1}\right]\\
      &\leq (4\mu_\beta)^{s}\e^{-6\mu_\beta Ma_{k-1}}\leq \e^{-\widetilde{c}_1Ma_{k}}
    \end{aligned}
  \end{equation}
  for some $\widetilde{c}_1>0$ that depends only on $\beta$. Here the last inequality follows from the fact that $a_k=\lfloor (1+\lambda)a_{k-1}\rfloor+1$ for $\lambda\in (0,1)$  (see \eqref{def-ai}).

   Moreover, for the case  $s>a_{k-1}$ with $k\geq 1$, it follows from the definition of $\mathcal{L}_k$ that when $s>a_k$, the probability on the LHS of \eqref{P-Lk-s} is zero.
   Therefore, it suffices to consider the case where $s\in (a_{k-1},a_k]$. From Proposition~\ref{P-red-many-tree}, there exists a constant $\widetilde{c}_2>0$ (depending only on $\beta$) such that
  \begin{equation*}\label{prob-L1}
  \begin{aligned}
    &\mathds{P}\left[\text{there exists $L\in \mathcal{L}_{k}$ such that }\# L=s\ \text{and }\varpi(v)\sim L \right]\\
    &\leq \e^{-c_1Ms}\leq \e^{-\widetilde{c}_2Ma_k},
    \end{aligned}
  \end{equation*}
   where $c_1>0$ (depending only on $\beta$) is the constant defined  in Proposition~\ref{P-red-many-tree}.
\end{proof}

We also need the following estimate for the probability of $a_k$-LEAF.

\begin{lemma}\label{p-j-k-2}
For $\beta>0$, $k\geq 1$, $s'\in (a_{k-1}, a_k]$ and sufficiently large $M\in \mathds{N}$ satisfying \eqref{cond-M1}, 
assume that there exists a constant $c_6>0$ {\rm(}depending only on $\beta${\rm)} such that \eqref{est-pk} holds for all $j<k$ and all $s\leq a_j$, as well as holds for $j=k$ and $s<s'$. Then for each $j\leq k$, the probability that an $a_k$-interval $J$ is connected to an animal $L\in \widetilde{\mathcal{L}}_{j\to k}^{(2)}$ with $\# L=s'$ is at most
  \begin{equation*}\label{eq-L2}
    \begin{aligned}
    1600\mu_\beta Ma_k &\exp\left\{-c_6 M^{0.25} \max\left\{a_{k-1}^{1/2},s'a_{k-1}^{-1/2},A^{1/2}\right\}\right\}\I_{\{j=k\}}\\
    &+2b_k/b_j \exp\left\{-\frac{1}{32} M^{0.1}\sqrt{a_{j-1}}-c_6 M^{0.25}a_{j-1}^{-1/2}s'\right\}\I_{\{j<k\}}.
    \end{aligned}
  \end{equation*}
\end{lemma}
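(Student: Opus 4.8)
\textbf{Plan for the proof of Lemma~\ref{p-j-k-2}.} The goal is to bound the probability that an $a_k$-interval $J$ is connected to an animal $L\in\widetilde{\mathcal{L}}^{(2)}_{j\to k}$ of size $s'\in(a_{k-1},a_k]$. Recall from Definition~\ref{def-expansion}(2) that animals in $\widetilde{\mathcal L}^{(2)}_{j\to k}$ are produced from animals in $\widetilde{\mathcal L}^{(1)}_{j\to j}$ (when $j<k$, we start from $\widetilde{\mathcal L}^{(1)}_{j\to j}\subset\widetilde{\mathcal L}_{j\to j}^{(1)}$) or $\widetilde{\mathcal{L}}^{(1)}_{k\to k}$ (when $j=k$) by applying $a_k$-LEAF (resp. $a_j$-LEAF) iteratively, possibly merging two base animals that pick up the same long edge. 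So one should split into the two cases $j=k$ and $j<k$, and in each case decompose $L$ into a ``core'' (the union of the base animals from $\widetilde{\mathcal L}^{(1)}$) together with the long edges and the small $B_{b}(\cdot)$-balls that were attached by LEAF.

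For the case $j<k$: here the core is an animal in $\widetilde{\mathcal L}^{(1)}_{j\to j}$ — but wait, when $j<k$ the animal $L$ must actually already be an $a_k$-subset, so it is built from an $a_j$-animal by adding long edges that cross the annuli $B_{b_{j-1,j}}\setminus B_{b_{j-1}}$ or $B_{b_j}\setminus B_{b_{j-1,j}}$ around vertices neighbouring the animal, and these long edges are what push its size from $\le a_j$ up to $s'>a_{k-1}$. The plan is: first condition on the core $a_j$-animal $L_0$ (with $\#L_0=s_0\le a_j$) and on the vertex $\varpi(i)\in J$ realising $J\sim L$; the probability of this is at most $b_k\cdot p_j(s_0)$ by a union bound over $\varpi(i)\in J$ (the factor $b_k=\#J$). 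Then, given $L_0$, the remaining $s'-s_0$ vertices of $L$ are contributed by long edges emanating from the $\mathrm{deg}(L_0)\le 20\mu_\beta M a_j$ boundary-neighbours of $L_0$; each such neighbour $\varpi(u)$ must have a long edge crossing one of the two annuli, an event whose probability is comparable to $\int\!\!\int |x-y|^{-2}$ over the annulus, which is $O(1)$ (a bounded quantity, since $b_{j-1,j}/b_{j-1}$ and $b_j/b_{j-1,j}$ are bounded — by the definition \eqref{def-bk} and \eqref{def-ai}, $a_k^{1/2}-a_{k-1}^{1/2}\asymp \lambda a_{k-1}^{1/2}/a_k^{1/2}$ is small, so in fact these annuli probabilities are $\asymp M^{0.2}(a_j^{1/2}-a_{j-1}^{1/2})\asymp M^{0.1}a_{j-1}^{-1/2}$, genuinely small). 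Counting how many long edges are needed to reach size $s'$ and how many ways to attach them, together with the independence of the edges in disjoint regions, one gets a factor roughly $(b_k/b_j)\exp\{-c M^{0.1}\sqrt{a_{j-1}}-c_6 M^{0.25}a_{j-1}^{-1/2}s'\}$, matching the claimed bound; the $b_k/b_j$ accounts for the location of $J$ relative to the core, and the first exponential term is the cost of the LEAF long edges having to reach out to $B_{b_j}(\varpi(i))^c$.

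For the case $j=k$: now $L\in\widetilde{\mathcal L}^{(2)}_{k\to k}$ is built from an animal $L'\in\widetilde{\mathcal L}^{(1)}_{k\to k}$ by $a_k$-LEAF, where $\widetilde{\mathcal L}^{(1)}_{k\to k}$ is the $a_k$-NNAF-merge of animals in $\widetilde{\mathcal L}_k$. One handles this by: (a) using Lemma~\ref{est-j-k-explore} (applied with $j=k$ and the exploration over NNAF-neighbours) together with the inductive hypothesis that \eqref{est-pk} holds for $j=k$ and all $s<s'$, to bound the probability that $J$ meets an animal in $\widetilde{\mathcal L}^{(1)}_{k\to k}$ of size $s''<s'$; (b) then controlling the additional LEAF edges. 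Since the number of base animals $K$ in the NNAF-merge satisfies $K\ge s''/a_k$ and the NNAF step forces $a_k$-intervals to be within $2b_k$ of each other, the combinatorial factor is of order $(\text{const}\cdot\mu_\beta M a_k)^{K}$, which is dominated by the super-exponential decay in $p_k(x_i)$. The upshot is the bound $1600\mu_\beta M a_k\exp\{-c_6 M^{0.25}\max\{a_{k-1}^{1/2},s'a_{k-1}^{-1/2},A^{1/2}\}\}$; one tracks constants so the factor $1600\mu_\beta M a_k = (400\mu_\beta M a_k)\cdot 4$ emerges from at most $4^K$ up-down structures times the $(400\mu_\beta M a_j)^K$ of Lemma~\ref{est-j-k-explore}, absorbed appropriately, plus a union over $\varpi(v)\in J$.

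\textbf{Main obstacle.} The delicate point is the bookkeeping in the $j=k$ case: the animal $L$ is \emph{not} an $a_k$-subset (its degree may exceed $20\mu_\beta M a_k$ precisely because it stays at scale $k$ after merging), so one cannot directly invoke the $a_k$-subset degree bound when counting candidate LEAF edges or NNAF-neighbours. Instead one must carefully use that each \emph{base} animal in $\widetilde{\mathcal L}_k$ is a genuine $a_k$-subset with $\mathrm{deg}\le 20\mu_\beta M a_k$, bound the number of base animals by $s'/a_{k-1}$ (since each has $>a_{k-1}$ vertices, by the lower bound in Definition~\ref{def-ak-clt} when the core size — rather than degree — triggers the classification, with the degree-triggered case handled as in \eqref{prob-L2}), and then the total combinatorial and geometric factors must be shown to be beaten by the product of the $p_k(x_i)$'s coming from the inductive hypothesis. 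Keeping the exponents consistent — in particular verifying that $\max\{a_{k-1}^{1/2},s'a_{k-1}^{-1/2},A^{1/2}\}$ really is the right exponent after all the attachments, using $s'\le a_k\le 2a_{k-1}$ and \eqref{cond-M1} — is the part that requires the most care, but it is the same type of elementary estimate as in the proof of Lemma~\ref{prob-total-explore} and Lemma~\ref{prob-1-bad}, so no genuinely new idea is needed beyond organising the decomposition.
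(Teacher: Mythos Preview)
Your proposal has the right high-level shape but contains a genuine structural gap in the $j<k$ case. You write that ``the core is an animal in $\widetilde{\mathcal L}^{(1)}_{j\to j}$ \ldots\ so it is built from an $a_j$-animal by adding long edges,'' i.e.\ you assume $L$ has a \emph{single} base animal. But Definition~\ref{def-expansion}(2) explicitly allows two (or more) animals in $\widetilde{\mathcal L}^{(1)}_{j\to j}$ to be merged when they pick up the \emph{same} LEAF long edge (Figure~\ref{LEAF-same-edge}). So an animal $L\in\widetilde{\mathcal L}^{(2)}_{j\to k}$ may consist of $K\ge 1$ base pieces from $\widetilde{\mathcal L}^{(1)}_{j\to j}$ glued together by a tree of shared long edges, and your conditioning ``on the core $a_j$-animal $L_0$ with $\#L_0=s_0$'' misses all the $K\ge 2$ contributions. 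These are precisely what the paper handles via a second depth-first exploration (Lemma~\ref{p-j-k-2-step1}), this time over $a_j$-intervals that are \emph{animal-LE neighbours} (linked by a LEAF long edge), producing the bound
\[
(b_k/b_j)\sum_{K\ge 1}\sum_{3(K-1)\ge s'-s}\sum_{x_1+\cdots+x_K=s,\,0\le x_i\le a_j}\bigl(80\mu_\beta^2 Ma_j(b_{j-1}/b_{j-1,j}+b_{j-1,j}/b_j)\bigr)^{K-1}\prod_{i=1}^K p_j^1(x_i),
\]
where $p_j^1$ comes from Lemma~\ref{p-L1-k-k}. The constraint $3(K-1)\ge s'-s$ is essential: each LEAF operation adds at most three vertices, so reaching size $s'$ from a core of total size $s=\sum x_i$ forces $K-1\ge (s'-s)/3$. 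Without this constraint you cannot get the factor $\exp\{-c_6 M^{0.25}a_{j-1}^{-1/2}s'\}$ in the claimed bound.

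A second, smaller issue: your estimate of the annulus-crossing probability is garbled. You write that it is $\asymp M^{0.1}a_{j-1}^{-1/2}$, but in fact (using $a_j\approx(1+\lambda)a_{j-1}$ and $\lambda=M^{-0.1}$) one has $b_{j-1}/b_{j-1,j}+b_{j-1,j}/b_j\le 2\exp\{-\tfrac18 M^{0.1}\sqrt{a_{j-1}}\}$, i.e.\ exponentially small in $M^{0.1}a_{j-1}^{1/2}$, not polynomially small. This is exactly what produces the $-\tfrac{1}{32}M^{0.1}\sqrt{a_{j-1}}$ in the stated bound, and it is the crucial gain that makes the sum over $K$ converge geometrically (the paper checks $T_{K+1,s}/T_{K,s}<1/2$ and then $Y_{s+1}/Y_s>2$). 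Your ``main obstacle'' about $L$ not being an $a_k$-subset when $j=k$ is not really where the difficulty lies: the paper sidesteps this entirely by running the exploration over $a_j$-\emph{intervals} (each touching at most one animal in $\widetilde{\mathcal L}^{(1)}_{j\to j}$, possibly none, whence the convention $p_j^1(0)=1$), so the degree bound $20\mu_\beta Ma_j$ is only ever applied to individual base animals, never to $L$ itself.
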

Before moving on to the proof, we would like to add the following remark.
\begin{remark}\label{rem-j=k}
It is worth emphasizing that the conclusion of Lemma \ref{p-j-k-2} also holds for the case where $j=k$ and $s'\leq a_{k-1}$. This is because, during the proof of this lemma (particularly in the proof of Lemma \ref{p-j-k-2-step1}), when $j=k$, we did not rely on the restriction $s'\in (a_{k-1},a_k]$.
\end{remark}

To prove Lemma \ref{p-j-k-2}, we first need to estimate the probability arising from $\widetilde{\mathcal{L}}^{(1)}_{k\to k}$.
\begin{lemma}\label{p-L1-k-k}
  For $\beta>0$, $k\geq 1$, $s'\in (0, a_k]$ and sufficiently large $M\in \mathds{N}$ satisfying \eqref{cond-M1}, 
  assume that there exists a constant $c_6>0$ {\rm(}depending only on $\beta${\rm)} such that \eqref{est-pk} holds for all $j<k$ and all $s\leq a_j$, as well as holds for $j=k$ and $s<s'$. Then  the probability that an $a_k$-interval $J$ is connected to an animal $L\in \widetilde{\mathcal{L}}_{k\to k}^{(1)}$ with $\# L=s'$ is at most
  \begin{equation}\label{eq-L1-k-k}
    p_k^1(s'):=800\mu_\beta Ma_k \exp\left\{-c_6 M^{0.25} \max\left\{a_{k-1}^{1/2},s'a_{k-1}^{-1/2},A^{1/2}\right\}\right\}.
  \end{equation}
\end{lemma}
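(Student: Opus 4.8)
The plan is to estimate the probability that an $a_k$-interval $J$ attaches to an animal $L\in\widetilde{\mathcal L}_{k\to k}^{(1)}$ with $\#L=s'$ by decomposing $L$ into the red $a_k$-animals of $\widetilde{\mathcal L}_k$ that were merged together via $a_k$-NNAF (this is exactly the composition recorded in Definition \ref{def-expansion}(1)). If $L$ consists of $K$ animals in $\widetilde{\mathcal L}_k$ with sizes $x_1,\dots,x_K$ (each $x_i\le a_k$) and $\sum_i x_i = s'$, then by Lemma \ref{est-j-k-explore} applied with $j=k$ the probability that $\varpi(v)$ is connected to such an $L$ is at most $(400\mu_\beta M a_k)^K\sum_{x_1+\cdots+x_K=s',\ x_i\le a_k}\prod_{i=1}^K p_k(x_i)$. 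The subtlety is that $p_k(x_i)$ is only available for $x_i<s'$ by the induction hypothesis of the present lemma, but this is fine: whenever $K\ge 2$ each $x_i\le s'-1<s'$, and the single case $K=1$ is just "$J\sim L$ for $L\in\widetilde{\mathcal L}_k$ with $\#L=s'$'', which contributes $p_k(s')$ — but that term is precisely what we are bounding, so $K=1$ must be handled by observing that an animal in $\widetilde{\mathcal L}_{k\to k}^{(1)}$ arising from a single $a_k$-animal to which no NNAF was applied is already in $\widetilde{\mathcal L}_k$, hence not in $\widetilde{\mathcal L}_{k\to k}^{(1)}$ unless it got merged, i.e. $K\ge2$. (Alternatively one reads the definition so that $\widetilde{\mathcal L}_{k\to k}^{(1)}$ only collects genuinely merged components; in any case only $K\ge 2$ occurs.)

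First I would take a union bound over $\varpi(v)\in J$ (a factor $b_k = \#J$) and over $K\ge 2$, then insert the inductive bound \eqref{est-pk} for each $p_k(x_i)$, namely $p_k(x_i)\le \exp\{-c_6 M^{0.25}\max\{a_{k-1}^{1/2},x_i a_{k-1}^{-1/2},A^{1/2}\}\}$. Summing the exponents: since $\sum x_i=s'$ we get $\sum_i \max\{a_{k-1}^{1/2},x_i a_{k-1}^{-1/2},A^{1/2}\}\ge \max\{K a_{k-1}^{1/2},\ s' a_{k-1}^{-1/2},\ K A^{1/2}\}$, and also $\ge \tfrac12(K a_{k-1}^{1/2}+s' a_{k-1}^{-1/2})$ and $\ge \tfrac12(K A^{1/2}+ s' a_{k-1}^{-1/2})$ type bounds, which is what lets one simultaneously beat the combinatorial factor $b_k(400\mu_\beta M a_k)^K$ (whose logarithm is $O(M^{0.2}a_k^{1/2} + K\log M)$, negligible against $c_6 K M^{0.25}a_{k-1}^{1/2}$ and against $c_6 M^{0.25} s' a_{k-1}^{-1/2}$ for $M$ large — here one uses $a_{k-1}\ge A=M^{0.2}$ so $a_{k-1}^{1/2}\ge M^{0.1}$, and $a_k\le 2a_{k-1}$). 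After bounding the number of compositions $(x_1,\dots,x_K)$ of $s'$ by $a_k^K$ (each coordinate in $[1,a_k]$), the sum over $K\ge2$ becomes a geometric series in $K$ with ratio $400\mu_\beta M a_k^2 \exp\{-c_6 M^{0.25}\max\{a_{k-1},A\}^{1/2}\}$, which for large $M$ is $\le \tfrac12$; so the series is at most twice its first ($K=2$) term.

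Finally I would read off, from the $K=2$ term together with the surviving $b_k$ factor, a bound of the form $\exp\{M^{0.2}a_k^{1/2} - \tfrac32 c_6 M^{0.25}\max\{a_{k-1}^{1/2}, s' a_{k-1}^{-1/2}, A^{1/2}\}\}$ (using $K=2$ and the splitting of the exponent above), and then absorb $M^{0.2}a_k^{1/2}$ into half of one of the negative terms using $a_k\le 2a_{k-1}$, $a_{k-1}\ge A$, and $M$ large, leaving $\le \exp\{-\tfrac54 c_6 M^{0.25}\max\{\cdots\}\}$, which is comfortably smaller than the claimed $p_k^1(s') = 800\mu_\beta M a_k \exp\{-c_6 M^{0.25}\max\{a_{k-1}^{1/2},s'a_{k-1}^{-1/2},A^{1/2}\}\}$ — indeed the prefactor $800\mu_\beta M a_k$ in the target is generous and could simply absorb all polynomial-in-$M$ and $b_k$-type factors directly, so one need not even be careful about constants in the exponent, only that the leading exponential rate $c_6 M^{0.25}\max\{a_{k-1}^{1/2},s'a_{k-1}^{-1/2},A^{1/2}\}$ is preserved. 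The main obstacle is the bookkeeping in the exponent sum: one must arrange the elementary inequality $\sum_i\max\{a_{k-1}^{1/2},x_ia_{k-1}^{-1/2},A^{1/2}\}\ge \max\{a_{k-1}^{1/2}, s'a_{k-1}^{-1/2}, A^{1/2}\}\cdot(\text{something}\gtrsim K)$ so that the combinatorial factors $(400\mu_\beta M a_k)^K a_k^K$ and the union-bound factor $b_k=\exp\{M^{0.2}a_k^{1/2}\}$ are both dominated; the choice $A=M^{0.2}$, $\lambda=M^{-0.1}$ in \eqref{def-Alambda} is precisely calibrated for this, and invoking $a_{k-1}\ge A$ and $a_k\le(1+\lambda)a_{k-1}+1\le 2a_{k-1}$ repeatedly is the routine part.
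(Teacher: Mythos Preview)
Your overall approach via Lemma~\ref{est-j-k-explore} with $j=k$ and a sum over $K$ matches the paper's. But there is a genuine gap in your handling of the $K=1$ case.

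Your claim that ``only $K\ge 2$ occurs'' is a misreading of Definition~\ref{def-expansion}(1). The set $\widetilde{\mathcal L}_{k\to k}^{(1)}$ is defined as the set of components (in the red-plus-dashed-red subgraph after NNAF) that contain at least one animal from $\widetilde{\mathcal L}_k$ and are not $a_l$-subsets for any $l>k$. A single animal $L\in\widetilde{\mathcal L}_k$ that found no NNAF partner is still such a component, and (being an $a_k$-subset) it lands in $\widetilde{\mathcal L}_{k\to k}^{(1)}$. So $K=1$ does occur and cannot be discarded. Relatedly, your assertion that the $K=1$ contribution ``is precisely what we are bounding'' is a confusion: the $K=1$ term is bounded by $p_k(s')$ (the probability for $\widetilde{\mathcal L}_k$), whereas the lemma bounds $p_k^1(s')$ (for $\widetilde{\mathcal L}_{k\to k}^{(1)}$); these are different quantities and there is no circularity.

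The paper keeps the $K=1$ term as $400\mu_\beta M a_k\, p_k(s')$ and then uses \eqref{est-pk} at $s=s'$ for it. You are right that the hypothesis as written only gives \eqref{est-pk} for $s<s'$; this is a minor imprecision in the statement --- in every place the lemma is invoked (via Lemma~\ref{p-to-q}, or via Lemma~\ref{p-j-k-2-step1} with $j<k$ and the outer induction of Proposition~\ref{lem-prob-CF}) one in fact has \eqref{est-pk} for $s\le s'$, so the proof goes through. The fix is to strengthen the hypothesis to $s\le s'$, not to delete $K=1$.

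Two smaller points. First, the union bound over $\varpi(v)\in J$ (your factor $b_k$) is unnecessary here: when $j=k$ the $a_j$-interval $J_1$ in the proof of Lemma~\ref{est-j-k-explore} is already the $a_k$-interval $J$, so the bound applies to $J$ directly; the paper does not introduce $b_k$. Second, the paper's treatment of $K\ge 2$ is simpler than yours: it drops the $x_i a_{k-1}^{-1/2}$ part of the max, bounds each factor by $\exp\{-c_6 M^{0.25}\max\{a_{k-1}^{1/2},A^{1/2}\}\}$, gets a geometric series in $K$, and then uses that $s'\le a_k$ forces $s' a_{k-1}^{-1/2}<2\max\{a_{k-1}^{1/2},A^{1/2}\}$ so the $K\ge 2$ tail already beats the target exponent. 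Your more elaborate exponent-splitting works too but is not needed.
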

\begin{proof}
  From Lemma~\ref{est-j-k-explore} with $j=k$ and the assumption for \eqref{est-pk} we get that the desired probability can be bounded above by
  \begin{equation*}
    \begin{aligned}
      & 400\mu_\beta Ma_k p_k(s')+\sum_{K\geq 2}\sum_{x_1+\cdots+x_K=s'}\prod_{i=1}^K \left(400\mu_\beta Ma_k\right)^Kp_k(x_i) \\
      &\leq 400\mu_\beta Ma_k p_k(s')+\sum_{K\geq 2}\left(400\mu_\beta Ma_k^2\exp\left\{-c_6 M^{0.25}\max\left\{a_{k-1}^{1/2},A^{1/2}\right\}\right\}\right)^K\\
      &\leq 800 \mu_\beta Ma_k \exp\left\{-c_6 M^{0.25} \max\left\{a_{k-1}^{1/2},s'a_{k-1}^{-1/2},A^{1/2}\right\}\right\},
    \end{aligned}
  \end{equation*}
  where the last inequality follows from the fact that $2\max\{a_{k-1}^{1/2},A^{1/2}\}> \max\{a_{k-1}^{1/2}, s'a_{k-1}^{-1/2},A^{1/2}\}$ by $s'\leq a_k$ and the definition of $a_k$ in \eqref{def-ai}.
\end{proof}

 We also need the following lemma regarding to animals in $\widetilde{\mathcal{L}}_{j\to k}^{(2)}$.
\begin{lemma}\label{p-j-k-2-step1}
  Under the same assumption in Lemma~\ref{p-j-k-2}, for each $j\leq k$, the probability that an $a_k$-interval $J$ is connected to some animal $L\in \widetilde{\mathcal{L}}_{j\to k}^{(2)}$ with $\# L=s'$ is at most
  \begin{equation}\label{eq-L2-start}
    \begin{aligned}
      & (b_k/b_j)\cdot
      \sum_{K\geq 1}\sum_{s'-s\leq 3(K-1)}\sum_{x_1+\cdots+x_K=s,0\leq x_i\leq a_j} \left(80\mu_\beta^2 Ma_j(b_{j-1}/b_{j-1,j}+b_{j-1,j}/b_j)\right)^{K-1}\prod_{i=1}^K p_j^1(x_i),
    \end{aligned}
  \end{equation}
   where $p_k^1(0):=1$, $p_k^1(x_i)$ for $x_i>0$ is defined in \eqref{eq-L1-k-k}.
\end{lemma}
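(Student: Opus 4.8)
The plan is to decode an animal $L\in\widetilde{\mathcal{L}}^{(2)}_{j\to k}$ into a sequence of constituent animals in $\widetilde{\mathcal{L}}^{(1)}_{k\to k}$ (when $j=k$) or in $\widetilde{\mathcal{L}}^{(1)}_{j\to k}$ (when $j<k$) glued together by the dashed red long edges produced by $a_j$-LEAF, and to estimate the number of admissible ``decoding trees'' together with the probability cost of each constituent animal. The key observation from Definition \ref{def-expansion} (2) is that any $L\in\widetilde{\mathcal{L}}^{(2)}_{j\to k}$ is formed from animals in $\widetilde{\mathcal{L}}^{(1)}_{j\to k}$ (for the case $j<k$, which after being merged via $a_j$-LEAF get recorded at scale $k$) by coloring at most a few shared $a_j$-LEAF long edges; since $a_j$-LEAF adds a dashed edge from some $\varpi(i)\sim L$ to an endpoint $\varpi(j_1)$ of a long edge in one of the two annuli $B_{b_{j-1,j}}(\varpi(i))\setminus B_{b_{j-1}}(\varpi(i))$ or $B_{b_j}(\varpi(i))\setminus B_{b_{j-1,j}}(\varpi(i))$, each such gluing ``costs'' a probability factor comparable to $\mathbb{P}[\text{a long edge exists in one of these annuli}]$, which by a direct computation with the $|x-y|^{-2}$ intensity is at most a constant times $b_{j-1}/b_{j-1,j}+b_{j-1,j}/b_j$ (here $b_{j-1,j}$ is the geometric-ish midpoint scale from \eqref{def-bk}).

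First I would set up the combinatorial exploration: starting from the $a_k$-interval $J$, which must be connected to some constituent animal $L_1$, we perform a depth-first exploration of the constituent animals $L_1,\dots,L_K$ exactly as in the proof of Lemma \ref{est-j-k-explore}, except that here the ``edges'' of the exploration tree are the shared $a_j$-LEAF long edges rather than $a_j$-NNAF short edges. Each constituent animal $L_i\in\widetilde{\mathcal{L}}^{(1)}_{j\to k}$ has $\mathrm{deg}(L_i)\leq 20\mu_\beta M a_j$ (it is an $a_j$-subset up to LEAF, but the degree bound is what we use), so from each $L_i$ there are at most $20\mu_\beta M a_j$ boundary vertices $\varpi(i)$, and from each of those at most $\mu_\beta$-on-average-many long edges in the relevant annuli; combining these gives at most $\approx 80\mu_\beta^2 M a_j$ candidate children per constituent animal, so the number of exploration trees on $K$ constituents is at most $(80\mu_\beta^2 M a_j)^{K-1}$ up to the $4^K$ factor for up-down structures (which I would absorb). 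The factor $b_k/b_j$ in front comes from the union bound over the at most $b_k$ vertices $\varpi(v)\in J$ and over the at most $b_k/b_j$ possible positions of the containing $a_j$-interval around $\varpi(v)$ — more precisely, I would union-bound over which $a_j$-interval $J_1$ contains the point where $J$ attaches to $L_1$. The constraint $s'-s\leq 3(K-1)$ records that each of the $K-1$ LEAF gluings adds at most the two endpoints plus the boundary vertex $\varpi(i)$ (at most $3$ new vertices) beyond the $s=\sum x_i$ vertices already present in the constituents; and $p_j^1(x_i)$ is the probability (Lemma \ref{p-L1-k-k}, valid by the standing induction hypothesis on \eqref{est-pk} for $j<k$ and for $j=k$ with $s<s'$) that a given $a_j$-interval is connected to a constituent animal in $\widetilde{\mathcal{L}}^{(1)}_{j\to k}$ with $x_i$ vertices, with the convention $p^1_j(0)=1$ accounting for the possibility of attaching directly to a long edge without a genuine constituent animal at that slot.

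The main obstacle I anticipate is the bookkeeping of independence: the probabilities $p_j^1(x_i)$ for the different constituent animals $L_i$ must be multiplied, which requires that the events ``$J_i$ is connected to a constituent animal in $\widetilde{\mathcal{L}}^{(1)}_{j\to k}$ with $x_i$ vertices'' be measurable with respect to disjoint edge sets, and simultaneously that the LEAF long edges used to glue them are also edge-disjoint from all of these and from each other. As in Lemmas \ref{est-j-k-explore} and \ref{P-Ek1-many}, one argues that the depth-first exploration never reuses an edge, so all the relevant events live on disjoint edge sets and the product bound is legitimate; but one has to be careful that a LEAF long edge attached to $L_i$ might a priori land inside another $L_{i'}$, and the definition of $a_j$-LEAF together with the scale separation $b_{j-1}\ll b_{j-1,j}\ll b_j\ll b_k$ (guaranteed by \eqref{cond-M1} and \eqref{def-bk}) is exactly what rules this out, so that the merged object is recorded at scale $k$ rather than collapsing. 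Assembling the union bound over $K\geq 1$, over the decompositions $x_1+\cdots+x_K=s$ with $0\leq x_i\leq a_j$, and over $s'-s\leq 3(K-1)$, with the per-tree combinatorial weight $(80\mu_\beta^2 M a_j(b_{j-1}/b_{j-1,j}+b_{j-1,j}/b_j))^{K-1}$ and the $b_k/b_j$ prefactor, yields precisely \eqref{eq-L2-start}.
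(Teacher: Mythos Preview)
Your proposal is essentially correct and follows the same depth-first exploration strategy as the paper: encode an animal in $\widetilde{\mathcal{L}}^{(2)}_{j\to k}$ by a tree of $a_j$-intervals glued via LEAF long edges, bound the expected number of candidate children via the degree bound $20\mu_\beta M a_j$ times the annulus long-edge expectation $\mu_\beta(b_{j-1}/b_{j-1,j}+b_{j-1,j}/b_j)$, then multiply the per-interval costs $p_j^1(x_i)$ using edge-disjointness and apply the $b_k/b_j$ union bound over the initial $a_j$-interval. One small correction: the constituent pieces live in $\widetilde{\mathcal{L}}^{(1)}_{j\to j}$, not $\widetilde{\mathcal{L}}^{(1)}_{j\to k}$ (Definition~\ref{def-expansion}(2) applies $a_j$-LEAF to $\widetilde{\mathcal{L}}^{(1)}_{j\to j}$), which is precisely why the degree bound $20\mu_\beta M a_j$ and the probability bound $p_j^1$ from Lemma~\ref{p-L1-k-k} are the right inputs.
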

\begin{proof}
  For $j\leq k$, recall that $\widetilde{\mathcal{L}}_{j\to k}^{(2)}$ is defined in Definition \ref{def-expansion}. For any two $a_j$-intervals $J_1,J_2$, we say $J_1,J_2$ are animal-LE neighbors if
  $$
  J_i\bigcup\left(\bigcup_{L\in\widetilde{\mathcal{L}}^{(1)}_{j\to j}:L\sim J_i}L\right),\quad i=1,2
  $$
  can be connected to each other by a long edge satisfying Definition~\ref{def-LECF}~(2).

  Now let $J$ be an $a_k$-interval. According to the definition of $\widetilde{\mathcal{L}}_{j\to k}^{(2)}$ in Definition \ref{def-expansion}, it can be observed that if an $a_k$-interval $J$ is connected to an animal $L\in \widetilde{\mathcal{L}}_{j\to k}^{(2)}$ with $\# L=s'>a_{k-1}$, then there exists $K\in \mathds{N}$ and $K$ $a_j$-intervals $J_1,\cdots, J_K$ with $J_1\subset J$ such that $\{J_1, \ldots, J_K\}$ forms a connected graph via the animal-LE neighboring relation. 

   For each $l\in[1,K]$, if there is at least one red animal $L_l\in \widetilde{\mathcal{L}}^{(1)}_{j\rightarrow j}$ such that $L_l\sim J_l$, then there must be only one such red animal $L_l$ from the definition of $a_j$-NNAF and we denote $x_l=\#L_l$. If no such red animal exists, we set $x_l=0$.
Note that by the definition of $\widetilde{\mathcal{L}}_{j\to k}^{(2)}$, one has $L_l\subset L$ for each $l\in [1,K]$. Therefore, we have $s:=\sum_{i=1}^K x_i\leq s'$. Moreover, we have 
\begin{equation}\label{K-and-xi}
  3(K-1)\geq s'-\sum_{i=1}^K x_i, 
\end{equation}
since we have performed $a_j$-LEAF operation at most $K-1$ times and in each operation we will add at most 3 vertices to the animal. Moreover we have $0\leq x_i\leq a_j$ for each $i\in[1,l]$ from the definition of $\widetilde{\mathcal{L}}_{j\to j}^{(1)}$.

Building on the observation above, similar to the proof of Lemma~\ref{est-j-k-explore},  we can  employ a ``depth-first'' exploration process $(J'_i)_{i=1}^{2K-1}$ to encode an $a_k$-animal $L\in\widetilde{\mathcal{L}}_{j\to k}^{(2)}$ with the aid of an auxiliary sequence $\textbf{w}=({\rm w}_i)^{2K-2}_{i=1}\in\{{\rm u,d}\}^{2K-2}$ as follows:
\begin{enumerate}
  \item[(i)] Start with $J'_1 = J_1$.
\medskip

  \item[(ii)] For $i=2,\cdots,2K-1$, assume that $J_1',\cdots, J_{i-1}'$ have been defined.
    \begin{enumerate}
      \item[(a)] If there exists a $J_{i'}\notin\{J_1',\cdots J_{i-1}'\}$ such that $J_{i'}$ is an animal-LE neighbor of $J'_{i-1}$,
      we select the leftmost $J_{i'}$ among them and set $J_{i}'=J_{i'}$.
      We then say that $J_{i}'$ is a child of $J_{i-1}'$ while $J_{i-1}'$ serves as the parent of $J_i'$. We set ${\rm w}_{i-1}=\rm d$.
      \item[(b)] Otherwise, we set $J_i'$ as the parent of $J_{i-1}'$ and let ${\rm w}_{i-1}=\rm u$.
    \end{enumerate}
\end{enumerate}

We now consider the selection of $J_1,\cdots,J_K$ when the sequence $\textbf{w}$ is fixed. Our goal is to provide an upper bound on the number of ``trees'' on $\{J_1, \ldots, J_K\}$ with $J_1\subset J$ fixed and the specific up-and-down structure with respect to the connections generated by animal-LE neighbors.

Suppose the exploration process visits exactly $l_i$ children, denoted by $J'_1,\cdots,J'_{l_i}$, of some interval $J_i$. Let $L_i$ be the red animal in $\widetilde{\mathcal{L}}_{j\rightarrow j}^{(1)}$ such that $L_i\sim J_i$. Define
\begin{equation*}
\Omega_i=\bigcup_{\text{black }\varpi(j)\sim L_i} B_{b_j}(\varpi(j)).
\end{equation*}
Note that if no such $L_i$ exists, let $\Omega_i=J_i$.
Then the expected number of ways to choose these $l_i$ children is at most
\begin{equation*}
  \begin{aligned}
    &\sum_{J'_1,\cdots,J'_{l_i}}\prod_{i'=1}^{l_i}\mathbb{P}\left[J'_{i'}\sim \Omega_i\text{ by a long edge in Definition~\ref{def-LECF}}\right]\\
    &\leq \left(\mathbb{E}\left[\#\{J':J'\sim \Omega_i\text{ by a long edge in Definition~\ref{def-LECF}}\}\right]\right)^{l_i}\\
    &\leq \left(20\mu_\beta Ma_j\mathbb{E}\left[\#\{J':J'\sim J_i\text{ by a long edge in Definition~\ref{def-LECF}}\}\right]\right)^{l_i}\quad \text{by }\text{deg}(L_i)\leq 20\mu_\beta Ma_j\\
    &\leq \left(20\mu_\beta^2 Ma_j(b_{j-1}/b_{j-1,j}+b_{j-1,j}/b_j)\right)^{l_i},
  \end{aligned}
\end{equation*}
where the summation is taken over all $a_j$-intervals and the last inequality comes from the following estimate:
\begin{equation*}
  \begin{aligned}
    &\mathbb{E}\left[\#\{J':J'\sim J_i\text{ by a long edge in Definition~\ref{def-LECF}}\}\right]\\
    &\leq \mathbb{E}\left[\#\{\langle \varpi(i_1),\varpi(i_2)\rangle\in E_n:\ \varpi(i_1)\in B_{b_{j-1}}(\varpi(i)),\ \varpi(i_2)\in B_{b_{j-1,j}}(\varpi(i))^c\}\right]\\
    &\quad +\mathbb{E}\left[\#\{\langle \varpi(i_1),\varpi(i_2)\rangle\in E_n:\ \varpi(i_1)\in B_{b_{j-1,j}}(\varpi(i)),\ \varpi(i_2)\in B_{b_{j}}(\varpi(i))^c\}\right]\\
    &\leq \mu_\beta (b_{j-1}/b_{j-1,j}+b_{j-1,j}/b_j).
  \end{aligned}
\end{equation*}

Note that the above bound can be applied to all intervals $J_i$. Since the total number of children across all intervals is $K-1$, as each interval, except $J_1$, is the child of exactly one other interval. Therefore, the expected number of   ``trees'' with a specified up-and-down structure can
be bounded from above by
\begin{equation*}
  \left(20\mu_\beta^2 Ma_j(b_{j-1}/b_{j-1,j}+b_{j-1,j}/b_j)\right)^{K-1}.
\end{equation*}

Now we consider taking a union bound over all possible up-and-down structures $\textbf{w}$ (as we have shown in the proof of Lemma~\ref{est-j-k-explore}, the number of choices for $\textbf{w}$ is at most $4^{K}$). Also note that since all red animals and the structure among all $J_i$'s do not share the same edge, there is independence and we can multiply probabilities arising from $a_j$-animals in different intervals in the exploration process. Thus for fixed $K$ and $x_1,\cdots,x_K$ and $\textbf{w}$, the probability that an $a_j$-interval $J_1$ is connected to an $a_k$-animal $L\in\widetilde{\mathcal{L}}_{j\to k}^{(2)}$ whose exploration process can be encoded by such parameters is at most
\begin{equation*}
  \left(20\mu_\beta^2 Ma_j(b_{j-1}/b_{j-1,j}+b_{j-1,j}/b_j)\right)^{K-1}\prod_{i=1}^K p_j^1(x_i),
\end{equation*}
where $p_j^1(x_i)$ for $x_i>0$ is defined in \eqref{eq-L1-k-k}. If $x_i=0$ we set $p_j^1(x_i)=1$.
In addition, we take a union bound over the parameters $K$ and $x_1,\cdots, x_K$ and $\textbf{w}$. 
Thus combining this with the restriction for $K$ and $\{x_i\}_{i=1}^K$ in \eqref{K-and-xi}, we get that the probability that an $a_j$-interval $J_1$ is connected to an $a_k$-animal $L\in\widetilde{\mathcal{L}}_{j\to k}^{(2)}$ which contains $s'$ vertices is at most
\begin{equation*}
  \sum_{K\geq 1}\sum_{s\leq s'}\sum_{x_1+\cdots+x_K=s,0\leq x_i\leq a_j}\I_{\{3(K-1)\geq s'-s\}} \left(80\mu_\beta^2 Ma_j(b_{j-1}/b_{j-1,j}+b_{j-1,j}/b_j)\right)^{K-1}\prod_{i=1}^K p_j^1(x_i),
\end{equation*}
where the range of $x_i$ comes from the analysis around \eqref{K-and-xi}.
Thus we take a union bound over all $a_j$-intervals contained in $J$ (which is an $a_k$-interval) and get that
\begin{equation}\label{Ljk2-start}
  \begin{aligned}
    &\mathbb{P}\left[\text{there exists $L\in\widetilde{\mathcal{L}}_{j\to k}^{(2)}$ such that }L\sim J\ \text{and }\#L=s'\right]\\
    &\leq (b_k/b_j)\sum_{K\geq 1}\sum_{s\leq s'}\sum_{x_1+\cdots+x_K=s,0\leq x_i\leq a_j}\I_{\{3(K-1)\geq s'-s\}} \left(80\mu_\beta^2 Ma_j(b_{j-1}/b_{j-1,j}+b_{j-1,j}/b_j)\right)^{K-1}\\
    &\quad\quad\quad \cdot \prod_{i=1}^K p_j^1(x_i),
  \end{aligned}
\end{equation}
which implies the lemma.
\end{proof}

We now turn to the

\begin{proof}[Proof of Lemma~\ref{p-j-k-2}]
With Lemma~\ref{p-j-k-2-step1} in hand, the remaining part of the proof is to estimate the RHS on \eqref{eq-L2-start}. To simplify this formula, we first note that when $K=1$ the only term in the summation is $p_k^1(s')\I_{\{j=k\}}$. Also note that by the definition of $b_j$ and $b_{j-1,j}$ in \eqref{def-bk}, one has
\begin{equation*}\label{bk-1k}
\begin{aligned}
  &b_{j-1}/b_{j-1,j}+b_{j-1,j}/b_j\\
  &=\exp\left\{M^{-0.2}\left(\sqrt{a_{j-1}}-\sqrt{(a_{j-1}+a_j)/2}\right)\right\}
  +\exp\left\{M^{-0.2}\left(\sqrt{(a_{j-1}+a_j)/2}-\sqrt{a_{j}}\right)\right\}\\
  &\leq 2\exp\left\{-M^{0.2} \left(\sqrt{1+\lambda/2}-1\right)\sqrt{a_{j-1}}\right\}\\
  &\leq 2\exp\left\{-\frac{1}{8}\lambda M^{0.2}\sqrt{a_{j-1}}\right\}=2\exp\left\{-\frac{1}{8} M^{0.1}\sqrt{a_{j-1}}\right\}.
  \end{aligned}
\end{equation*}
Here the last equality uses $\lambda=M^{-0.1}$. Combining this with \eqref{Ljk2-start} and Lemma~\ref{p-L1-k-k} yields
\begin{equation}\label{Ljk2-step2}
  \begin{aligned}
    &\mathbb{P}\left[\text{there exists $L\in\widetilde{\mathcal{L}}_{j\to k}^{(2)}$ such that }L\sim J_j\ \text{and }\#L=s'\right]\\
    &\leq p_k^1(s')\I_{\{j=k\}}\\
    &\quad +(b_k/b_j)\sum_{K\geq 2}\sum_{0\leq s'-s\leq 3(K-1)} \left(160\mu_\beta^2 Ma_j\exp\left\{-\frac{1}{8} M^{0.1}\sqrt{a_{j-1}}\right\}\right)^{K-1}\sum_{x_1+\cdots+x_K=s}\prod_{i=1}^K p_j^1(x_i)\\
    &\leq  p_k^1(s')\I_{\{j=k\}}+(b_k/b_j)\sum_{K\geq 2}\sum_{0\leq s'-s\leq 3(K-1)} \left(160\mu_\beta^2 Ma_j\exp\left\{-\frac{1}{8}M^{0.1}\sqrt{a_{j-1}}\right\}\right)^{K-1}\\
    &\qquad\qquad\qquad\qquad\cdot(2a_j)^K\left(800\mu_\beta Ma_j\right)^K \exp\left\{-c_6 M^{0.25}\max\left\{sa_{j-1}^{-1/2},a_{j-1}^{1/2}, A^{1/2}\right\}\right\}\\
    &=:p_k^1(s')\I_{\{j=k\}}+(b_k/b_j)\sum_{K\geq 2}\sum_{0\leq s'-s\leq 3(K-1)} T_{K,s},\\
    &=p_k^1(s')\I_{\{j=k\}}+(b_k/b_j)\sum_{s\leq s'}\sum_{K\geq 2: K-1\geq (s'-s)/3} T_{K,s}
  \end{aligned}
\end{equation}

In the following, for convenience, we denote
$$p_s=\exp\left\{-c_6 M^{0.25}\max\left\{sa_{j-1}^{-1/2},a_{j-1}^{1/2}, A^{1/2}\right\}\right\}.$$
Note that since $M$ is sufficiently large, by the definition of $a_k$ in \eqref{def-ai} we have
\begin{equation*}
  T_{K+1,s}/T_{K,s}\leq \widetilde{C}_1 \mu_\beta^4 M^2a_j^5\exp\left\{-\frac{1}{8} M^{0.1}\sqrt{a_{j-1}}\right\}<1/2
\end{equation*}
for some deterministic constant $\widetilde{C}_1<\infty$. As a result, applying this to \eqref{Ljk2-step2}, we obtain that for sufficiently large $M\in \mathds{N}$,
\begin{equation}\label{Ljk2-step3}
  \begin{aligned}
    &\mathbb{P}\left[\text{there exists $L\in\widetilde{\mathcal{L}}_{j\to k}^{(2)}$ such that }L\sim J_j\ \text{and }\#L=s' \right]\\
    &\leq p_k^1(s')\I_{\{j=k\}}+(b_k/b_j)\sum_{s\leq s'} \left(\widetilde{C}_1\mu_\beta^4 M^2a_j^5\exp\left\{-\frac{1}{8} M^{0.1}\sqrt{a_{j-1}}\right\}\right)^{\max\{(s'-s)/3,1\}} p_s\\
    &\leq p_k^1(s')\I_{\{j=k\}}+(b_k/b_j)\sum_{s\leq s'} \left(\exp\left\{-\frac{1}{16} M^{0.1}\sqrt{a_{j-1}}\right\}\right)^{\max\{(s'-s)/3,1\}} p_s\\
    &\leq p_k^1(s')\I_{\{j=k\}}+(b_k/b_j)\sum_{s\leq s'} \left(\exp\left\{-\frac{1}{32} M^{0.1}\sqrt{a_{j-1}}\right\}\right)\left(\exp\left\{-\frac{1}{32} M^{0.1}\sqrt{a_{j-1}}\right\}\right)^{(s'-s)/3} p_s\\
    &=p_k^1(s')\I_{\{j=k\}}+(b_k/b_j)\exp\left\{-\frac{1}{32} M^{0.1}\sqrt{a_{j-1}}\right\}\sum_{s\leq s'}\exp\left\{-\frac{1}{96}(s'-s)M^{0.1}\sqrt{a_{j-1}}\right\}p_s.
  \end{aligned}
\end{equation}

In addition, we let
\begin{equation*}
\begin{aligned}
  Y_s&=\exp\left\{-\frac{1}{96}(s'-s)M^{0.1}\sqrt{a_{j-1}}\right\}p_s\\
  &=\exp\left\{-\frac{1}{96}(s'-s)M^{0.1}\sqrt{a_{j-1}}-c_6M^{0.25}\max\left\{sa_{j-1}^{-1/2},a_{j-1}^{1/2}, A^{1/2}\right\}\right\}.
\end{aligned}
\end{equation*}
Then using 
the fact that $M$ is sufficiently large we have
  \begin{align}
    Y_{s+1}/Y_s&\geq \exp\left\{\frac{1}{96}M^{0.1}\sqrt{a_{j-1}}-c_6 M^{0.25}a_{j-1}^{-1/2}\right\}\nonumber\\
    &=\exp\left\{\frac{1}{96}M^{0.1}\sqrt{a_{j-1}}\left(1-96c_6 M^{0.15}a_{j-1}^{-1}\right)\right\}\nonumber\\
    &\geq \exp\left\{\frac{1}{96}M^{0.1}\sqrt{a_{j-1}}\left(1-96c_6M^{0.15}A^{-1}\right)\right\}>2,\nonumber
  \end{align}
where the last inequality uses $A=M^{0.2}$. Applying this to \eqref{Ljk2-step3}, we have
\begin{equation*}
  \begin{aligned}
    &\mathbb{P}\left[\text{there exists $L\in\widetilde{\mathcal{L}}_{j\to k}^{(2)}$ such that }L\sim J_j\right]\\
    &\leq p_k^1(s')\I_{\{j=k\}}+2(b_k/b_j)\exp\left\{-\frac{1}{32} M^{0.1}\sqrt{a_{j-1}}\right\}\exp\left\{-c_6 M^{0.25}\max\left\{s'a_{j-1}^{-1/2},a_{j-1}^{1/2}, A^{1/2}\right\}\right\}\\
    &\leq \begin{cases}
      1600\mu_\beta Ma_k \exp\left\{-c_6 M^{0.25} \max\left\{a_{k-1}^{1/2},s'a_{k-1}^{-1/2},A^{1/2}\right\}\right\},&\quad j=k;\\
     2(b_k/b_j)\exp\left\{-\frac{1}{32} M^{0.1}\sqrt{a_{j-1}}\right\}\exp\left\{-c_6 M^{0.25}s'a_{j-1}^{-1/2}\right\}, & \quad j<k,\ s'\in(a_{k-1},a_k],
    \end{cases}
  \end{aligned}
\end{equation*}
which implies the result.
\end{proof}

From Lemma~\ref{p-j-k-2} with $j=k$ and Remark \ref{rem-j=k}, we directly have the following relation between the probabilities $p_k$ and $q_k$, defined in \eqref{p0k}.

\begin{lemma}\label{p-to-q}
  For each $k\geq 1$, assume that \eqref{est-pk} holds for each $s\leq a_k$. Then we have \eqref{est-qk} for each $s\leq a_k$.
\end{lemma}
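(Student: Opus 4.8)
The plan is short: Lemma~\ref{p-to-q} is the specialization $j=k$ of Lemma~\ref{p-j-k-2}, together with one trivial inclusion. First I would record that by Definition~\ref{def-expansion}~(3) one has $\mathcal{G}_k=\widetilde{\mathcal{L}}^{(2)}_{k\to k}\setminus\widetilde{\mathcal{L}}^{(2)}_{k\to k,\mathrm{bad}}\subseteq\widetilde{\mathcal{L}}^{(2)}_{k\to k}$, so for every $a_k$-interval $J$ and every $s$ the event defining $q_k(s)$ in \eqref{p0k} is contained in the event defining $\widetilde{q}_k(s)$; maximising over $J$ gives $q_k(s)\le\widetilde{q}_k(s)$, which is the first inequality in \eqref{est-qk}. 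It then remains to bound $\widetilde{q}_k(s)$.

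For that, fix $s\le a_k$ and an $a_k$-interval $J$ and apply Lemma~\ref{p-j-k-2} with $j=k$ and $s'=s$: when $s\in(a_{k-1},a_k]$ this is Lemma~\ref{p-j-k-2} as stated, and when $s\le a_{k-1}$ the same bound holds by Remark~\ref{rem-j=k}. Since $j=k$ the summand carrying the factor $\I_{\{j<k\}}$ vanishes, leaving
\[
\mathbb{P}\bigl[\exists\,L\in\widetilde{\mathcal{L}}^{(2)}_{k\to k}:\ \#L=s,\ J\sim L\bigr]\le 1600\mu_\beta Ma_k\exp\Bigl\{-c_6M^{0.25}\max\bigl\{a_{k-1}^{1/2},\,sa_{k-1}^{-1/2},\,A^{1/2}\bigr\}\Bigr\}.
\]
Taking the maximum over $a_k$-intervals $J$ gives the bound on $\widetilde{q}_k(s)$, and combining it with the inclusion above yields \eqref{est-qk} for all $s\le a_k$. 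I would also flag that the hypotheses of Lemma~\ref{p-j-k-2} are met here: its requirement that \eqref{est-pk} hold at level $k$ for sizes $<s'$ is exactly the standing assumption of the present lemma, while its requirement that \eqref{est-pk} hold at all levels $j<k$ is provided by the outer induction in the proof of Proposition~\ref{lem-prob-CF}, inside which Lemma~\ref{p-to-q} is invoked.

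There is essentially no obstacle here; the content is pure bookkeeping. The only two points needing a line of care are the case split $s\le a_{k-1}$ versus $a_{k-1}<s\le a_k$ (absorbed by Remark~\ref{rem-j=k}) and verifying that Lemma~\ref{p-j-k-2} is genuinely applicable at the point of use, i.e.\ that its lower-level induction hypotheses are in force.
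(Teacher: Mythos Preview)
Your proposal is correct and follows exactly the paper's approach: the paper's proof is a one-liner invoking Lemma~\ref{p-j-k-2} with $j=k$ together with Remark~\ref{rem-j=k}, which is precisely what you do (with the inclusion $\mathcal{G}_k\subseteq\widetilde{\mathcal{L}}^{(2)}_{k\to k}$ spelled out). Your additional remarks about the case split $s\le a_{k-1}$ versus $s>a_{k-1}$ and about the lower-level hypotheses being supplied by the ambient induction are correct elaborations of what the paper leaves implicit.
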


Additionally, by applying the union bound on the probabilities in Lemma~\ref{p-j-k-2} with respect to $j$, we have the following estimate.

\begin{lemma}\label{p-j-k-2-2}
  For $\beta>0$, $k\geq 1,\ s'\in (a_{k-1},a_k]$ and sufficiently large $M\in \mathds{N}$ satisfying \eqref{cond-M1}, assume that there exists a constant $c_6>0$ {\rm(}depending only on $\beta${\rm)} such that \eqref{est-pk} holds for all $j<k$. Then the probability that an $a_k$-interval $J$ is connected to some animal $L\in\cup_{j<k}\widetilde{\mathcal{L}}^{(2)}_{j\to k}$ with $\# L=s'$ is at most $ \exp\{-c_6 M^{0.25}s'a_{k-1}^{-1/2}\}/2$.
\end{lemma}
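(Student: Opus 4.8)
\textbf{Proof proposal for Lemma~\ref{p-j-k-2-2}.}
The plan is to deduce the estimate directly from Lemma~\ref{p-j-k-2} by summing the $j<k$ bound over $j$. Fix $k\geq 3$ (for $k\leq 2$ the sum over $j<k$ with $j\geq 2$ is empty and there is nothing to prove, since the $j=1$ term of Lemma~\ref{p-j-k-2} vanishes: $a_0=0$ forces $a_0^{-1/2}=+\infty$ and hence $\exp\{-c_6M^{0.25}a_0^{-1/2}s'\}=0$). By the $j<k$ case of Lemma~\ref{p-j-k-2} — whose derivation, via Lemmas~\ref{p-L1-k-k} and \ref{p-j-k-2-step1} applied with parameter $j$ in place of $k$, appeals to \eqref{est-pk} only at indices $\leq j<k$, hence only to the hypothesis of the present lemma — we have, for every $j$ with $2\leq j<k$,
\[
\mathds{P}\left[\exists L\in\widetilde{\mathcal{L}}^{(2)}_{j\to k}:\ \#L=s',\ J\sim L\right]\leq 2(b_k/b_j)\exp\left\{-\tfrac{1}{32}M^{0.1}\sqrt{a_{j-1}}-c_6M^{0.25}s'a_{j-1}^{-1/2}\right\}.
\]
Using $b_k/b_j=\exp\{M^{0.2}(\sqrt{a_k}-\sqrt{a_j})\}$ and pulling out the factor $\exp\{-c_6M^{0.25}s'a_{k-1}^{-1/2}\}$, a union bound over $j$ reduces the claim to the purely deterministic inequality $\sum_{j=2}^{k-1}e^{E_j}\leq\frac14$, where
\[
E_j:=M^{0.2}\left(\sqrt{a_k}-\sqrt{a_j}\right)-\tfrac{1}{32}M^{0.1}\sqrt{a_{j-1}}-c_6M^{0.25}s'\left(a_{j-1}^{-1/2}-a_{k-1}^{-1/2}\right).
\]

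The heart of the matter is the bound $E_j\leq-\frac{c_6}{8}(k-j)M^{0.15}\sqrt{a_{k-1}}$ for all $2\leq j<k$, which I would establish by estimating the positive and the negative contributions separately. For the positive term I telescope: from $a_i\leq(1+\lambda)a_{i-1}+1$ one gets $\sqrt{a_i}-\sqrt{a_{i-1}}\leq\frac{\lambda}{2}\sqrt{a_{i-1}}+\frac{1}{2\sqrt{a_{i-1}}}\leq\frac{\lambda}{2}\sqrt{a_{k-1}}+\frac12$ for $i\leq k$, and since $\sqrt{a_{k-1}}\geq\sqrt{A}=M^{0.1}$ and $\lambda=M^{-0.1}$, summing over $i\in(j,k]$ gives $M^{0.2}(\sqrt{a_k}-\sqrt{a_j})\leq(k-j)M^{0.1}\sqrt{a_{k-1}}$. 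For the subtracted term I use $a_{k-1}\geq(1+\lambda)^{k-j}a_{j-1}$ together with the elementary bound $(1+\lambda)^{(k-j)/2}-1\geq\frac{k-j}{4}\lambda$ (valid for all integers $k-j\geq1$ and $\lambda\in(0,1)$), which yields $a_{j-1}^{-1/2}-a_{k-1}^{-1/2}\geq\frac{k-j}{4}\lambda\,a_{k-1}^{-1/2}$; combined with $s'>a_{k-1}$ this gives $c_6M^{0.25}s'(a_{j-1}^{-1/2}-a_{k-1}^{-1/2})\geq\frac{c_6}{4}(k-j)M^{0.15}\sqrt{a_{k-1}}$. Dropping the (negative) middle term of $E_j$ and using $M^{0.15}\gg M^{0.1}$ for $M$ large, we obtain $E_j\leq(k-j)\sqrt{a_{k-1}}(M^{0.1}-\frac{c_6}{4}M^{0.15})\leq-\frac{c_6}{8}(k-j)M^{0.15}\sqrt{a_{k-1}}$. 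Hence $\sum_{j=2}^{k-1}e^{E_j}\leq\sum_{\ell\geq1}e^{-\frac{c_6}{8}\ell M^{0.15}\sqrt{a_{k-1}}}\leq2e^{-\frac{c_6}{8}M^{0.15}\sqrt{a_{k-1}}}\leq2e^{-\frac{c_6}{8}M^{0.25}}$, which is $\leq\frac14$ once $M$ is large enough in terms of $\beta$ (through $c_6$), uniformly in $n$.

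I expect the main obstacle to be exactly this last bookkeeping: one must balance the four scales $M^{0.1},M^{0.15},M^{0.2},M^{0.25}$ against the geometric growth rate $1+\lambda=1+M^{-0.1}$ of $\{a_k\}$ and extract a factor that decays \emph{geometrically in} $k-j$, rather than merely summing $k-1$ comparable terms; this geometric gain is essential because $k$ can be as large as $K_*$, which grows with $n$, so a naive $\ln k$-type loss would not be uniform in $n$. The telescoping estimate for $\sqrt{a_k}-\sqrt{a_j}$ and the Bernoulli lower bound for $(1+\lambda)^{(k-j)/2}-1$ are precisely what convert the difference of the two leading contributions to $E_j$ into a quantity of order $-(k-j)M^{0.15}\sqrt{a_{k-1}}$, after which enlarging $M$ closes the argument; the degenerate $j=1$ term and the case $k\leq2$ are handled by the trivial observations noted above.
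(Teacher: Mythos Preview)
Your argument is correct and follows the paper's route: sum the $j<k$ case of Lemma~\ref{p-j-k-2} over $j$ and show that the $c_6M^{0.25}$ contribution absorbs the $b_k/b_j$ factor. The only difference is bookkeeping: the paper cancels $b_k/b_j$ exactly via the inequality $s'\bigl(a_{j-1}^{-1/2}-a_{k-1}^{-1/2}\bigr)\geq\tfrac12\bigl(\sqrt{a_k}-\sqrt{a_j}\bigr)$ and then sums the leftover factor $\exp\{-\tfrac{1}{32}M^{0.1}\sqrt{a_{j-1}}\}$ over $j\geq 2$, whereas you discard that middle factor and instead extract geometric decay in $k-j$ from the comparison of $M^{0.15}$ against $M^{0.1}$; your explicit handling of the degenerate $j=1$ term is a clarification the paper leaves implicit.
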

\begin{proof}
From Lemma~\ref{p-j-k-2}, it is sufficient to show that
  \begin{equation}\label{goal-jk2}
    2\sum_{j< k} (b_k/b_j)\exp\left\{-\frac{1}{32}M^{0.1}a_{j-1}^{1/2}-c_6 M^{0.25}s'a_{j-1}^{-1/2}\right\}\leq \frac{1}{2} \exp\left\{-c_6 M^{0.25}s'a_{k-1}^{-1/2}\right\}.
  \end{equation}
In the following, we will estimate the LHS of \eqref{goal-jk2} term by term.

For a fixed $j< k$, note that by the definition of $a_k$ in \eqref{def-ai} and $s'\in (a_{k-1},a_k]$, we have
\begin{equation*}
  \begin{aligned}
    s'\left(a_{j-1}^{-1/2}-a_{k-1}^{-1/2}\right)=s'\frac{a_{k-1}^{1/2}-a_{j-1}^{1/2}}{a_{j-1}^{1/2}a_{k-1}^{1/2}}
    &\geq a_{k-1}^{1/2}-a_{j-1}^{1/2}\\
    &\geq (1+\lambda)^{-1/2}\left(a_{k}^{1/2}-a_j^{1/2}\right)\geq \left(a_{k}^{1/2}-a_j^{1/2}\right)/2.
  \end{aligned}
\end{equation*}
As a result we get
\begin{equation*}
  \begin{aligned}
    -c_{6}M^{0.25}s'a_{j-1}^{-1/2}
    &= -c_{6}M^{0.25}s'a_{k-1}^{-1/2}-c_{6}M^{0.25}s'\left(a_{j-1}^{-1/2}-a_{k-1}^{-1/2}\right)\\
    &\leq -c_{6}M^{0.25}s'a_{k-1}^{-1/2}-\frac{1}{2}c_{6}M^{0.25}\left(a_{k}^{1/2}-a_j^{1/2}\right),
  \end{aligned}
\end{equation*}
which implies that (recalling the definition of $b_j$ in \eqref{def-bk}) for sufficiently large $M$,
\begin{equation*}
  \begin{aligned}
  \exp\left\{-c_{6}M^{0.25}s'a_{j-1}^{-1/2}\right\}&\leq \exp\left\{-c_{6}M^{0.25}s'a_{k-1}^{-1/2}\right\} \exp\left\{-\frac{1}{2}c_{6}M^{0.25}(a_{k}^{1/2}-a_j^{1/2})\right\}\\
  &\leq b_j/b_k \exp\left\{-c_{6}M^{0.25}s'a_{k-1}^{-1/2}\right\}.
  \end{aligned}
\end{equation*}
Applying this to \eqref{goal-jk2} yields that
\begin{equation*}
  \text{LHS of \eqref{goal-jk2}}\leq 2\exp\left\{-c_{6}M^{0.25}s'a_{k-1}^{-1/2}\right\}\sum_{j<k}\exp\left\{-\frac{1}{32} M^{0.1}a_{j-1}^{1/2}\right\}\leq \text{RHS of \eqref{goal-jk2}},
\end{equation*}
where the last inequality uses the fact that $M$ is sufficiently large.
\end{proof}

With the above lemmas at hand, we now can present the

\begin{proof}[Proof of Proposition \ref{lem-prob-CF}]
We start with the case where $k=1$. Recall that $a_1=A=M^{0.2}$. By Proposition~\ref{P-red-many-tree} and a union bound over all vertices in an $a_1$-interval, we have
   \begin{equation*}\label{prob-L2-1}
    p_1(s)\leq b_1\exp\{-c_1Ms\}\leq \exp\left\{M^{0.2}\sqrt{a_1}-c_1M\right\}\leq \exp\left\{-c_1M^{0.25}A^{1/2}\right\}
   \end{equation*}
   for all $s\geq 1$, where $c_1>0$ (depending only on $\beta$) is the constant defined in Proposition \ref{P-red-many-tree}.
Thus, \eqref{est-pk} holds for $k=1$ and $s\leq a_1$ with $c_6\leq c_1$.
In the rest of the proof, let $c_6=\min\{c_7,c_1\}$, where $c_7>0$ (depending only on $\beta$) is the constant defined in Lemma \ref{prob-1-bad}.

We now consider the case where $k>1$, assuming that \eqref{est-pk} and \eqref{est-qk} hold for all $l<k$ and all $s\leq a_{l}$. Based on induction and Lemma~\ref{p-to-q}, it suffices to show that \eqref{est-pk} holds for $k$ and $s\leq a_k$.

  We first consider the case when $s\leq a_{k-1}$. Let $J$ be an $a_k$-interval. In this case, the event in the definition of $p_k(s)$ in \eqref{p0k} is equivalent to the event that there exists $L\in\widetilde{\mathcal{L}}_{k}$ such that $J\sim L$ with either $\mathrm{deg}(L)\geq 20\mu_\beta Ma_{k-1}$ or $L\in \widetilde{\mathcal{L}}^{(2)}_{(k-1)\to (k-1),\mathrm{bad}}$. Hence,
  \begin{equation}\label{est-pk2}
  \begin{aligned}
  p_k(s)&\leq \mathds{P}\left[\text{there exists $L\in \widetilde{\mathcal{L}}_k$ such that }J\sim L\ \text{and }\mathrm{deg}(L)\geq 20\mu_\beta Ma_{k-1}\right]\\
  &\quad +\mathds{P}\left[\text{there exists $L\in \widetilde{\mathcal{L}}_k$ such that }J\sim L,\ \#L=s\ \text{and }L\in \widetilde{\mathcal{L}}^{(2)}_{(k-1)\to (k-1),\mathrm{bad}}\right]\\
  &=:(I)+(II).
  \end{aligned}
  \end{equation}

  For the first term $(I)$ on the RHS of \eqref{est-pk2}, using the similar arguments for deriving \eqref{prob-L2}, we get that
  \begin{equation}\label{est-I}
  \begin{aligned}
  &(I)\leq (4\mu_\beta)^s\exp\left\{-6\mu_\beta Ma_{k-1}\right\}<\frac{1}{2}\exp\left\{-c_6M^{0.25}a_{k-1}^{1/2}\right\}.
  \end{aligned}
  \end{equation}

We now turn to the second term $(II)$ on the RHS of \eqref{est-pk2}.
Recall for a red animal $L$, the event that $L\in \widetilde{\mathcal{L}}^{(2)}_{(k-1)\to (k-1),\mathrm{bad}}$ is equivalent to the event that  $L\in \widetilde{\mathcal{L}}^{(2)}_{(k-1)\rightarrow (k-1)}$ and it is $a_{k-1}$-bad, see Definition \ref{def-expansion} (3).
For simplicity, for any $l\in\mathds{N}$ and any red $a_l$-animal $L$, we write $R_{L,l}$ for the effective resistance in \eqref{eff-ik} with $k$ replacing by $l$, that is,
\begin{equation}\label{eff-L}
R_{L,l}=  R\left(B_{b_{l-1,l}}(L), B_{b_{l}}(L)^c;E_n\setminus E_{L\times L}\right),
\end{equation}
where $B_\cdot(L)$ is defined in \eqref{def-BL}. Let $\mathcal{AA}_{k-1}$ be the set of all admissible $a_{k-1}$-animals. Here we say $L$ is an admissible animal if
$$\mathbb{P}\left[L\in \widetilde{\mathcal{L}}^{(2)}_{(k-1)\to {(k-1)}}\right]>0.$$
Then from \eqref{p0k} and Definition \ref{def-a1bad}, it is clear that
\begin{equation}\label{term-II}
\begin{aligned}
(II)&= \sum_{L\in \mathcal{AA}_{k-1}: \# L=s}\mathds{P}\left[L\in\widetilde{\mathcal{L}}^{(2)}_{(k-1)\to {(k-1)}}, J\sim L\right] \cdot \mathds{P}\left[L\ \text{is $a_{k-1}$-bad}\ |L\in\widetilde{\mathcal{L}}^{(2)}_{(k-1)\to {(k-1)}}, J\sim L\right]\\
&\leq \left(\sum_{L\in \mathcal{AA}_{k-1}: \# L=s}\mathds{P}\left[L\in\widetilde{\mathcal{L}}^{(2)}_{(k-1)\to {(k-1)}}, J\sim L\right]\right) \\
& \quad\quad  \cdot \max_{L\in \mathcal{AA}_{k-1}: \# L=s} \mathds{P}\left[L\ \text{is $a_{k-1}$-bad}\ |L\in\widetilde{\mathcal{L}}^{(2)}_{(k-1)\to {(k-1)}}, J\sim L\right]\\
&\leq \frac{b_k}{b_{k-1}} \widetilde{q}_{k-1}(s)\\
&\quad \cdot  \max_{L\in \mathcal{AA}_{k-1}: \# L=s}\mathds{P}\left[R_{L,k-1}\leq c_*(20\mu_\beta Ma_{k-1})^{-1}\e^{\delta M^{0.2}\sqrt{a_{k-1}}/2}\ |\text{$ L\in \widetilde{\mathcal{L}}^{(2)}_{(k-1)\to {(k-1)}}$ , $J\sim L$}\right].\\
\end{aligned}
\end{equation}
Here the first term in the last inequality follows from a union bound over all $a_{k-1}$-intervals and then uses the fact that each $ a_{k-1}$-interval is connected to at most one red animal $L\in \widetilde{\mathcal{L}}^{(2)}_{(k-1)\to {(k-1)}}$ from Definitions \ref{def-LECF} and \ref{def-NNCF}.

In the following, we aim to estimate the final conditional probability in \eqref{term-II} for any fixed $L\in\mathcal{AA}_{k-1}$. To achieve this, for any $\varpi(i)\in V_n\setminus L$ with $\varpi(i)\sim L$, we write
\begin{equation}\label{eff-ik2}
R_{\varpi(i),l}=  R\left(B_{b_{l-1,l}}(\varpi(i)), B_{b_l}(\varpi(i))^c;E_{\varpi(i),l}\right),
\end{equation}
where
$$
E_{\varpi(i),l}:=E_n\setminus \left(E_{L\times L}\cup\left(\cup_{\varpi(j)\in V_n\setminus (L\cup \{\varpi(i)\})}B_{b_{l-1,l}}(\varpi(j))\right) \right).
$$
Since $L\in \widetilde{\mathcal{L}}^{(2)}_{(k-1)\to {(k-1)}}$, one has $\text{deg}(L)\leq 20\mu_\beta Ma_{k-1}$. Combining this with \eqref{eff-L} and \eqref{eff-ik2} gives that
\begin{equation*}
\left\{R_{L,k-1}\leq c_*(20\mu_\beta Ma_{k-1})^{-1}\e^{\delta M^{0.2}\sqrt{a_{k-1}}/2}\right\}\subset \bigcup_{\varpi(i)\in V_n\setminus L:\varpi(i)\sim L}\left\{R_{\varpi(i),k-1}\leq c_*\e^{\delta M^{0.2}\sqrt{a_{k-1}}/2}\right\}.
\end{equation*}
This implies that
\begin{equation}\label{term-II-2}
\begin{aligned}
&\mathds{P}\left[R_{L,k-1}\leq c_*(20\mu_\beta Ma_{k-1})^{-1}\e^{\delta M^{0.2}\sqrt{a_{k-1}}/2}\ |\text{$ L\in \widetilde{\mathcal{L}}^{(2)}_{(k-1)\to {(k-1)}}$ , $J\sim L$}\right]\\
&\leq \sum_{\varpi(i)\in V_n\setminus\{L\}: \varpi(i)\sim L}\mathds{P}\left[R_{\varpi(i),k-1}\leq c_*\e^{\delta M^{0.2}\sqrt{a_{k-1}}/2}\ |\text{$L\in \widetilde{\mathcal{L}}^{(2)}_{(k-1)\to {(k-1)}}$, $J\sim L$}\right].
\end{aligned}
\end{equation}

We now claim that the two events in the conditional probability on the RHS of \eqref{term-II-2} are independent.
Indeed, on the one hand, from Definition \ref{def-a1bad}, we can see that the effective resistance $R_{\varpi(i),k-1}$ defined in \eqref{eff-ik2} is independent of the number of vertices contained in $L$ and its degree.
On the other hand, we claim that if $L$ is upgraded from some $L'\in \widetilde{\mathcal{L}}_{a_l}$ for some $l<k-1$ due to the $a_l$-bad condition, then for any $\varpi(i)\sim L'\subset L$, the collection of edges responsible for the resistance $R_{\varpi(i),l}$ does not overlap with that for $R_{\varpi(i),k-1}$.
Otherwise, suppose that $R_{\varpi(i),l}$ and $R_{\varpi(i),k-1}$ do share a common responsible edge $\langle \varpi(u),\varpi(v)\rangle$. Then from \eqref{eff-ik2}, we can deduce that
$$
\langle \varpi(u),\varpi(v)\rangle \in E_{(B_{b_l}(\varpi(i))\setminus B_{b_{l-1,l}}(\varpi(i)))\times (B_{b_{k-1}}(\varpi(i))\setminus B_{b_{k-2,k-1}}(\varpi(i)))}.
$$
However, according to Definition \ref{def-LECF}, the edge $\langle \varpi(u),\varpi(v)\rangle$ must be incorporated into $L$ through the $a_l$-LEAF process. Combining this with the definition of $R_{\varpi(i),k-1}$ in \eqref{eff-ik2}, we can find that the resistance  $R_{\varpi(i),k-1}$ does not use the edge $\langle \varpi(u),\varpi(v)\rangle$, leading to a contradiction (See Figure~\ref{fig-resist} for an illustration).

\begin{figure}[h]
  \includegraphics[scale=0.7]{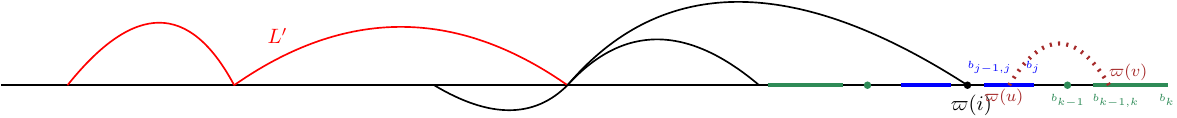}
  \caption{The illustration for the proof of the claim about the independence. The red curves represent some red animal $L'\in\widetilde{\mathcal{L}}_{a_l}\ (l<k-1)$ that can be upgraded to some red $a_{k-1}$-animal $L$. The black points are the vertices connected to $L'$ directly. The blue and green lines represent the annulus $B_{b_l}(\varpi(i))\setminus B_{b_{l-1,l}}(\varpi(i))$ and $B_{b_{k-1}}(\varpi(i))\setminus B_{b_{k-2,k-1}}(\varpi(i))$, respectively. The dotted brown curve represents the absence of the long edge directly connecting $B_{b_l}(\varpi(i))\setminus B_{b_{l-1,l}}(\varpi(i))$ and $B_{b_{k-1}}(\varpi(i))\setminus B_{b_{k-2,k-1}}(\varpi(i))$. Since the resistance $R_{\varpi(i),k-1}$ depends only on edges with one endpoint in $B_{b_{k-1}}(\varpi(i))\setminus B_{b_{k-2,k-1}}(\varpi(i))$, it does not share any edge with the resistance $R_{\varpi(i),l}$. This implies these two resistances are independent.
  }
  \label{fig-resist}
\end{figure}

Additionally, recall that $\delta\in (0,\delta^*]$ is fixed. Here $\delta^*$ is the constant defined in Lemma \ref{Lem-RNtail-1}. According to  \eqref{eff-ik2}, Lemma \ref{Lem-RNtail-1}, the monotonicity of the effective resistance and the translation invariance of the model, we get that for $\varpi(i)\in V_n\setminus L$ with $\varpi(i)\sim L$,
\begin{equation*}
\begin{aligned}
&\mathds{P}\left[R_{\varpi(i),k-1}\leq c_*\e^{\delta M^{0.2}a_{k-1}^{1/2}/2}\right]\\
&\leq \mathds{P}\left[R\left([-b_{k-1,k},b_{k-1,k}], [-b_k,b_k]^c\right)\leq c_*\e^{\delta M^{0.2}\sqrt{a_{k-1}}/2}\ | [-b_{k-1,k},b_{k-1,k}]\nsim [-b_k,b_k]^c\right]\\
&\leq \exp\left\{-\widetilde{C}_*M^{0.2}a_{k-1}^{1/2}\right\},
\end{aligned}
\end{equation*}
where $\widetilde{C}_*:=(2C_*)^{-1}$, $c_*$ and $C_*$ are the constants defined in Lemma \ref{Lem-RNtail-1}.
 Applying this and $\text{deg}(L)\leq 20\mu_\beta Ma_{k-1}$ for $L\in \widetilde{\mathcal{L}}^{(2)}_{(k-1)\rightarrow (k-1)}$ to \eqref{term-II-2} and then to \eqref{term-II} yields
\begin{equation}\label{est-II2}
(II)\leq \frac{b_k}{b_{k-1}} \widetilde{q}_{k-1}(s)(20\mu_\beta M a_{k-1})\exp\left\{-\widetilde{C}_*M^{0.2}a_{k-1}^{1/2}\right\}.
\end{equation}
Thus it suffices to show that the RHS of \eqref{est-II2} can be bounded above by $\exp\{-c_6 M^{0.25}a_{k-1}^{1/2}\}/2$.

Recall that $s\leq a_{k-1}$ and we assume \eqref{est-pk} and \eqref{est-qk} hold for all $l<k$ and all $s\leq a_{l}$.
Applying \eqref{est-qk} to $\widetilde{q}_{k-1}(s)$ on the RHS of \eqref{est-II2} yields that
    \begin{align*}
      &\text{RHS of \eqref{est-II2}}\label{goal-1-1}\\
      &\leq 40000\mu_\beta^2 M^2 a_{k-1}^2 \exp\left\{M^{0.2}(a_k^{1/2}-a_{k-1}^{1/2})-c_{6,k-1}M^{0.25}a_{k-2}^{1/2}-\widetilde{C}_*M^{0.2}a_{k-1}^{1/2}\right\}\nonumber\\
      &\leq \exp\left\{\lambda M^{0.2}a_{k-1}^{1/2}-c_{6}M^{0.25}a_{k-2}^{1/2}-\frac{1}{2}\widetilde{C}_*M^{0.2}a_{k-1}^{1/2}\right\}\quad \quad \text{by \eqref{def-ai} and $M$ sufficiently large}\nonumber\\
    &\leq \exp\left\{-c_6 M^{0.25}a_{k-1}^{1/2}-a_{k-1}^{1/2}\left(\frac{1}{2}\widetilde{C}_*M^{0.2}-2c_{6,k-1}\lambda M^{0.25}-\lambda M^{0.2}\right)\right\}\quad \text{by }a_{k-1}^{1/2}-a_{k-2}^{1/2}\leq 2\lambda a_{k-1}^{1/2}\nonumber\\
    &\leq \exp\left\{-c_6 M^{0.25}a_{k-1}^{1/2}-a_{k-1}^{1/2}\left(\frac{1}{2}\widetilde{C}_*M^{0.2}-2c_{6,k-1} M^{0.15}- M^{0.1}\right)\right\}\quad \quad \text{by }\lambda =M^{-0.1}\nonumber\\
    &\leq \frac{1}{2}\exp\left\{-c_6 M^{0.25}a_{k-1}^{1/2}\right\} \quad \quad \text{by $M$ sufficiently large}\nonumber.
    \end{align*}
Combining this with \eqref{est-II2} and \eqref{est-I}, we get that \eqref{est-pk} holds for $s\leq a_{k-1}$.

In the final part of the proof, we consider the case where $s>a_{k-1}$. From Lemma~\ref{prob-1-bad} and $s\leq a_k$, it is obvious that
  \begin{equation*}
    \begin{aligned}
      &\mathbb{P}\left[\text{there exists $L\in \mathcal{L}_{k}$ such that }\# L=s\ \text{and }J\sim L\right]\\
      &\leq \exp\left\{-c_7M\max\{s,a_k\}\right\}\\
      &\leq \frac{1}{4}\exp\left\{-c_6 M^{0.25}sa_{k-1}^{-1/2}\right\}\quad \quad \text{by }c_6=\min\{c_7,c_1\}.
    \end{aligned}
  \end{equation*}
  Combining this with Lemmas~\ref{prob-total-explore}, \ref{p-j-k-2-2} and the definition of $\widetilde{\mathcal{L}}_k$ in \eqref{L0ak}, we get the desired result for  $s\geq a_{k-1}$.
\end{proof}

Finally, we present the
\begin{proof}[Proof of Proposition \ref{prop-eventH}]
  From Proposition \ref{lem-prob-CF}, by taking a union bound we have
  \begin{equation*}
  \begin{aligned}
    \mathds{P}[\mathcal{H}^c]&\leq \sum_{k>K_{*}} \frac{n}{b_k}\exp\left\{-c_6  M^{0.25}a_{k-1}^{1/2}\right\}\\
    &=\sum_{k>K_{*}}n\exp\left\{-M^{0.2}a_k^{1/2}-c_6  M^{0.25}a_{k-1}^{1/2}\right\}\\\
    &\leq 2n \exp\left\{-c_6 M^{0.25}a_{K_{*}}^{1/2}\right\}\\
    &\leq 2n\cdot 2^{-1}n^{-3}=n^{-2}\quad \text{by definition of $K_{*}$ in \eqref{def-K}}.
    \end{aligned}
  \end{equation*}
Hence, we complete the proof.
\end{proof}

\subsection{Proof of Proposition \ref{weak-supermult}}\label{sect-proofwm}
Recall that $\mathcal{H}$ is the event that $\bigcup_{k>K_{*}}\widetilde{\mathcal{L}}_{k}=\emptyset$.
For each $k\geq 1$, $\mathcal{G}_k=\widetilde{\mathcal{L}}^{(2)}_{k\to k}\setminus \widetilde{\mathcal{L}}^{(2)}_{k\to k,\mathrm{bad}}$, see Definition \ref{def-expansion} (3).
For simplicity,  we will refer to a red animal in $\mathcal{G}_k$ as a good red $a_k$-animal.

Combining the definition of  $\mathcal{H}$ and  Definition \ref{def-expansion}, it is clear that on the event $\mathcal{H}$, each red component $L\in \mathcal{L}$ can be expanded to a good red $a_k$-animal for some $k\geq 1$. That is, there exists a good red animal $\widetilde{L}\in \cup_{k\leq K_{*}}\mathcal{G}_k$ such that $L\subset \widetilde{L}$, which means both the vertices and edges within $L$ are included in $\widetilde{L}$.
Additionally, by Definition \ref{def-expansion}, we can derive the following properties for each good red $a_k$-animal $L\in \mathcal{G}_k$:
\begin{itemize}

\item[(P1)]  For each black vertex $\varpi(i)\in V_n$ with $\varpi(i)\sim L$, there is no long edge connecting $B_{b_{k-1}}(\varpi(i))$ and $B_{b_{k-1,k}}(\varpi(i))^c$ directly, nor is there any long edge connecting $B_{b_{k-1,k}}(\varpi(i))$ and $B_{b_{k}}(\varpi(i))^c$ directly.

\item[(P2)] There is no other good red $a_k$-animal $\widetilde{L}\in \mathcal{G}_k$ such that
$$\text{dist}(L,\widetilde{L})\leq 2b_k=2\exp\left\{M^{0.2}a_k^{1/2}\right\}.$$

\item[(P3)] For each black vertex $\varpi(i)\in V_n$ with $\varpi(i)\sim L$,
$$
R\left(B_{b_{k-1,k}}(L), B_{b_k}(L)^c;E_n\setminus E_{L\times L}\right)> c_*(20\mu_\beta Ma_k)^{-1}\exp\left\{\delta M^{0.2}a_k^{1/2}/2\right\},
$$
where $\delta\in (0,\delta^*]$, $c_*$ and $\delta^*$ are the constants defined in Lemma \ref{Lem-RNtail-1} and $B_{\cdot}(L)$ is defined in \eqref{def-BL}.
\end{itemize}

In the following, let us assume that the event $\mathcal{H}$ occurs. We fix a unit flow $g$ from $\varpi(0)$ to $\varpi(n-1)$ restricted to $V_n$ and a good red $a_k$-animal $L$.
We will show the energy generated by the flow $g$ through $L$ is significantly smaller than the energy generated by the flow $g$ through the neighboring region of $L$.
Specifically, we denote by $\theta_{g,L}$ the total amount of the flow $g$ into $L$, given by
\begin{equation*}
\theta_{g,L}=\sum_{\varpi(i)\in L}\sum_{\varpi(j): \varpi(j)\text{ is black}}g_{\varpi(j)\varpi(i)}\I_{\{g_{\varpi(j)\varpi(i)}>0\}}.
\end{equation*}
Since $L\in\mathcal{G}_k\subset \widetilde{\mathcal{L}}^{(2)}_{k\rightarrow k}$, it is evident that $\#L\leq a_k$ and $\text{deg}(L)\leq 20\mu_\beta Ma_k$.
Therefore, from this and (P3) we obtain
\begin{equation}\label{g-L}
\sum_{\langle \varpi(i),\varpi(j)\rangle\in E_{L \times L}}g_{\varpi(i)\varpi(j)}^2\leq \theta^2_{g,L} a^2_k
\end{equation}
and
\begin{equation}\label{g-Lint}
\begin{aligned}
&\sum_{\varpi(i)\in V_n \setminus L: \varpi(i)\sim L}\sum_{\langle \varpi(j_1),\varpi(j_2)\rangle\in (E_{B_{b_k}(\varpi(i))^2}\setminus E_{L\times L}) }g_{\varpi(j_1)\varpi(j_2)}^2\\
&\geq \theta^2_{g,L} R\left(B_{b_{k-1,k}}(L), B_{b_k}(L)^c;E_n\setminus E_{L\times L}\right)\\
&\geq c_*\theta^2_{g,L}(20\mu_\beta Ma_k)^{-1}\exp\left\{\delta M^{0.2}a_k^{1/2}/2\right\}.  
\end{aligned}
\end{equation}
In addition, according to the definition of $a_k$ in \eqref{def-ai}, we can see that for sufficiently large $M\in \mathds{N}$,
\begin{equation*}
c_*(20\mu_\beta Ma_k)^{-1}\exp\left\{\delta M^{0.2}a_k^{1/2}/2\right\}\geq a_k^3\quad \text{for all }k\geq 1.
\end{equation*}
Combining this with \eqref{g-L} and \eqref{g-Lint}, we get that
\begin{equation}\label{energy-L-B}
\sum_{\langle \varpi(i),\varpi(j)\rangle\in E_{L\times L} }g_{\varpi(i)\varpi(j)}^2\leq \frac{1}{a_k}\sum_{\varpi(i)\in V_n\setminus L : \varpi(i)\sim L}\sum_{\langle \varpi(j_1),\varpi(j_2)\rangle\in (E_{B_{b_k}(\varpi(i))^2}\setminus E_{L\times L})}g_{\varpi(j_1)\varpi(j_2)}^2.
\end{equation}
This implies that the energy generated by the flow $g$ through $L$ is significantly smaller than the energy generated by the flow $g$ through the region $\cup_{\varpi(i)\in V_n\setminus L:\ \varpi(i)\sim L}B_{b_k}(\varpi(i))$.

Notice that the property (P2) implies that for any $k\geq 1$ and any $L_1,L_2\in \mathcal{G}_k$, the aforementioned neighboring regions for different animals do not intersect. Specifically, we have
\begin{equation*}
\left(\bigcup_{\varpi(i)\in V_n\setminus L_1:\varpi(i)\sim L_1}B_ {b_k}\left(\varpi(i)\right)\right)\bigcap \left(\bigcup_{\varpi(i)\in V_n\setminus L_2:\varpi(i)\sim L_2}B_ {b_k}(\varpi(i))\right)=\emptyset.
\end{equation*}
Therefore, we can apply \eqref{energy-L-B} to the good red animals in $\mathcal{G}_1$ to get
\begin{equation}\label{property-g}
\begin{aligned}
&\sum_{\langle \varpi(i),\varpi(j)\rangle\in E_n}g_{\varpi(i)\varpi(j)}^2\\
&=\sum_{k\geq 1}\sum_{L\in \mathcal{G}_k}\sum_{\langle \varpi(i),\varpi(j)\rangle\in E_{L\times L}}g_{\varpi(i)\varpi(j)}^2+\sum_{\langle \varpi(i),\varpi(j)\rangle\in E_n\setminus(\cup_{k\geq 1}\cup_{L\in \mathcal{G}_k}E_{L\times L})}g_{\varpi(i)\varpi(j)}^2\\
&\leq \left(1+\frac{1}{a_1}\right)\left(\sum_{k\geq 2}\sum_{L\in \mathcal{G}_k}\sum_{\langle \varpi(i),\varpi(j)\rangle\in E_{L\times L}}g_{\varpi(i)\varpi(j)}^2+\sum_{\langle \varpi(i),\varpi(j)\rangle\in E_n\setminus(\cup_{k\geq 1}\cup_{L\in \mathcal{G}_k}E_{L\times L})}g_{\varpi(i)\varpi(j)}^2\right).
\end{aligned}
\end{equation}
By iteratively applying \eqref{energy-L-B} to the good red animals in $\mathcal{G}_k$ for all $k\geq 1$, we obtain that for each $l\geq 1$,
\begin{equation*}
\begin{aligned}
&\sum_{k\geq l}\sum_{L\in \mathcal{G}_k}\sum_{\langle \varpi(i),\varpi(j)\rangle\in E_{L\times L}}g_{\varpi(i)\varpi(j)}^2+\sum_{\langle \varpi(i),\varpi(j)\rangle\in E_n\setminus(\cup_{k\geq 1}\cup_{L\in \mathcal{G}_k}E_{L\times L})}g_{\varpi(i)\varpi(j)}^2\\
&\leq \left(1+\frac{1}{a_l}\right)\left(\sum_{k\geq l+1}\sum_{L\in \mathcal{G}_k}\sum_{\langle \varpi(i),\varpi(j)\rangle\in E_{L\times L}}g_{\varpi(i)\varpi(j)}^2+\sum_{\langle \varpi(i),\varpi(j)\rangle\in E_n\setminus(\cup_{k\geq 1}\cup_{L\in \mathcal{G}_k}E_{L\times L})}g_{\varpi(i)\varpi(j)}^2\right).
\end{aligned}
\end{equation*}
Combining this with \eqref{property-g} yields
\begin{equation}\label{property-g2}
\begin{aligned}
\sum_{\langle \varpi(i),\varpi(j)\rangle\in E_n}g_{\varpi(i)\varpi(j)}^2&\leq \prod_{k\geq 1}\left(1+\frac{1}{a_k}\right)\sum_{\langle \varpi(i),\varpi(j)\rangle\in E_n\setminus(\cup_{k\geq 1}\cup_{L\in \mathcal{G}_k}E_{L\times L})}g_{\varpi(i)r\varpi(j)}^2\\
&\leq C_1\sum_{\langle \varpi(i),\varpi(j)\rangle\in E_n\setminus(\cup_{k\geq 1}\cup_{L\in \mathcal{G}_k}E_{L\times L})}g_{\varpi(i)\varpi(j)}^2
\end{aligned}
\end{equation}
for some constant $C_1<\infty$ (depending only on $\beta$), where the last inequality follows from the definition of $a_k$ in \eqref{def-ai}.

With the above analysis, we can present the
\begin{proof}[Proof of Proposition \ref{weak-supermult}]
We first assume that the event $\mathcal{H}$ occurs.
Let $f$ be the unit electric flow from $0$ to $mn-1$ restricted to $[0,mn)$ in the LRP model. By the definition of flow, for each $i\in [0,n)$,
\begin{equation*}
\sum_{u\in I_i}\sum_{j\in [0,n)\setminus\{i\}}\sum_{v\in I_j}f_{vu}\I_{\{f_{vu}>0\}}
\end{equation*}
represents  the amount of flow entering into the interval $I_i$. In addition,  recall that $R_{I_i}$ denotes the internal (optimal) energy in $I_i$ as defined in \eqref{def-RIi}. Consequently,
\begin{equation}\label{inter-Ii}
\frac{1}{2}\sum_{u\in I_i}\sum_{v\in I_i}f_{uv}^2
\geq \left(\sum_{u\in I_i}\sum_{j\in [0,n)\setminus\{i\}}\sum_{v\in I_j}f_{vu}\I_{\{f_{vu}>0\}}\right)^2R_{I_i}
\end{equation}
for each $i\in [0,n)$.

We now construct a unit flow $g$ from $\varpi(0)$ to $\varpi(n-1)$ in the graph $G_n$ based on $f$, which is defined as
\begin{equation}\label{def-f-g}
g_{\varpi(i)\varpi(j)}=\sum_{u\in I_i}\sum_{v\in I_j}f_{uv}\quad \text{for all } i,j\in [0,n).
\end{equation}
Applying the flow $g$ to \eqref{property-g2} and according to \eqref{inter-Ii} and the definition of black vertices (i.e. $(\delta,\alpha)$-very good vertices, see Definition \ref{def-alphagood-1}), we obtain that on the event $\mathcal{H}$,
\begin{align*}
R_{[0,mn)}(0,mn-1)&=\frac{1}{2}\sum_{u\sim v}f_{uv}^2\geq \sum_{i\in [0,n)}\left(\sum_{u\in I_i}\sum_{v\in I^c_i}f_{vu}\I_{\{f_{vu}>0\}}\right)^2R_{I_i}\\
&= \sum_{i\in[0,n)}\left(\sum_{u\in I_i}\sum_{j\in [0,n)\setminus\{i\}}\sum_{v\in I_j}f_{vu}\I_{\{f_{vu}>0\}}\right)^2R_{I_i}\\
&\geq  \sum_{i\in[0,n)}\sum_{j\in [0,n)\setminus\{i\}}\left(\sum_{u\in I_i}\sum_{v\in I_j}f_{vu}\I_{\{f_{vu}>0\}}\right)^2R_{I_i}\\
&\geq \sum_{i\in[0,n)}\sum_{j\in [0, n)\setminus\{i\}} g^2_{\varpi(j)\varpi(i)}\I_{\{g_{\varpi(j)\varpi(i)}>0\}}R_{I_i} \quad \quad \text{by \eqref{def-f-g}}\\
&\geq \sum_{\langle \varpi(i),\varpi(j)\rangle\in E_n\setminus(\cup_{k\geq 1}\cup_{L\in \mathcal{G}_k}E_{L\times L})}g_{\varpi(j)\varpi(i)}^2\I_{\{g_{\varpi(j)\varpi(i)}>0\}} R_{I_i}\\
&\geq \frac{1}{2}\alpha_2 m^\delta\sum_{\langle \varpi(i),\varpi(j)\rangle\in E_n\setminus(\cup_{k\geq 1}\cup_{L\in \mathcal{G}_k}E_{L\times L})}g_{\varpi(i)\varpi(j)}^2 \quad \quad \text{by Definition \ref{def-alphagood-1}}\\
&\geq \frac{1}{2C_1}\alpha_2 m^\delta\sum_{\langle \varpi(i), \varpi(j)\rangle \in E_n}g_{\varpi(i)\varpi(j)}^2\quad \quad \text{by \eqref{property-g2}}\\
&\geq \frac{1}{2C_1}\alpha_2 m^{\delta}R^{G}_{V_n}(\varpi(0),\varpi(n-1))=:\widetilde{c}_1m^{\delta}R^{G}_{V_n}(\varpi(0),\varpi(n-1)),
\end{align*}
where $\alpha_2>0$ and $C_1$ (depending only on $\beta$) are the constants in Definition \ref{def-alphagood-1} and \eqref{property-g2}, respectively. Consequently, combining this with the scaling invariance of the LRP model and $\mathds{P}[\mathcal{H}^c]\leq n^{-2}$, we get that
\begin{equation}\label{exponent-delta}
\begin{aligned}
&\mathds{E}\left[R_{[0,mn)}(0,mn-1)\right]\\
&\geq \mathds{E}\left[R_{[0,mn)}(0,mn-1)\I_{\mathcal{H}}\right]\\
&\geq \widetilde{c}_1m^{\delta}\mathds{E}\left[R^{G}_{V_n}(\varpi(0),\varpi(n-1))\I_{\mathcal{H}}\right]\\
&=\widetilde{c}_1m^{\delta}\left(\mathds{E}\left[R^{G}_{V_n}(\varpi(0),\varpi(n-1))\right]-\mathds{E}\left[R^{G}_{V_n}(\varpi(0),\varpi(n-1))\I_{\mathcal{H}^c}\right]\right)\\
&\geq \widetilde{c}_1m^{\delta}\left(\mathds{E}\left[R^{G}_{V_n}(\varpi(0),\varpi(n-1))\right]-n\mathds{P}\left[\mathcal{H}^c\right]\right)\quad\quad  \text{by }R^{G}_{V_n}(\varpi(0),\varpi(n-1))\leq n\\
&\geq \widetilde{c}_2m^{\delta}\Lambda(n)\quad\quad  \text{by Proposition \ref{R(0,n)main} and Remark \ref{rem-delta*} (2)}
\end{aligned}
\end{equation}
for some constant $\widetilde{c}_2>0$ depending only on $\beta$.
\end{proof}

\section{Several types of resistances in $[0,n)$}\label{sect-severaltype}
Throughout this section, we fix a sufficiently large $n\in \mathds{N}$.
The objective of this section is to establish the relationships and estimates between different types of resistances, including point-to-point resistance, point-to-box resistance, and box-to-box resistance. To this end, we begin in Section \ref{secondm} by providing a second moment bound for point-to-point resistances.
Next, in Section \ref{sect-pb}, we will use the second moment method to show that the point-to-box resistance $R(0,[-n,n]^c)$ is comparable to the point-to-point resistance $\Lambda(n)$. Finally, in Section \ref{sect-bb}, we want to prove that the box-to-box resistance $R([-n,n],[-2n,2n]^c)$, conditioned on the absence of edges connecting $[-n,n]$ and $[-2n,2n]^c$, is also comparable to the previous two types of resistances.


\subsection{The second moment bound}\label{secondm}

The following proposition is the main output of this subsection, which gives a relationship  between the second moment of the point-to-point resistances and their first moment. We employ a slightly modified technique that has already been used in \cite[Section 8]{Baumler23a} to prove a similar result for the graph distance in the $\beta$-LRP model.

\begin{proposition}\label{prop-2thmoment}
For all $\beta>0$, there exists a constant $C_1=C_1(\beta)<\infty$ {\rm(}depending only on $\beta${\rm)} such that for all $n\in \mathds{N}$ and all $i,j\in [0,n)$,
\begin{equation*}
\mathds{E}\left[R_{[0,n)}(i,j)^2\right]\leq C_1\Lambda(n)^2.
\end{equation*}
\end{proposition}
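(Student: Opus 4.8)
The plan is to adapt the second-moment argument of \cite[Section 8]{Baumler23a} for graph distance to the resistance setting, using the submultiplicativity estimate (Proposition~\ref{submult}), the comparison of close and far points (Proposition~\ref{R(0,n)main}), and the weak supermultiplicativity (Proposition~\ref{weak-supermult}) established above. Fix $n$ large (small $n$ is handled trivially since $R_{[0,n)}(i,j)\le n$ and $\Lambda(n)\ge c n^{\delta^*}>0$ by Remark~\ref{rem-delta*}). The idea is a renormalization at a fixed, small scale. Pick a constant $m_0=m_0(\beta)$, to be chosen, write $n=m_0 q+r$ with $0\le r<m_0$, and consider the renormalization $G=(V,E)$ from the $\beta$-LRP model obtained by identifying blocks $I_i=[im_0,(i+1)m_0)$ with vertices $\varpi(i)$, restricted to $V_q=\{\varpi(0),\dots,\varpi(q-1)\}$. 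Using the same flow construction as in the proof of Proposition~\ref{submult} (the map $g\mapsto f$ of Section~\ref{proof-submult}, together with Lemma~\ref{network} and Lemma~\ref{lem:compare}), one obtains for any $a,b\in[0,n)$ with $a\in I_{i_a}$, $b\in I_{j_b}$, \emph{conditionally on} $\widehat{\mathcal E}=\mathcal E\setminus(\cup_i\mathcal E_{I_i\times I_i})$,
\begin{equation*}
R_{[0,n)}(a,b)\le \frac12\sum_{\varpi(i)\sim\varpi(j)}g_{\varpi(i)\varpi(j)}^2
+\sum_{\varpi(i)\in V_q}\Big(\sum_{\varpi(k):g_{\varpi(k)\varpi(i)}>0}g_{\varpi(k)\varpi(i)}\Big)^2\,\rho_i,
\end{equation*}
where $g$ is the unit electric flow on $G|_{V_q}$ realizing $R^G_{V_q}(\varpi(i_a),\varpi(j_b))$ (hence $\widehat{\mathcal E}$-measurable), and $\rho_i=\max_{x,y\in I_i}R_{I_i}(x,y)$ are independent of $\widehat{\mathcal E}$ and i.i.d.\ across $i$. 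The first term equals $2R^G_{V_q}(\varpi(i_a),\varpi(j_b))$.

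\textbf{Main steps.} (i) Square the displayed bound and take expectations. Squaring gives at most $3$ times the sum of squares of the three pieces after splitting; the cross terms are controlled by Cauchy--Schwarz in terms of those same squares, so it suffices to bound $\mathds E[R^G_{V_q}(\varpi(i_a),\varpi(j_b))^2]$, $\mathds E[(\sum_i \Theta_i^2\rho_i)^2]$ where $\Theta_i:=\sum_{k:g_{\varpi(k)\varpi(i)}>0}g_{\varpi(k)\varpi(i)}$, and note these are the only contributions. (ii) For the $\rho$-term, condition on $\widehat{\mathcal E}$: then $\{\rho_i\}$ are i.i.d., independent of the (deterministic-given-$\widehat{\mathcal E}$) weights $\Theta_i^2$, and $\mathds E[\rho_i]\le \Lambda(m_0)$, $\mathds E[\rho_i^2]\le C(\beta,m_0)$ (a finite constant since $I_i$ has only $m_0$ vertices). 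Expanding the square and separating diagonal from off-diagonal terms,
\begin{equation*}
\mathds E\Big[\big(\textstyle\sum_i\Theta_i^2\rho_i\big)^2\,\big|\,\widehat{\mathcal E}\Big]
\le \mathds E[\rho_0^2]\sum_i\Theta_i^4+\Lambda(m_0)^2\Big(\sum_i\Theta_i^2\Big)^2
\le C(\beta,m_0)\Big(\sum_i\Theta_i^2\Big)^2,
\end{equation*}
using $\sum_i\Theta_i^4\le(\sum_i\Theta_i^2)^2$ and $\Lambda(m_0)\le C(\beta,m_0)$. Then Lemma~\ref{lem:compare} (applied with $n$ replaced by $q$, summed over $\varpi(i)\in V_q$ — or rather its proof, giving also control of the square of the sum) together with the energy identity $\sum_i\Theta_i^2\le \sum_{\varpi(i)\sim\varpi(j)}g_{\varpi(i)\varpi(j)}^2=2R^G_{V_q}(\varpi(i_a),\varpi(j_b))$ reduces everything, after taking expectation over $\widehat{\mathcal E}$, to bounding $\mathds E[(\sum_i\Theta_i^2)^2]$; one needs the $L^2$-version of Lemma~\ref{lem:compare}, i.e.\ $\mathds E[(\sum_i\Theta_i^2)^2]\le C'(\beta)\,\mathds E[R^G_{V_q}(\varpi(i_a),\varpi(j_b))^2]$, which follows by the same BK/coupling argument as in Lemma~\ref{lem:compare} applied to each vertex. (iii) Therefore everything is bounded by $C(\beta,m_0)\,\mathds E[R^G_{V_q}(\varpi(i_a),\varpi(j_b))^2]$. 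By scaling invariance $R^G_{V_q}(\varpi(i_a),\varpi(j_b))\overset{d}{=}R_{[0,q)}(i_a,j_b)$, and by Proposition~\ref{R(0,n)main}, $\mathds E[R_{[0,q)}(i_a,j_b)]\le C'_{1,*}\mathds E[R_{[0,q)}(0,q-1)]$, and for the second moment we use monotonicity $R_{[0,q)}(i,j)\le R_{[i,j]}(i,j)\le (j-i)$ crudely only if needed — but in fact the key point is to also get a \emph{second-moment} comparison of close to far, which does not follow directly from Proposition~\ref{R(0,n)main}.

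\textbf{The main obstacle.} The crux, exactly as in \cite[Section~8]{Baumler23a}, is to close the recursion: one wants $\mathds E[R_{[0,n)}(i,j)^2]\le C_1\Lambda(n)^2$, and the bound just derived reads $\mathds E[R_{[0,n)}(i,j)^2]\le C(\beta,m_0)\,\mathds E[R_{[0,q)}(0,q-1)^2]$ with $q\approx n/m_0$. Iterating this down to bounded scale, the constant accumulates as $C(\beta,m_0)^{\log_{m_0}n}=n^{\log_{m_0}C(\beta,m_0)}$, which is useless unless $\log_{m_0}C(\beta,m_0)$ can be made small — i.e.\ one must choose $m_0$ large enough that $C(\beta,m_0)\le m_0^{\delta/2}$ or the like, \emph{and simultaneously} one must know that $\Lambda(q)\ge c\,m_0^{-\delta}\Lambda(n)$ along the recursion so that the accumulated factor is absorbed into $\Lambda(n)^2$. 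This is precisely where Proposition~\ref{weak-supermult} enters: it gives $\mathds E[R_{[0,mq)}(0,mq-1)]\ge m^{\delta}\mathds E[R_{[0,q)}(0,q-1)]$ for $m,q$ large, hence by Proposition~\ref{R(0,n)main}, $\Lambda(n)\gtrsim m_0^{\delta}\Lambda(q)$ when $n\approx m_0 q$. Matching the constant $C(\beta,m_0)$ from step (ii) — which grows polynomially in $m_0$, roughly like $m_0^{O(1)}$ because of the $\mathds E[\rho_0^2]\le C(\beta,m_0)$ term and the degree sums — against the gain $m_0^{2\delta}$ in $\Lambda(n)^2$ forces one to be careful: the naive $\mathds E[\rho_0^2]$ bound is too lossy. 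The resolution, following Baumler, is to not bound $\mathds E[\rho_0^2]$ by a crude constant but to run the argument so that the $\rho_i^2$-diagonal contributes $\sum_i\Theta_i^4\,\mathds E[\rho_0^2]$, and to observe $\sum_i\Theta_i^4\le(\max_i\Theta_i^2)\sum_i\Theta_i^2$ with $\max_i\Theta_i^2\le 2R^G_{V_q}$ as well, so the diagonal term is $O(\mathds E[(R^G_{V_q})^2]\cdot\mathds E[\rho_0^2])$ — still fine — but crucially one chooses $m_0$ \emph{first} (depending on $\beta,\delta$) large enough that the resulting constant is $\le$ any prescribed power of $m_0$, then bootstraps. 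Concretely I would: fix $\varepsilon>0$ small, choose $m_0$ so that $C(\beta,m_0)\le m_0^{2\delta}$ is false in general, so instead one proves the bound with $C_1=C_1(\beta)$ by a direct (non-iterative) argument: combine the one-step estimate with $\mathds E[R_{[0,q)}(0,q-1)^2]\le C_1\Lambda(q)^2$ as induction hypothesis and $C(\beta,m_0)\Lambda(q)^2\le C(\beta,m_0)(C'm_0^{-\delta}\Lambda(n))^2=C(\beta,m_0)C'^2 m_0^{-2\delta}\Lambda(n)^2$, choosing $m_0=m_0(\beta)$ once and for all so that $C(\beta,m_0)C'^2 m_0^{-2\delta}\le 1$ — possible iff $C(\beta,m_0)$ grows subquadratically in $m_0^{\delta}$, which is the content that must be checked carefully and is the real work. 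I expect this matching of the renormalization constant against the supermultiplicative gain to be the main difficulty; the rest is bookkeeping with Cauchy--Schwarz, Lemma~\ref{network}, Lemma~\ref{lem:compare}, and the results of Sections~\ref{sect-far}--\ref{sect-weaksupermult}.
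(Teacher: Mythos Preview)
Your renormalization-via-flow approach has a genuine gap that you partially identify but do not resolve. The one-step bound you obtain reads, schematically, $\Gamma(n)\le C(\beta,m_0)\,\Gamma(n/m_0)$ with $C(\beta,m_0)\ge c\,\Lambda(m_0)^2$ coming from the off-diagonal term $\Lambda(m_0)^2(\sum_i\Theta_i^2)^2$. To close the induction $\Gamma(n)\le C_1\Lambda(n)^2$ you would need $C(\beta,m_0)\le \Lambda(n)^2/\Lambda(q)^2$. But at this point in the paper only weak supermultiplicativity is available, giving $\Lambda(n)/\Lambda(q)\ge c\,m_0^{\delta}$, while $\Lambda(m_0)$ can a priori be much larger than $m_0^{\delta}$ (the matching upper bound $\Lambda(m_0)\lesssim m_0^{\delta}$ is only established in Section~\ref{sect-proofmr}, which uses Proposition~\ref{prop-2thmoment} as input). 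Even granting $\Lambda(m_0)\asymp m_0^{\delta}$, your recursion coefficient is $c\cdot m_0^{2\delta}$ times an absolute constant from Cauchy--Schwarz and the ``$L^2$ version of Lemma~\ref{lem:compare}'', and that absolute constant accumulates as $C^{\log_{m_0}n}$ and does not disappear by enlarging $m_0$. The flow construction from Section~\ref{proof-submult} is simply too lossy for a second-moment bound: it produces a recursion coefficient comparable to $\Lambda(m)^2$ rather than something sublinear in $m$.

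The paper takes a completely different route, following the cut-point/bridging construction of \cite[Section~8]{Baumler23a}. The key device is the argmax trick: among the $\le 4m$ subintervals $[x_{i-1},x_i]$ determined by block boundaries and endpoints of a carefully chosen bridging edge set $\mathcal B$, locate the one with maximal restricted resistance. If this interval is bridged by some $e\in\mathcal B$, bypass it via $e$; then $R_{[0,m^{n+1})}(0,s)$ is controlled by $4m$ times the \emph{second} largest restricted resistance, whose square has expectation bounded by $16m^2$ times the square of the \emph{first moment} (independence plus \cite[(94)]{Baumler23a}). If it is unbridged, pay it --- but the expected number of unbridged subintervals is $f(\beta,m)$, which is $O(m^{1-\beta})$ for $\beta<1$, $O(\log m)$ for $\beta\in[1,2]$, and $O(1)$ for $\beta>2$. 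This gives the recursion
\[
\Gamma_{n+1}\le \widetilde f(\beta,m)\,\Gamma_n + \widetilde C_2 m^4\,\Upsilon_n^2,
\]
with $\widetilde f(\beta,m)=2(2+f(\beta,m))$ and $\Upsilon_n=\mathds E[R_{[0,m^n)}(0,m^n-1)]$. The point is that the coefficient $\widetilde f(\beta,m)$ on $\Gamma_n$ is genuinely small --- sublinear or logarithmic in $m$ --- not $\Lambda(m)^2$. One then iterates and sums the resulting series using Proposition~\ref{weak-supermult} (for $\beta\ge 1$, where $\widetilde f(\beta,m)m^{-2\delta}<1/2$ for $m$ large) or the separate Lemma~\ref{cut-beta0-1} (for $\beta<1$, where the gain $m^{2(1-\beta)}$ beats $\widetilde f(\beta,m)\sim m^{1-\beta}$). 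The bridging/argmax structure is the missing idea; without it the recursion coefficient cannot be made small enough to close.
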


To prove Proposition \ref{prop-2thmoment}, we need to do some preparations. For $n\in \mathds{N}$ and $\iota\in (0,1)$, let $R_\iota^n=[n-\iota n, n)$ and $L_\iota^n=[0,\iota n]$.

\begin{lemma}\label{lem-point-box}
For all $\beta\geq 0$, there exists sufficiently small $\iota>0$ {\rm(}depending only on $\beta${\rm)} such that for all $n\in \mathds{N}$,
\begin{equation*}
\mathds{E}[R_{[0,n)}(L_\iota^n, R_\iota^n)]\geq \frac{1}{2}\mathds{E}[R_{[0,n)}(0,n-1)].
\end{equation*}
\end{lemma}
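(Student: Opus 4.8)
The statement to prove is a lower bound: the expected effective resistance between the two \emph{boxes} $L_\iota^n=[0,\iota n]$ and $R_\iota^n=[n-\iota n,n)$ inside $[0,n)$ is at least half the expected point-to-point resistance $\mathds{E}[R_{[0,n)}(0,n-1)]$, provided $\iota$ is chosen small enough. The plan is to reverse the trivial inequality $R_{[0,n)}(L_\iota^n,R_\iota^n)\le R_{[0,n)}(0,n-1)$ by showing that the box-to-box resistance, which only ``loses'' resistance near the two ends, loses at most a constant fraction. Concretely, I would decompose a unit flow from $L_\iota^n$ to $R_\iota^n$ into three pieces: a piece living in the left end $[0,\iota n]$ (or rather, in a slightly larger region), a piece living in the right end $[n-\iota n,n)$, and the ``bulk'' piece, which is a unit flow across a central sub-interval of length $(1-2\iota)n$ roughly. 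The bulk contributes at least $\mathds{E}[R_{[0,(1-2\iota)n)}(0,(1-2\iota)n-1)]$ up to applying Proposition~\ref{R(0,n)main} / Lemma~\ref{R-renorm} to compare that with $\mathds{E}[R_{[0,n)}(0,n-1)]$ via a constant $C_{6,*}$-type factor; one then needs that the ``lost'' end contributions are small relative to the same quantity.

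The cleaner route, following the reference to \cite[Section 8]{Baumler23a} hinted at in Section~\ref{secondm}, is probably via the variational characterization: $R_{[0,n)}(L_\iota^n,R_\iota^n)$ can be written as an infimum over unit flows from $L_\iota^n$ to $R_\iota^n$, hence $R_{[0,n)}(0,n-1)\le R_{[0,n)}(L_\iota^n,R_\iota^n) + \big(\text{resistance from }0\text{ to }L_\iota^n\text{-boundary inside }[0,\iota n]\big) + \big(\text{resistance from }R_\iota^n\text{-boundary to }n-1\text{ inside }[n-\iota n,n)\big)$, using the series law: any unit flow realizing $R_{[0,n)}(L_\iota^n,R_\iota^n)$ can be extended to a unit flow from $0$ to $n-1$ by routing it from $0$ into $L_\iota^n$ and out of $R_\iota^n$ to $n-1$, at an extra energy cost controlled by the optimal resistances within the two end intervals. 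Taking expectations and using scaling invariance, the two end contributions are each at most $\mathds{E}[R_{[0,\iota n]}(0,\iota n)] = \mathds{E}[R_{[0,\iota n)}(0,\iota n -1)] + O(1)$, which by Lemma~\ref{R-renorm} / Proposition~\ref{R(0,n)main} is at most $C_{6,*}\,\mathds{E}[R_{[0,n)}(0,n-1)]$ times a factor that we would like to be small. The key point is that this last factor is NOT automatically small — it is just a constant — so a naive series-law bound gives $\mathds{E}[R_{[0,n)}(0,n-1)]\le \mathds{E}[R_{[0,n)}(L^n_\iota,R^n_\iota)] + 2C_{6,*}\mathds{E}[R_{[0,n)}(0,n-1)]$, which is useless unless $C_{6,*}<1/2$.

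The resolution, and the main obstacle, is to exploit the polynomial growth of resistances to make the end contributions genuinely small: with the weak supermultiplicativity Proposition~\ref{weak-supermult} in hand, $\mathds{E}[R_{[0,\iota n)}(0,\iota n-1)] \le \iota^{-\delta}\,$ (roughly) $\,\mathds{E}[R_{[0,n)}(0,n-1)]$ for $\iota n$ and $n$ both large, i.e. each end contributes at most $\sim \iota^{\delta}\,\mathds{E}[R_{[0,n)}(0,n-1)]$ up to constants; choosing $\iota$ small enough drives $2(\text{const})\iota^{\delta}<1/2$, which yields $\mathds{E}[R_{[0,n)}(L^n_\iota,R^n_\iota)]\ge (1-2(\text{const})\iota^\delta)\mathds{E}[R_{[0,n)}(0,n-1)] \ge \tfrac12\mathds{E}[R_{[0,n)}(0,n-1)]$. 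I expect the genuinely delicate step to be the series-law decomposition itself: one must be careful that the unit flow from $L_\iota^n$ to $R_\iota^n$ realizing the box-to-box resistance can indeed be spliced with near-optimal flows inside the two end intervals \emph{without the flow in the middle interacting with the end edges}, and that the energy cost of this splicing really is bounded by the internal resistances of the ends (not, say, the full resistance of $[0,n)$ restricted to the ends) — this is where one invokes a cutset/series argument or directly the min-energy characterization together with the fact that the end intervals $[0,\iota n]$ and $[n-\iota n,n)$ are disjoint from the complementary region. A secondary technicality is handling the small-$n$ (or small-$\iota n$) cases, where Proposition~\ref{weak-supermult} does not apply; there one uses that all quantities are bounded below by a positive constant (Remark~\ref{rem-delta*}(2)) and above by $n$, absorbing small $n$ into the constants by possibly shrinking $\iota$ further or treating finitely many cases directly.
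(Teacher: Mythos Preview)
Your proposal is correct and follows essentially the same route as the paper: splice the optimal unit flow from $L_\iota^n$ to $R_\iota^n$ with weighted internal flows in the two end intervals to obtain a unit flow from $0$ to $n-1$, giving $\mathds{E}[R_{[0,n)}(0,n-1)]\le \mathds{E}[R_{[0,n)}(L_\iota^n,R_\iota^n)]+2\Lambda(\iota n)$, and then invoke Proposition~\ref{weak-supermult} (together with Proposition~\ref{R(0,n)main}) to get $\Lambda(\iota n)\lesssim \iota^{\delta}\Lambda(n)\lesssim \iota^{\delta}\mathds{E}[R_{[0,n)}(0,n-1)]$ and choose $\iota$ small. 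The one point you leave implicit that the paper makes explicit is the mechanism for the splice: condition on the edges touching the middle region (so that the entry/exit points and weights $\theta_x,\vartheta_y$ are fixed), then inside each end take the convex combination of unit electric flows and use Jensen/Cauchy--Schwarz to bound the conditional expected energy by $\sum_x\theta_x\,\mathds{E}[R_{L_\iota^n}(0,x)]\le \Lambda(\iota n)$.
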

\begin{proof}
Without loss of generality, we assume that $\iota n\in \mathds{N}$. Otherwise, we can replace $\iota n$ with $\lfloor \iota n\rfloor$ in the following proof.

First, we claim that for $n\in \mathds{N}$ and $\iota\in (0,1)$,
\begin{equation}\label{point-box}
\mathds{E}[R_{[0,n)}(0,n-1)]\leq 2\Lambda(\iota n)+\mathds{E}[R_{[0,n)}(L_\iota^n, R_\iota^n)].
\end{equation}
To establish this, denote
\begin{equation*}
I_L=\left\{x\in L_\iota^n:\ \text{there exists $y\in (\iota n, n-\iota n)$ such that }\langle x,y\rangle \in \mathcal{E}_{L_\iota^n\times (\iota n, n-\iota n)}\right\}
\end{equation*}
and
\begin{equation*}
I_R=\left\{y\in R_\iota^n:\ \text{there exists $x\in (\iota n, n-\iota n)$ such that }\langle x,y\rangle \in \mathcal{E}_{ (\iota n,n-\iota n)\times R_\iota^n}\right\}.
\end{equation*}
Let $g$ be a unit electric flow from $L_\iota^n$ to $R_\iota^n$ restricted to $[0,n)$. We denote $\{\theta_x\}_{x\in I_L}$ as the amount of flow $g$ leaving each point $x$ in $L_\iota^n$, and $\{\vartheta_y\}_{y\in I_R}$ as the amount of the flow entering each point $y$ in $R_\iota^n$.
We will now construct a unit flow $f$ from 0 to $n-1$ according to the flow $g$ as follows.
\begin{itemize}
\item[(1)] For any fixed $x\in I_L$, denote by $\phi^{(x)}$ the unit electric flow from 0 to $x$ restricted to the interval $L_\iota^n$. Then we define
    \begin{equation*}
    f_{uv}=\sum_{x\in I_L}\theta_x\phi_{uv}^{(x)}\quad \text{for all }\langle u,v\rangle \in \mathcal{E}_{L_\iota^n\times L_\iota^n}.
    \end{equation*}

\item[(2)] For any fixed $y\in I_R$, denote by $\phi^{(y)}$ the unit electric flow from $y$ to $n-1$ restricted to the interval $R_\iota^n$. Then we define
        \begin{equation*}
    f_{uv}=\sum_{y\in I_R}\vartheta_y\phi_{uv}^{(y)}\quad \text{for all }\langle u,v\rangle \in \mathcal{E}_{R_\iota^n\times R_\iota^n}.
    \end{equation*}
\end{itemize}

From the above construction, it can checked that $f$ is a unit flow from 0 to $n-1$ restricted to the interval $[0,n-1)$.
In addition, note that the flow $g$ (accordingly, $R_{[0,n)}(L_\iota^n, R_\iota^n)$), along with $I_L$ and $I_R$, depend only  on the edge set
\begin{equation*}
\widehat{\mathcal{E}}_{\iota}:=\mathcal{E}_{(\iota n,n-\iota n)\times [0,n)}.
\end{equation*}
Given $\widehat{\mathcal{E}}_{\iota}$, the energies generated by $\phi^{(x)}$ for all $x\in I_L$
and $\phi^{(y)}$ for all $y\in I_R$ depend only on the edge sets $\mathcal{E}_{L_\iota^n\times L_\iota^n}$ and $\mathcal{E}_{R_\iota^n\times R_\iota^n}$, respectively. Therefore, according to Cauchy-Schwartz inequality and the fact that $\sum_{x\in I_L}\theta_x=\sum_{y\in I_R}\vartheta_y=1$, we have
\begin{align}
\mathds{E}\left[R_{[0,n)}(0,n-1)\ |\widehat{\mathcal{E}}_\iota\right]
&\leq \frac{1}{2}\mathds{E}\left[\sum_{u\sim v}f_{uv}^2\ |\widehat{\mathcal{E}}_\iota\right]\label{R0n}\\
&=R_{[0,n)}(L_\iota^n, R_\iota^n)
+\frac{1}{2}\mathds{E}\left[\sum_{\langle u,v\rangle\in \mathcal{E}_{L_\iota^n\times L_\iota^n}}f_{uv}^2\ | \widehat{\mathcal{E}}_\iota\right]
+\frac{1}{2}\mathds{E}\left[\sum_{\langle u,v\rangle\in \mathcal{E}_{R_\iota^n\times R_\iota^n}}f_{uv}^2\ | \widehat{\mathcal{E}}_\iota\right]\nonumber\\
&=R_{[0,n)}(L_\iota^n, R_\iota^n)
+\frac{1}{2}\mathds{E}\left[\sum_{\langle u,v\rangle\in \mathcal{E}_{L_\iota^n\times L_\iota^n}}\left(\sum_{x\in I_L}\theta_x\phi^{(x)}_{uv}\right)^2\ | \widehat{\mathcal{E}}_\iota\right]\nonumber\\
&\quad +\frac{1}{2}\mathds{E}\left[\sum_{\langle u,v\rangle\in \mathcal{E}_{R_\iota^n\times R_\iota^n}}\left(\sum_{y\in I_R}\vartheta_y\phi^{(y)}_{uv}\right)^2\ | \widehat{\mathcal{E}}_\iota\right]\nonumber\\
&\leq R_{[0,n)}(L_\iota^n, R_\iota^n)
+\frac{1}{2}\mathds{E}\left[\sum_{\langle u,v\rangle\in \mathcal{E}_{L_\iota^n\times L_\iota^n}}\sum_{x\in I_L}\theta_x\left(\phi^{(x)}_{uv}\right)^2\ | \widehat{\mathcal{E}}_\iota\right]\nonumber\\
&\quad +\frac{1}{2}\mathds{E}\left[\sum_{\langle u,v\rangle\in \mathcal{E}_{R_\iota^n\times R_\iota^n}}\sum_{y\in I_R}\vartheta_y\left(\phi^{(y)}_{uv}\right)^2\ | \widehat{\mathcal{E}}_\iota\right]\nonumber\\
&=R_{[0,n)}(L_\iota^n, R_\iota^n)
+\sum_{x\in I_L}\theta_x\mathds{E}\left[ R_{L_\iota^n}(0,x)\ | \widehat{\mathcal{E}}_\iota\right]
+\sum_{y\in I_R}\vartheta_y\mathds{E}\left[ R_{R_\iota^n}(y,n-1)\ | \widehat{\mathcal{E}}_\iota\right].\nonumber
\end{align}
Additionally, by the independence of edges, the translation invariance of LRP model and the definition of $\Lambda(\cdot)$ in \eqref{def-Lambda}, we get that for all $x\in I_L$ and all $y\in I_R$,
\begin{equation*}
\mathds{E}\left[ R_{L_\iota^n}(0,x)\ | \widehat{\mathcal{E}}_\iota\right]\leq \max_{u,v\in L_\iota^n}\mathds{E}\left[R_{L_\iota^n}(u,v)\right]=\Lambda(\iota n)
\end{equation*}
and
\begin{equation*}
\mathds{E}\left[ R_{R_\iota^n}(y,n-1)\ | \widehat{\mathcal{E}}_\iota\right]
\leq  \max_{u,v\in R_\iota^n}\mathds{E}\left[R_{R_\iota^n}(u,v)\right]=\Lambda(\iota n).
\end{equation*}
Applying this and  $\sum_{x\in I_L}\theta_x=\sum_{y\in I_R}\vartheta_y=1$ into \eqref{R0n} yields \eqref{point-box}.

Moreover, from Propositions \ref{R(0,n)main} and \ref{weak-supermult}, we can see that there exist constants $\widetilde{c}_1>0$ and $\delta>0$ (both depending only on $\beta$) such that
$$
\widetilde{c}_1\iota^{-\delta}\Lambda(\iota n)\leq \Lambda(n).
$$
Applying this to \eqref{point-box}, we arrive at
\begin{equation*}
\begin{aligned}
\mathds{E}\left[R_{[0,n)}(L_\iota^n, R_\iota^n)\right]&\geq \mathds{E}\left[R_{[0,n)}(0,n-1)\right]- 2\Lambda(\iota n)\\
&\geq \mathds{E}\left[R_{[0,n)}(0,n-1)\right]-2\widetilde{c}_1^{-1}\iota^{\delta}\Lambda(n)\\
&\geq \mathds{E}\left[R_{[0,n)}(0,n-1)\right]-\widetilde{C}_1\iota^{\delta}\mathds{E}\left[R_{[0,n)}(0,n-1)\right]
\end{aligned}
\end{equation*}
for some constant $\widetilde{C}_1=\widetilde{C}_1(\beta)<\infty$ depending only on $\beta$, where in the last inequality we used Proposition \ref{R(0,n)main}.
Consequently, by choosing sufficiently small $\iota>0$ such that $\widetilde{C}_1\iota^{\delta}<1/4$, we get the desired result.
\end{proof}

In the following, we will analyze the growth of the resistance $\mathds{E}[R_{[0,n)}(0,n-1)]$. As we mentioned in \cite[Section 1.2]{DFH24+}, for fixed $\beta<1$, we can obtain
 $$
 \mathds{E} \left[R_{[0,n)}(0,n)\right]\geq c(\beta)n^{1-\beta}
 $$
 by finding a sufficient number of cut-edges, where $c(\beta)>0$ is a constant depending only on $\beta$. The next lemma provides a more uniform bound on the growth of $\mathds{E}[R_{[0,n)}(0,n-1)]$ that applies simultaneously for all $\beta\in [0,1]$.

\begin{lemma}\label{cut-beta0-1}
For any $\beta\in [0,1]$, there exists a constant $c_1>0$ {\rm(}depending only on $\beta${\rm)} such that for all $m,n\in \mathds{N}$,
\begin{equation}\label{R-R-box}
\mathds{E}[R_{[0,mn)}(0,mn-1)]\geq c_1m^{1-\beta}\mathds{E}[R_{[0,n)}(0,n-1)].
\end{equation}
\end{lemma}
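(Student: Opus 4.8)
The plan is to re-run the argument behind Proposition~\ref{weak-supermult}, replacing the resistance tail bound of Lemma~\ref{Lem-RNtail-1} by the elementary cut-edge estimate that already underlies the inequality $\mathds{E}[R_{[0,N)}(0,N)]\geq c(\beta)N^{1-\beta}$ recalled before the lemma; the key point is that this bound is available \emph{uniformly} for $\beta\in[0,1]$ and lets us run the whole machinery with the exponent $\delta_0:=1-\beta$. Concretely, I would first establish the analogue of Lemma~\ref{Lem-RNtail-1} with $\delta_0$ in place of $\delta^*$: for every $\beta\in[0,1]$, $\varepsilon\in(0,1/4)$ and $N\in\mathds{N}$ one has $\mathds{P}[\widehat R_N\geq c\,\varepsilon^{C}N^{1-\beta}]\geq 1-\varepsilon$, and the corresponding statements for $R(0,[-N,N]^c)$ and for the annular box-to-box resistance. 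This reduces to the high-probability lower bound $\#\{\text{cut-edges of }[0,N)\}\geq c_0 N^{1-\beta}$, where $\langle x-1,x\rangle$ is a cut-edge of $[0,N)$ if no other edge of $[0,N)$ joins $[0,x)$ to $[x,N)$; whenever this holds, every unit flow crossing $[0,N)$ puts energy $\geq 1$ on each such edge, and distinct cut-edges carry the flow through disjoint bonds.

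The probabilistic heart of the matter is the deviation estimate $\mathds{P}[\#\{\text{cut-edges of }[0,N)\}<c_0N^{1-\beta}]\leq\varepsilon$. Here the product form of the connection probabilities is crucial: $\mathds{P}[\langle x-1,x\rangle\text{ is a cut-edge of }[0,N)]=\exp\{-\beta\sum_{u<x\leq v,\,v-u\geq 2}\int_u^{u+1}\!\int_v^{v+1}|y-z|^{-2}\,\d y\,\d z\}\asymp\min(x,N-x)^{-\beta}$, so the expected cut-edge count is $\asymp N^{1-\beta}$ for $\beta<1$ (and $\asymp\log N$ for $\beta=1$, which still exceeds $c_0N^{1-\beta}=c_0$). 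Since cut-edge events are positively correlated and the count is not a sum of independent increments (a cut-edge at $x$ constrains edges over all of $[0,N)$), I would not use a second moment bound; instead I would split $[0,N)$ into windows of length $\sim N^{\beta}$, observe that in a window $W$ the presence of a cut-edge of $[0,N)$ has probability bounded below by an absolute constant conditionally on the edge configuration outside a doubling of $W$, and then dominate the count from below by a sum of independent Bernoulli indicators to which a Chernoff bound applies. This yields, via the definition \eqref{def-RIi}, that the internal optimal energy $R_{J_i}$ of a scale-$m$ interval $J_i$ exceeds $c\,m^{1-\beta}$ with probability $\geq 1-\varepsilon$, i.e. $J_i$ is "$(\delta_0,\alpha)$-very good'' in the sense of Definition~\ref{def-alphagood-1} (now with exponent $\delta_0$).

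With this input, the remaining steps are exactly those of Sections~\ref{sect-cg}–\ref{sect-proofwm}: on the event that most scale-$m$ intervals are very good, any unit flow from $0$ to $mn-1$ deposits energy at least $\theta_{g,J_i}^2\,R_{J_i}\geq\theta_{g,J_i}^2\,c\,m^{1-\beta}$ in $J_i$, the coarse-graining argument of Section~\ref{sect-cg} covers the not-very-good components at all scales with probability $\geq 1-n^{-2}$, and the chain of inequalities \eqref{inter-Ii}–\eqref{exponent-delta} gives $R_{[0,mn)}(0,mn-1)\geq \widetilde c\, m^{1-\beta}R^{G}_{V_n}(\varpi(0),\varpi(n-1))$ on that event, where $G$ is the renormalization collapsing each scale-$m$ interval to a vertex. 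Taking expectations and using the self-similarity identity $\mathds{E}[R^{G}_{V_n}(\varpi(0),\varpi(n-1))]=\mathds{E}[R_{[0,n)}(0,n-1)]$ together with $R^{G}_{V_n}\leq n$ and $\mathds{P}[\mathcal H^c]\leq n^{-2}$ yields $\mathds{E}[R_{[0,mn)}(0,mn-1)]\geq c_1 m^{1-\beta}\mathds{E}[R_{[0,n)}(0,n-1)]$ for $m,n$ large; small values of $n$ are dealt with separately using $R_{[0,n)}(0,n-1)\leq n$ and the cut-edge bound $\mathds{E}[R_{[0,mn)}(0,mn-1)]\geq c(\beta)(mn)^{1-\beta}$, and small $m\geq2$ is already covered since a scale-$m$ interval has at least one cut-edge (the edge $\langle 0,1\rangle$).

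The main obstacle is the concentration estimate for the cut-edge count described in the second paragraph: controlling it despite the positive correlations and the non-local nature of a single cut-edge event is the one genuinely new ingredient, and making the conditional lower bound on a window clean enough to extract an independent dominating sequence requires some care about how much of the boundary of the window must be frozen. A secondary point to check is that the coarse-graining scheme of Section~\ref{sect-cg}, which was calibrated assuming $\delta\leq\delta^*$, remains valid when the exponent is taken to be $\delta_0=1-\beta$ (which may exceed $\delta^*$); but since $\delta$ enters the arguments there only through the lower bounds on energies and through the choice of the scale sequence $\{a_k\}$, the argument goes through with the obvious substitutions once the very-good probability estimate (Proposition~\ref{prop-alphagood-1}, reproved via cut-edges) is in place.
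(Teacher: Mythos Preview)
Your proposal takes a very different route from the paper and is substantially more complicated than needed. The paper's proof is an elementary first-moment argument: it introduces \emph{separation intervals}, i.e.\ intervals $[in,(i+1)n)$ (for odd $i$) such that in the renormalized graph neither $i$ is connected to $\{0,\dots,i-2\}\cup\{i+2,\dots,m-1\}$ nor $\{0,\dots,i-1\}$ to $\{i+1,\dots,m-1\}$. By scaling, each odd $i$ is a separation interval with probability $\geq 0.1\,m^{-\beta}$ (this is the same calculation as for separation points in \cite{Baumler23a}). Whenever $[in,(i+1)n)$ is a separation interval, any unit flow from $0$ to $mn-1$ must enter it from $[(i-1)n,in)$ and exit into $[(i+1)n,(i+2)n)$, generating energy at least $R_{[(i-1)n,(i+2)n)}([(i-1)n,in),[(i+1)n,(i+2)n))$. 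Since both the separation event and this resistance are edge-decreasing, FKG decouples them in expectation, and by Lemma~\ref{lem-point-box} (plus an FKG step) the expected resistance term is $\gtrsim\mathds E[R_{[0,n)}(0,n-1)]$. Summing over odd $i$ gives the factor $m^{1-\beta}$ directly. No concentration, no coarse-graining, no high-probability events.

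Your route may be salvageable in principle, but it has two real obstacles you already flagged and which are not minor. First, the concentration of cut-edges: cut-edge indicators are highly non-local (each one constrains edges across all of $[0,N)$), so your windowing/Bernoulli-domination idea needs genuine work, and for $\beta$ near $1$ the expected count is only mildly divergent, making the Chernoff step delicate. Second, the coarse-graining of Section~\ref{sect-cg} is calibrated with $\delta\leq\delta^*$ entering through the $a_k$-bad resistance condition (Definition~\ref{def-a1bad}) and the tail exponent $\varepsilon^{C_*\delta}$ of Lemma~\ref{Lem-RNtail-1}; pushing $\delta_0=1-\beta$ (possibly $>\delta^*$) through that requires reproving the box-to-box tail with the cut-edge input, which is another non-trivial step. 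None of this is needed: the whole point of the paper's argument is that a purely first-moment computation, with FKG replacing independence, suffices for \eqref{R-R-box}.
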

\begin{proof}
For fixed $\beta\geq 0$, it follows from Lemma \ref{lem-point-box} that there exists a sufficiently small $\iota\in (0,1)$ (depending only on $\beta$) such that for all $n\in \mathds{N}$,
\begin{equation}\label{box-point}
\mathds{E}\left[R_{[0,n)}(L_\iota^n,R_\iota^n)\right]\geq \frac{1}{2}\mathds{E}\left[R_{[0,n)}(0,n-1)\right].
\end{equation}
From this, we claim that there exists a constant $\widetilde{c}_1>0$ (depending only on $\beta$) such that
\begin{equation}\label{box-point-2n}
\mathds{E}\left[R_{[-n,2n)}([-n,0),[n,2n))\right]\geq \widetilde{c}_1 \mathds{E}\left[R_{[0,n)}(0,n-1)\right]
\end{equation}
holds for all  sufficiently large $n\in \mathds{N}$.
Indeed, for fixed $\iota>0$, let $A$ be the event that the rightmost vertex connected to $[-n,0)$ lies inside $L_\iota^n$ and the leftmost vertex connected to $[n,2n)$ lies inside $R_\iota^n$. By a simple calculation, it is clear that $\mathds{P}(A)>0$. In addition, on the event $A$, one has
\begin{equation}\label{R-event}
R_{[-n,2n)}([-n,0),[n,2n))\geq R_{[0,n)}(L_\iota^n,R_\iota^n).
\end{equation}
Note that by the monotonicity of effective resistance with edges, the event in \eqref{R-event} and the event $A$ are both decreasing with the edge set. Therefore, it follows from the FKG inequality and \eqref{box-point} that
\begin{equation*}
\begin{aligned}
\mathds{E}\left[R_{[-n,2n)}([-n,0),[n,2n))\right]&\geq \mathds{E}\left[R_{[-n,2n)}([-n,0),[n,2n))\I_A\right]\\
&\geq \mathds{E}\left[R_{[0,n)}(L_\iota^n,R_\iota^n)\I_A\right]\\
&\geq \mathds{E}\left[R_{[0,n)}(L_\iota^n,R_\iota^n)\right]\mathds{P}[A]\geq \frac{\mathds{P}[A]}{2}\mathds{E}\left[R_{[0,n)}(0,n-1)\right].
\end{aligned}
\end{equation*}
This, along with $\mathds{P}(A)>0$, implies the claim \eqref{box-point-2n} holds.

Now for the long-range percolation on the interval $[0,m)$, we call an odd point $i\in [1,m-2]$ a separation point (see Figure \ref{sep-point} for an illustration) if
\begin{equation*}
i\nsim \{0,1,\cdots,i-2\}\cup \{i+2,\cdots,m-1\}\quad \text{and}\quad \{0,1,\cdots,i-1\}\nsim \{i+1,\cdots,m-1\}.
\end{equation*}
From the calculations in \cite[Proof of Lemma 8.1]{Baumler23a}, we have that
\begin{equation}\label{sepa-i}
\mathds{P}\left[i\text{ is a separation point}\right]\geq 0.1 m^{-\beta}.
\end{equation}
In addition, for odd $i\in [1,m-2]$, we also call the interval $[in,(i+1)n)$ as a separation interval if
\begin{equation*}
[in,(i+1)n)\nsim [0,(i-1)n)\cup [(i+2)n, mn)\quad \text{and}\quad [0,in)\nsim [(i+1)n,mn).
\end{equation*}
By the scaling invariance of LRP model and \eqref{sepa-i}, we can see that
\begin{equation}\label{P-sepint}
\mathds{P}\left[\text{$[in,(i+1)n)$ is a separation interval}\right]= \mathds{P}\left[\text{$i$ is a separation point}\right]\geq 0.1 m^{-\beta}.
\end{equation}

\begin{figure}[htbp]
\centering
\includegraphics[scale=0.8]{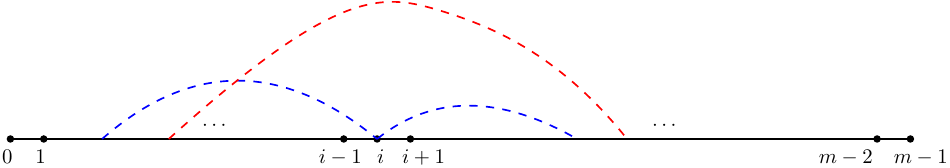}
\caption{The illustration for the definition of separation points. The blue dashed curves in the graph represent the absence of long edges directly connecting $i$ and $\{0,1,\cdots, i-2\}\cup \{i+2,\cdots, m-1\}$, while the red dashed curve represents the absence of long edge directly connecting the intervals $\{0,1,\cdots, i-1\}$ and $\{i+1,\cdots,m-1\}$.
}
\label{sep-point}
\end{figure}

Now let $i_1,\cdots, i_k\in \{1,\cdots, m-2\}$ be the integers such that $[i_ln, (i_l+1)n)$ is a separation interval for all $1\leq l\leq k$.
Then each flow from $0$ to $mn-1$ restricted in $[0,mn)$ needs to cross all separation intervals. Furthermore, when passing through a separation interval $[i_l n, (i_l+1)n)$, the flow must hit the intervals $[(i_l-1)n, i_ln)$ and $[(i_l+1)n, (i_l+2)n)$ before and after crossing, respectively (see Figure \ref{sep-int} for an illustration).
\begin{figure}[htbp]
\centering
\includegraphics[scale=0.8]{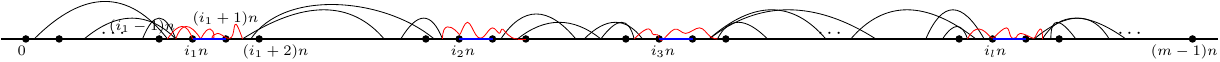}
\caption{The illustration for the flow passing through the separation intervals. The blue lines represent the separation intervals. When the flow travels from 0 to $mn-1$, it must first traverse the left side of the separation interval $[i_ln,(i_l+1)n)$, specifically the interval $[(i_l-1)n,i_ln)$. The flow then enters the separation interval and subsequently exits into the interval $[(i_l+1)n, (i_l+2)n)$. This generates energy lower-bounded by $R_{[(i_l-1)n,(i_l+2)n)}([(i_l-1)n, i_ln), [(i_l+1)n, (i_l+2)n))$ as represented by the red curves in the figure.
}
\label{sep-int}
\end{figure}
Therefore,
\begin{equation*}
R_{[0,mn)}(0,mn-1)\geq \sum_{l=1}^k R_{[(i_l-1)n,(i_l+2)n)}\left([(i_l-1)n, i_ln), [(i_l+1)n, (i_l+2)n)\right).
\end{equation*}
Since the event $\{[i_ln, (i_l+1)n)\ \text{is a separation interval}\}$ and the resistance
$$
R_{[(i_l-1)n,(i_l+2)n)}\left([(i_l-1)n, i_ln), [(i_l+1)n, (i_l+2)n)\right)
$$
 are decreasing with edges, by the FKG inequality and the translation invariance of LRP model, we get that
\begin{equation*}
\begin{aligned}
&\mathds{E}\left[R_{[0,mn)}(0,mn-1)\right]\\
&\geq \mathds{E}\left[\sum_{i=1}^{m-2}\I_{\{[in, (i+1)n)\ \text{is a separation interval}\}}R_{[(i-1)n,(i+2)n)}([(i-1)n, in), [(i+1)n, (i+2)n))\right]\\
&\geq \sum_{i=1}^{m-2}\mathds{E}\left[\I_{\{[in, (i+1)n)\ \text{is a separation interval}\}}\right]\mathds{E}\left[R_{[-n,2n)}([-n,0),[n,2n))\right]\\
&\geq \sum_{i\in \{1,\cdots, m-2\},\ i\text{ odd}} 0.1 m^{-\beta}\widetilde{c}_1 \mathds{E}\left[R_{[0,n)}(0,n-1)\right]\quad \quad \text{by \eqref{box-point-2n} and \eqref{P-sepint} }\\
&\geq \widetilde{c}_2m^{1-\beta} \mathds{E}\left[R_{[0,n)}(0,n-1)\right]
\end{aligned}
\end{equation*}
for some small constant $\widetilde{c}_2>0$ (depending only on $\beta$) and sufficiently large $m$. For small $m\in \mathds{N}$, we can take $\widetilde{c}_2$ small enough such that \eqref{R-R-box} holds for such $m$.
\end{proof}

With this at hand, we are ready to present the

\begin{proof}[Proof of Proposition \ref{prop-2thmoment}]
In this proof, we say that the vertex $i\in [1, m-2]$ is a cut point for the interval $[0,m)$ if there exists no edge of the form $\langle k,l\rangle $ with $0\leq k<i<l\leq m-1$. From \cite[Proof of Lemma 4.5]{Baumler23a} we can see that  for all $i<m/2$ and all $\beta<2$,
\begin{equation*}
\mathds{P}\left[i\ \text{is a cut point for the interval }[0,m)\right]\leq 4i^{-\beta}.
\end{equation*}
Furthermore,
\begin{equation}\label{numb-cut}
\begin{aligned}
f(\beta,m)&:=\mathds{E}\left[\#\{i\in[1,m-2]:\ i\ \text{is a cut point for the interval }[0,m)\}\right]\\
&\leq
\begin{cases}
\frac{20}{1-\beta}m^{1-\beta},\quad &\beta<1,\\
10+8\log m, \quad &1\leq \beta\leq 2, \\
20,\quad &\beta>2.
\end{cases}
\end{aligned}
\end{equation}

In the following, we aim to bound the second moment of $R_{[0,m^{n+1})}(0, m^{n+1}-1)$. The main idea of our method is originated from \cite[Proof of Lemma 4.5 for $d=1$]{Baumler23a} (see also \cite{DS13+}), with minor modifications specific to the resistance.

To this end, we say that an interval $[im^n,(i+1)m^n)$ is unbridged if there exists no edge $\langle k,l\rangle $ with $k\in [0,im^n)$ and $l\in [(i+1)m^n,m^{n+1})$. On the contrary, if there exists such an edge we say that the interval is bridged by the edge $\langle k,l\rangle $.
Obviously, the intervals $[0,m^n)$ and $[(m-1)m^{n},m^{n+1})$ are unbridged by the definition.
Moreover, for any $i\in [1,m-2]$,
\begin{equation*}
\mathds{P}\left[[im^n,(i+1)m^n) \text{ is unbridged}\right]=\mathds{P}\left[i\ \text{is a cut point}\right]\leq 4i^{-\beta}.
\end{equation*}

We now also define a set of edges $\mathcal{B}$ as follows. For $i,j\in [0,m)$ with $i+1<j$, if
\begin{itemize}

\item[(1)]  $[im^n,(i+1)m^n)\sim  [jm^n,(j+1)m^n)$;
\medskip

\item[(2)]  $[(i-l_1)m^n,(i-l_1+1)m^n)\nsim [(j+l_2)m^n,(j+l_2+1)m^n)$ for all $(l_1,l_2)\in \{0,\cdots, i\}\times \{0,\cdots, m-1-j\}\setminus \{(0,0)\}$,
\end{itemize}
then we add one edge connecting $[im^n,(i+1)m^n)$ to $[jm^n,(j+1)m^n)$ directly  into $\mathcal{B}$. Note that if there are multiple such edges, we choose the shortest edge that has the minimum endpoint in $[im^n,(i+1)m^n)$.
With this construction, we can see that $\#\mathcal{B}\leq m$, as each interval $[jm^n,(j+1)m^n)$ can be adjacent to at most two edges in $\mathcal{B}$, and each edge in $\mathcal{B}$ touches two intervals. Furthermore, if an interval $[jm^n,(j+1)m^n)$ is bridged, then there exists an edge $e\in \mathcal{B}$ so that $[jm^n,(j+1)m^n)$ is bridged by $e$.
Let $\mathcal{U}'$ be the set of endpoints of edges in $\mathcal{B}$. We define
\begin{equation}\label{def-u}
\mathcal{U}:=\mathcal{U}'\cup \{0, m^n,\cdots, (m-1)m^n\}\cup \{m^n-1,\cdots, m^{n+1}-1\}.
\end{equation}
 For convenience, we will arrange the elements of $\mathcal{U}$ in the ascending order and denote it as $\mathcal{U}=\{x_0,x_1,\cdots, x_u\}$.
 By the construction, we have $\#\mathcal{U}\leq 4m$ and $|x_{i-1}-x_i|\leq m^n-1$.
 Now for $x_{i-1},x_i$ with $(x_{i-1},x_i)\neq (km^n-1,km^n)$ for all $k$, we say that the interval $[x_{i-1},x_i]$ is bridged, if there exists an edge $\langle k,l\rangle\in \mathcal{B}$ with $k\leq x_{i-1}<x_i\leq l$.
 It is worth noting that for a pair $(x_{i-1},x_i)$ that is not of the form $(km^n-1,km^n)$, there must exist a $j\in [0,m-1]$ such that $[x_{i-1},x_i]\subset [jm^n,(j+1)m^n)$. In this case, if  $[x_{i-1},x_i]$ is not bridged, then $[jm^n,(j+1)m^n)$ is also not bridged.
 In addition, if $[x_{i-1},x_i]$ is bridged, then $ [jm^n,(j+1)m^n)$ is also bridged by some edge in $\mathcal{B}$.
  Within each interval $[jm^n,(j+1)m^n)$, there are at most two points in  $[jm^n,(j+1)m^n)\cap \mathcal{U}$ that correspond to the endpoints of edges in  $\mathcal{B}$, and note that both endpoints of the interval are also included in $[jm^n,(j+1)m^n)\cap \mathcal{U}$. Therefore, we can get that
 \begin{equation*}
\#\left([jm^n,(j+1)m^n)\cap \mathcal{U}\right)\leq 4,
 \end{equation*}
which implies that there are at most three intervals of the form $[x_{i-1},x_i]$ contained within each $[jm^n,(j+1)m^n)$. This leads us to conclude that
\begin{equation}\label{unbridge-1}
\begin{aligned}
&\#\{i\in \{1,\cdots, u\}:\ [x_{i-1},x_i]\ \text{is unbridged}\}\\
&\leq 3\#\{j\in \{0,\cdots, m-1\}:\ [jm^n,(j+1)m^n)\ \text{is unbridged}\}.
\end{aligned}
\end{equation}

From now on, we aim to estimate $R_{[0,m^{n+1})}(0,s)$ for all $s\in (0,m^{n+1})$. To achieve this, denote $u_s=\max\{i\in \{1,\cdots, u \}:\ x_i\leq s\}\leq 4m$, and let
\begin{equation}\label{arg-max}
\tau_s= \text{arg}\left(\max_{i\in \{1,\cdots, u_s-1\}}R_{[x_{i-1},x_i]}(x_{i-1},x_i)\vee R_{[x_{u_s},x_{u_s+1}]}(x_{u_s},s)\right).
\end{equation}
Without loss of generality, we assume that this maximum does not occur at the right endpoint $x_i=km^n$ for some $k$. In fact, if $x_i=km^n$ for some $k$, then by the definition of $\mathcal{U}$ in \eqref{def-u}, we have $[x_{i-1},x_i]=[km^n-1,km^n]$. This leads to $R_{[x_{i-1},x_i]}(x_{i-1},x_i)\leq 1$ for all $i$, which makes the subsequent analysis trivial.
Therefore, if there are multiple maximizers in \eqref{arg-max}, we select one such that $x_i\neq km^n$ for all $k$ and choose the one with the minimal $x_i$ among these maximizers.
This implies that  $[x_{i-1},x_i]$ will always lie within some interval $[jm^n,(j+1)m^n)$.


We will first consider the case where $[x_{\tau_s-1},x_{\tau_s}]$ is bridged  
by some edge $e=\langle x_{\tau_{s1}},x_{\tau_{s2}}\rangle \in \mathcal{B}$ with $\tau_{s1}<\tau_{s2}\leq u_s+1$.
Clearly, in this case, $\tau_s \neq u_s+1$.
This is because if $\tau_s = u_s+1$, then we have $[x_{\tau_s-1},x_{\tau_s}]=[x_{u_s},x_{u_s+1}]$.
Consequently, the edge $e=\langle x_{\tau_{s1}},x_{\tau_{s2}}\rangle$, which bridges the interval $[x_{\tau_s-1},x_{\tau_s}]$,  must satisfy  $x_{\tau_{s2}}>x_{u_s+1}$, implying $\tau_{s2}>u_s+1$.
This contradicts the requirement that $\tau_{s2}\leq u_s+1$.
Now consider the flow that flows from $0=x_0$ to $x_{\tau_{s1}}$, then directly flows to $x_{\tau_{s2}}$ and subsequently flows to $s$. This gives us that
\begin{equation*}
\begin{aligned}
R_{[0,m^{n+1})}(0,s)&\leq \sum_{i=1}^{\tau_{s1}}R_{[x_{i-1},x_i]}(x_{i-1},x_i)+1+\sum_{i=\tau_{s2}+1}^{u_s}R_{[x_{i-1},x_i]}(x_{i-1},x_i)+ R_{[x_{u_s},x_{u_s+1}]}(x_{u_s},s)\\
&\leq u_s\left(\max_{i\neq \tau_s,i\leq u_s+1}R_{[x_{i-1},x_i]}(x_{i-1},x_i\wedge s)\right)
 \leq 4m \left(\max_{i\neq \tau_s,i\leq u_s+1}R_{[x_{i-1},x_i]}(x_{i-1},x_i\wedge s)\right).
\end{aligned}
\end{equation*}

In the case where $[x_{\tau_s-1},x_{\tau_s}]$ is not bridged, or where $[x_{\tau_s-1},x_{\tau_s}]$ is bridged by some edge $e=\langle x_{\tau_{s1}},x_{\tau_{s2}}\rangle\in \mathcal{B}$ with $\tau_{s1}<\tau_{s2}$ and $\tau_{s2}> u_s+1$, we will refer to these two situations as $[x_{\tau_s-1},x_{\tau_s}]$ being $s$-unbridged for simplicity. We will consider the flow that passes through the points $x_0,x_1,\cdots, x_{u_s},s$. Note that even in the presence of the aforementioned long edge $e=\langle x_{\tau_{s1}},x_{\tau_{s2}}\rangle$, we will avoid using it.
Then we have
\begin{align*}
&R_{[0,m^{n+1})}(0,s)\\
&\leq \sum_{i=1}^{\tau_{s}-1}R_{[x_{i-1},x_i]}(x_{i-1},x_i)
+R_{[0,m^{n+1})}(x_{\tau_s-1},x_{\tau_s})+\sum_{i=\tau_{s}+1}^{u_s}R_{[x_{i-1},x_i]}(x_{i-1},x_i)\\
&\quad  + R_{[x_{u_s},x_{u_s+1}]}(x_{u_s},s)\I_{\{\tau_s\neq u_s+1\}}\\
& \leq 4m \left(\max_{i\neq \tau_s,i\leq u_s+1}R_{[x_{i-1},x_i]}(x_{i-1},x_i\wedge s)\right)
 +\max_{[x_{i-1},x_i]\ \text{is $s$-unbridged}, i\leq u_{s}+1}R_{[0,m^{n+1})}(x_{i-1},x_i\wedge s).
\end{align*}
Therefore, in both cases  we have
\begin{equation}\label{iterate-R}
\begin{aligned}
R_{[0,m^{n+1})}(0,s)^2
& \leq 32m^2 \left(\max_{i\neq \tau_s,i\leq u_s+1}R_{[x_{i-1},x_i]}(x_{i-1},x_i\wedge s)\right)^2\\
&\quad +2\left(\max_{[x_{i-1},x_i]\ \text{is $s$-unbridged}, i\leq u_{s}+1}R_{[0,m^{n+1})}(x_{i-1},x_i\wedge s)\right)^2\\
&\leq 32m^2 \left(\max_{i\neq \tau_s,i\leq u_s+1}R_{[x_{i-1},x_i]}(x_{i-1},x_i\wedge s)\right)^2\\
&\quad +2\left(\sum_{[x_{i-1},x_i]\ \text{is $s$-unbridged}, i\leq u_{s}+1}R_{[k_im^n,(k_i+1)m^{n})}(x_{i-1},x_i\wedge s)\right)^2,
\end{aligned}
\end{equation}
where in the last inequality we chose $k_i\in \mathds{Z}$ such that $[x_{i-1},x_i]\subset [k_im^n,(k_i+1)m^n)$ for all $i\leq u_s+1$.

Next, we want to bound both terms in \eqref{iterate-R} in expectation. It is worth emphasizing that conditioned on $\mathcal{U}$,   $\{R_{[x_{i-1},x_i]}(x_{i-1},x_i\wedge s)\}_{1\leq i\leq u_s+1}$ are independent, and their expectations are bounded by $\mathds{E}[R_{[0,m^n)}(0,m^n-1)]$ up to a constant (depending only on $\beta$) from Lemma \ref{R-renorm} and Proposition \ref{R(0,n)main}.
Therefore, from the above fact and \cite[(94)]{Baumler23a} we get that
\begin{equation}\label{1th-term}
\begin{aligned}
&\mathds{E}\left[\left(\max_{i\neq \tau_s,i\leq u_s+1}R_{[x_{i-1},x_i]}(x_{i-1},x_i\wedge s)\right)^2\right]\\
&=\mathds{E}\left[\mathds{E}\left[\left(\max_{i\neq \tau_s,i\leq u_s+1}R_{[x_{i-1},x_i]}(x_{i-1},x_i\wedge s)\right)^2\ |\mathcal{U}\right]\right]\\
&\leq \mathds{E}\left[16m^2\left(\max_{i\neq \tau_s}\mathds{E}\left[R_{[x_{i-1},x_i]}(x_{i-1},x_i)\ |\mathcal{U}\right]\vee \mathds{E}\left[R_{[x_{u_s},x_{u_s+1}]}(x_{u_s},s)\ |\mathcal{U}\right]\right)^2\right]\\
&\leq 16\widetilde{C}_1m^2 \left(\mathds{E}[R_{[0,m^n)}(0,m^n-1)]\right)^2
\end{aligned}
\end{equation}
for some constant $\widetilde{C}_1=\widetilde{C}_1(\beta)<\infty$ depending only on $\beta$.

In addition, define
$$
\Gamma_{n}=\max_{0\leq s<r< m^n}\mathds{E}\left[R_{[0,m^n)}(s,r)^2\right].
$$
By \eqref{unbridge-1}, the translation invariance of the model and the monotonicity of function $f(\beta,\cdot)$, we have
\begin{align*}
&\mathds{E}\left[\sum_{[x_{i-1},x_i]\ \text{is $s$-unbridged}, i\leq u_{s}+1}R_{[k_im^n,(k_i+1)m^n)}(x_{i-1},x_i\wedge s)^2\right]\\
&=\mathds{E}\left[\mathds{E}\left[\sum_{[x_{i-1},x_i]\ \text{is $s$-unbridged}, i\leq u_{s}+1}R_{[k_im^n,(k_i+1)m^n)}(x_{i-1},x_i\wedge s)^2\ |\mathcal{U}\right]\right]\\
&\leq \Gamma_{n} \mathds{E}\left[\sum_{ i\leq u_{s}+1}\I_{\{[x_{i-1},x_i]\ \text{is $s$-unbridged}\}}\right]\\
&\leq \Gamma_{n} \mathds{E}\left[\#\left\{j\in [0,u_s+1):\ [jm^n,(j+1)m^n)\ \text{is $s$-unbridged}\right\}\right]\\
&\leq (2+f(\beta,u_s+1))\Gamma_{n}\leq (2+f(\beta,m))\Gamma_{n}.
\end{align*}
Combining this with \eqref{1th-term} and \eqref{iterate-R}, we obtain
\begin{equation}\label{R0s}
\mathds{E}\left[R_{[0,m^{n+1})}(0,s)^2\right]\leq 2(2+f(\beta,m))\Gamma_{n}+ \widetilde{C}_2m^4 \Upsilon_{n}^2,
\end{equation}
where $\Upsilon_{n}:=\mathds{E}[R_{[0,m^n)}(0,m^n-1)]$ and $\widetilde{C}_2<\infty$ is a constant depending only on $\beta$.
Using the similar argument as in the above, we can also get that the inequality \eqref{R0s} holds for $\mathds{E}\left[R_{[0,m^{n+1})}(r,s)^2\right]$ for all $0\leq s<r< m^{n+1}$.  Consequently,
\begin{equation*}\label{Gamma-iterate}
\Gamma_{n+1}\leq 2(2+f(\beta,m))\Gamma_{n}+ \widetilde{C}_2m^4 \Upsilon_{n}^2=:\widetilde{f}(\beta,m)\Gamma_{n}+ \widetilde{C}_2m^4 \Upsilon_{n}^2.
\end{equation*}
Iterating this inequality over all $k=1,\cdots, n$, we arrive at
\begin{equation}\label{Gamma-iterate-2}
\Gamma_{n+1}\leq \widetilde{C}_2m^4\sum_{k=1}^n \left(\widetilde{f}(\beta,m)\right)^{n+1-k}\Upsilon_{k}^2.
\end{equation}
In addition, by \eqref{numb-cut} we have
\begin{equation*}
\widetilde{f}(\beta,m)=2(2+f(\beta,m))\leq
\begin{cases}
\frac{80}{1-\beta}m^{1-\beta},\quad &\beta<1,\\
40(1+\log m),\quad & 1\leq \beta\leq 2,\\
80,\quad &\beta >2.
\end{cases}
\end{equation*}


In the rest of the proof, we aim to show that for all $\beta>0$, there exists a constant $\widetilde{C}<\infty$ (depending only on $\beta$) such that for all $N\in \mathds{N}$, and all $u,v\in [0,N)$,
\begin{equation}\label{secondmoment}
\mathds{E}\left[R_{[0,N)}(u,v)^2\right]\leq \widetilde{C}\Lambda(N)^2.
\end{equation}

We start with the case $\beta\geq 1$. By Proposition \ref{weak-supermult}, there exists a constant $\delta=\delta(\beta)>0$ (depending only on $\beta$) such that
\begin{equation*}\label{unif-weaksup}
\Upsilon_{k+1}=\mathds{E}\left[R_{[0,m^{k+1})}(0,m^{k+1}-1)\right]
\geq m^{\delta}\mathds{E}\left[R_{[0,m^{k})}(0,m^{k}-1)\right]=m^{\delta}\Upsilon_{k}
\end{equation*}
for all $k\in\mathds{N}$ and all $m\in \mathds{N}$ large enough. Applying this into \eqref{Gamma-iterate-2}, we arrive at
\begin{equation}\label{Gamma-iterate-3}
\Gamma_{n+1}\leq \widetilde{C}_2m^4\sum_{k=1}^n \left(\widetilde{f}(\beta,m)\right)^{n+1-k}(m^{-2\delta})^{n-k}\Upsilon_{n}^2.
\end{equation}
Now when $\beta>1$, we choose $m\in \mathds{N}$ large enough so that
$
\widetilde{f}(\beta, m)m^{-2\delta}<1/2.
$
For $\beta=1$, by the definition of $\widetilde{f}$, it is clear that for sufficiently large $m\in \mathds{N}$, we also have
$\widetilde{f}(1, m)m^{-2\delta(1)}\leq 1/2$.
Applying this into \eqref{Gamma-iterate-3} gives that
\begin{equation*}\label{Gamma-iterate-4}
\begin{aligned}
\Gamma_{n+1}&=\max_{0\leq s<r< m^{n+1}}\mathds{E}[R_{[0,m^{n+1})}(s,r)^2]\\
&\leq  \widetilde{C}_2m^4\widetilde{f}(\beta,m)
\sum_{k=1}^n \left(\frac{1}{2}\right)^{n-k}\Upsilon_{n}^2\leq \widetilde{C}_3m^4\widetilde{f}(\beta,m)\Lambda(m^n)^2
\end{aligned}
\end{equation*}
for some $\widetilde{C}_3<\infty$. According this, Lemma \ref{R-renorm} and Proposition \ref{R(0,n)main}, we obtain that \eqref{secondmoment} holds for $N=m,m^2,\cdots$.

In addition, for general $N\in \mathds{N}$, let $p$ be the number such that $m^p\leq N<m^{p+1}$. Then for any $u,v\in [0,N)$, we can find a sequence $\{i_k\}_{k=1}^K \subset \{1, 2,\cdots, \lceil Nm^{-p}\rceil\}$ such that $|i_k-i_{k+1}|=1$, and $u\leq i_1m^p<\cdots<i_Km^p\leq v$.
From this and the triangle inequality of the resistance, we get that
$$
\begin{aligned}
\mathds{E}[R_{[0,N)}(u,v)^2]&\leq
2m\bigg[\mathds{E}\left[R_{[(i_1-1)m^p,i_1m^p)}(u,i_1m^p-1)^2\right]+1\\
&\quad+\sum_{k=1}^{K-1}\mathds{E}\left[R_{[i_km^p,(i_k+1)m^p)}(i_km^p,(i_k+1)m^p-1)^2\right]+1\\
&\quad +\mathds{E}\left[R_{[i_Km^p,(i_K+1)m^p)}(i_Km^p,v)^2\right]
\bigg].
\end{aligned}
$$
Combining this with \eqref{secondmoment} for $N=m^p$  and Lemma \ref{R-renorm} again we obtain that
\begin{equation*}
\mathds{E}[R_{[0,N)}(u,v)^2]\leq \widetilde{C}_4 m^5\widetilde{f}(\beta,m)\Lambda(m^p)^2\leq \widetilde{C}_5 m^5\widetilde{f}(\beta,m)\Lambda(N)^2
\end{equation*}
for some constants $\widetilde{C}_4, \widetilde{C}_5<\infty$ depending only on $\beta$. That is, \eqref{secondmoment} holds for all $N\in \mathds{N}$.

Next, we turn to the case that $\beta\in (0,1)$. From Lemma \ref{cut-beta0-1}, we get that there exists a constant $c_1\in (0,1)$ (depending only on $\beta$) such that
\begin{equation*}
\begin{aligned}
\mathds{E}\left[R_{[0,m^n)}(0,m^n-1)\right]&\geq c_1 m^{(n-k)(1-\beta)}\mathds{E}\left[R_{[0,m^k)}(0,m^k-1)\right]\\
&\geq \left(c_1 m^{1-\beta}\right)^{n-k}\mathds{E}\left[R_{[0,m^k)}(0,m^k-1)\right]
\end{aligned}
\end{equation*}
for all $n\geq k$ and $m\in \mathds{N}$. Now take $m$ large enough so that $\frac{80m^{\beta-1}}{c_1(1-\beta)}<1/2$. Combining this with \eqref{Gamma-iterate-2} we obtain that 
\begin{align*}
\Gamma_{n+1}&=\max_{0\leq s<r< m^{n+1}}\mathds{E}[R_{[0,m^{n+1})}(s,r)^2]\\
&\leq \widetilde{C}_2m^4\sum_{k=1}^n \left(\widetilde{f}(\beta,m)\right)^{n+1-k}\left(\mathds{E}\left[R_{[0,m^k)}(0,m^k-1)\right]\right)^2\\
&\leq \widetilde{C}_2m^4\sum_{k=1}^n \left(\widetilde{f}(\beta,m)\right)^{n+1-k}
\left(c_1m^{1-\beta}\right)^{-2(n-k)}
\left(\mathds{E}\left[R_{[0,m^n)}(0,m^n-1)\right]\right)^2\\
&\leq \widetilde{C}_2m^4\widetilde{f}(\beta,m)
\sum_{k=1}^n\left(\frac{80m^{\beta-1}}{c_1(1-\beta)}\right)^{n-k}
\left(\mathds{E}\left[R_{[0,m^n)}(0,m^n-1)\right]\right)^2\\
&\leq \widetilde{C}_2m^4\widetilde{f}(\beta,m)
\sum_{k=1}^n2^{-(n-k)}
\left(\mathds{E}\left[R_{[0,m^n)}(0,m^n-1)\right]\right)^2\\
&\leq \widetilde{C}_6m^5\left(\mathds{E}\left[R_{[0,m^n)}(0,m^n-1)\right]\right)^2
\end{align*}
for some constant $\widetilde{C}_6<\infty$ depending only on $\beta$. This, along with Lemma \ref{R-renorm} and Proposition \ref{R(0,n)main}, implies that \eqref{secondmoment} hods for $N=m,m^2,\cdots$. With the similar arguments for extending to general $N$ in the case $\beta\geq 1$, we can obtain \eqref{secondmoment} for all $N\in \mathds{N}$.
\end{proof}

\subsection{Resistances between points and boxes}\label{sect-pb}
The goal of this section is to establish the equivalence between point-to-point and point-to-box resistances as follows.

\begin{proposition}\label{R-Lambda}
For all $\beta>0$ and $\varepsilon\in (0,1)$, there exist constants $0<c<C<\infty$ {\rm(}depending only on $\beta$ and $\varepsilon${\rm)} such that for all $n\in \mathds{N}$,
\begin{equation*}
\mathds{P}\left[c\Lambda(n)\leq R(0,[-n,n]^c)\leq C\Lambda(n)\right]\geq 1-\varepsilon.
\end{equation*}
\end{proposition}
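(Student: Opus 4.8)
The plan is to prove the two inequalities separately, Markov's inequality for the upper bound and the second moment method (Paley--Zygmund) for the lower bound. For the upper bound, I would first control $\mathds{E}[R(0,[-n,n]^c)]$: since the nearest-neighbour edge $\langle n,n+1\rangle$ is always present and $n+1\in[-n,n]^c$, routing the optimal unit flow realising $R_{[0,n+1)}(0,n)$ and then sending it along $\langle n,n+1\rangle$ gives a unit flow from $0$ to $[-n,n]^c$ supported in $[0,n+1]$, so $R(0,[-n,n]^c)\le R_{[0,n+1)}(0,n)+1$. Using the series bound $R_{[0,n+1)}(0,n)\le R_{[0,n)}(0,n-1)+1$, Proposition~\ref{R(0,n)main}, Proposition~\ref{submult}, and the fact that $\Lambda(n)\to\infty$, one gets $\Lambda(n+1)\le C\Lambda(n)$ and hence $\mathds{E}[R(0,[-n,n]^c)]\le C'\Lambda(n)$; Markov then gives $\mathds{P}[R(0,[-n,n]^c)\le C\Lambda(n)]\ge 1-\varepsilon/2$ with $C=2C'/\varepsilon$.

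For the lower bound the two inputs are a second moment bound and a matching first moment bound. The second moment bound $\mathds{E}[R(0,[-n,n]^c)^2]\le C\Lambda(n)^2$ follows from $R(0,[-n,n]^c)^2\le 2R_{[0,n+1)}(0,n)^2+2$ together with Proposition~\ref{prop-2thmoment} and $\Lambda(n+1)\asymp\Lambda(n)$. For the first moment bound $\mathds{E}[R(0,[-n,n]^c)]\ge c\Lambda(n)$ I would run the coarse-graining/very-good-interval analysis of Section~\ref{sect-weaksupermult} with the unit flow from $0$ to $[-n,n]^c$ in place of the flow from $0$ to $mn-1$: splitting $[-n,n]$ into blocks of size $m$, a very-good block forces the flow to spend energy $\gtrsim\alpha_2 m^{\delta}$ in (or near a long edge emanating from) that block, the not-very-good blocks are covered by the covering of Section~\ref{sect-cg}, and on the resulting event of probability $\ge 1-\varepsilon_0$ one obtains $R(0,[-n,n]^c)\ge \widetilde c\,\alpha_2 m^{\delta}R^{G}(\varpi(0),\text{boundary blocks})$ for the renormalised critical model $G$; iterating this with the a priori bound of Remark~\ref{rem-delta*}(2), submultiplicativity, and Proposition~\ref{R(0,n)main} pins $\mathds{E}[R(0,[-n,n]^c)]\asymp\Lambda(n)$ (alternatively, a direct FKG-gluing argument in the spirit of Lemma~\ref{cut-beta0-1}, on the positive-probability event that every edge leaving $[-n,n]$ has its inner endpoint in $[-n,-n/2]\cup[n/2,n]$, reduces $R(0,[-n,n]^c)$ to a within-interval resistance at scale $n$). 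Given these inputs, Paley--Zygmund produces a fixed $c_0=c_0(\beta)>0$ with $\mathds{P}[R(0,[-n,n]^c)\ge c\Lambda(n)]\ge c_0$; to upgrade $c_0$ to $1-\varepsilon$ I would fix $K=K(\varepsilon,\beta)$ and apply this to $K$ disjoint sub-blocks of $[-n,n]$ of comparable length, noting that the internal resistances of disjoint blocks depend on disjoint edge sets and are therefore independent, so all $K$ fail with probability at most $(1-c_0)^K\le\varepsilon$, while a single resistive block, fed into the coarse-grained comparison, already forces $R(0,[-n,n]^c)\ge c(\varepsilon)\Lambda(n)$.

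The main obstacle is the heavy-tailed nature of the model: because the expected number of edges joining a macroscopic central part of $[-n,n]$ to $[-n,n]^c$ is $\Theta(1)$ rather than $o(1)$, the naive deterministic lower bound ``any unit flow from $0$ must generate energy while crossing scale $2^k$'' holds only with probability bounded away from $1$ and hence cannot be iterated over $\log n$ scales. This is exactly where the very-good-interval notion and the exponential decay of large not-very-good components (Propositions~\ref{P-red-many-tree} and~\ref{prop-eventH}) are indispensable: a flow that bypasses a very-good block through a long edge is still charged energy $R_{I_i}(u,B_{\alpha_1 m}(u)^c)\ge\alpha_2 m^{\delta}$ near that edge's endpoint. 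The delicate point is to carry this through while keeping all constants dependent only on $\varepsilon$ and $\beta$ (not on $n$) and simultaneously reaching probability $1-\varepsilon$; this is what forces the combined use of the coarse-graining machinery, the second moment estimate of Proposition~\ref{prop-2thmoment}, and the block-independence boosting described above.
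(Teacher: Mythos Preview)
Your upper bound via Markov is exactly what the paper does. Your use of the second moment (Proposition~\ref{prop-2thmoment}) together with Paley--Zygmund to get a \emph{constant} lower bound on $\mathds{P}[R\ge c\Lambda(n)]$ is also essentially the paper's mechanism: the paper runs this argument for the restricted interval-to-interval resistance (Lemma~\ref{lem-pointbox}), then upgrades via FKG to a box-to-box statement $\mathds{P}[R([-N,N],[-2N,2N]^c)\ge c\Lambda(N)]\ge c_5$ (Lemma~\ref{box-box-lem}).

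The gap is in your boosting from $c_0$ to $1-\varepsilon$. Taking $K$ disjoint sub-blocks of $[-n,n]$ of \emph{comparable} length $\sim n/K$ does not work: the expected number of edges jumping over a block of width $n/K$ inside $[-n,n]$ is $\Theta(1)$ (in fact $\Theta(K)$ in total), so you cannot condition all such jump-overs away, and without that conditioning a unit flow from $0$ to $[-n,n]^c$ can bypass any given block entirely. Hence ``one resistive block forces $R(0,[-n,n]^c)\ge c\Lambda(n)$'' is false in this geometry, and the independence of the internal block resistances buys nothing. You even diagnose this obstruction in your last paragraph (the heavy-tailed $\Theta(1)$ edge count across macroscopic regions), but your boosting scheme runs straight into it.

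The paper's fix is to use \emph{nested} annuli at \emph{geometric} scales $[-K^{2i}N,K^{2i}N]$, $i=1,\dots,K$. At these scales the probability of an edge from $[-K^{2(i-1)}N,K^{2(i-1)}N]$ to $[-K^{2i}N,K^{2i}N]^c$ is $O(K^{-2})$, so by a union bound all $K$ jump-overs are absent with probability $1-O(K^{-1})$. On that event every unit flow from $0$ must cross each annulus, the box-to-box resistances across odd-indexed annuli are independent, and each is $\ge c\Lambda(N)$ with probability $\ge c_5$ by Lemma~\ref{box-box-lem}; so all of them fail with probability $\le (1-c_5)^{\lfloor K/2\rfloor}$. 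Choosing $K$ large (depending on $\varepsilon$) and then using submultiplicativity to pass from $\Lambda(N)$ to $\Lambda(K^{2K}N)$ finishes the lower tail. The essential point you are missing is that independence must be manufactured across scales that are far enough apart for jump-over edges to be rare, and that requires geometric (not arithmetic) spacing together with the intermediate box-to-box estimate.
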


To prove Proposition \ref{R-Lambda}, we first do some preparations. For $n\in \mathds{N}$ and $\iota\in (0,1)$, recall that $R_\iota^n=[n-\iota n, n)$ and $L_\iota^n=[0,\iota n]$.

\begin{lemma}\label{lem-pointbox}
For all $\beta>0$, there exist  constants $c_2=c_2(\beta)>0$ and sufficiently small $\iota>0$ {\rm(}both depending only on $\beta${\rm)} such that for all $n\in \mathds{N}$,
\begin{equation*}
\mathds{P}\left[R_{[0,n)}(L_\iota^n,R_\iota^n)\geq \frac{1}{4}\mathds{E}\left[R_{[0,n)}(0,n-1)\right]\right]\geq c_2.
\end{equation*}
\end{lemma}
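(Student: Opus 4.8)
The plan is to combine the first-moment lower bound from Lemma~\ref{lem-point-box} with the second-moment upper bound from Proposition~\ref{prop-2thmoment} via the Paley--Zygmund inequality, exactly in the spirit of a reverse concentration estimate for a nonnegative random variable. Fix $\iota>0$ small enough that Lemma~\ref{lem-point-box} applies, so that
$$
\mathds{E}\left[R_{[0,n)}(L_\iota^n,R_\iota^n)\right]\geq \tfrac12\,\mathds{E}\left[R_{[0,n)}(0,n-1)\right].
$$
Write $X=R_{[0,n)}(L_\iota^n,R_\iota^n)$ and $a=\mathds{E}[R_{[0,n)}(0,n-1)]$, so $\mathds{E}[X]\geq a/2$. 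The target event is $\{X\geq a/4\}\supset\{X\geq \tfrac12\mathds{E}[X]\}$, so it suffices to bound $\mathds{P}[X\geq \tfrac12\mathds{E}[X]]$ from below by a constant.

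First I would control the second moment of $X$. By the monotonicity of effective resistance in the edges, collapsing the source set to a single vertex and the sink set to a single vertex only decreases resistance, hence $X=R_{[0,n)}(L_\iota^n,R_\iota^n)\leq R_{[0,n)}(0,n-1)$; combined with Proposition~\ref{prop-2thmoment} this gives $\mathds{E}[X^2]\leq \mathds{E}[R_{[0,n)}(0,n-1)^2]\leq C_1\Lambda(n)^2$, and by Proposition~\ref{R(0,n)main} we have $\Lambda(n)\leq C'_{1,*}a$, so $\mathds{E}[X^2]\leq C_1 (C'_{1,*})^2 a^2 =: C_7 a^2$. Then the Paley--Zygmund inequality yields, for $\theta\in(0,1)$,
$$
\mathds{P}\left[X\geq \theta\,\mathds{E}[X]\right]\geq (1-\theta)^2\,\frac{\mathds{E}[X]^2}{\mathds{E}[X^2]}\geq (1-\theta)^2\,\frac{(a/2)^2}{C_7 a^2}=\frac{(1-\theta)^2}{4C_7}.
$$
Taking $\theta=\tfrac12$ gives $\mathds{P}[X\geq \tfrac14 a]\geq \mathds{P}[X\geq \tfrac12\mathds{E}[X]]\geq \tfrac{1}{16 C_7}=:c_2>0$, a constant depending only on $\beta$ (through $\iota$, $C_1$, $C'_{1,*}$), which is exactly the claimed bound with $\tfrac14\mathds{E}[R_{[0,n)}(0,n-1)]$.

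The only genuinely nontrivial ingredients here are Lemma~\ref{lem-point-box} and Proposition~\ref{prop-2thmoment}, both already established; the argument above is then a routine Paley--Zygmund application, so I do not expect a serious obstacle. The one point requiring a little care is the reduction $X\leq R_{[0,n)}(0,n-1)$: one must check that the infimum in the definition of $R_{[0,n)}(L_\iota^n,R_\iota^n)$ is over a larger class of flows (flows from any subset of $L_\iota^n$ to any subset of $R_\iota^n$, in particular from $\{0\}$ to $\{n-1\}$ since $0\in L_\iota^n$ and $n-1\in R_\iota^n$), so that indeed $R_{[0,n)}(L_\iota^n,R_\iota^n)\leq R_{[0,n)}(0,n-1)$ holds deterministically. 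With that in hand the constants track through cleanly and the proof is complete.
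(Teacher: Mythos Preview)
Your proposal is correct and follows essentially the same approach as the paper: both combine the first-moment lower bound from Lemma~\ref{lem-point-box} with the second-moment bound from Proposition~\ref{prop-2thmoment} (via the deterministic inequality $R_{[0,n)}(L_\iota^n,R_\iota^n)\leq R_{[0,n)}(0,n-1)$ and Proposition~\ref{R(0,n)main}) to run a second-moment argument. You phrase it as Paley--Zygmund while the paper splits $\mathds{E}[X]$ on the event $A=\{X\geq a/4\}$ and applies Cauchy--Schwarz to $\mathds{E}[X\I_A]$, but these are the same computation.
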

\begin{proof}
For convenience, let $A$ be the event that $R_{[0,n)}(L_\iota^n,R_\iota^n)\geq \frac{1}{4}\mathds{E}[R_{[0,n)}(0,n-1)]$. Then according to Lemma \ref{lem-point-box}, the Cauchy-Schwarz inequality and Proposition \ref{prop-2thmoment}, we get that for sufficiently small $\iota>0$,
\begin{equation*}
\begin{aligned}
\mathds{E}[R_{[0,n)}(0,n-1)]&\leq 2\mathds{E}\left[R_{[0,n)}(L_\iota^n, R_\iota^n)\right]\\
&= 2\mathds{E}[R_{[0,n)}(L_\iota^n, R_\iota^n)\I_{A^c}]+2\mathds{E}\left[R_{[0,n)}(L_\iota^n, R_\iota^n)\I_{A}\right]\\
&\leq \frac{1}{2}\mathds{E}\left[R_{[0,n)}(0,n-1)\right]
+2\left(\mathds{E}\left[R_{[0,n)}(0,n-1)^2\right]\right)^{1/2}\mathds{P}[A]^{1/2}\\
&\leq \frac{1}{2}\mathds{E}\left[R_{[0,n)}(0,n-1)\right]
+C_1^{1/2}\mathds{E}\left[R_{[0,n)}(0,n-1)\right]\mathds{P}[A]^{1/2},
\end{aligned}
\end{equation*}
where $C_1<\infty$ (depending only on $\beta$) is the constant defined in Proposition \ref{prop-2thmoment}. This implies that $\mathds{P}[A]\geq (4C_1)^{-1}$.
\end{proof}

\begin{lemma}\label{point-point}
For all $\beta>0$, there exists a constant $c_3=c_3(\beta)>0$ {\rm(}depending only on $\beta${\rm)} such that for all $n\in \mathds{N}$,
\begin{equation}\label{box-point-2}
\mathds{E}\left[R_{[-n,n]}(0,-n)\right]=\mathds{E}\left[R_{[-n,n]}(0,n)\right]\geq c_3\mathds{E}\left[R_{[0,n)}(0,n-1)\right].
\end{equation}
\end{lemma}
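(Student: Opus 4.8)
The identity $\mathds{E}[R_{[-n,n]}(0,-n)]=\mathds{E}[R_{[-n,n]}(0,n)]$ is immediate from the reflection symmetry $x\mapsto -x$, under which the $\beta$-LRP model on $[-n,n]$ is invariant in law while $0\mapsto 0$ and $-n\mapsto n$; so it remains to prove the inequality. Fix $\iota$ to be the constant from Lemma~\ref{lem-point-box}, which we may take with $\iota<1/2$: for any smaller $\iota'$ one has $R_{[0,n)}(L_{\iota'}^n,R_{\iota'}^n)\geq R_{[0,n)}(L_{\iota}^n,R_{\iota}^n)$ since shrinking the source and sink sets only increases the resistance, so the conclusion of Lemma~\ref{lem-point-box} persists for $\iota'$. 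Let $A$ be the event that
$$
[\iota n,n)\nsim [-n,0)\qquad\text{and}\qquad n\nsim [-n,n-\iota n).
$$
The two families of edges constrained by $A$ are disjoint (the first involves edges with an endpoint in $[-n,0)$ and one in $[\iota n,n)$, with $n\notin[\iota n,n)$; the second involves edges incident to $n$), hence independent, and neither family lies inside $[0,n)$; thus $A$ is independent of $R_{[0,n)}(L_\iota^n,R_\iota^n)$.

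\textbf{Step 1: on $A$, $R_{[-n,n]}(0,n)\geq R_{[0,n)}(L_\iota^n,R_\iota^n)$.} Let $f$ be the optimal unit flow from $0$ to $n$ in the network $[-n,n]$, so $R_{[-n,n]}(0,n)=\tfrac12\sum_{u\sim v}f_{uv}^2$, and let $\widetilde f$ be its restriction to edges with both endpoints in $[0,n)$. Since dropping edges only decreases the energy, it suffices to check that, on $A$, $\widetilde f$ is a unit flow from $L_\iota^n$ to $R_\iota^n$ inside $[0,n)$, for then $\tfrac12\sum\widetilde f^2\geq R_{[0,n)}(L_\iota^n,R_\iota^n)$. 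On $A$, the only edges joining $[0,n)$ to $[-n,n]\setminus[0,n)=[-n,0)\cup\{n\}$ have their $[0,n)$-endpoint in $[0,\iota n)\subset L_\iota^n$ (for edges to $[-n,0)$) or in $[n-\iota n,n)=R_\iota^n$ (for edges to $\{n\}$), and moreover $[-n,0)\nsim\{n\}$. Thus the block $[-n,0)$, which contains neither the source $0$ nor the sink $n$, exchanges net flux only with $L_\iota^n$, and flow conservation on $[-n,0)$ forces this net flux to be $0$. Writing $f^{[0,n)}_{(v)}:=\sum_{u\in[0,n),\,u\sim v}f_{vu}$, the conservation law ($f_{(0)}=1$, $f_{(n)}=-1$, $f_{(v)}=0$ otherwise) together with the above description of the boundary yields $f^{[0,n)}_{(v)}=0$ for every $v\in[0,n)\setminus(L_\iota^n\cup R_\iota^n)$, while summing over the remaining vertices gives $\sum_{v\in L_\iota^n}f^{[0,n)}_{(v)}=1$ and $\sum_{v\in R_\iota^n}f^{[0,n)}_{(v)}=-1$ (note individual vertices of $L_\iota^n$ may carry nonzero flux, but the definition of a unit flow from $L_\iota^n$ to $R_\iota^n$ only constrains these sums). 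As $L_\iota^n\cap R_\iota^n=\emptyset$ for $\iota<1/2$, this is exactly the assertion that $\widetilde f$ is such a unit flow, proving the claim.

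\textbf{Step 2: conclusion.} By Step~1, the independence of $A$ and $R_{[0,n)}(L_\iota^n,R_\iota^n)$, and Lemma~\ref{lem-point-box},
$$
\mathds{E}\left[R_{[-n,n]}(0,n)\right]\geq \mathds{E}\left[R_{[0,n)}(L_\iota^n,R_\iota^n)\I_A\right]=\mathds{P}[A]\,\mathds{E}\left[R_{[0,n)}(L_\iota^n,R_\iota^n)\right]\geq \tfrac{1}{2}\mathds{P}[A]\,\mathds{E}\left[R_{[0,n)}(0,n-1)\right].
$$
It remains to bound $\mathds{P}[A]$ below by a constant depending only on $\beta,\iota$. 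By \eqref{LRP} and independence of edges, $\mathds{P}[A]$ is the product of $\exp\{-\beta\int_{[\iota n,n]}\int_{[-n,0]}|x-y|^{-2}\}$ and $\exp\{-\beta\int_{[n,n+1]}\int_{[-n,n-\iota n]}|x-y|^{-2}\}$; a direct computation shows the first integral equals $\ln\frac{1+\iota}{2\iota}+O(1/n)$ and the second is $O(1/(\iota n))$, so $\mathds{P}[A]\geq c(\beta,\iota)>0$ for all large $n$. The finitely many small $n$ are handled by shrinking $c_3$ (both sides of \eqref{box-point-2} are positive, and the left side is bounded below by a positive constant since the path $0,1,\dots,n$ gives $R_{[-n,n]}(0,n)\geq$ a fixed constant on a positive-probability configuration). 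Setting $c_3=c(\beta,\iota)/2$, further shrunk for small $n$, finishes the proof.

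\emph{Expected main difficulty.} The delicate point is Step~1: one must guarantee that the restricted flow $\widetilde f$ is genuinely a unit flow between $L_\iota^n$ and $R_\iota^n$ — i.e.\ that no interior vertex of $[0,n)$ leaks net flux to the outside and that the dangling block $[-n,0)$ carries zero net flux. This is precisely what forces the two connectivity constraints defining $A$. The tension is that forbidding \emph{all} edges between $[-n,0)$ and $[0,n)$ would make $\mathds{P}[A]$ decay like $n^{-c\beta}$ (too many short crossing edges), so one may only forbid crossings from the ``deep'' part $[\iota n,n)$; checking that this weaker event still keeps $\widetilde f$ supported correctly, and that its probability stays bounded away from $0$, is the crux.
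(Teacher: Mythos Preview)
Your proof is correct and takes a genuinely different route from the paper. The paper proceeds indirectly: it first uses monotonicity and translation invariance to get $\mathds{E}[R_{[0,3n]}(n,2n)]\le\mathds{E}[R_{[-n,n]}(0,n)]$, then chains these via the triangle inequality to control $\mathds{E}[R_{[0,(k+2)n]}(n,(k+1)n)]$, and finally invokes the rescaling Lemma~\ref{R-renorm} (which in turn rests on the variational characterization of Lemma~\ref{vc-R} and the nontrivial Proposition~\ref{propR-renorm}) to pull the endpoints back into $L_\iota^n$ and $R_\iota^n$ before applying Lemma~\ref{lem-point-box}. Your argument bypasses all of this machinery: you restrict the optimal flow to $[0,n)$ on an explicit event $A$ of uniformly positive probability, check directly that on $A$ the restriction is a legitimate unit flow from $L_\iota^n$ to $R_\iota^n$ (the key point being that the block $[-n,0)$ carries zero net flux and the only leakage happens through $L_\iota^n$ and $R_\iota^n$), and then use independence of $A$ from the internal edges of $[0,n)$ together with Lemma~\ref{lem-point-box}. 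Your approach is more elementary and self-contained; the paper's approach is more systematic given that Lemma~\ref{R-renorm} is already in place for other purposes. A minor remark: your use of reflection symmetry for the equality is in fact cleaner than the paper's appeal to ``translation invariance'', which is really the same reflection.
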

\begin{proof}
By the translation invariance of the LRP model, we can see the first equality in \eqref{box-point-2} holds directly.
Therefore, we only need to prove the inequality in \eqref{box-point-2}.
To this end, by the monotonicity of resistance, we get that
\begin{equation*}
\mathds{E}\left[R_{[-n,2n]}(0,n)\right]\leq \mathds{E}\left[R_{[-n,n]}(0,n)\right].
\end{equation*}
Using this and the translation invariance of the LRP model again yields
\begin{equation*}
\mathds{E}\left[R_{[0,3n]}(n,2n)\right]\leq \mathds{E}\left[R_{[-n,n]}(0,n)\right].
\end{equation*}
Combining this with the triangle inequality of the resistance, we obtain that for all $k\geq 1$,
\begin{equation}\label{Rk-triangle}
\mathds{E}\left[R_{[0,(k+2)n]}(n,(k+1)n)\right]
\leq\sum_{l=1}^k\mathds{E}\left[R_{[(l-1)n,(l+2)n]}(ln,(l+1)n)\right]\leq k\mathds{E}\left[R_{[-n,n]}(0,n)\right].
\end{equation}

We now denote $w_1=\lfloor\frac{n}{k+2}\rfloor$ and $w_2=\lfloor\frac{(k+1)n}{k+2}\rfloor$. Then from Lemma \ref{R-renorm} and \eqref{Rk-triangle}, we can see that
\begin{equation}\label{Rw1w2}
\mathds{E}\left[R_{[0,n]}(w_1,w_2)\right]\leq C_{6,*}\mathds{E}\left[R_{[0,(k+2)n]}(n,(k+1)n)\right]\leq C_{6,*}k\mathds{E}\left[R_{[-n,n]}(0,n)\right],
\end{equation}
where $C_{6,*}<\infty$ (depending only on $\beta$) is the constant defined in Lemma \ref{R-renorm}.

Finally, recall that $\iota>0$ is the constant defined in Lemma \ref{lem-point-box}, and $R_\iota^n=[n-\iota n, n)$ and $L_\iota^n=[0,\iota n]$. Take $k\in \mathds{N}$ large enough such that $w_1\in L_\iota^n$ and $w_2\in R_\iota^n$. It is worth emphasizing that $k$ depends only on $\iota$, and thus depends only on $\beta$.
Then by the monotonicity of resistance, we get that
$$
\mathds{E}\left[R_{[0,n]}(w_1,w_2)\right]\geq \mathds{E}\left[R_{[0,n]}(L_\iota^n,R_\iota^n)\right].
$$
Applying this into \eqref{Rw1w2}, we obtain that
\begin{equation*}
\begin{aligned}
\mathds{E}\left[R_{[-n,n]}(0,n)\right]&\geq \frac{1}{C_{6,*}k}\mathds{E}\left[R_{[0,n]}(w_1,w_2)\right]\\
&\geq\frac{1}{C_{6,*}k}\mathds{E}\left[R_{[0,n]}(L_\iota^n,R_\iota^n)\right]\\
&\geq \frac{1}{2C_{6,*}k}\mathds{E}\left[R_{[0,n]}(0,n)\right]\geq \frac{1}{2C_{6,*}^2k}\mathds{E}\left[R_{[0,n)}(0,n-1)\right],
\end{aligned}
\end{equation*}
where we used Lemmas \ref{lem-point-box} and \ref{R-renorm} for the third and fourth inequalities, respectively.
\end{proof}

\begin{lemma}\label{box-box-lem}
For $\beta>0$, there exist constants $\eta=\eta(\beta)>0$ and $c_4=c_4(\beta)>0$ {\rm(}both depending only on $\beta${\rm)} such that for all $n\in \mathds{N}$,
\begin{equation}\label{box-point-3}
\mathds{E}\left[R_{[-n,n]}([-\eta n, \eta n], [n-\eta n, n+\eta n])\right]\geq c_4\Lambda(n).
\end{equation}
In addition, there exists a constant $c_5=c_5(\beta)>0$ {\rm(}depending only on $\beta${\rm)} such that
\begin{equation}\label{P-box-point}
\mathds{P}\left[R_{[-n,n]}([-\eta n, \eta n], [n-\eta n, n+\eta n])\geq \frac{ c_4}{2}\Lambda(n)\right]\geq c_5
\end{equation}
and
\begin{equation}\label{box-box}
\mathds{P}\left[R([-n,n], [-2n,2n]^c)\geq  \frac{ c_4}{8}\Lambda(n)\right]\geq c_5.
\end{equation}
\end{lemma}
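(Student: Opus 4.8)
\textbf{Proof plan for Lemma \ref{box-box-lem}.}
The plan is to establish the three displays in turn, in the order \eqref{box-point-3}, then \eqref{P-box-point}, then \eqref{box-box}, each building on the previous.

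\emph{Step 1: the first-moment bound \eqref{box-point-3}.} I would compare the box-to-box resistance $R_{[-n,n]}([-\eta n,\eta n],[n-\eta n,n+\eta n])$ to the point-to-box resistance $\mathds{E}[R_{[-n,n]}(0,n)]$ studied in Lemma \ref{point-point}. Arguing as in the proof of Lemma \ref{lem-point-box}, one writes a unit flow from the left box to the right box, and splices onto it unit electric flows inside the two end subintervals $[-\eta n,\eta n]$ and $[n-\eta n,n+\eta n]$ to make it a flow from $0$ to $n$; by Cauchy--Schwarz and the independence of the edge sets in the two end intervals from the ``bulk'' edges, the extra energy is controlled by $2\Lambda(\eta n)$. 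This yields
\[
\mathds{E}\left[R_{[-n,n]}(0,n)\right]\leq 2\Lambda(\eta n)+\mathds{E}\left[R_{[-n,n]}([-\eta n,\eta n],[n-\eta n,n+\eta n])\right].
\]
Now by Propositions \ref{R(0,n)main} and \ref{weak-supermult} one has $\Lambda(\eta n)\leq \widetilde c\,\eta^{\delta}\Lambda(n)$ for some $\delta>0$, while Lemma \ref{point-point} together with Proposition \ref{R(0,n)main} gives $\mathds{E}[R_{[-n,n]}(0,n)]\geq c_3\mathds{E}[R_{[0,n)}(0,n-1)]\geq c_3' \Lambda(n)$. Choosing $\eta$ small enough that $2\widetilde c\,\eta^{\delta}\leq c_3'/2$ absorbs the error term and proves \eqref{box-point-3} with $c_4=c_3'/2$.

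\emph{Step 2: the lower-tail bound \eqref{P-box-point}.} This is a second-moment/Paley--Zygmund argument. Let $A$ be the event $\{R_{[-n,n]}([-\eta n,\eta n],[n-\eta n,n+\eta n])\geq \tfrac{c_4}{2}\Lambda(n)\}$. Split $\mathds{E}[R_{[-n,n]}(\cdots)]=\mathds{E}[R_{[-n,n]}(\cdots)\mathds 1_{A^c}]+\mathds{E}[R_{[-n,n]}(\cdots)\mathds 1_A]$; the first term is at most $\tfrac{c_4}{2}\Lambda(n)\leq \tfrac12\mathds{E}[R_{[-n,n]}(\cdots)]$ by \eqref{box-point-3}, and the second is at most $(\mathds{E}[R_{[-n,n]}(\cdots)^2])^{1/2}\mathds{P}[A]^{1/2}$. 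The second moment of the box-to-box resistance is bounded by a constant times $\Lambda(n)^2$: by monotonicity $R_{[-n,n]}([-\eta n,\eta n],[n-\eta n,n+\eta n])\leq R_{[-n,n]}(0,n)\leq R_{[0,2n]}(0,2n)$ (up to translation), and $\mathds{E}[R_{[0,2n]}(0,2n)^2]\leq C_1\Lambda(2n)^2\leq C_1'\Lambda(n)^2$ by Proposition \ref{prop-2thmoment}, Lemma \ref{R-renorm} and submultiplicativity (Proposition \ref{submult}). Rearranging gives $\mathds{P}[A]\geq c_5$ for an explicit $c_5=c_5(\beta)>0$.

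\emph{Step 3: from $[-n,n]$-confined to unconfined, \eqref{box-box}.} Here I would relate $R([-n,n],[-2n,2n]^c)$ to the confined quantity $R_{[-n,n]}([-\eta n,\eta n],[n-\eta n,n+\eta n])$ by a short topological/FKG argument. On a positive-probability event $B$ (decreasing in the edges) one can force: (i) no edge joins $[-\eta n,\eta n]$ to $([-n,n])^c$ and no edge joins $[n-\eta n,n+\eta n]$ to $([-n,n]\cup[-2n,2n]^c)$ other than through the relevant interval, so that any unit flow from $[-n,n]$ to $[-2n,2n]^c$, restricted appropriately, must pass from a neighborhood of the left box to a neighborhood of the right sub-box within $[-n,n]$; more precisely one arranges that the ``interface'' between $[-n,n]$ and its complement is carried by vertices inside $[-\eta n,\eta n]\cup[n-\eta n,n+\eta n]$. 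On $B$ one then gets $R([-n,n],[-2n,2n]^c)\geq c\, R_{[-n,n]}([-\eta n,\eta n],[n-\eta n,n+\eta n])$ for an absolute constant $c$ (the factor $1/4$ between $c_4/2$ and $c_4/8$ is the room for this). Since both this inequality (as an event) and the event $A$ from Step 2 are decreasing in the edge configuration, the FKG inequality gives $\mathds{P}[R([-n,n],[-2n,2n]^c)\geq \tfrac{c_4}{8}\Lambda(n)]\geq \mathds{P}[A\cap B]\geq \mathds{P}[A]\,\mathds{P}[B]\geq c_5\,\mathds{P}[B]$, and after renaming the constant this is \eqref{box-box}.

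\emph{Main obstacle.} The delicate point is Step 3: one must choose the event $B$ and the constant $\eta$ so that, conditionally on $B$, every unit flow realizing $R([-n,n],[-2n,2n]^c)$ genuinely crosses the annular regions around the two boxes that appear in the confined resistance, without losing more than a constant factor of energy and without $B$ interfering with the decreasing-event structure needed for FKG. This requires carefully identifying which edges to suppress (those that would let the flow ``escape'' the left box region or bypass the right sub-box) and verifying that the resulting lower bound on the flow energy is exactly the confined box-to-box resistance up to a universal multiplicative constant. The first two steps are routine adaptations of the flow-splicing and second-moment techniques already used in Sections \ref{sect-far} and \ref{secondm}.
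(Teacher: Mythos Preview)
Your Steps 1 and 2 are correct and essentially identical to the paper's argument: the flow-splicing comparison plus Lemma \ref{point-point} and Propositions \ref{R(0,n)main}, \ref{weak-supermult} give \eqref{box-point-3}, and the Paley--Zygmund/second-moment argument (with Proposition \ref{prop-2thmoment}) gives \eqref{P-box-point}.

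Step 3, however, has a genuine gap, and you have in fact correctly flagged it as the main obstacle. The inequality you are aiming for,
\[
R([-n,n],[-2n,2n]^c)\;\geq\; c\,R_{[-n,n]}\big([-\eta n,\eta n],\,[n-\eta n,n+\eta n]\big)
\]
on some decreasing event $B$, cannot be arranged by suppressing edges. The source set on the left is the \emph{entire} interval $[-n,n]$: a unit flow from $[-n,n]$ to $[-2n,2n]^c$ may emanate from points arbitrarily close to $\pm n$ and exit immediately, without ever visiting the central box $[-\eta n,\eta n]$. No edge-deletion event forces the flow to cross from the central box to the box near $n$, so the confined resistance between those two sub-boxes is simply not a lower bound for the left-hand side. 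Your description of $B$ (``the interface between $[-n,n]$ and its complement is carried by vertices inside $[-\eta n,\eta n]\cup[n-\eta n,n+\eta n]$'') is geometrically off: the interface sits near $\pm n$, not near $0$.

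The paper fixes this with a two-stage argument you are missing. First, it proves an \emph{intermediate} small-box-to-outside estimate
\[
\mathds{P}\big[R([-\eta n,\eta n],[-n,n]^c)\geq \tfrac{c_4}{8}\Lambda(n)\big]\geq \widetilde c_1.
\]
Here the source is the small central box, so one \emph{can} force the flow to exit through the boundary boxes $[\pm n-\eta n,\pm n+\eta n]$: take $A_\eta=\{R_{[-n,n]}([-\eta n,\eta n],[x-\eta n,x+\eta n])\geq \tfrac{c_4}{2}\Lambda(n)\text{ for }x=\pm n\}$ (probability $\geq c_5^2$ by FKG and \eqref{P-box-point}), and $B_\eta=\{[-(1-\eta)n,(1-\eta)n]\nsim[-n,n]^c\}$ (independent of $A_\eta$, positive probability). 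On $A_\eta\cap B_\eta$ the parallel law gives the factor $1/4$. Second, the paper covers $[-n,n]$ by $q=\lfloor 1/\eta\rfloor+1$ translates of $[-\eta n,\eta n]$; since each translate satisfies the small-box-to-outside bound with probability $\geq\widetilde c_1$ and these are decreasing events, FKG yields
\[
\mathds{P}\big[R([-n,n],[-2n,2n]^c)\geq \tfrac{c_4}{8}\Lambda(n)\big]\geq \widetilde c_1^{\,q}.
\]
It is this covering step, passing from a single small source box to the full interval $[-n,n]$, that your plan lacks.
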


\begin{proof}
By Proposition \ref{R(0,n)main} and Lemma \ref{point-point}, we can see that there exists a constant $\widetilde{C}_1=\widetilde{C}_1(\beta)<\infty$ (depending only on $\beta$) such that
\begin{equation*}
\Lambda(n)\leq C'_{1,*}\mathds{E}\left[R_{[0,n)}(0,n-1)\right]\leq \widetilde{C}_1(\beta)\mathds{E}\left[R_{[-n,n]}(0,n)\right],
\end{equation*}
where $C'_{1,*}<\infty$ (depending only on $\beta$) is the constant defined in Proposition \ref{R(0,n)main}.

In the following, without loss of generality, we assume that $\eta n\in \mathds{N}$. Otherwise, we can replace $\eta n$ by $\lfloor \eta n\rfloor$. Using the similar argument in the proof of Lemma \ref{lem-point-box}, we arrive at
\begin{equation*}
\begin{aligned}
\mathds{E}\left[R_{[-n,n]}(0,n)\right]
&\leq \mathds{E}\left[R_{[-n,n]}([-\eta n, \eta n], [n-\eta n, n+\eta n])\right]+ 2\Lambda(\lfloor\eta n\rfloor)\\
&\leq  \mathds{E}\left[R_{[-n,n]}([-\eta n, \eta n], [n-\eta n, n+\eta n])\right]+ \widetilde{C}_2\eta^\delta \Lambda(n)
\end{aligned}
\end{equation*}
for some constants $\widetilde{C}_2=\widetilde{C}_2(\beta)<\infty$  and $\delta>0$ both depending only on $\beta$.  Combining this with Lemma \ref{point-point} gives us that
\begin{equation*}
\begin{aligned}
\mathds{E}\left[R_{[-n,n]}([-\eta n, \eta n], [n-\eta n, n+\eta n])\right]&\geq \mathds{E}\left[R_{[-n,n]}(0,n)\right]- \widetilde{C}_2\eta^\delta \Lambda(n)\\
&\geq c_3\mathds{E}\left[R_{[0,n)}(0,n-1)\right]- \widetilde{C}_2\eta^\delta \Lambda(n),
\end{aligned}
\end{equation*}
where $c_3>0$ (depending only on $\beta$) is the constant defined in Lemma \ref{point-point}. Consequently, we can obtain \eqref{box-point-3} by Proposition \ref{R(0,n)main} and taking $\eta>0$ (depending only on $\beta$) small enough.
In addition, using similar arguments in the proof of Lemma \ref{lem-pointbox}, we can get \eqref{P-box-point}.

To prove \eqref{box-box}, we start by proving that there exists a constant $\widetilde{c}_1=\widetilde{c}_1(\beta)>0$ (depending only on $\beta$) such that
\begin{equation}\label{box-box-2}
\mathds{P}\left[R([-\eta n,\eta n], [-n,n]^c)\geq \frac{c_4}{8}\Lambda(n)\right]\geq \widetilde{c}_1.
\end{equation}
For that, let $A_\eta$ be the event that $R_{[-n,n]}([-\eta n, \eta n], [x-\eta n, x+\eta n])\geq \frac{ c_4}{2}\Lambda(n)$ for all $x=\pm n$.
Since $\{R_{[-n,n]}([-\eta n, \eta n], [x-\eta n, x+\eta n])\geq \frac{ c_4}{2}\Lambda(n)\}$ is decreasing with the edge set, from the FKG inequality and  \eqref{P-box-point} we get that
\begin{equation}\label{P(A)}
\mathds{P}[A_\eta]\geq c_5^2,
\end{equation}
where $c_5>0$ is the constant defined in \eqref{P-box-point}.
Additionally, denote by $B_\eta$ the event that $[-(n-\eta n), n-\eta n]\nsim [-n,n]$.  Then by a simple calculation, one has
\begin{equation}\label{P(B)}
\begin{aligned}
\mathds{P}[B_\eta]&=\exp\left\{-\int_{-(n-\eta n)}^{n-\eta n}\left(\int_{-\infty}^{-n}+\int_{n}^\infty\right)\frac{\beta}{|u-v|^2}\d u\d v\right\}\\
&=\exp\left\{-\int_{-(1-\eta )}^{1-\eta }\left(\int_{-\infty}^{-1}+\int_{1}^\infty\right)\frac{\beta}{|u-v|^2}\d u\d v\right\}=:b>0.
\end{aligned}
\end{equation}

Note that on the event $B_\eta\cap A_\eta$, for any flow $f$ from $[-\eta n,\eta n]$ to $[-n,n]^c$, it must pass through $\cup_{x=\pm n}[x-\eta n, x+\eta n]$, which means
\begin{equation*}
\begin{aligned}
&R([-\eta n,\eta n],[-n,n]^c)\\
&\geq R([-\eta n,\eta n], \cup_{x=\pm n}[x-\eta n, x+\eta n])\\
&\geq \frac{1}{4}\min\left\{R([-\eta n,\eta n],[-n-\eta n, -n+\eta n]),\ R([-\eta n,\eta n],[n-\eta n, n+\eta n])\right\}\\
&\geq \frac{c_4}{8}\Lambda(n).
\end{aligned}
\end{equation*}
Combining this with \eqref{P(A)}, \eqref{P(B)} and the fact that $A_\eta, B_\eta$ are independent, we obtain that
\begin{equation*}
\mathds{P}\left[R([-\eta n,\eta n],[-n,n]^c)\geq \frac{c_4}{8}\Lambda(n)\right]\geq bc_5^2,
\end{equation*}
which shows \eqref{box-box-2}.

Let $D_{\eta,n}=\{-n+\eta n, -n+2\eta n,\cdots, n-\eta n\}$ . It is obvious that  $\cup_{x\in D_{\eta,n}}[x-\eta n,x+\eta n]=[-n,n]$ and $\#D_{\eta,n}=\lfloor 1/\eta\rfloor +1=:q$. From the monotonicity of the resistance, we have
\begin{equation*}
\begin{aligned}
R([-\eta n,\eta n],[-2n,2n]^c)&=R(\cup_{x\in D_{\eta,n}}[x-\eta n,x+\eta n], [-2n,2n]^c)\\
&\geq \min_{\{\theta_x\}_{x\in D_{\eta,n}}}\sum_{x\in D_{\eta,n}}\theta_x^2 R([x-\eta n,x+\eta n], [-2n,2n]^c)\\
&\geq  \min_{\{\theta_x\}_{x\in D_{\eta,n}}}\sum_{x\in D_{\eta,n}} \theta_x^2 R([x-\eta n,x+\eta n], [x-n,x+n]^c),
\end{aligned}
\end{equation*}
where the minmum is over all $\{\theta_x\}_{x\in D_{\eta,n}}$ with $\theta_x\geq 0$ and $\sum_{x\in D_{\eta,n}}\theta_x=1$.
Therefore, by \eqref{box-box-2} and the FKG inequality again, we arrive at
\begin{equation*}
\begin{aligned}
&\mathds{P}\left[R([-n,n], [-2n,2n]^c)\geq  \frac{ c_4}{8}\Lambda(n)\right]\\
&\geq \mathds{P}\left[R([x-\eta n,x+\eta n], [x-n,x+n]^c)\geq  \frac{ c_4}{8}\Lambda(n)\ \text{for all } x\in D_{\eta,n}\right] \geq \widetilde{c}_1^q,
\end{aligned}
\end{equation*}
which completes the proof.
\end{proof}

\begin{proof}[Proof of Proposition \ref{R-Lambda}]

Fix $\varepsilon\in (0,1)$ and $n\in \mathds{N}$. From the monotonicity of the resistance, we can see that $R(0,[-n,n]^c)\leq R_{[0,n+1]}(0,n+1)$. Consequently,
$$
\mathds{E}\left[R(0,[-n,n]^c)\right]\leq \Lambda(n+1)\leq \Lambda(n)+1.
$$
Combining this with Markov's inequality, we get that
\begin{equation}\label{uppertail}
\mathds{P}\left[R(0,[-n,n]^c)>C\Lambda(n)\right]\leq \frac{\Lambda(n)+1}{C\Lambda(n)}.
\end{equation}
We now choose the constant $C$ sufficiently large such that $2/C<\varepsilon/2$. Then by \eqref{uppertail} one has the upper tail
\begin{equation}\label{uppertail-1}
\mathds{P}\left[R(0,[-n,n]^c)>C\Lambda(n)\right]\leq \varepsilon/2.
\end{equation}

We now turn to the lower tail of $R(0,[-n,n]^c)$.
Fix $K,N\in \mathds{N}$ such that the function $i\rightarrow \Lambda(K^{2i}N)$ is increasing in $i$.
This is possible by Proposition \ref{weak-supermult}. In the following, we consider intervals of the form $[-K^{2(i-1)}N, K^{2(i-1)}N]$ for $i\geq 1$. By a simple calculation, it is clear that
\begin{equation*}
\mathds{P}\left[[-K^{2(i-1)}N, K^{2(i-1)}N]\sim [-K^{2i}N, K^{2i}N]^c\right]=\mathds{P}\left[0\sim [-K^2,K^2]^c\right]\leq 50\beta K^{-2}.
\end{equation*}
Therefore,
\begin{equation}\label{exist-edge}
\mathds{P}\left[\exists i\in [1,K]\ \text{such that }[-K^{2(i-1)}N, K^{2(i-1)}N]\sim [-K^{2i}N, K^{2i}N]^c\right]\leq 50\beta K^{-1}.
\end{equation}
For convenience, denote by $A$ the complement of the event in \eqref{exist-edge}.  So on the event $A$, each unit flow from $[-N,N]$ to $[-K^{2K}N,K^{2K}N]$ needs to flow from $[-K^{2(i-1)}N, K^{2(i-1)}N]$ to $[-K^{2i}N, K^{2i}N]^c$ for all $1\leq i\leq K$.
For all odd $i$, the resistances
$$
R([-K^{2(i-1)}N, K^{2(i-1)}N],[-K^{2i}N, K^{2i}N]^c)
$$
are independent of each other.
Since  the function $i\rightarrow \Lambda(K^{2i}N)$ is increasing in $i$, conditioned on $A$ we have
\begin{equation*}
\begin{aligned}
&\mathds{P}\left[R(0, [-K^{2K}N, K^{2K}N]^c)<\frac{c_4}{8}\Lambda(N)\ |A\right]\\
&\leq \mathds{P}\left[R([-K^{2(i-1)}N, K^{2(i-1)}N], [-K^{2i}N, K^{2i}N]^c)<\frac{c_4}{8}\Lambda(N)\ \text{for all odd }i\in[1,K]_\mathds{Z}\ |A\right]\\
&=\prod_{i=1,i\text{ odd}}^K\mathds{P}\left[R([-K^{2(i-1)}N, K^{2(i-1)}N], [-K^{2i}N, K^{2i}N]^c)<\frac{c_4}{8}\Lambda(N)\ |A\right]\\
&\leq (1-c_5)^{\lfloor K/2\rfloor},
\end{aligned}
\end{equation*}
where $c_4,c_5$ are the constants defined in Lemma \ref{box-box-lem}, and the last inequality is from \eqref{box-box} in Lemma \ref{box-box-lem} and the monotonicity of the resistance. Consequently, we arrive at
\begin{equation}\label{resist-KN}
\begin{aligned}
\mathds{P}\left[R(0, [-K^{2K}N, K^{2K}N]^c)<\frac{c_4}{8}\Lambda(N)\right]&\leq \mathds{P}\left[R(0, [-K^{2K}N, K^{2K}N]^c)<\frac{c_4}{8}\Lambda(N)\ |A\right]+\mathds{P}[A^c]\\
&\leq (1-c_5)^{\lfloor K/2\rfloor}+50\beta K^{-1}.
\end{aligned}
\end{equation}

In addition, by Proposition \ref{submult}, we can see that there exists a constant $C_{1,*}=C_{1,*}(\beta)<\infty$ (depending only on $\beta$) such that
\begin{equation*}
\Lambda(N)\leq \Lambda(K^{2K}N)\leq C_{1,*}\Lambda(K^{2K})\Lambda(N)\leq C_{1,*}K^{2K}\Lambda(N).
\end{equation*}
Combining this with \eqref{resist-KN}, we obtain
\begin{equation}\label{R2KN}
\begin{aligned}
&\mathds{P}\left[R(0, [-K^{2K}N, K^{2K}N]^c)<\frac{c_4}{8C_{1,*}K^{2K}}\Lambda(K^{2K}N)\right]\\
&\leq \mathds{P}\left[R(0, [-K^{2K}N, K^{2K}N]^c)<\frac{c_4}{8}\Lambda(N)\right]\\
&\leq (1-c_5)^{\lfloor K/2\rfloor}+50\beta K^{-1}.
\end{aligned}
\end{equation}

Now for fixed $\varepsilon \in (0,1)$, take $K$ sufficiently large so that $(1-c_5)^{\lfloor K/2\rfloor}+50\beta K^{-1}<\varepsilon/2$.
For sufficiently large $n\in \mathds{N}$ with $n>K^{2K}$, let $N$ be the largest integer for which $K^{2K}N\leq n$. Thus we have $K^{2K}N\leq n\leq 2K^{2K}N$. Then by Lemma \ref{R-renorm} and Proposition \ref{R(0,n)main}, there exists a constant $\widetilde{C}_2=\widetilde{C}_2(\beta)<\infty$ (depending only on $\beta$) such that
\begin{equation*}
\Lambda(n)\leq  \widetilde{C}_2\Lambda(2K^{2K}N)\leq 2\widetilde{C}_2\Lambda(K^{2K}N).
\end{equation*}
Combining this with \eqref{R2KN} and the selection of $K$, we have
\begin{equation*}
\begin{aligned}
&\mathds{P}\left[R(0,[-n,n]^c)<\frac{c_4}{16\widetilde{C}_1\widetilde{C}_2K^{2K}}\Lambda(n)\right]\\
&\leq \mathds{P}\left[R(0,[-n,n]^c)<\frac{c_4}{8\widetilde{C}_1K^{2K}}\Lambda(K^{2K}N)\right]\\
&\leq \mathds{P}\left[R(0,[-K^{2K}N,K^{2K}N]^c)<\frac{c_4}{8\widetilde{C}_1K^{2K}}\Lambda(K^{2K}N)\right]< \varepsilon/2.
\end{aligned}
\end{equation*}
Hence, we obtain the desired result by combining this with \eqref{uppertail-1}.
\end{proof}

\subsection{Resistances between boxes and boxes}\label{sect-bb}
In the section, we mainly establish the following equivalence between point-to-point and box-to-box resistances.

\begin{proposition}\label{box-box-tail}
For $\beta>0$ and $\varepsilon \in (0,1)$, there exist constants $0<c'<C'<\infty$ {\rm(}depending only on $\beta$ and $\varepsilon${\rm)} such that for all $n\in \mathds{N}$,
\begin{equation}\label{P-boxbox-tail}
\mathds{P}\left[c'\Lambda(n)\leq R([-n,n],[-2n,2n]^c )\leq C'\Lambda(n)\ |[-n,n]\nsim [-2n,2n]^c \right]\geq 1-\varepsilon.
\end{equation}
\end{proposition}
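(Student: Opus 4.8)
The plan is to deduce Proposition~\ref{box-box-tail} by relating the conditioned box-to-box resistance $R([-n,n],[-2n,2n]^c)$ to the point-to-box resistance, for which Proposition~\ref{R-Lambda} already gives the desired two-sided bound. The conditioning event $\{[-n,n]\nsim[-2n,2n]^c\}$ has probability bounded away from $0$ and $1$ uniformly in $n$ (by a direct computation as in \eqref{P(B)}), so conditional and unconditional probabilities of any event differ only by a fixed multiplicative constant; this reduces matters to \emph{unconditional} upper and lower tail estimates for $R([-n,n],[-2n,2n]^c)$ on the event $\{[-n,n]\nsim[-2n,2n]^c\}$. The lower tail is essentially done already: \eqref{box-box} in Lemma~\ref{box-box-lem} states $\mathds{P}[R([-n,n],[-2n,2n]^c)\geq (c_4/8)\Lambda(n)]\geq c_5$, and this can be boosted to probability $\geq 1-\varepsilon$ by the same multi-scale independence trick used in the proof of Proposition~\ref{R-Lambda}: consider nested annuli $[-K^{2(i-1)}N,K^{2(i-1)}N]$ to $[-K^{2i}N,K^{2i}N]^c$ for odd $i$, on which the relevant box-to-box resistances are independent, combine with the submultiplicativity $\Lambda(n)\leq C_{1,*}K^{2K}\Lambda(K^{2K}N)$ from Proposition~\ref{submult}, and intersect with the (positive probability) event that no long edge connects $[-n,n]$ to $[-2n,2n]^c$. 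Monotonicity of resistance and FKG let one insert the conditioning at the end.

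For the upper tail, the plan is to bound $R([-n,n],[-2n,2n]^c)$ from above by a sum of finitely many point-to-point resistances of comparable scale and apply the first-moment/Markov argument together with Proposition~\ref{prop-2thmoment} or simply Proposition~\ref{R(0,n)main}. Concretely, on the event $\{[-n,n]\nsim[-2n,2n]^c\}$ any unit flow from $[-n,n]$ to $[-2n,2n]^c$ must exit through the ``collar'' $[-2n,2n]\setminus[-n,n]$ via edges with both endpoints in $[-2n,2n]$; routing a unit flow from (say) $0$ out to $[-2n,2n]^c$ through a bounded number of intermediate points covering this collar, one gets $R([-n,n],[-2n,2n]^c)\leq R([-n,n],[-2n,2n]^c\text{ via a fixed route})$, which is dominated by a fixed number (depending only on the fixed geometric constants) of terms of the form $R_{[a,b]}(u,v)$ with $b-a\asymp n$. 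Each such term has expectation $\lesssim\Lambda(n)$ by Lemma~\ref{R-renorm} and Proposition~\ref{R(0,n)main}, so $\mathds{E}[R([-n,n],[-2n,2n]^c)\,\mathbb{1}_{[-n,n]\nsim[-2n,2n]^c}]\lesssim\Lambda(n)$, and Markov's inequality (after dividing by the positive constant $\mathds{P}[[-n,n]\nsim[-2n,2n]^c]$) gives $\mathds{P}[R([-n,n],[-2n,2n]^c)>C'\Lambda(n)\mid[-n,n]\nsim[-2n,2n]^c]\leq\varepsilon/2$ for $C'$ large.

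Combining the boosted lower tail (probability $\geq 1-\varepsilon/2$ conditionally) with the upper tail (probability $\geq 1-\varepsilon/2$ conditionally) and a union bound yields \eqref{P-boxbox-tail}. I expect the main obstacle to be the upper-tail step: one must carefully construct the covering flow so that its energy is genuinely controlled by $O(1)$ independent point-to-point resistances at scale $n$ \emph{while} respecting the constraint that no edge leaves $[-2n,2n]$ from inside $[-n,n]$ — i.e., one must route the flow through the collar region $[-2n,2n]\setminus[-n,n]$ and then out, and check that the pieces inside $[-2n,2n]$ and the single ``exit'' piece all have expectation $\lesssim\Lambda(n)$ via Lemma~\ref{R-renorm} and Proposition~\ref{R(0,n)main}, possibly also invoking Lemma~\ref{lem:compare} to handle the splitting of the flow among several collar vertices as in the proof of Lemma~\ref{R-renorm}. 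The lower-tail boosting is routine given the machinery already in place.
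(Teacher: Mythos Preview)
Your upper-tail plan is correct though more elaborate than needed: the paper simply observes $R([-n,n],[-2n,2n]^c)\le R_{[n,2n]}(n,2n)+1$ (route everything through the single point $n$, then to $2n$, then one step out), which is independent of the conditioning event and has expectation $\le\Lambda(n)+1$, so Markov finishes. Your routing-through-the-collar construction would also work.

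The lower-tail plan, however, has a genuine gap. In Proposition~\ref{R-Lambda} the multi-scale trick works because a flow from the \emph{point} $0$ to $[-K^{2K}N,K^{2K}N]^c$ must cross \emph{every} nested annulus $[-K^{2(i-1)}N,K^{2(i-1)}N]\to[-K^{2i}N,K^{2i}N]^c$, so $R(0,[-n,n]^c)$ dominates the maximum of $\sim K/2$ independent annulus resistances, and one of them is large with probability $1-(1-c_5)^{K/2}$. For the box-to-box quantity $R([-n,n],[-2n,2n]^c)$ the source is the whole box $[-n,n]$: the flow does not have to cross any annulus inside $[-n,n]$, and the only annulus it must cross is the single shell $[-2n,2n]\setminus[-n,n]$. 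You cannot fit $K$ disjoint dyadic annuli (with the aspect ratio needed for \eqref{box-box}) into that one shell, so there is no product of $K$ independent events to take. The ``same multi-scale independence trick'' therefore does not apply here, and \eqref{box-box} alone only gives probability $c_5$, not $1-\varepsilon$.

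The paper resolves this by a different mechanism: a bootstrap recursion. It writes $n=MN$, introduces events $B_{\gamma,1},B_{\gamma,2},D_{\gamma'}$ controlling the number and location of edges of length $\ge N$ in the shell, and splits into two cases. If at least half the flow uses such a long edge, one of them carries flow $\ge\gamma/(4\mu_M)$ and pays energy $\gtrsim\gamma^3\Lambda(MN)$ near its endpoint (via the already-established point-to-box bound). If not, the remaining subflow of mass $\ge1/2$ uses no edge of length $\ge N$ and hence must cross all $\sim M$ thin sub-annuli of width $N$; if the total energy is small then at least two of these (independent) crossings are small, giving
\[
\mathds{P}\bigl[R([-MN,MN],[-2MN,2MN]^c)<\gamma^3\Lambda(MN)\,\big|\,A_{MN}\bigr]\;\le\;M^2\,\mathds{P}\bigl[R([-N,N],[-2N,2N]^c)<\gamma^3\Lambda(N)\,\big|\,A_{N}\bigr]^2+\widetilde f(\gamma).
\]
Iterating $a_{k+1}=M^2a_k^2+\widetilde f(\gamma)$ from a small enough $a_0$ (guaranteed by \eqref{box-box} at a fixed large $N$) drives the probability to the small fixed point $a_-\lesssim\widetilde f(\gamma)$. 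This quadratic recursion is what replaces your missing product of independent annuli; without it (or an equivalent mechanism) the boosting step does not go through.
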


To prove Proposition \ref{box-box-tail}, we need some preparations. For that,  fix $\beta>0$ and $\varepsilon \in (0,1)$.  For convenience, we denote by $A_n$ the event that $[-n,n]\nsim [-2n,2n]^c$ for $n\in \mathds{N}$ throughout this subsection.
For any $m_1,m_2\in \mathds{N}$ with $m_1<m_2$, denote by
$$
\mathbb{A}_{m_2,m_1}=[-m_2,m_2]\setminus [-m_1,m_1]
$$
the annulus between $[-m_2,m_2]$ and $[-m_1,m_1]$.
We also fix two constants $N$ and $M$ (depending only on $\beta$), which will be determined later. Let $\mu_M=4\beta M$ and let $\xi$ be the number of $x\in \mathbb{A}_{2MN,MN}$ such that there exists an edge connecting $x$ and $(x-N, x+N)^c$ directly. That is,
\begin{equation}\label{def-xi}
\xi=\#\left\{x\in \mathbb{A}_{2MN,MN}: x\sim [x-(N-1), x+(N-1)]^c\right\}.
\end{equation}

We now define two events as follows.

\begin{definition}\label{event-Bgamma}
For $\gamma>0$, let $B_{\gamma,1}$ be the event that $\xi\geq (1+\gamma^{-1})\mu_M$. Additionally, let $B_{\gamma,2}$ be the event that there exists a point $x\in \mathbb{A}_{2MN,MN}$ such that there exists an edge $e=\langle x,y\rangle$ with $|x-y|\geq N$ and
\begin{equation*}
R\left(x, \mathbb{A}_{2MN,MN}^c;\mathcal{E}\setminus\{e\}\right)< (4\mu_M)^2\gamma \Lambda(MN).
\end{equation*}
Here $R(\cdot, \cdot;\mathcal{E}\setminus\{e\})$ denotes the effective resistance after removing the edge $e$.
\end{definition}

\begin{lemma}\label{prob-Bgamma}
For all $\beta>0$ and $\gamma\in (0,1/2)$, there exists  a function $f_1(\gamma)$ satisfying $f_1(\gamma)\rightarrow 0$ as $\gamma\rightarrow 0$ such that
$$\mathds{P}[B_{\gamma,1}]\leq {\rm e}^{-\mu_M/(2\gamma)}\quad  \text{and}\quad
\mathds{P}[B_{\gamma,2}]\leq f_1(\gamma).$$
\end{lemma}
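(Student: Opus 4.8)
The plan is to bound the two events separately, both by Chernoff-type arguments combined with inputs already available. First consider $B_{\gamma,1}$. The random variable $\xi$ defined in \eqref{def-xi} counts the points $x\in\mathbb{A}_{2MN,MN}$ that are incident to an edge of length at least $N$. Writing $\xi=\sum_{x\in\mathbb{A}_{2MN,MN}}\mathbbm{1}\{x\sim[x-(N-1),x+(N-1)]^c\}$ as a sum of independent Bernoulli variables (independence because distinct $x$'s use disjoint edge sets to the outside of their own $N$-neighbourhood — actually the indicators are functions of overlapping edge sets, so one should instead dominate $\xi$ by the total number of ``long'' edges incident to the annulus, which is a sum of genuinely independent indicators over edge pairs), a direct computation gives $\mathds{E}[\xi]\le 4\beta M=\mu_M$, using $\int\int |x-y|^{-2}$ over the relevant region and the fact that the annulus has $2MN$ points each seeing mass $O(\beta/N)$ at distance $\ge N$. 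Then Chernoff's bound for sums of independent Bernoulli variables with mean $\le\mu_M$ yields $\mathds{P}[\xi\ge(1+\gamma^{-1})\mu_M]\le \mathrm{e}^{-\mu_M h(\gamma^{-1})}$ where $h(t)=(1+t)\ln(1+t)-t$; since $h(\gamma^{-1})\ge \tfrac12\gamma^{-1}\ln(\gamma^{-1})\ge \tfrac1{2\gamma}$ for $\gamma$ small, this gives $\mathds{P}[B_{\gamma,1}]\le\mathrm{e}^{-\mu_M/(2\gamma)}$, as required.

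Next consider $B_{\gamma,2}$. Here I would take a union bound over the points $x\in\mathbb{A}_{2MN,MN}$ and over which incident long edge $e=\langle x,y\rangle$ is removed. For a fixed $x$, the event that {\it some} long edge incident to $x$ is present and, after its removal, $R(x,\mathbb{A}_{2MN,MN}^c;\mathcal{E}\setminus\{e\})<(4\mu_M)^2\gamma\Lambda(MN)$ can be controlled as follows: the resistance $R(x,\mathbb{A}_{2MN,MN}^c;\mathcal{E}\setminus\{e\})$, by monotonicity of resistance in the edge set, is at least $R(x,B_N(x)^c)$ computed in the network with $e$ removed; when $e$ has length $\ge N$, removing it does not affect edges inside $B_N(x)$, so this lower bound equals $R(x,B_N(x)^c)$ in the original network and is independent of the presence of $e$. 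Hence the event in question is contained in $\{R(x,B_N(x)^c)<(4\mu_M)^2\gamma\Lambda(MN)\}$ together with the (independent) event that $x$ is incident to at least one long edge, whose probability is $O(\beta/N)$. Applying Lemma~\ref{Lem-RNtail-1} with $\delta=\delta^*$ and $N$ chosen so that $N^{\delta^*}\asymp \Lambda(MN)$ up to constants — here one uses Propositions~\ref{R(0,n)main}, \ref{weak-supermult} and \ref{submult} to compare $\Lambda(MN)$ with a power of $N$ and fix the relationship between $M$, $N$ and the threshold — gives $\mathds{P}[R(x,B_N(x)^c)<c_*\varepsilon_0^{C_*\delta^*}N^{\delta^*}]\le\varepsilon_0$ with $\varepsilon_0$ comparable to a small power of $\gamma$. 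Multiplying by the $O(\beta/N)$ factor and summing over the $O(MN)$ choices of $x$ (and the bounded-in-expectation number of incident long edges) yields $\mathds{P}[B_{\gamma,2}]\le C(M)\cdot \gamma^{q}$ for some $q>0$, which defines the function $f_1(\gamma)\to 0$.

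The main obstacle is the second bound: one must pick $N=N(M,\gamma,\beta)$ so that, on the one hand, $(4\mu_M)^2\gamma\Lambda(MN)$ is smaller than the typical lower bound $c_*\varepsilon_0^{C_*\delta^*}N^{\delta^*}$ furnished by Lemma~\ref{Lem-RNtail-1} for the one-scale resistance $R(x,B_N(x)^c)$, and on the other hand the union-bound loss of $O(MN)$ terms is absorbed. Since $\Lambda(MN)$ and $N^{\delta^*}$ are only comparable up to a $\beta$-dependent constant and an arbitrary power (because $\delta$ in Proposition~\ref{weak-supermult} need not equal $\delta^*$), the clean way is to first fix $M$ large (this is the parameter ``$M$'' promised to be chosen later in the subsection), then use submultiplicativity and Lemma~\ref{R-renorm}/Proposition~\ref{R(0,n)main} to write $\Lambda(MN)\le C(M)\Lambda(N)$ and $\Lambda(N)\le N$, and finally choose the exponent in Lemma~\ref{Lem-RNtail-1} and the decay rate so that the $\gamma$-power beats the polynomial union-bound factor; this bookkeeping, rather than any deep estimate, is where care is needed. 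I expect the rest to be routine Chernoff and monotonicity arguments.
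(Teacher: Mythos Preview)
Your argument for $B_{\gamma,1}$ is essentially the paper's: compute $\mathds{E}[\xi]\le\mu_M$ and apply Chernoff. Your aside about dominating by a genuine sum of independent edge indicators is the right way to make the Chernoff step rigorous.

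For $B_{\gamma,2}$, however, there is a real gap. You claim that $R(x,\mathbb{A}_{2MN,MN}^c;\mathcal{E}\setminus\{e\})\ge R(x,B_N(x)^c)$ by monotonicity. But this inequality requires $B_N(x)\subset\mathbb{A}_{2MN,MN}$, which fails whenever $x$ lies within distance $N$ of the inner or outer boundary of the annulus. For such $x$ the escape distance to $\mathbb{A}_{2MN,MN}^c$ can be as small as $1$, so no useful lower bound on the resistance is available. There are $\Theta(N)$ such boundary points, each carrying a long edge with probability $\Theta(\beta/N)$, so with probability $\Theta(\beta)$ --- independent of $\gamma$ --- some boundary point has a long edge and your bound says nothing. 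This does not tend to $0$.

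The paper handles exactly this boundary issue by inserting an auxiliary event $D_{\gamma'}$: some point in a thin boundary layer $\partial^{\gamma'}\mathbb{A}_{2MN,MN}$ of width $\gamma'N$ has a long edge. One shows $\mathds{P}[D_{\gamma'}]\le 10\beta\gamma'$, and on $D_{\gamma'}^c$ every relevant $x$ sits at distance $\ge\gamma'N$ from the boundary, so that $B_{\gamma'N}(x)\subset\mathbb{A}_{2MN,MN}$ and the monotonicity step becomes $R(x,\mathbb{A}_{2MN,MN}^c;\mathcal{E}\setminus\{e\})\ge R(x,B_{\gamma'N}(x)^c)$. Choosing $\gamma'=\gamma'(\gamma)\to 0$ slowly enough (e.g.\ so that $\gamma/\gamma'\to 0$) makes both the $D_{\gamma'}$ contribution and the resistance tail vanish. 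A second simplification: rather than invoking Lemma~\ref{Lem-RNtail-1} and trying to match $N^{\delta^*}$ against $\Lambda(MN)$ (which drags in the not-yet-proved polynomial growth), the paper uses Proposition~\ref{R-Lambda} directly to get $R(0,[-\gamma'N,\gamma'N]^c)\gtrsim\Lambda(\gamma'N)$ with high probability; the comparison $\gamma\Lambda(MN)\ll\Lambda(\gamma'N)$ then follows from submultiplicativity.
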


To prove Lemma \ref{prob-Bgamma}, we need the following event.
\begin{definition}\label{Bgamma'}
For $\gamma'>0$, let $D_{\gamma'}$ be the event that there exists $x\in \partial^{\gamma'}\mathbb{A}_{2MN,MN}$ such that $x\sim (x-N,x+N)^c$, where $$\partial^{\gamma'}\mathbb{A}_{2MN,MN}:=\mathbb{A}_{MN+\gamma'N,MN}\cup \mathbb{A}_{2MN,2MN-\gamma'N}. $$
\end{definition}

\begin{lemma}\label{prob-Dgamma'}
For all $\beta>0$ and all $\gamma'\in(0,1)$, we have $\mathds{P}[D_{\gamma'}]\leq 10 \beta \gamma'$.
\end{lemma}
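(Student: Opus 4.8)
\textbf{Proof proposal for Lemma \ref{prob-Dgamma'}.}
The plan is to bound the probability of $D_{\gamma'}$ by a direct union bound over the points $x$ in the thin boundary layer $\partial^{\gamma'}\mathbb{A}_{2MN,MN}$, using the basic estimate for the probability that a single vertex is incident to a long edge. First I would write
$$
\mathds{P}[D_{\gamma'}]\leq \sum_{x\in \partial^{\gamma'}\mathbb{A}_{2MN,MN}}\mathds{P}\left[x\sim (x-N,x+N)^c\right],
$$
and then for each such $x$ bound the single-vertex probability by
$$
\mathds{P}\left[x\sim (x-N,x+N)^c\right]\leq 1-\exp\left\{-\beta\int_x^{x+1}\left(\int_{-\infty}^{x-N+1}+\int_{x+N}^{\infty}\right)\frac{1}{|u-v|^2}\d v\,\d u\right\}\leq \frac{\widetilde{C}\beta}{N}
$$
for some absolute constant $\widetilde{C}$, since the double integral is of order $1/N$ (the integrand is $|u-v|^{-2}$ with $|u-v|\geq N-1$ over a set of the relevant shape). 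The constant can be made explicit so that the final bound has the stated form $10\beta\gamma'$.

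Next I would count the number of points in $\partial^{\gamma'}\mathbb{A}_{2MN,MN}$. By definition this set is $\mathbb{A}_{MN+\gamma'N,MN}\cup \mathbb{A}_{2MN,2MN-\gamma'N}$, which consists of four intervals (two near $\pm MN$ and two near $\pm 2MN$) each of length at most $\gamma' N+1\leq 2\gamma' N$ when $\gamma'N\geq 1$; hence $\#\partial^{\gamma'}\mathbb{A}_{2MN,MN}\leq 8\gamma' N$ (and the case $\gamma' N<1$ is trivial since then the layer is empty or a single point and the bound $10\beta\gamma'$ still follows from the single-vertex estimate, possibly after adjusting constants). Multiplying the cardinality bound by the per-vertex probability bound $\widetilde{C}\beta/N$ gives $\mathds{P}[D_{\gamma'}]\leq 8\widetilde{C}\beta\gamma'$, and choosing the explicit constant in the single-vertex estimate (or simply tracking it carefully) yields $\mathds{P}[D_{\gamma'}]\leq 10\beta\gamma'$, as desired.

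I do not anticipate a genuine obstacle here: this is a routine first-moment computation, and the only mildly delicate point is bookkeeping the constants so that the clean bound $10\beta\gamma'$ comes out (in particular handling small $\gamma'$ where the integer layer may be empty, and using $1-e^{-t}\leq t$). If one wants to avoid optimizing constants, one can instead just prove $\mathds{P}[D_{\gamma'}]\leq C\beta\gamma'$ for some absolute $C$ and note that this suffices for all later uses; but since the statement asks for $10\beta\gamma'$, I would carry out the explicit estimate of $\int |u-v|^{-2}$ over the prescribed region to pin down the constant.
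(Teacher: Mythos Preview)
Your proposal is correct and follows essentially the same approach as the paper: a union bound over $x\in\partial^{\gamma'}\mathbb{A}_{2MN,MN}$, then the estimate $\mathds{P}[0\sim(-N,N)^c]\leq C\beta/N$ via $1-e^{-t}\leq t$ and $\sum_{k\geq N}1/k^2\lesssim 1/N$. The only cosmetic difference is that the paper takes $\#\partial^{\gamma'}\mathbb{A}_{2MN,MN}\leq 4\gamma' N$ (four intervals of length $\gamma' N$) rather than your more cautious $8\gamma' N$, which is what makes the explicit constant $10$ come out directly.
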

\begin{proof}
According to $|\partial^{\gamma'}\mathbb{A}_{2MN,MN}|\leq 4\gamma'N$ and the independence of edges in the model, we get that
\begin{equation*}
\begin{aligned}
\mathds{P}[D_{\gamma'}]\leq 4\gamma'N \mathds{P}\left[0\sim (N,N)^c\right]&=8\gamma'N\sum_{k\geq N}\left[1-\exp\left\{-\int_0^1\int_k^{k+1}\frac{\beta}{|u-v|}\d u\d v\right\}\right]\\
&\leq 8\gamma'\beta N\sum_{k\geq N}\frac{1}{k^2}\leq 10\beta\gamma'.
\end{aligned}
\end{equation*}
Hence the proof is complete.
\end{proof}
Using Lemma \ref{prob-Dgamma'}, we can present the
\begin{proof}[Proof of Lemma \ref{Bgamma'}]
For $\gamma\in (0,1/2)$, we start with the event $B_{\gamma,1}$. By the definition of $\xi$ in \eqref{def-xi}, we can see that
\begin{equation*}
\mathds{E}[\xi]\leq \sum_{x\in \mathbb{A}_{2MN,MN}}\sum_{y\in (x-N,x+N)^c}\mathds{P}[x\sim y]\leq \#\mathbb{A}_{2MN,MN}\sum_{y\in (-N,N)^c}\mathds{P}[0\sim y]\leq 4\beta M=:\mu_M.
\end{equation*}
Combining this with the Chernoff bound, we get that
\begin{equation*}
\mathds{P}\left[B_{\gamma,1}\right]=\mathds{P}\left[\xi\geq (1+\gamma^{-1})\mu_M\right]\leq \exp\left\{-\frac{\gamma^{-2}\mu_M}{2+\gamma^{-1}}\right\}\leq \exp\left\{-\mu_M/(2\gamma)\right\}.
\end{equation*}

Additionally, let $\gamma'$ be a constant depending on $\gamma$ such that $\gamma'\rightarrow 0$ as $\gamma\rightarrow 0$. Then from Lemma \ref{prob-Dgamma'} and Proposition \ref{R-Lambda}, we have
\begin{equation*}
\begin{aligned}
\mathds{P}\left[B_{\gamma,2}\right]&=\mathds{P}\left[D_{\gamma'}\right]+ \mathds{P}\left[D_{\gamma'}^c\cap B_{\gamma,2}\right]\\
&\leq 10\beta\gamma'+\mathds{P}\left[\exists x\in \mathbb{A}_{2MN,MN}\setminus \partial^{\gamma'}\mathbb{A}_{2MN,MN}\ \text{such that there exists an edge $e=\langle x,y\rangle$}\right.\\
 &\quad \quad\quad\text{with $|x-y|\geq N$ and }R(x,[x-\gamma'N,x+\gamma'N]^c;\ \mathcal{E}\setminus\{e\})<(4\mu_M)^2\gamma \Lambda(MN) \Big]\\
 &\leq 10\beta\gamma'+ \mu_M\mathds{P}\left[R(0,[-\gamma'N,\gamma' N]^c)<(4\mu_M)^2\gamma \Lambda(MN)\right]=:f_1(\gamma)\rightarrow 0
\end{aligned}
\end{equation*}
as $\gamma\rightarrow 0 $.
\end{proof}

With the above lemmas at hand, we can present the

\begin{proof}[Proof of Proposition \ref{box-box-tail}]
Fix $\beta>0$ and $\varepsilon \in (0,1)$.
By the monotonicity and the translation invariance of the resistance, we first get that for all $n\in\mathds{N}$,
\begin{equation*}
\mathds{E}\left[R([-n,n],[-2n,2n]^c )\ |A_n\right]\leq \mathds{E}\left[R(n,2n+1)\right]\leq \mathds{E}\left[R_{[0,n]}(0,n)\right]+1\leq \Lambda(n)+1.
\end{equation*}
Combining this with the Markov's inequality we obtain that for sufficiently large $\widetilde{C}_1<\infty$ (depending only on $\beta$ and $\varepsilon$) with $\widetilde{C}_1\geq 4/\varepsilon$,
\begin{equation}\label{uppertail-2}
\mathds{P}\left[R([-n,n],[-2n,2n]^c )\geq \widetilde{C}_1\Lambda(n)\ |A_n\right]\leq \frac{\Lambda(n)+1}{\widetilde{C}_1\Lambda(n)+1}\leq \frac{2}{\widetilde{C}_1}\leq \varepsilon /2.
\end{equation}

We now turn to the lower tail of $R([-n,n],[-2n,2n]^c )$. Fix sufficiently large $M,N\in \mathds{N}$, which will be determined later.
For $\gamma\in (0,1/2)$ and $\gamma'>0$ depending on $\gamma$ such that $\gamma'\rightarrow 0$ as $\gamma\rightarrow 0$, recall that $B_{\gamma,1}, B_{\gamma,2}$ and $D_{\gamma'}$ are the events defined in Definitions \ref{event-Bgamma} and \ref{Bgamma'}, respectively.
In the rest of the proof, we condition on $A_{MN}$ and assume that $(B_{\gamma,1}\cup B_{\gamma,2}\cup D_{\gamma'})^c$ occurs. It is worth emphasizing that by Lemmas \ref{prob-Bgamma} and \ref{prob-Dgamma'}, we have
\begin{equation}\label{P-BBD}
\mathds{P}\left[B_{\gamma,1}\cup B_{\gamma,2}\cup D_{\gamma'}\right]\leq \e^{-\mu_M/(2\gamma)}+ f_1(\gamma)+10 \beta \gamma'=:\widetilde{f}(\gamma)\rightarrow 0\quad \text{as} \gamma\rightarrow 0.
\end{equation}
In addition, on the event $(B_{\gamma,1}\cup B_{\gamma,2}\cup D_{\gamma'})^c$, the following holds:
\begin{itemize}
\item[(1)] In $\mathbb{A}_{2MN,MN}$, the number of long edges with a Euclidean length of at least $N$, denoted as $\xi$, satisfies $\xi\leq (1+\gamma^{-1})\mu_M < 2\mu_M/\gamma$. For simplicity, we will denote the set of such edges as $\mathcal{C}_{\geq N}$.
\medskip

\item[(2)]For each $e=\langle x,y\rangle\in  \mathcal{C}_{\geq N}$ with $x\in \mathbb{A}_{2MN,MN}$, we have $x\in \mathbb{A}_{2MN,MN}\setminus \partial^{\gamma'}\mathbb{A}_{2MN,MN}$.
\medskip

\item[(3)] For each $e=\langle x,y\rangle\in  \mathcal{C}_{\geq N}$ with $x\in \mathbb{A}_{2MN,MN}\setminus \partial^{\gamma'}\mathbb{A}_{2MN,MN}$, one has
    $$
    R(x,[x-\gamma'N,x+\gamma'N]^c; \mathcal{E}\setminus\{e\})\geq(4\mu_M)^2\gamma \Lambda(MN).
    $$
\end{itemize}
For convenience, we will also denote $\mathcal{C}_{\geq N}=\{e_1=\langle x_1,y_1\rangle,\cdots,  e_\xi=\langle x_\xi,y_\xi\rangle\}$.

Let $f$ be the unit electric  flow from $[-MN,MN]$ to $[-2MN,2MN]^c$.
Denote $\theta_{\geq N}=\sum_{i=1}^\xi |f_{x_iy_i}|$, which represents the total flow through the long edges in $\mathcal{C}_{\geq N}$.

(a) We first consider the case that $\theta_{\geq N}\geq 1/2$.  Since $\xi=\#\mathcal{C}_{\geq N}<2\mu_M/\gamma$ by (1), we can see that there must exist $e_i=\langle x_i,y_i \rangle \in \mathcal{C}_{\geq N}$ such that
\begin{equation*}
|f_{x_iy_i}|\geq \frac{\theta}{2\mu_M/\gamma}\geq \gamma/(4\mu_M).
\end{equation*}
Combining this with the above (3), we obtain that
\begin{equation*}
R([-MN,MN],[-2MN,2MN]^c)\geq \gamma^3\Lambda(MN).
\end{equation*}

(b) For the case where $\theta_{\geq N}<1/2$, there must exist a subflow $g$ of the unit flow $f$ that satisfies $|g|\geq 1/2$ and does not pass through any long edge in $\mathcal{C}_{\geq N}$. This implies that the subflow $g$ starting from $[-MN,MN]$, must across each annulus $\mathbb{A}_{MN+kN,MN+(k-1)N}$ for $1\leq k\leq M-1$. Consequently,
\begin{equation}\label{R-kN}
\begin{aligned}
&R([-MN,MN],[-2MN,2MN]^c)\geq\frac{1}{2}\sum_{x\sim y}g_{xy}^2\\
&\geq \frac{1}{4}\sum_{k:\ k\leq M-2\ \text{and odd}} R(\mathbb{A}_{MN+kN,MN+(k-1)N}, \mathbb{A}_{MN+(k+2)N,MN+(k+1)N};\mathcal{E}\setminus \mathcal{C}_{\geq N})
\end{aligned}
\end{equation}
(see Figure \ref{CMN} for an illustration).
\begin{figure}[htbp]
\centering
\includegraphics[scale=0.8]{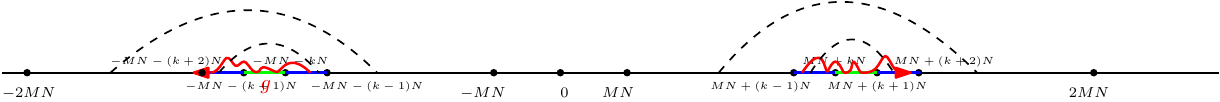}
\caption{The illustration for \eqref{R-kN}. The red curves represent the subflow $g$, which does not pass through any long edges in $\mathcal{C}_{\geq N}$ (shown as dashed curves). Therefore, when the subflow $g$ starting from $[-MN,MN]$, it must enter each annulus of width $N$, denoted as $\mathbb{A}_{MN+kN,MN+(k-1)N}$ for $1\leq k\leq M-1$. When the flow transitions from the $k$-th annulus to the $(k+2)$-th annulus (represented by blue lines), it will generate energy in the intermediate annulus $\mathbb{A}_{MN+(k+1)N,MN+kN}$ (as green lines) given by $R(\mathbb{A}_ {MN+kN,MN+(k-1)N}, \mathbb{A}_{MN+(k+2)N,MN+(k+1)N};\mathcal{E}\setminus \mathcal{C}_{\geq N})$.
}
\label{CMN}
\end{figure}
Now if we assume  that
\begin{equation*}\label{assume-R-MN}
R([-MN,MN],[-2MN,2MN]^c)<\gamma^3\Lambda(MN),
\end{equation*}
then from \eqref{R-kN} and Proposition \ref{submult}, there exists a constant $C_{1,*}=C_{1,*}(\beta)<\infty$ (depending only on $\beta$) such that
\begin{equation}\label{sum-RkN}
\begin{aligned}
&\frac{1}{4}\sum_{k:\ k\leq M-2\ \text{and odd}} R(\mathbb{A}_{MN+kN,MN+(k-1)N}, \mathbb{A}_{MN+(k+2)N,MN+(k+1)N};\mathcal{E}\setminus \mathcal{C}_{\geq N})\\
&\leq \gamma^3\Lambda(MN)\leq C_{1,*} \gamma^3\Lambda(M)\Lambda(N).
\end{aligned}
\end{equation}
Take $M>0$ sufficiently large such that
\begin{equation}\label{cond-M}
\Lambda(M)\leq M/(100C_{1,*}).
\end{equation}
 Then the left-hand side in \eqref{sum-RkN} is upper-bounded by $M\gamma^3 \Lambda(N)/ 100$. This implies that there are at leat two odd $k\in \{1,2,\cdots, M-2\}$  such that
\begin{equation*}
R(\mathbb{A}_{MN+kN,MN+(k-1)N}, \mathbb{A}_{MN+(k+2)N,MN+(k+1)N};\mathcal{E}\setminus \mathcal{C}_{\geq N})\leq \gamma^3 \Lambda(N).
\end{equation*}
Combining this with the independence of
$$R(\mathbb{A}_{MN+kN,MN+(k-1)N}, \mathbb{A}_{MN+(k+2)N,MN+(k+1)N};\mathcal{E}\setminus \mathcal{C}_{\geq N})\quad \text{for all odd } k$$
and the monotonicity of resistance, we get that
\begin{equation*}
\begin{aligned}
&\mathds{P}\left[\frac{1}{4}\sum_{k:\ k\leq M-2\ \text{and odd}} R(\mathbb{A}_{MN+kN,MN+(k-1)N}, \mathbb{A}_{MN+(k+2)N,MN+(k+1)N};\mathcal{E}\setminus \mathcal{C}_{\geq N})\leq M\gamma^3 \Lambda(N)/ 100\right]\\
&\leq M^2\left[\max_{i}\mathds{P}\left[ R(\mathbb{A}_{MN+kN,MN+(k-1)N}, \mathbb{A}_{MN+(k+2)N,MN+(k+1)N};\mathcal{E}\setminus \mathcal{C}_{\geq N})<\gamma^3 \Lambda(N)\right]\right]^2\\
&\leq M^2 \mathds{P}\left[R([-N,N],[-2N,2N]^c)<\gamma^3 \Lambda(N)\ |A_{MN}\right]^2.
\end{aligned}
\end{equation*}

From (a) and (b), we obtain that
\begin{equation*}
\begin{aligned}
&\mathds{P}\left[R([-MN,MN], [-2MN,2MN]^c)<\gamma^3\Lambda(MN),\ (B_{\gamma,1}\cup B_{\gamma,2}\cup D_{\gamma'})^c\ |A_{MN}\right]\\
&\leq M^2 \mathds{P}\left[R([-N,N],[-2N,2N]^c)<\gamma^3 \Lambda(N)\ |A_{MN}\right]^2.
\end{aligned}
\end{equation*}
Additionally, combining this with \eqref{P-BBD}, one has
\begin{equation*}
\begin{aligned}
&\mathds{P}\left[R([-MN,MN], [-2MN,2MN]^c)<\gamma^3\Lambda(MN)\ |A_{MN}\right]\\
&\leq M^2 \mathds{P}\left[R([-N,N],[-2N,2N]^c)<\gamma^3 \Lambda(N)\ |A_{MN}\right]^2+\widetilde{f}(\gamma).
\end{aligned}
\end{equation*}
Define a sequence $\{a_n\}_{n\geq 0}$ by
$$
a_0=\mathds{P}\left[R([-N,N],[-2N,2N]^c)<\gamma^3 \Lambda(N)\ |A_{MN}\right]\quad \text{and}\quad a_{n+1}=M^2 a_n^2+\widetilde{f}(\gamma).
$$
Then inductively it follows that
$$
\mathds{P}\left[R([-M^kN,M^kN],[-2M^kN,2M^kN]^c)<\gamma^3 \Lambda(N)\ |A_{MN}\right]\leq a_k\quad \text{for all }k\geq 0.
$$
For $\widetilde{f}(\gamma)<1/(4M^2)$, the equation $a=M^2a^2+\widetilde{f}(\gamma)$ has the following two solutions:
\begin{equation*}
a_-=\frac{1-\sqrt{1-4M^2\widetilde{f}(\gamma)}}{2M^2}\leq \widetilde{c}_1\widetilde{f}(\gamma)\quad \text{and}\quad a_+=\frac{1+\sqrt{1-4M^2\widetilde{f}(\gamma)}}{2M^2}>\frac{1}{2M^2}
\end{equation*}
for some constant $\widetilde{c}_1>0$ (independent with $M$ and $\gamma$).
Note that for $a_0\in [0,a_+)$, in particular for $a_0\in [0,1/(2M^2)]$, the sequence $a_n$ converges to $a_-\leq \widetilde{c}_1\widetilde{f}(\gamma)$. Hence,
\begin{equation}\label{P-RMN}
\limsup_{k\rightarrow \infty}\mathds{P}\left[R([-M^kN,M^kN],[-2M^kN,2M^kN]^c)<\gamma^3 \Lambda(N)\ |A_{MN}\right]\leq a_-\leq \widetilde{c}_1\widetilde{f}(\gamma).
\end{equation}

We now first choose $\gamma\in (0,1/2)$ small enough such that $\widetilde{c}_1\widetilde{f}(\gamma)<\varepsilon/2$. Then choose $M\in \mathds{N}$ sufficiently large such that \eqref{cond-M} holds. Finally, we choose $N\in\mathds{N}$ large enough such that
\begin{equation*}
a_0=\mathds{P}\left[R([-N,N],[-2N,2N]^c)<\gamma^3 \Lambda(N)\ |A_{MN}\right]\leq \frac{1}{2M^2}.
\end{equation*}
Then combining \eqref{uppertail-2} and \eqref{P-RMN}, we can see that \eqref{P-boxbox-tail} holds for $n=MN,M^2N,\cdots$. To get the statement for all $n\in \mathds{N}$, one can use Lemma \ref{R-renorm} and the fact that $\widetilde{c}_2\Lambda(n)\leq \Lambda(mn)\leq \widetilde{C}_2m\Lambda(n)$ for all $m, n\in \mathds{N}$. Here $0<\widetilde{c}_2<\widetilde{C}_2<\infty$ are constants depending only on $\beta$.
\end{proof}

\section{Supermultiplicativity}\label{sect-supermult}

The aim of this section is to establish the supermultiplicativity of the effective resistance as follows.

\begin{proposition}\label{supermult}
For all $\beta>0$, there exists a constant $c_1=c_1(\beta)>0$ {\rm(}depending only on $\beta${\rm)} such that for all $m,n\in \mathds{N}$,
\begin{equation*}
\Lambda(mn)\geq c_1\Lambda(m)\Lambda(n).
\end{equation*}
\end{proposition}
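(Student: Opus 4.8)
The plan is to prove the equivalent statement $\mathds{E}[R_{[0,mn)}(0,mn-1)]\geq c\Lambda(m)\Lambda(n)$ for all sufficiently large $m,n$, which yields Proposition~\ref{supermult} via Proposition~\ref{R(0,n)main} (the remaining cases with $m$ or $n$ bounded being handled by the crude polynomial bounds $\widetilde c_1 k^{\delta^*}\leq\Lambda(k)\leq k^{C}$ that follow from Remark~\ref{rem-delta*}(2) and Proposition~\ref{submult}). As in Section~\ref{sect-weaksupermult} I would renormalise at scale $m$: let $G_n=(V_n,E_n)$ be the renormalised critical $\beta$-LRP on $[0,n)$, with $\varpi(i)$ representing $I_i=[im,(i+1)m)$, so that by scaling invariance and Proposition~\ref{R(0,n)main} one has $\mathds{E}[R^G_{V_n}(\varpi(0),\varpi(n-1))]=\mathds{E}[R_{[0,n)}(0,n-1)]\asymp\Lambda(n)$.

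The key new input, replacing Lemma~\ref{Lem-RNtail-1} as the source of the internal-energy lower bound, is a uniform lower-tail estimate for the scale-$m$ resistances normalised by $\Lambda(m)$: for every $\varepsilon\in(0,1)$ there is $\lambda=\lambda(\beta,\varepsilon)>0$ such that, for all $m$ and all $i$,
\begin{equation*}
\mathds{P}[I_i\text{ is }(\lambda\Lambda(m))\text{-very good}]\geq 1-\varepsilon,
\end{equation*}
where $(\lambda\Lambda(m))$-very good means condition~(1) of Definition~\ref{def-alphagood-1} holds and the internal energy $R_{I_i}$ of \eqref{def-RIi} is at least $\lambda\Lambda(m)$. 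I would derive this along the lines of Section~\ref{est-good}: condition~(1) is handled verbatim by the FKG step of Lemma~\ref{P-E11}; for the energy condition I would use Propositions~\ref{R-Lambda} and~\ref{box-box-tail} — together with $\widehat R(I_{i-1},I_{i+1})\geq\widehat R_m$ and the passage between $\widehat R_m$, point-to-box and box-to-box resistances from \cite{DFH24+} — to show that $\widehat R(I_{i-1},I_{i+1})\geq\lambda\Lambda(m)$ and $R_{I_i}(u,B_{\alpha_1 m}(u)^c)\geq\lambda\Lambda(m)$ for every $u\in\mathcal K_i$, each with probability $1-O(\varepsilon)$. Here one first fixes $M$ (so that $\#\mathcal K_i\leq M$ with high probability, Lemma~\ref{lem-good-1}) and $\alpha_1$, then picks $\lambda$ small enough that $\lambda\Lambda(m)$ is a small fraction of $\Lambda(\alpha_1 m)$ uniformly in $m$ (comparing $\Lambda(\alpha_1 m)$ and $\Lambda(m)$ via Proposition~\ref{submult}), and finally absorbs a factor $(M+1)^{-1}$ into $\lambda$ exactly as in Remark~\ref{M-vg}.

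With this estimate in hand I would run the coarse-graining machinery of Sections~\ref{sect-cg}–\ref{sect-proofwm} with the single change that the energy threshold defining red/black vertices is $\lambda\Lambda(m)$ rather than $\alpha_2 m^\delta$. This is legitimate because the multi-scale construction and all of its estimates (Propositions~\ref{P-red-many-tree}, \ref{lem-prob-CF}, \ref{prop-eventH}, the merging operations $a_k$-LEAF and $a_k$-NNAF, and the notion of $a_k$-bad animal) live at the macroscopic block-scales $b_k=\exp\{M^{0.2}a_k^{1/2}\}$ and invoke Lemma~\ref{Lem-RNtail-1} only through resistances of annuli of radius $b_k$ with the genuine exponent $\delta\in(0,\delta^*]$; the scale-$m$ threshold enters only at the final energy comparison, and the only other place where "not very good" is used (namely the red-component decay à la Proposition~\ref{P-red-many-tree}) goes through with the same proof, now feeding in the estimate of the previous paragraph in place of Lemma~\ref{P-Ek3}. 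Following the proof of Proposition~\ref{weak-supermult} verbatim, on the high-probability event $\mathcal H$ with $\mathds{P}[\mathcal H]\geq 1-n^{-2}$ one gets the analogue of \eqref{exponent-delta},
\begin{equation*}
R_{[0,mn)}(0,mn-1)\geq \frac{\lambda\Lambda(m)}{2C_1}\,R^G_{V_n}(\varpi(0),\varpi(n-1)).
\end{equation*}
Taking expectations, discarding the contribution of $\mathcal H^c$ using $R^G_{V_n}(\varpi(0),\varpi(n-1))\leq n$, $\mathds{P}[\mathcal H^c]\leq n^{-2}$ and $\Lambda(n)\geq cn^{\delta^*}$, and then applying scaling invariance and Proposition~\ref{R(0,n)main}, gives $\mathds{E}[R_{[0,mn)}(0,mn-1)]\geq c\,\lambda\Lambda(m)\Lambda(n)$, hence $\Lambda(mn)\geq c_1\Lambda(m)\Lambda(n)$.

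The main obstacle is the second step: upgrading the deterministic threshold $\alpha_2 m^\delta$ in the definition of "very good" to the random quantity $\lambda\Lambda(m)$ while keeping "very good" a high-probability event. Concretely one must verify that Propositions~\ref{R-Lambda} and~\ref{box-box-tail} (together with $\mathds{E}[\widehat R_m]\asymp\Lambda(m)$ and its lower tail) supply lower-tail bounds for the relevant scale-$m$ resistances that are uniform in $m$ and decay to $0$ as the threshold fraction $\to 0$, so that the union bound over $u\in\mathcal K_i$ and the three defining conditions still yields probability $\geq 1-\varepsilon$. Once this is granted, everything downstream of the definition of "very good" is a routine re-run of Section~\ref{sect-weaksupermult}, since none of it depends on the particular value of the threshold.
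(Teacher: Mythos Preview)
Your proposal is correct and follows essentially the same route as the paper: the paper's Section~\ref{sect-supermult} does exactly what you outline, replacing the threshold $\alpha_2 m^\delta$ in Definition~\ref{def-alphagood-1} by $\alpha_2\Lambda(m)$ (their Definition~\ref{def-alphagood-2}), proving the analogue of Proposition~\ref{prop-alphagood-1} via Propositions~\ref{R-Lambda} and~\ref{box-box-tail} (their Proposition~\ref{prop-alphagood}), and then re-running the coarse-graining of Section~\ref{sect-cg} verbatim to obtain $\mathds{P}[\mathcal H]\geq 1-n^{-2}$ and conclude as in the proof of Proposition~\ref{weak-supermult}. Your identification of the ``main obstacle'' and its resolution via the results of Section~\ref{sect-severaltype} matches the paper's own logic precisely.
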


The primary tools for proving Proposition \ref{supermult} are  the coarse-graining argument and the resistance estimates from Section \ref{sect-severaltype}. Since the proof closely resembles that of Proposition \ref{weak-supermult}, we will omit certain details to focus on the difference between the two proofs.

In the remainder of this section, we fix sufficiently large $m,n\in \mathds{N}$.
For convenience, we denote the interval $[im,(i+1)m)$ in $\mathds{Z}$ as $I_i$.
Recall that $\mathcal{K}_\cdot$ and $\widehat{R}(\cdot,\cdot)$ are defined in \eqref{def-Ki-1} and \eqref{def-Rhat}, respectively.
Similar with Definition \ref{def-alphagood-1}, we now define the very good intervals as follows.
\begin{definition}\label{def-alphagood-2}
For $i\in \mathds{Z}$ and $\alpha=(\alpha_1,\alpha_2)\in (0,1)^2$, we say the interval $I_i$ is \textit{$\alpha$-very good} if it satisfies the following conditions.
\begin{itemize}

\item[(1)]For any two different edges $\langle u_1,v_1\rangle,\langle u_2,v_2\rangle\in \mathcal{E}$ with $v_1,u_2\in I_i$, $u_1\in I_i^c$ and $v_2\in [(i-1)m,(i+2)m)^c$, we have $|v_1-u_2|\geq \alpha_1 m$.
  \medskip

\item[(2)] The internal energy
\begin{equation}\label{def-RIi-2}
R_{I_i}:=\inf_{\theta}\left(\sum_{u\in \mathcal{K}_i}\theta_u^2R_{I_i}\left(u,B_{\alpha_1m}(u)^c\right)+\left(1-\sum_{u\in \mathcal{K}_i}\theta_u\right)^2\widehat{R}(I_{i-1},I_{i+1})\right) \geq \alpha_2 \Lambda(m),
\end{equation}
where the infimum is taken over $\theta=\{\theta_u\}_{u\in \mathcal{K}_i}$ with $\theta_u\geq 0$ and $\sum_{u\in \mathcal{K}_i}\theta_u\leq 1$.
\end{itemize}
\end{definition}

By replacing  Lemma \ref{Lem-RNtail-1} in the proof of Proposition \ref{prop-alphagood-1} with Propositions \ref{R-Lambda} and \ref{box-box-tail}, we can similarly establish the following lower bound for the probability of very good intervals.

\begin{proposition}\label{prop-alphagood}
For all $\beta>0$ and $\varepsilon\in (0,1)$, there exists $\alpha=(\alpha_1,\alpha_2)\in (0,1)^2$ {\rm(}both depending only on $\beta$ and $\varepsilon${\rm)} such that for all $i\in \mathds{N}$,
\begin{equation*}
\mathds{P}\left[I_i\text{ is $\alpha$-very good}\right]\geq 1-\varepsilon.
\end{equation*}
\end{proposition}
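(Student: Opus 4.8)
The plan is to follow the blueprint of the proof of Proposition \ref{prop-alphagood-1}, with the scaling-exponent estimates of Lemma \ref{Lem-RNtail-1} replaced by the tail bounds of Propositions \ref{R-Lambda} and \ref{box-box-tail}. By the translation invariance of the LRP model it suffices to treat $i=0$. Fix $\varepsilon\in(0,1)$. As in Definition \ref{def-verygood-1}, introduce a stronger notion: call $I_0$ \emph{$(M,\alpha)$-very good} if it is $M$-good (Definition \ref{def-good-1}, i.e. $\xi_0=\#\mathcal{K}_0\le M$) and, writing $F_1,F_2,F_3$ for the three events
\begin{itemize}
\item[(1)] for any two distinct edges $\langle u_1,v_1\rangle,\langle u_2,v_2\rangle\in\mathcal{E}$ with $v_1,u_2\in I_0$, $u_1\in I_0^c$, $v_2\in[-m,2m)^c$ one has $|v_1-u_2|\ge\alpha_1 m$;
\item[(2)] $R_{I_0}(u,B_{\alpha_1 m}(u)^c)\ge\alpha_2\Lambda(m)$ for every $\langle u,v\rangle\in\mathcal{E}$ with $u\in I_0$, $v\in[-m,2m)^c$;
\item[(3)] $\widehat R(I_{-1},I_1)\ge\alpha_2\Lambda(m)$,
\end{itemize}
all of $F_1,F_2,F_3$ hold. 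Exactly as in Remark \ref{M-vg}, if $I_0$ is $(M,\alpha)$-very good then it is $\alpha'$-very good in the sense of Definition \ref{def-alphagood-2} with $\alpha'=(\alpha_1,\alpha_2(M+1)^{-1})$: one bounds the infimum defining $R_{I_0}$ in \eqref{def-RIi-2} below by $\alpha_2\Lambda(m)\inf_\theta\big(\sum_u\theta_u^2+(1-\sum_u\theta_u)^2\big)\ge\alpha_2(M+1)^{-1}\Lambda(m)$, using $\#\mathcal{K}_0\le M$.

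Next I would reproduce the three probabilistic lemmas. First, Lemma \ref{lem-good-1} gives $\mathds{P}[I_0\text{ is $M$-good}]\ge 1-\mathrm e^{-c_1 M}$ for large $M$, unchanged. Second, the analogue of Lemma \ref{P-E11}: $\mathds{P}[F_1^c\mid I_0\text{ is $M$-good}]\le\mathds{P}[F_1^c]\le\widetilde c_4\alpha_1\log(1/\alpha_1)$ by the FKG inequality and the computation in \eqref{P-E1-1}; this is purely a geometric estimate on edge placements and does not involve resistances at all, so it carries over verbatim, and we may take $\alpha_1=\varepsilon^{C_3}$ for a suitable $C_3=C_3(\beta)$ to make $\mathds{P}[F_1^c]\le\varepsilon/3$. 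Third, the analogue of Lemma \ref{P-E3}: conditioned on $\{I_0\text{ is $M$-good}\}$ there are at most $M+1$ resistance events appearing in $F_2\cap F_3$, so by a union bound it is enough to control each one. Here is where Lemma \ref{Lem-RNtail-1} is replaced. For the $\widehat R(I_{-1},I_1)$ term, note $\widehat R(I_{-1},I_1)\ge R([-m,m],[-2m,2m]^c)$-type quantities are controlled by Proposition \ref{box-box-tail} (or one may directly invoke $\widehat{R}_m$ comparisons as in the original proof together with Proposition \ref{R-Lambda}); and for each $R_{I_0}(u,B_{\alpha_1 m}(u)^c)$, monotonicity of resistance and translation invariance reduce it to $R(0,[-\alpha_1 m,\alpha_1 m]^c)$, which by Proposition \ref{R-Lambda} satisfies $\mathds{P}[R(0,[-\alpha_1 m,\alpha_1 m]^c)\ge c(\varepsilon')\Lambda(\alpha_1 m)]\ge 1-\varepsilon'$ for any $\varepsilon'\in(0,1)$, with $c(\varepsilon')$ depending only on $\beta,\varepsilon'$. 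Using Proposition \ref{R(0,n)main} together with Proposition \ref{weak-supermult} to compare $\Lambda(\alpha_1 m)$ with $\Lambda(m)$ — namely $\Lambda(\alpha_1 m)\ge \widetilde c\,\alpha_1^{\delta}\Lambda(m)$ for the exponent $\delta$ of Proposition \ref{weak-supermult} — one obtains a bound of the form $R(0,[-\alpha_1 m,\alpha_1 m]^c)\ge \alpha_2\Lambda(m)$ with probability at least $1-\varepsilon'$, where $\alpha_2$ is a constant (depending only on $\beta$, $\varepsilon$, and $M$) absorbing $c(\varepsilon')$, $\widetilde c$ and $\alpha_1^\delta$. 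Taking $\varepsilon'=\varepsilon/(3(M+1))$ and a union bound over the at most $M+1$ relevant edges gives $\mathds{P}[(F_2\cap F_3)^c\mid I_0\text{ is $M$-good}]\le\varepsilon/3$.

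Finally I would assemble the pieces exactly as in the proof of Proposition \ref{prop-alphagood-1}: choose $M=M(\beta,\varepsilon)$ large enough that $\mathrm e^{-c_1 M}\le\varepsilon/3$; then
\begin{equation*}
\mathds{P}[I_0\text{ is not $\alpha'$-very good}]\le\mathds{P}[I_0\text{ is not $M$-good}]+\mathds{P}[F_1^c\mid I_0\text{ is $M$-good}]+\mathds{P}[(F_2\cap F_3)^c\mid I_0\text{ is $M$-good}]\le\varepsilon,
\end{equation*}
and rename $\alpha'$ as $\alpha$ (both coordinates depend only on $\beta$ and $\varepsilon$, since $M$ does). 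The main obstacle, and the only place where genuine work beyond Proposition \ref{prop-alphagood-1} is needed, is step three: one must be careful that the comparison $\Lambda(\alpha_1 m)\asymp \alpha_1^{\delta}\Lambda(m)$ holds uniformly in $m$ (this is where Proposition \ref{weak-supermult} and Proposition \ref{R(0,n)main} are essential, and is why this proposition appears only now, after Section \ref{sect-severaltype}), and that Propositions \ref{R-Lambda} and \ref{box-box-tail} are applied at the correct scale with the conditioning on absence of long edges handled as in the definition of $\widehat R$; everything else is a routine transcription of the earlier argument.
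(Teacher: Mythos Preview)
Your approach is essentially the same as the paper's: reduce to the $(M,\alpha)$-very good version, keep Lemmas \ref{lem-good-1} and \ref{P-E11} verbatim, and redo Lemma \ref{P-E3} with Propositions \ref{R-Lambda} and \ref{box-box-tail} in place of Lemma \ref{Lem-RNtail-1}. The paper does exactly this, in one sentence.

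There is one genuine slip in your write-up, however. To pass from $R(0,[-\alpha_1 m,\alpha_1 m]^c)\ge c(\varepsilon')\Lambda(\alpha_1 m)$ to a bound of the form $\ge \alpha_2\Lambda(m)$ you need a \emph{lower} bound on $\Lambda(\alpha_1 m)$ in terms of $\Lambda(m)$. You invoke Proposition \ref{weak-supermult} for the inequality $\Lambda(\alpha_1 m)\ge\widetilde c\,\alpha_1^{\delta}\Lambda(m)$, but weak supermultiplicativity goes the other way: writing $m=(1/\alpha_1)\cdot(\alpha_1 m)$, it gives $\Lambda(m)\gtrsim(1/\alpha_1)^{\delta}\Lambda(\alpha_1 m)$, i.e.\ an \emph{upper} bound on $\Lambda(\alpha_1 m)$. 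The correct tool here is \emph{submultiplicativity} (Proposition \ref{submult}):
\[
\Lambda(m)=\Lambda\big(\lceil 1/\alpha_1\rceil\cdot \alpha_1 m\big)\le C_{1,*}\,\Lambda(\lceil 1/\alpha_1\rceil)\,\Lambda(\alpha_1 m),
\]
so $\Lambda(\alpha_1 m)\ge \Lambda(m)/\big(C_{1,*}\Lambda(\lceil 1/\alpha_1\rceil)\big)$. Since $\alpha_1$ depends only on $\beta$ and $\varepsilon$, the factor $\big(C_{1,*}\Lambda(\lceil 1/\alpha_1\rceil)\big)^{-1}$ is a positive constant depending only on $\beta$ and $\varepsilon$, which is all you need to define $\alpha_2$. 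With this correction your argument goes through.
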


\begin{proof}
For any $\varepsilon \in (0,1)$, it follows from Proposition \ref{R-Lambda} that, there exists a constant $\widetilde{c}_1>0$ (depending only on $\beta$ and $\varepsilon$) such that for each $m\in \mathds{N}$,
\begin{equation}\label{pb-Lambda}
\mathds{P}\left[R(0,[-m,m]^c)\geq \widetilde{c}_1\Lambda(m)\right]\geq 1-\varepsilon.
\end{equation}
In addition, recall that $\widehat{R}(\cdot,\cdot)$ is the resistance defined in \eqref{def-Rhat}. According to  the monotonicity of the resistance, we have
\begin{equation*}
\widehat{R}(I_{-1}, I_{1})\geq \widehat{R}([-2m,0), (-\infty,-3m)\cup [m,\infty))\overset{\text{law}}{=}\widehat{R}([-m,m],[-2m,2m]^c).
\end{equation*}
Combining this with Proposition \ref{box-box-tail}, we obtain
\begin{equation}\label{bb-Lambda}
\mathds{P}\left[\widehat{R}(I_{-1}, I_{1})\geq \widetilde{c}_2\Lambda(m)\right]\geq 1-\varepsilon
\end{equation}
for some constant $\widetilde{c}_2>0$ depending only on $\beta$ and $\varepsilon$.
Replacing \eqref{est-RNhat} in the proof of Proposition \ref{prop-alphagood-1} with \eqref{pb-Lambda} and \eqref{bb-Lambda}, we can complete the proof.
\end{proof}

Now recall that $G=(V,E)$ is the renormalization from the $\beta$-LRP by identifying the intervals $I_i$  to vertices $\varpi(i)$, and $G_n$ is the subgraph of $G$ restricted to $V_n=\{\varpi(0),\varpi(1),\cdots,\varpi(n-1)\}$.
We will refer to a vertex $\varpi(i)\in V_n$ as red if it is not $\alpha$-very good, and a vertex set $L\subset V_n$ is referred  to as a red component if it forms a connected subset in $G_n$ consisting only of red vertices and there are no other red vertices directly connected to it.
Additionally, we recall the parameters $\lambda, a_k,b_k$ and $K_{*}$ are defined in \eqref{def-Alambda}-\eqref{def-K}.
We also define the animal sets $\mathcal{L}_{k}, \widetilde{\mathcal{L}}_{k}, \widetilde{\mathcal{L}}^{(1)}_{k\rightarrow l}, \widetilde{\mathcal{L}}^{(2)}_{k\rightarrow l}, \widetilde{\mathcal{L}}^{(2)}_{k\to k,\mathrm{bad}}$ and $\mathcal{G}_k$ from Subsection \ref{sect-cg}, replacing $\alpha$-red animal with red animal.

Now let $\mathcal{H}$ be the event that $\cup_{k>K_{*}}\widetilde{\mathcal{L}}_k=\emptyset$. Using the similar coarse-graining argument in the proof of Proposition \ref{prop-eventH}, we can obtain  that
\begin{equation}\label{sect6-H}
\mathds{P}[\mathcal{H}]\geq 1-n^{-2}.
\end{equation}
Furthermore, on the event $\mathcal{H}$, for any $k\geq 1$ and any animal $L\in \mathcal{G}_k$, it possesses the properties (P1)-(P3) in Section \ref{sect-proofwm}.
Consequently, we also have \eqref{property-g2} for any unit flow from $\varpi(0)$ to $\varpi(n-1)$ restricted to $V_n$.

Based on the above analysis, we can present the
\begin{proof}[Proof of Proposition \ref{supermult}]
Let us first assume that the event $\mathcal{H}$ occurs.
Let $f$ be the unit electric flow from $0$ to $mn-1$ restricted to $[0,mn)$ in the LRP model. We can then construct a unit flow $g$ from $\varpi(0)$ to $\varpi(n-1)$ in the graph $G_n$ based on $f$, defined as
\begin{equation}\label{def-f-g2}
g_{\varpi(i)\varpi(j)}=\sum_{u\in I_i}\sum_{v\in I_j}f_{uv}\quad \text{for all } i,j\in [0,n).
\end{equation}

Applying the flow $g$ to \eqref{property-g2} and according to the definitions of $R_{I_i}$ in \eqref{def-RIi-2} and black vertices (i.e. $\alpha$-very good vertices, see Definition \ref{def-alphagood-2}), we find that on the event $\mathcal{H}$,
\begin{align*}
R_{[0,mn)}(0,mn-1)&=\frac{1}{2}\sum_{u\sim v}f_{uv}^2\geq \sum_{i\in [0,n)}\left(\sum_{u\in I_i}\sum_{v\in I^c_i}f_{vu}\I_{\{f_{vu}>0\}}\right)^2R_{I_i}\\
&= \sum_{i\in[0,n)}\left(\sum_{u\in I_i}\sum_{j\in [0,n)\setminus\{i\}}\sum_{v\in I_j}f_{vu}\I_{\{f_{vu}>0\}}\right)^2R_{I_i}\\
&\geq  \sum_{i\in[0,n)}\sum_{j\in [0,n)\setminus\{i\}}\left(\sum_{u\in I_i}\sum_{v\in I_j}f_{vu}\I_{\{f_{vu}>0\}}\right)^2R_{I_i}\\
&\geq \sum_{i\in[0,n)}\sum_{j\in [0, n)\setminus\{i\}} g^2_{\varpi(j)\varpi(i)}\I_{\{g_{\varpi(j)\varpi(i)}>0\}}R_{I_i} \quad \quad \text{by \eqref{def-f-g2}}\\
&\geq \sum_{\langle \varpi(i),\varpi(j)\rangle\in E_n\setminus(\cup_{k\geq 1}\cup_{L\in \mathcal{G}_k}E_{L\times L})}g_{\varpi(j)\varpi(i)}^2 \I_{\{g_{\varpi(j)\varpi(i)}>0\}}R_{I_i}\\
&\geq \frac{1}{2}\alpha_2 \Lambda(m)\sum_{\langle \varpi(i),\varpi(j)\rangle\in E_n\setminus(\cup_{k\geq 1}\cup_{L\in \mathcal{G}_k}E_{L\times L})}g_{\varpi(i)\varpi(j)}^2 \quad \quad \text{by Definition \ref{def-alphagood-2}}\\
&\geq \frac{1}{2C_1}\alpha_2 \Lambda(m)\sum_{\langle \varpi(i), \varpi(j)\rangle \in E_n}g_{\varpi(i)\varpi(j)}^2\quad \quad \text{by \eqref{property-g2}}\\
&\geq \frac{1}{2C_1}\alpha_2 \Lambda(m)R^{G}_{V_n}(\varpi(0),\varpi(n-1))=:\widetilde{c}_1\Lambda(m)R^{G}_{V_n}(\varpi(0),\varpi(n-1)),
\end{align*}
where $\alpha_2>0$ and $C_1<\infty$ (both depending only on $\beta$) are the constants in Definition \ref{def-alphagood-2} and \eqref{property-g2}, respectively. Consequently, combining this with the scaling invariance of the LRP model and $\mathds{P}[\mathcal{H}^c]\leq n^{-2}$ from \eqref{sect6-H}, we get that
\begin{align*}
&\mathds{E}\left[R_{[0,mn)}(0,mn-1)\right]\\
&\geq \mathds{E}\left[R_{[0,mn)}(0,mn-1)\I_{\mathcal{H}}\right]\\
&\geq \widetilde{c}_1\Lambda(m)\mathds{E}\left[R^{G}_{V_n}(\varpi(0),\varpi(n-1))\I_{\mathcal{H}}\right]\\
&=\widetilde{c}_1\Lambda(m)\left(\mathds{E}\left[R^{G}_{V_n}(\varpi(0),\varpi(n-1))\right]-\mathds{E}\left[R^{G}_{V_n}(\varpi(0),\varpi(n-1))\I_{\mathcal{H}^c}\right]\right)\\
&\geq \widetilde{c}_1\Lambda(m)\left(\mathds{E}\left[R^{G}_{V_n}(\varpi(0),\varpi(n-1))\right]-n\mathds{P}\left[\mathcal{H}^c\right]\right)\quad\quad  \text{by }R^{G}_{V_n}(\varpi(0),\varpi(n-1))\leq n\\
&\geq \widetilde{c}_2\Lambda(m)\Lambda(n)\quad\quad  \text{by Proposition \ref{R(0,n)main} and Remark \ref{rem-delta*} (2)}
\end{align*}

for some constant $\widetilde{c}_2>0$ depending only on $\beta$.
\end{proof}

\section{The proof of Theorem \ref{mainr1} and Corollary \ref{cor-lt}}\label{sect-proofmr}

We first present the proof of Theorem \ref{mainr1}. Although proving that a sequence exhibits polynomial growth via submultiplicativity and supermultiplicativity is a well-established method, we will provide a detailed proof here for the sake of completeness.

\begin{proof}[Proof of Theorem \ref{mainr1}]
We first combine Propositions \ref{submult} (for submultiplicativity) and \ref{supermult} (for supermultiplicativity) to get that there exist constants $0<\widetilde{c}_1<\widetilde{C}_1<\infty$ (depending only on $\beta$) such that for all $n,m\in \mathds{N}$,
\begin{equation}\label{sub-super}
\widetilde{c}_1\Lambda(m,\beta)\Lambda(n,\beta)\leq \Lambda(mn,\beta)\leq \widetilde{C}_1\Lambda(m,\beta)\Lambda(n,\beta).
\end{equation}
Now denote $a_k=\log(\widetilde{C}_1\Lambda (2^k,\beta))$ and $b_k=\log(\widetilde{c}_1\Lambda (2^k,\beta))$ for all $k\in \mathds{N}$. Then from the submultiplicativity in \eqref{sub-super} and Fekete's lemma, we obtain that the following limit exists:
\begin{equation}\label{sub-inf}
\delta(\beta)=\lim_{k\rightarrow \infty}\frac{\log(\Lambda (2^k,\beta))}{\log(2^k)}=\lim_{k\rightarrow \infty}\frac{a_k}{k\log2}=\inf_{k\in \mathds{N}}\frac{a_k}{k\log2}.
\end{equation}
In addition, by the supermultiplicativity in \eqref{sub-super}, we arrive at
\begin{equation*}
b_{k+l}=\log(\widetilde{c}_1\Lambda (2^{k+l},\beta))\geq \log(\widetilde{c}_1\Lambda (2^{k},\beta)\widetilde{c}_1\Lambda (2^{l},\beta))=b_k+b_l.
\end{equation*}
Hence, we also have
\begin{equation*}
\delta(\beta)=\lim_{k\rightarrow \infty}\frac{\log(\Lambda (2^k,\beta))}{\log(2^k)}=\lim_{k\rightarrow \infty}\frac{b_k}{k\log2}=\sup_{k\in \mathds{N}}\frac{b_k}{k\log2}.
\end{equation*}
Combining this with \eqref{sub-inf} yields that
\begin{equation}\label{ineq-2k}
\widetilde{C}_1^{-1}2^{k\delta(\beta)}\leq \Lambda(2^k,\beta)\leq \widetilde{c}_1^{-1}2^{k\delta(\beta)}
\end{equation}
for all $k\in \mathds{N}$. Furthermore, by Lemma \ref{R-renorm} we can extend inequalities in \eqref{ineq-2k} to all integers and get that there exists a constant $\widetilde{C}_2=\widetilde{C}_2(\beta)\in (0,\infty)$ (depending only on $\beta$) such that for all $n\in \mathds{N}$,
\begin{equation}\label{exp-Lambda}
\widetilde{C}_2^{-1}n^{\delta(\beta)}\leq \Lambda(n,\beta)\leq \widetilde{C}_2n^{\delta(\beta)}.
\end{equation}
Therefore, the proof is complete by applying this to Propositions \ref{R(0,n)main}, \ref{R-Lambda} and \ref{box-box-tail}.
\end{proof}

Finally, we will present the proof of Corollary \ref{cor-lt} by using similar arguments in Section \ref{sect-supermult}.

\begin{proof}[Proof of Corollary \ref{cor-lt}]
Fix $N\in \mathds{N}$ and sufficiently small $\varepsilon\in (0,1)$.
For $\alpha=(\alpha_1,\alpha_2)\in(0,1)^2$, recall that the $\alpha$-very good interval and the event $\mathcal{H}$ are  defined in Definition \ref{def-alphagood-2} and in the preceding \eqref{sect6-H}, respectively.

Let $m= (\varepsilon/\alpha_2)^{1/\delta}N$ and $n=N/m=(\alpha_2/\varepsilon)^{1/\delta}$.
Without loss of generality, we assume that $m,n\in \mathds{N}$.
Otherwise, we can replace $m,n$ with $\lfloor m\rfloor$ and $\lfloor n\rfloor$, respectively.
Now applying $n=(\alpha_2/\varepsilon)^{1/\delta}$ to \eqref{sect6-H}, we get that
\begin{equation}\label{prob-H2}
\mathds{P}[\mathcal{H}]\geq 1-(\varepsilon/\alpha_2)^{2/\delta}.
\end{equation}

Let $f$ be the unit electric flow from 0 to $[-N,N]^c$ in the LRP model. Similar to \eqref{def-f-g2}, we construct a unit flow from $\varpi(0)$ to $W^c_n:=\{\varpi(-n),\varpi(-n+1),\cdots, \varpi(n-1)\}^c$ in the graph $G$, defined as
\begin{equation*}\label{def-f-g2}
g_{\varpi(i)\varpi(j)}=\sum_{u\in I_i}\sum_{v\in I_j}f_{uv}\quad \text{for all } i,j\in \mathds{Z}.
\end{equation*}
Applying the similar arguments in the proof of Proposition \ref{supermult} to the flow $g$, we can show that on the event $\mathcal{H}$, there exists a constant $\widetilde{c}_1>0$ (depending only on $\beta$) such that
\begin{align}
R(0,[-N,N]^c)=\frac{1}{2}\sum_{u\sim v}f_{uv}^2
&\geq \frac{1}{2C_1}\alpha_2 \Lambda(m)R^{G}(\varpi(0),W_n^c)\label{RN2}\\
&\geq \widetilde{c}_1\alpha_2m^{\delta}R^{G}(\varpi(0),W_n^c)\quad \quad \text{by \eqref{exp-Lambda}}\nonumber\\
&=\widetilde{c}_1\varepsilon N^{\delta}R^{G}(\varpi(0),W_n^c) \quad \quad \text{by }m=(\varepsilon/\alpha_2)^{1/\delta}N,\nonumber
\end{align}
where $\delta>0$ (depending only on $\beta$) is the exponent defined in \eqref{sub-inf}.

In addition, according to the scaling invariance of the LRP model, Lemma \ref{Lem-RNtail-1} and the fact that $n=(\alpha_2/\varepsilon)^{1/\delta}$, we have
\begin{equation*}\label{Rnn}
\begin{aligned}
\mathds{P}\left[R^{G}(\varpi(0),W_n^c)\geq c_*\right]
&\geq \mathds{P}\left[R(0,[-n,n]^c)\geq c_*+1\right]\geq 1-(\varepsilon/\alpha_2)^{\frac{1}{C_*\delta}},
\end{aligned}
\end{equation*}
where $0<c_*<C_*<\infty$ (both depending only on $\beta$) are the constants defined in Lemma \ref{Lem-RNtail-1}.
Applying this and \eqref{prob-H2} to \eqref{RN2}, we get that
\begin{equation*}
\mathds{P}\left[R(0,[-N,N]^c)\geq \widetilde{c}_2 \varepsilon N^\delta\right]\geq 1-(\varepsilon/\alpha_2)^{2/\delta}-(\varepsilon/\alpha_2)^{\frac{1}{C_*\delta}}\geq 1-\varepsilon^q
\end{equation*}
for some $\widetilde{c}_2, q>0$ depending only on $\beta$. Hence, the proof is complete.
\end{proof}

\bigskip

\noindent{\bf Acknowledgement.} \rm
We warmly thank Takashi Kumagai for stimulating discussions at an early stage of the project.
J.\ Ding is partially supported by NSFC Key Program Project No.\ 12231002 and the Xplorer prize. L.-J.\ Huang is partially supported by National Key R\&D Program of China No.\ 2023YFA1010400, and the National Natural Science Foundation of China No.\ 12471136.

\noindent\textbf{Data Availability} Data sharing is not applicable to this article as no datasets were generated or analyzed during the current study.

\noindent\textbf{Conflict of interest} On behalf of all authors, the corresponding author states that there is no conflict of interest.

\bibliographystyle{plain}
\bibliography{resistance-ref2}

\end{document}